\newcommand{\mb}{\mathbf}
\newcommand{\mc}{\mathcal}
\renewcommand{\Re}{\mathrm{Re}\,}
\newcommand{\rg}{\mathrm{rg}\,}
\newcommand{\N}{\mathbb{N}}
\newcommand{\R}{\mathbb{R}}
\newcommand{\C}{\mathbb{C}}
\newcommand{\B}{\mathbb{B}}
\newcommand{\s}{\mathbb{S}}
\newcommand{\la}{\lambda}
\newcommand{\Hb}{\overline{\mathbb{H}}}
\newcommand{\ra}{\rightarrow}
\newcommand{\ve}{\varepsilon}
\newcommand{\rst}[1]{\ensuremath{{\mathbin |}%
\raise-.5ex\hbox{$#1$}}} 
\newcommand{\norm}[1]{{\left\vert\kern-0.25ex\left\vert\kern-0.25ex\left\vert #1 
    \right\vert\kern-0.25ex\right\vert\kern-0.25ex\right\vert}}
\newtheorem{lemma}{Lemma}[section]
\newtheorem{theorem}[lemma]{Theorem}
\newtheorem{corollary}[lemma]{Corollary}
\newtheorem{proposition}[lemma]{Proposition}
\theoremstyle{remark}
\newtheorem{remark}[lemma]{Remark}
\theoremstyle{definition}
\numberwithin{equation}{section}
\newcommand{\lastcfrac}[2]{%
	\vphantom{\cfrac{#1}{#2}}%
	\raisebox{\dimexpr1ex-\height}{%
		$\displaystyle
		\raisebox{.5\height}{$\ddots$}+\cfrac{#1}{#2}
		$%
	}%
}
\title[]{Co-dimension one stable blowup for the supercritical cubic wave equation}
\author{Irfan Glogi\'c}
\address{The Ohio State University, Department of Mathematics,
	231 West 18th Ave, Columbus, USA}
\address{Universit\"at Wien, Fakult\"at f\"ur Mathematik,
Oskar-Morgenstern-Platz 1, A-1090 Vienna, Austria}
\email{irfan.glogic@univie.ac.at}
\author{Birgit Sch\"orkhuber}
\address{Karlsruhe Institute of Technology, Institute for Analysis,  Englerstra{\ss}e 2, D-76131 Karlsruhe, Germany}
\address{Leopold-Franzens Universit\"at Innsbruck, Institut f\"ur Mathematik, Technikerstraße 13, 6020 Innsbruck, Austria}
\email{Birgit.Schoerkhuber@uibk.ac.at}
\thanks{Irfan Glogi\'c is supported by the Austrian Science Fund FWF, Projects P 30076 and P 34378. Also, support from The Ohio State University Graduate School via Presidential Fellowship (during which a part of this work was completed) is gratefully acknowledged. Birgit Sch\"orkhuber acknowledges funding by the Deutsche Forschungsgemeinschaft (DFG, German Research Foundation) - Project-ID 258734477 - SFB 1173}
\begin{document}
\begin{abstract}
For the focusing cubic wave equation, we find an explicit, non-trivial self-similar blowup solution $u^*_T$,  which is defined on the whole space and exists in all supercritical dimensions $d \geq 5$. For $d=7$, we analyze its stability properties without any symmetry assumptions and prove the existence of a set of perturbations which lead to blowup via $u^*_T$ in a backward light cone.  Moreover, this set corresponds to a co-dimension one Lipschitz manifold modulo translation symmetries in similarity coordinates.
\end{abstract}

\maketitle

\section{Introduction}
We consider the focusing wave equation
\begin{align}\label{Eq:cubicNLW}
(\partial^2_{t} - \Delta_{x}) u(t,x) = u(t,x)^3
\end{align}
where $(t,x) \in I \times \R^d$ and $I \subset \R$ is an interval containing zero.
The equation is invariant under the rescaling $u \mapsto u_{\lambda}$, 
\[u_{\lambda}(t,x) = \lambda^{-1} u(t/\lambda , x/\lambda ) \]
which leaves invariant the  $\dot H^{s_c}(\R^d)-$ norm, $s_c = \frac{d}{2} - 1$. Thus, the model is energy supercritical in $d \geq 5$ and we restrict ourselves to this case. It is well-known that Eq.~\eqref{Eq:cubicNLW} has solutions that blow up in finite time from smooth, compactly supported initial data 
\begin{align*}
 u[0]= (u(0,\cdot), \partial_t u(0,\cdot)),
\end{align*}
see \cite{Lev74}. Locally, the stable blowup behavior for Eq.~\eqref{Eq:cubicNLW} is described by the ODE blowup 
\begin{align}\label{Eq:ODE_blowup}
 u_T(t,x) = \frac{\sqrt{2}}{T-t}, 
\end{align}
see \cite{DonnSchoerk2012}, \cite{DonningerSchoerkhuber2017} and \cite{ChaDon19}.
We note that $u_T$ is an example of a self-similar blowup solution with trivial spatial profile. For the supercritical focusing wave equation in three dimensions it is well-known that non-trivial, smooth, self-similar solutions exist, see \cite{BizonMaisonWasserman2007}. However, in contrast to this case, where none of these solutions are known in closed form, Eq.~\eqref{Eq:cubicNLW} has the explicit self-similar solution
 
\begin{equation}\label{Eq:CritSol}
u^*(t,x)=\frac{1}{t} U\left(\tfrac{|x|}{t}\right), \quad U(\rho) =  \frac{2 \sqrt{2(d-1)(d-4)}}{d-4+3 \rho^2}.
\end{equation} 
To the best of our knowledge this solution has not been known so far. By using symmetries of the equation, we obtain a non-trivial self-similar blow up solution
\begin{equation}\label{Eq:CritSol_Blowup}
u_T^*(t,x)=(T-t)^{-1} U\left(\tfrac{|x|}{T-t}\right), \quad T > 0,
\end{equation} 
which is defined on all of $\R^d$ and becomes singular in forward time as $t \to T$. The aim of this paper is to investigate the stability properties of $u_T^*$
and to show that it has exactly one genuine unstable direction. 

\subsection{The main result}
Note that Eq.~\eqref{Eq:cubicNLW} is invariant under shifts in space and time
\[
S_{T,x_0}(t,x):=(t-T,x-x_0),
\]
where $T\in\R$ and $x_0=(x_0^j)_{j=1\dots d}\in\R^d$, 
as well as under time reflection
\[
R(t,x):=(-t,x)
\]
and Lorentz transformations 
\[  \Lambda(a) := \Lambda^d(a^d) \circ \Lambda^{d-1}(a^{d-1}) \circ \dots \circ  \Lambda^1(a^1),   \]
where $a =  (a^{j})_{j=1,\dots, d}\in\R^d $ and the Lorentz boost in $j$-th direction  $\Lambda^j(a^j)$ is given by
\[
\left \{
\begin{aligned}
t &\mapsto t \cosh(a^j)+x^j\sinh(a^j)\\
x^j &\mapsto t \sinh(a^j)+x^j\cosh(a^j)\\
x^k &\mapsto x^k \quad (k\neq j).
\end{aligned}
\right.
\]
By composing these transformations we set  
\[
\Lambda_{T,x_0}(a):=R \circ \Lambda(a) \circ S_{T,x_0}.
\]
This allows us to define a $(2d+1)$-parameter family of solutions to Eq.~\eqref{Eq:cubicNLW}
\[
u^*_{T,x_0,a}(t,x):=u^*\circ \Lambda_{T,x_0}(a)(t,x).
\]

Note that $(0,0) = \Lambda_{T,x_0}(a)(T,x_0)$ and the lightcones emanating from $(T,x_0)$ are mapped into the ones emanating from the origin. Hence, for $(t', x') = \Lambda_{T,x_0}(a)(t,x)$ we have the identity
\[ |x'|^2 - t'^2 = |x - x_0|^2 - (t-T)^2.\]
Also,
\begin{align*}
t' = A_0(a)(T-t) - A_j(a)(x^j - x_0^j),
\end{align*}
where 
\begin{align*} 
A_{0}(a) &:=\cosh (a^{1})  \cdots   \cosh (a^{d})\\
A_{1 }(a) &:= \sinh(a^1) \cosh (a^{2}) \dots \cosh (a^d) \\
A_{2 }(a) &:= \sinh(a^2) \cosh (a^{3}) \dots \cosh (a^d) \\
\vdots \\
A_{d}(a) &:= \sinh(a^d).
\end{align*} 

In this paper, for the reasons that are explained below, we focus on the case $d=7$. In this case, our solution $u^*_{T,x_0,a}$ assumes the following explicit form 
\begin{align}\label{Eq:CritSol_general}
u^*_{T,x_0,a}(t,x) = \frac{1}{T-t} \psi^*_{a}\left (\frac{x-x_0}{T-t}\right),
\end{align}
 where 
\begin{align}\label{Eq:BlowupSol_selfsimilar}
\psi_a^*(\xi) = \frac{4 \gamma(\xi,a) }{2 \gamma(\xi,a) ^2+ |\xi|^2 - 1},
\end{align}
with $\gamma(\xi,a) = A_0(a) - A_j(a)\xi^j$.  For $a  \in \R^7$ sufficiently small,  $A_0(a) = 1 + O(|a|)$ and $ A_j(a) = O(|a|)$ and thence  $\psi^*_a  \in C^{\infty}(\R^7)$. From Eq.~\eqref{Eq:CritSol_general} and the scaling of Sobolev norms we obtain 
\begin{align}\label{Eq:Blowup_behavior}
 \|u^*_{T,x_0,a} (t,\cdot) \|_{\dot H^k(\mathbb B^7_{T-t}(x_0))}  = (T-t)^{\frac{5}{2}- k} \|\psi^*_{a} \|_{\dot H^k(\mathbb B^7)},
 \end{align}
for fixed $k \in \N_0$ and all $0 \leq t <T$, i.e., the solution blows up in the backward lightcone of $(T,x_0)$ for $k > s_c=\frac{5}{2}$.
The main result of this paper is the following stability property of $u^*_{T,x_0,a}$.
 
\begin{theorem}\label{Th:Main}
Let $d=7$ and define $\mb h := (h_1, h_2)$ by 
\begin{align}\label{Eq:Def_UnstableDir}
h_1(x) = \frac{1}{(1+|x|^2)^2} , \quad h_2(x) = \frac{4}{(1+|x|^2)^3}.
\end{align} 
There exist constants $\omega >0$, $\delta > 0$ and $c > 0$ such that for all  $\mb v = (v_1,v_2) \in C^{\infty}(\overline{\B_2^7}) \times C^{\infty}(\overline{\B_2^7})$, satisfying
\[ \|\mb v\|_{\mc Y} := \| (v_1,v_2) \|_{H^4 \times H^3(\B_2^7)} \leq \frac{\delta}{c}\]
the following holds: There are parameters $a(\mb v) \in \overline{\mathbb B^{7}_{c \delta/\omega}}$, $x_0(\mb v) \in \overline{\mathbb B^{7}_{\delta}}$, $T(\mb v) \in [1 - \delta, 1+\delta]$ and $\alpha(\mb v) \in [-\delta,\delta]$ depending Lipschitz continuously on $\mb v$ with respect to $\mc Y$ such that for initial data
\begin{align}\label{Eq:InitialValue_NLW_Codim1}
u[0] = u^*_{1,0,0}[0] + \mb v + \alpha(\mb v)  \mb h 
\end{align}
there exists a unique solution $u$ to Eq.~\eqref{Eq:cubicNLW} on $\bigcup_{t\in
	[0,T)}\{t\}\times \overline{ \mathbb B^7_{T-t}}$. Furthermore, $u$ blows up at $t=T$, $x = x_0$ and converges to $u^*_{T,x_0, a}$ according to 
\begin{align*}
(T-t)^{k - \frac{5}{2}} & \| u[t] - u^*_{T,x_0, a}[t] \|_{\dot H^k \times \dot H^{k-1}  (\mathbb B_{T-t}^7(x_0))} \lesssim (T-t)^{\omega} 
\end{align*}
for $k=1,2,3$. 
\end{theorem}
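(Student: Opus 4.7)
The plan is to pass to self-similar coordinates $\tau = -\log(T-t)$, $\xi = (x - x_0)/(T-t)$ centered at the conjectured blowup point, and to rescale $\phi_1(\tau,\xi) := (T-t) u(t,x)$ together with an appropriate companion $\phi_2$ so that Eq.~\eqref{Eq:cubicNLW} becomes a first-order autonomous system $\partial_\tau \Phi = \mb{L}_0 \Phi + \mb F(\Phi)$ on the Hilbert space $\mc H := H^4(\mathbb B^7)\times H^3(\mathbb B^7)$. The backward lightcone of $(T,x_0)$ is mapped to the infinite cylinder $[-\log T, \infty) \times \overline{\mathbb B^7}$, so the claimed finite-time blowup with asymptotic profile $u^*_{T,x_0,a}$ is equivalent to global existence in $\tau$ with $\Phi(\tau) \to \Psi^*_a$, the self-similar profile corresponding to \eqref{Eq:BlowupSol_selfsimilar}. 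Writing $\Phi = \Psi^*_0 + \varphi$ gives $\partial_\tau \varphi = \mb L \varphi + \mb N(\varphi)$, where $\mb L = \mb L_0 + \mb L'$, the perturbation $\mb L'$ is multiplication by the potential $V = 3 (\psi^*_0)^2 = 48/(1+|\xi|^2)^2$ coming from linearization around $\psi^*_0(\xi) = 4/(1+|\xi|^2)$, and $\mb N$ is quadratic in $\varphi$.

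The heart of the argument, and its main obstacle, is the spectral analysis of $\mb L$. I need to show that its spectrum in the closed half-plane $\{\Re \lambda \geq -\omega\}$ consists only of the eigenvalues forced by the symmetries, namely $\lambda = 1$ from time translation of $T$, $\lambda = 1$ of multiplicity $d=7$ from spatial translations of $x_0$, $\lambda = 0$ of multiplicity $d=7$ from Lorentz boosts $a$, together with one genuine unstable eigenvalue. A direct computation verifies that $\mb h$ in \eqref{Eq:Def_UnstableDir} is an eigenfunction with eigenvalue $\omega = 3$. To exclude any further eigenvalues in the relevant half-plane, I decompose into spherical harmonics and reduce each angular mode to a second-order ODE in $\rho = |\xi| \in [0,1]$ with regular singular points at $\rho = 0, 1$. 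The rational form of $\psi^*_0$ makes these ODEs explicitly of hypergeometric/Heun type; a Frobenius and connection-coefficient analysis at the endpoints, combined with the already identified eigenfunctions, pins down the point spectrum. Generation of a $C_0$-semigroup by $\mb L_0$ follows from standard dissipative estimates in $\mc H$, and compactness of $\mb L'$ as a relative perturbation (via Rellich on $\mathbb B^7$) ensures that the relevant spectrum is discrete and of finite algebraic multiplicity.

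With the spectral picture established, let $\mb P$ be the rank-$16$ Riesz projection onto the span of all symmetry modes and $\mb h$. A Gearhart--Pr\"uss argument, combined with a resolvent bound on vertical lines, yields $\|e^{\tau \mb L}(I - \mb P)\|_{\mc H \to \mc H} \lesssim e^{-\omega' \tau}$ for some $\omega' \in (0, \omega)$. Modulation theory absorbs the $15$-dimensional symmetry block: I replace the reference profile $\Psi^*_0$ by $\Psi^*_{T,x_0,a}$ and use an implicit-function-theorem argument to pick $(T, x_0, a)$ close to $(1,0,0)$ so that the corresponding symmetry components of the perturbation vanish identically along the trajectory. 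The one remaining unstable direction along $\mb h$ has no symmetry to be absorbed by, which is precisely why the single real parameter $\alpha(\mb v)$ appears in \eqref{Eq:InitialValue_NLW_Codim1}.

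Finally, I set up a Lyapunov--Perron fixed point in the exponentially weighted space
\[
X := \Bigl\{\varphi \in C([0,\infty); \mc H) : \sup_{\tau \geq 0} e^{\omega' \tau}\|\varphi(\tau)\|_{\mc H} < \infty\Bigr\}.
\]
The nonlinearity $\mb N$ is quadratic and locally Lipschitz thanks to the algebra property $H^4(\mathbb B^7) \hookrightarrow L^\infty$ (since $4 > 7/2$). A standard contraction, applied to the Duhamel equation with a correction term that subtracts the $\mb h$-component, produces a unique trajectory $\varphi \in X$ for each $(\mb v, \alpha, T, x_0, a)$ close to the reference data; a Brouwer-type argument then selects $\alpha(\mb v)$ so that the correction vanishes, and the implicit function theorem selects $(T, x_0, a)(\mb v)$ so that the symmetry components vanish. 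Uniform contraction estimates yield Lipschitz dependence on $\mb v$. Translating the decay $\|\varphi(\tau)\|_{\mc H} \lesssim e^{-\omega' \tau}$ back to physical variables and using the scaling identity analogous to \eqref{Eq:Blowup_behavior} produces the stated convergence rates for $k = 1, 2, 3$.
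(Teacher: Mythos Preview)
Your overall strategy matches the paper's: self-similar coordinates, spectral analysis of the linearization via spherical harmonics and Heun-type ODEs giving exactly $\{0,1,3\}$ with multiplicities $7,8,1$, Gearhart--Pr\"uss on the stable subspace, and a Lyapunov--Perron fixed point. Two differences in execution are worth noting. First, you work in $H^4\times H^3(\mathbb B^7)$ for the algebra property; the paper deliberately works in $H^3\times H^2(\mathbb B^7)$ and controls the cubic term by a trilinear estimate (Lemma~\ref{Est:Nonlinearity}). Your choice simplifies the nonlinearity, but the dissipative estimate needed for semigroup generation is \emph{not} standard at regularity $k>d/2$ --- it requires an equivalent inner product with a nontrivial boundary term (see Section~\ref{Sec:Overview_proof} and \cite{ChaDon19}), so ``standard dissipative estimates'' understates the work. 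Second, you handle all fifteen symmetry parameters by an implicit function theorem against a fixed profile $\Psi^*_0$; the paper instead makes the Lorentz parameter time-dependent, $a=a(\tau)$, solves a modulation ODE for it, and linearizes around $\Psi^*_{a_\infty}$ (Section~\ref{Sec:Modulation_CorrectedData}).

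There is one genuine gap. You select $\alpha(\mb v)$ by a Brouwer-type argument, but Brouwer gives neither uniqueness nor continuity, so it cannot deliver the Lipschitz dependence of $\alpha$ on $\mb v$ that the theorem asserts. Your final sentence (``uniform contraction estimates yield Lipschitz dependence'') does not repair this: the contraction you have is for $\varphi$ in the Lyapunov--Perron map with $\alpha$ fixed, not for $\alpha$ itself. The paper closes this by setting up a \emph{Banach} fixed point directly on the finite-dimensional parameters $(\alpha,\,T-1,\,x_0)$ (Proposition~\ref{Prop:VanishingCorrection}), which simultaneously yields existence and Lipschitz continuity of all of them. If you want to retain a topological argument for $\alpha$ you would need an additional step, e.g.\ showing the projected $\mb h$-component of the solution is a strictly monotone function of $\alpha$ with derivative bounded away from zero.
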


Some remarks on the result are in order.  \\

\noindent{\it 1. Co-dimension 1 stability.} Our result can be interpreted as co-dimension one stability \textit{modulo symmetries}. This is elaborated in Section \ref{Sec:Intro_Self_Sim_Prop} below, see in particular Proposition \ref{Prop:Main} and the subsequent remarks.  \\

\noindent{\it 2. On the choice of the dimension.} Large parts of the proof can be generalized to other space dimensions with obvious adjustments. The key problem is the spectral analysis for the operator corresponding to the linearization around $u^*_{T,x_0, a}$. In seven dimensions, we can exploit some special structural properties that simplify the problem and allow us to solve the spectral problem \textit{rigorously}, see Section \ref{Sec:Overview_proof} for a detailed discussion.  However, we expect that a result analogous to Theorem \ref{Th:Main} can be established in all space dimensions $d \geq 5$. In fact, the numerical investigations that we performed together with Maliborski in \cite{GloMalSch20} suggest that the qualitative structure of the spectrum is the same in all space dimensions.   \\

\noindent{\it 3. Assumptions on the initial data/Regularity of the solution.} The assumption on the initial data, in particular the smallness of $\mb v$ in $\mc Y$, allows for an elementary proof of the Lipschitz continuity of the parameters $(a,T,x_0, \alpha)$.  The same is expected to hold for $\mb v$ small in $H^3 \times H^2(\B_2^7)$, but here more structure must be utilized and we will address this elsewhere.  \\
As in \cite{DonSch14b}, the solution in Theorem \ref{Th:Main} has to be understood as a lightcone solution, i.e., a solution of the abstract evolution equation  corresponding to a reformulation of Eq.~\eqref{Eq:cubicNLW} in self-similar coordinates, see below. \\

\subsection{Dynamical system formulation in adapted coordinates}\label{Sec:Intro_Self_Sim_Prop}

To prove Theorem \ref{Th:Main}, we consider small perturbations  of the blowup initial data 
\[u^*_{1,0,0}[0] = ( u^*_{1,0,0}(0,\cdot), \partial_t u^*_{1,0,0}(0,\cdot)),\]
where 
\[  u^*_{1,0,0}(0,x) = \psi^*_0(x) = U(|x|), \quad \partial_t u^*_{1,0,0}(t,x)|_{t=0} =  \psi^*_0(x)  + x^j \partial_j  \psi^*_0(x)  = U(|x|) + |x| U'(|x|), \] 
i.e., we study the initial value problem for Eq.~\eqref{Eq:cubicNLW} with data of the form
\begin{align}\label{Eq:InitianData}
u[0] = u^*_{1,0,0}[0]+(f,g).
\end{align}
Our aim is to identify a suitable class of functions $(f,g)$ such that the corresponding solution blows up in forward time $T >0$  at some point $x_0 \in \R^7$ and converges to $ u^*_{T,x_0,a}$ for some $a \in \R^7$ as $t \to T$ in a backward light cone
$\bigcup_{t\in[0,T)}\{t\} \times \overline{\mathbb B_{T-t}^{7}(x_0)}$. In general, the  blowup parameters $(T,x_0,a)$ will depend on the data.  Thus, for (yet unknown)  $T > 0$ and $x_0 \in \R^7$ we define adapted coordinates 
\begin{align*}
\tau = -\log(T-t) + \log T, \quad   \xi = \frac{x-x_0}{T-t},
\end{align*} 
and we set
\begin{align}\label{Eq:new_variable}
\psi(\tau,\xi) = T e^{-\tau} u(T-Te^{-\tau},Te^{-\tau} \xi + x_0)
\end{align}
such that Eq.~\eqref{Eq:cubicNLW}  reads
\begin{align}\label{Eq:cubicNLW_7d_selfsimilar}
\left[ \partial^2_{\tau} + 3 \partial_{\tau} + 2 \xi^j \partial_{\xi^j} \partial_{\tau} - (\delta^{jk} - \xi^j \xi^k)\partial_{\xi^j}\partial_{\xi^k} + 4 \xi^j \partial_{\xi^j} + 2 \right ] \psi(\tau,\xi)  = \psi(\tau,\xi)^3.
\end{align}

Now, for each $a \in \R^7$, $\psi_a^*$ defined in Eq.~\eqref{Eq:BlowupSol_selfsimilar} is a static solution to Eq.~\eqref{Eq:cubicNLW_7d_selfsimilar}. 
We write  Eq.~\eqref{Eq:cubicNLW_7d_selfsimilar} as a first order system by setting 
\begin{align}
\psi_1(\tau,\xi) := \psi(\tau,\xi) , \quad \psi_2(\tau,\xi) := \partial_{\tau} \psi(\tau,\xi) + \xi^j \partial_j \psi(\tau,\xi) + \psi(\tau,\xi)
\end{align}
and $\Psi(\tau) = (\psi_1(\tau, \cdot),\psi_2(\tau, \cdot))$, which yields
\begin{align}\label{Eq:Abstract_ODE_NLW_formal}
\begin{split}
\partial_{\tau} \Psi(\tau) & =   \tilde{\mb L} \Psi(\tau) + \mb N(\Psi(\tau)) \\
\end{split}
\end{align}
with
\begin{align*}
  \tilde{\mb L}  \mb u(\xi):=  \left ( \begin{array}{c} - \xi^j \partial_j u_1(\xi) -  u_1(\xi) + u_2(\xi)\\ 						
	\partial_j\partial^j u_1(\xi)  - \xi^j \partial_j u_2(\xi) - 2  u_2(\xi)  \end{array} \right),
\end{align*}
and $ \mb N(\mb u) = ( 0, u_1^3)$. Eq.~\eqref{Eq:Abstract_ODE_NLW_formal} now has the explicit static solution $\Psi^*_a = (\psi^*_{1,a}, \psi^*_{2,a})$ whereby $\psi^*_{1,a} =  \psi^*_a$ and $\psi^*_{2,a}(\xi) = \xi^j \partial_j \psi_a^*(\xi) + \psi_a^*(\xi)$.  We study Eq.~\eqref{Eq:Abstract_ODE_NLW_formal} as an abstract initial value problem on 
\[\mc H = H^3(\B^7) \times H^2(\B^7)\]
equipped with the usual norm
\[ \|\mb u \| := \|(u_1,u_2)\|_{H^3 \times H^2(\B^7)}. \]
We consider the differential operator $  \tilde{\mb L}: \mc D(  \tilde{\mb L} ) \subset \mc H \to \mc H$ for $ \mc D(  \tilde{\mb L} ) = C^4(\overline{\B^7}) \times C^3(\overline{\B^7})$ such that $\tilde{\mb L}$ is densely defined. In Section \ref{Sec:FreeTime_Evol} we prove that $ \tilde{\mb L}$ is closable and denote this closure by $(\mb L,\mc D(\mb L))$. Furthermore, we show in Section \ref{Sec:Nonlin_Pert} that the nonlinearity $\mb N$ is well-defined as a mapping on $\mc H$, see Eq.~\eqref{Eq:Trlin_Est}. The following result is crucial to prove Theorem \ref{Th:Main}. 

\begin{proposition}\label{Prop:Main}
There exists a manifold $\mc M \subset \mc H$ of co-dimension $9$ in a small neighborhood around zero  such that the following holds: There is an $\omega >0$ such that for any $\Phi_0 \in \mc M \cap \mc D(\mb L)$ there is $a \in \B^7$ for which the initial value problem
\begin{align}\label{Eq:Abstract_ODE_NLW}
\begin{split}
\partial_{\tau} \Psi(\tau) & =   \mb L \Psi(\tau) + \mb N(\Psi(\tau)) \\
\Psi(0)  & = \Psi^*_0 + \Phi_0
\end{split}
\end{align}
has a unique solution $\Psi \in C^1([0,\infty), \mc H)$ given by 
\[ \Psi(\tau) = \Psi^*_{a} +\tilde  \Phi(\tau) \]
where $\| \tilde \Phi(\tau) \| \lesssim e^{- \omega \tau}$ for $\tau \to \infty$.  
\end{proposition}

The number of co-dimensions of the manifold $\mc M$ is related to the spectral properties of a linearized operator, the spectrum of which is confined to the open left half plane except for eigenvalues $\lambda_0 = 0$, $\lambda_1 = 1$ and $\lambda^* = 3$. The first two eigenvalues are not genuine instabilities as they are artifacts of the symmetries of the problem, i.e., the Lorentz invariance and the space-time translation symmetry. The eigenvalue $\lambda^*$ is however a true instability; the associated eigenspace is one-dimensional and spanned by $\mb h$ defined in Eq.~\eqref{Eq:Def_UnstableDir}.
When linearizing around $\Psi^*_{a}$, the parameter $a$ occurs in the resulting evolution equation for the perturbation. Hence, the center direction determined by $\lambda_0$ can be controlled by a modulation argument  (i.e., by letting $a$ depend on $\tau$). However, the parameters $T$ and $x_0$ cannot be treated in this manner since they do not appear in the equation (only in the transformed initial data), as is already obvious from the above calculations. Hence, these unstable directions can only be controlled by restricting the initial data to a suitable subset, which is the manifold $\mc M$.
Hence, on the level of Eq.~\eqref{Eq:Abstract_ODE_NLW} a classical co-dimension one stability result cannot be obtained due to additional instabilities caused by translation invariance. 
However, one can show that the intersection of  $\mc M$ with a small ball in $\mc H$ around zero is contained in  $\mc M_1 \cap \mc M_2$, where $\mc M_2$ and $\mc M_1$ are manifolds of co-dimension $1$ and  $8$, respectively, the latter being associated with the translation instability. Hence, Proposition \ref{Prop:Main} is interpreted as co-dimension one stability \textit{modulo symmetries}. \\

\begin{remark} We note that if we would consider the stability problem for a fixed value of $T, x_0$ and $a$, we could directly apply the results of Bates and Jones \cite{BatJon89}. However, having to allow for a variation of the symmetry parameters depending on the initial data complicates the situation. 
\end{remark}

In a second step we translate the result of Proposition \ref{Prop:Main} to physical coordinates $(t,x)$. First, we show that the transformed initial data given by  \eqref{Eq:InitianData} can be written as 
\[\Psi(0) = \Psi^*_0 + \mb U((f,g) ,T,x_0).\]
In Section \ref{Sec:Codim1} we prove that $\mb U$ maps into $\mc H$ for every $T$ close to one, $x_0$ small enough and sufficiently smooth $(f,g)$ defined on a suitably larger ball, say $\B^7_2$. Moreover, one can show the following: For every initial data $(f,g)$ sufficiently small in  $H^3(\B^7_2) \times H^{2}(\B^7_2)$, there exist $T$ and $x_0$ depending on the data such that $\mb U((f,g),T,x_0) \in \mc M_1$ (the manifold associated to the symmetry eigenvalue $1$). In particular, this holds for an \textit{open set} of initial data in the physical space. The additional requirement $\mb U((f,g),T,x_0) \in \mc M_2$ imposes a co-dimension one condition. We prove that by adding a suitable multiple of $\mb h $ to the initial data, this can always be achieved. By assuming additional regularity we can prove that the parameters $T, x_0$ and $a$ depend Lipschitz continuously on the data. The precise statement is given in the following proposition. Theorem $1.1$ is obtained by translating this back into physical coordinates. 

\begin{proposition}\label{Prop:Main_Manifold}
There exist constants $c > 0$ and $\delta > 0$ such that for every $\mb v \in \mc Y$ with $\|\mb v\|_{\mc Y}  \leq  \frac{\delta}{c}$
the following holds: There are $x_0 \in \overline{\mathbb B^{7}_{\delta}}$, $T \in [1 - \delta, 1+\delta]$ and $\alpha \in [-\delta,\delta]$, depending Lipschitz continuously on $\mb v$ with respect to $\mc Y$, such that
\[\mb U(\mb v +\alpha \mb h  ,T, x_0) \in \mc M.\]
\end{proposition}

\subsection{Discussion of related results}
There is an extensive amount of work on the focusing nonlinear wave equation
\begin{equation}\label{Eq:Focusing}
	(\partial^2_{t} - \Delta_{x}) u(t,x) =|u(t,x)|^{p-1}u(t,x), \quad x\in\R^d, \quad p>1.
\end{equation}
A simple way to exhibit blowup for Eq.~\eqref{Eq:Focusing} is to construct self-similar solutions. Apart from the spatially homogeneous fundamental solution \begin{equation}\label{Def:FundSS}
	u_T(t,x)= (T-t)^{-\frac{2}{p-1}} \kappa, \quad \kappa=\left(\tfrac{2(p+1)}{(p-1)^2}\right)^{\frac{1}{p-1}}, \quad T>0,
\end{equation}
the existence of other self-similar solutions with non-trivial profiles has been proved for certain combinations of parameters $d$ and $p$. For example, for $d=3$ and $p\geq3$ an odd integer not equal to five, the existence of infinitely many such solutions to Eq.~\eqref{Eq:Focusing} has been established in \cite{BizonBreitenlohnerMaisonWasserman2010} and \cite{BizonMaisonWasserman2007}. However, to the best of our knowledge, apart from the ODE blowup  \eqref{Def:FundSS}, no other self-similar solution to Eq.~\eqref{Eq:Focusing} has ever been found in closed form and $u^*$ is the first example of this type. \\

To investigate the role of $u^*_T$ in the generic time evolution, we performed numerical simulations in a parallel work with Maliborski \cite{GloMalSch20}. Our findings suggest the following picture: For small radial initial data, the solution exists globally in time and decays to zero, while for large data the solution blows up via the ODE profile \eqref{Eq:ODE_blowup}. For data fine-tuned to the threshold, the evolution approaches $u^*_T$ for some intermediate period before one of the two above scenarios occurs. We note that a similar phenomenology has been observed in numerical experiments for Eq.~\eqref{Eq:Focusing} in the supercritical case $d=3$, $p=7$ in \cite{BizonChmajTabor2004} and for supercritical wave maps \cite{BizonChmajTabor2000,BiernatBizonMaliborski2016}. These observations are especially interesting due to the striking similarity with critical phenomena in gravitational collapse, where threshold solutions are typically self-similar \cite{GunMar07}.
From an analytic point of view, however, the description of threshold phenomena for supercritical wave equations in terms of self-similar solutions is completely open and Theorem \ref{Th:Main} is the very first step in this direction.  \\

We note that much more is known in the energy critical case. There, the threshold for blowup is described in terms of the Talenti-Aubin solution $W$, see the seminal work by Kenig and Merle \cite{KenigMerle2008}. Our main result is somehow in the spirit of the work by Krieger and Schlag \cite{KriegerSchlag2007}, where a co-dimension one stability result was obtained for the family $\{W_{\la}\}$ of rescalings of $W$ in a topology stronger than the energy (in the radial setting). The threshold character of the constructed manifold was then described by Krieger, Nakanishi and Schlag in \cite{KriegerSchlagNakanishi2014}. Their work was inspired by previous numerical observations made by Bizo\'n, Chmaj and Tabor in \cite{BizonChmajTabor2004}.
Later on, in a series of papers \cite{KriegerSchlagNakanishi2013_2}, \cite{KriegerSchlagNakanishi2013_1}, \cite{KriegerSchlagNakanishi2015}, Krieger, Nakanishi and Schlag finally gave a fairly complete characterization of the threshold dynamics in the energy space around a co-dimension one manifold containing also type II blowup solutions. In this context, we also refer to the  recent papers by Krieger \cite{Krieger_preprint2017}, respectively, by Burzio and Krieger \cite{BurzioKrieger_preprint2017}.  \

From a physical point of view, the supercritical case is more interesting and we are only at the very beginning of understanding these types of phenomena. However, a description of threshold dynamics as detailed as in the critical case seems completely out of reach at the moment.  \\

We conclude the discussion with some general comments. Obviously, the blowup profile is defined on all of $\R^d$ and it would be interesting to investigate its stability properties on the whole space, see the recent paper by Biernat, Donninger and the second author \cite{BieDonSch19}. Furthermore, $u^*$ as defined in Eq.~\eqref{Eq:CritSol} is a global smooth solution to Eq.~\eqref{Eq:cubicNLW} for all  $t \geq 1$. Its self-similar character implies that it has a time-independent critical norm. In fact $\| (u^*(t, \cdot), \partial_t u^*(t, \cdot)) \|_{\dot H^{s_c}\times \dot H^{s_c-1}(\R^d)} = \infty$,
which is in agreement with the characterization of global, scattering solutions obtained by Dodson and Lawrie in \cite{DodsonLawrie2015}.

We also mention some other results that are known for Eq.~\eqref{Eq:cubicNLW}. In $d=3$, the problem is subcritical and conformally invariant and blowup has been analyzed in the seminal work of Merle and Zaag, see e.g. \cite{MerleZaag2005}. Stable ODE blowup has been shown by Donninger and the second author in \cite{DonnSchoerk2012}. Non-scattering, global solutions to Eq.~\eqref{Eq:cubicNLW} have been studied by Donninger and Zengino\u{g}lu \cite{DonZen14} as well as by Donninger and Burtscher \cite{DonningerBurtscher2017}. In the energy critical dimension $d=4$, non-trivial self-similar solutions can be excluded. Instead, non self-similar blowup (type II) occurs, see the corresponding construction by Hillairet and Rapha\"{e}l \cite{HillairetRaphael2012}. Finally, in higher space dimensions the result of Collot \cite{Collot2018} on the existence of (at least co-dimension two unstable) type II blowup solutions applies to Eq.~\eqref{Eq:cubicNLW}, which makes the picture even more complex.

\subsection{Overview of the paper}\label{Sec:Overview_proof}
The strategy of the proof of Theorem \ref{Th:Main} builds on on the approach developed by Donninger and the second author in \cite{DonSch14b} to prove, without symmetry assumptions, the nonlinear asymptotic stability of the ODE blowup for the focusing supercritical wave equation in three space dimensions. However, as opposed to the ODE blowup solution, $u^*_T$ is non-trivial and unstable. Both aspects introduce new difficulties that were not present in previous works and demand the development of new techniques. One major difficulty is the rigorous analysis of the underlying spectral problem since the approach developed  in 
\cite{CosDonGloHua16,CosDonGlo17} cannot be applied in our case. Nevertheless, we introduce a new and efficient method to treat such problems, see Sec.~\ref{Sec:ODE_analyis}. In particular, this method yields simpler and shorter proofs of similar problems that have already been resolved by different means, see \cite{CosDonXia16,CosDonGloHua16,CosDonGlo17,DonGlo19}. 
\\

In the following, we give a brief overview of the main steps in the proof of Theorem \ref{Th:Main}, respectively, Propositions \ref{Prop:Main} and \ref{Prop:Main_Manifold}. \\

In Section \ref{Sec:Problem_formulation}, we insert the modulation ansatz $\Psi(\tau) =\Psi^*_{a(\tau)} + \Phi(\tau)$ into Eq.~\eqref{Eq:Abstract_ODE_NLW_formal} and by assuming that $a(\tau) \to a_{\infty}$ as $\tau \to \infty$, we write the resulting equation for $\Phi$ as 
\begin{align}\label{Eq:Intro_Nonlin_ModEq}
 \partial_{\tau} \Phi(\tau)  =  [\mb L     + \mb L'_{a_{\infty}}] \Phi(\tau) + \mb G_{a(\tau)}(\Phi(\tau))  - \partial_{\tau} \Psi^*_{a(\tau)} 
\end{align}
with $ \mb L'_{a_{\infty}}$ containing a potential  term and $\mb G_{a(\tau)}$, the remaining nonlinearity. 
Sections \ref{Sec:Semigroup_theory} - \ref{Sec:Spectrum_growthbounds} are devoted to the analysis of operators of the type $\mb L_a = \mb L    + \mb L'_{a}$ for sufficiently small $a$. They are defined as unbounded operators $\mb L_a : \mc D(\mb L_a ) \subset \mc H \to \mc H$, where $\mc H = H^k(\B^7) \times H^{k-1}(\B^7)$ for $k=3$.\\
 
In Section \ref{Sec:Semigroup_theory} we prove that $\mb L$ generates a strongly continuous semigroup $(\mb S(\tau))_{\tau \geq 0}$.
We point out that in order to be able to prove exponential decay of the semigroup it is indispensable to work in a topology strictly above scaling, i.e, $k > s_c$ and we chose the lowest possible integer value. In addition, we introduce an equivalent, taylor-made inner product on $\mc H$ to prove the desired growth bounds for the semigroup, see Remark \ref{Rem:norm} below for an extended discussion.\\

In Sections \ref{Sec:ODE_analyis}  and \ref{Sec:Spectrum_growthbounds} we analyze the spectrum of $\mb L_a$ and prove suitable growth bounds for the associated semigroup $(\mb S_a(\tau))_{\tau \geq 0}$. While in  \cite{DonSch14b}, \cite{ChaDon19} the spectral problem associated to the linearization around the trivial ODE profile could easily be solved by elementary methods, we are facing new challenges here. Since $u^*_T$ has a non-trivial spatial profile we obtain a non-trivial potential in the linearization, which severly complicates the problem. Furthermore, in addition to the eigenvalues $\la_0 = 0$ and $\la_1  =1$ which are induced by the Lorentz symmetry and the space-time translation invariance of Eq.~\eqref{Eq:cubicNLW} there is another genuinely unstable eigenvalue $\la^*$. The symmetry eigenvalues $\la_0=0$ and $\la_1=1$ are the same in all space dimensions and their geometric multiplicity is equal to $d$, respectively to $d+1$ (note that this is different from \cite{DonSch14b}, \cite{ChaDon19}, where no spatial translations had to be taken into account).  For $\la^*$, on the other hand, numerics indicate that it is real\footnote{Note that, due to non-selfadjoint character of the underlying spectral problem, eigenvalues are not necessarily real.} and decreases as the dimension $d$ grows. In fact, it appears that $\la^*(d) \to 2$ as $d \to \infty$.  As it turns out, $\la^* =3$ in $d=7$, and the corresponding eigenfunction is \textit{explicit} and given by $\mb h$, see Eq.~\eqref{Eq:Def_UnstableDir}. This coincidence is due to the invariance of the spectral equation under conformal transformations, which in this particular dimension relates one of the eigenfunctions associated to $\la  =1$ to $\mb h$ by conformal symmetry, see also \cite{GloMalSch20} for a detailed discussion.

For $a =0$, the potential is radial and a decomposition into spherical harmonics reduces the eigenvalue problem to a family of ODEs, which, due to the non-trivial potential, are of Heun-type, i.e., they have four regular singular points and the understanding of such problems is rather limited. 
Nervertheless, for $d=7$, we are able to analyze these equations by improving the techniques developed by the first author together with Costin and Donninger  \cite{CosDonGlo17}, see also  \cite{CosDonGloHua16}. Here, the explicit knowledge of the eigenfunction $\mb h$ is crucial.

The case $a \neq 0$ is then treated perturbatively. More precisely, we show that there is $\tilde \omega > 0$ such that for the spectrum of $\mb L_a$ we have
\[ \sigma(\mb L_a) \subset \{\la \in \C : \mathrm{Re} \la \leq - \tilde \omega \} \cup \{0, 1, 3\} \]
for all $a \in\R^7$ that are sufficiently small.
Based on the information on the spectrum of $\mb L_a$ and suitable bounds on the resolvent, we conclude Section \ref{Sec:Spectrum_growthbounds} with estimates for the semigroup $\mb S_a(\tau)$ that guarantee exponential decay on a stable subspace defined via spectral projection. Another indispensable ingredient of the proof of the main result are Lipschitz estimates with respect to $a$, for basically all relevant quantities that occur at the linear (and later at the nonlinear) level. \\

In Section \ref{Sec:Nonlin_Pert} we turn to the analysis of Eq.~\eqref{Eq:Intro_Nonlin_ModEq}. In fact, we study the associated integral equation
\begin{align}\label{Eq:Intro_Integral_Equation}
\Phi(\tau) = \mb S_{a_{\infty}}(\tau) \mb u + \int_0^{\tau} 
 \mb S_{a_{\infty}}(\tau - \sigma)[ \mb G_{a(\sigma)}(\Phi(\sigma)) - \partial_{\sigma} \Psi^*_{a(\sigma)} ] d \sigma
\end{align}
for $\mb u \in \mc H$. The function spaces for $\Phi$ and $a$ are chosen in a way that reflects their desired properties, i.e., that $\Phi(\tau)$ decays exponentially and that $a(\tau) \to a_{\infty}$ as $\tau \to \infty$. 
In Section \ref{Sec:Modulation_CorrectedData} we proceed along the lines of \cite{DonSch14b}: First, we derive a modulation equation for $a(\tau)$ and show by a fixed point argument that $a$ can be chosen such that the center direction induced by the Lorentz transform is controlled. Since the parameters $T$ and $x_0$ do not occur in the equation, the translation instability is dealt with in the same manner as the instability caused by $\lambda^*$: we modify the initial data by subtracting suitable elements of the corresponding unstable
 subspaces to obtain a solution $\Phi$ to  Eq.~\eqref{Eq:Intro_Integral_Equation}, with $\mb u$ replaced by 
\[  \mb u - \mb C(\Phi, a, \mb u),\]
provided that $\mb u$ and $a$ are sufficiently small. 
Here, $\mb C = \mb C_1 + \mb C_2$ serves to suppress the instabilities originating from $\la =1$ and $\la^* = 3$. Furthermore, by using similar arguments as in \cite{DonningerBurtscher2017}, we explicitly construct a co-dimension 9 manifold $\mc M$ determined by the vanishing of the correction term. This yields Proposition \ref{Prop:Main}.
In order to prove Proposition \ref{Prop:Main_Manifold}, we consider initial data for Eq.~\eqref{Eq:cubicNLW} of the form 
\[ u[0] = u^*_{1,0,0}[0] + \mb v + \alpha  \mb h , \]
for $\alpha \in \R$, which transforms into initial data 
\[ \Phi(0) = \mb U(\mb v + \alpha \mb h, T, x_0) \]
for Eq.~\eqref{Eq:Intro_Nonlin_ModEq}. We prove suitable Lipschitz estimates for $\mb U$ (here the additional regularity assumption on $\mb v$ comes into play) and apply a fixed point argument to show that for every suitably small $\mb v \in \mc Y$, there are parameters $(a,T,x_0, \alpha)$ close to $(0,1,0,0)$ depending Lipschitz continuously on $\mb v$ such that $\mb U(\mb v + \alpha \mb h, T, x_0)  \in \mc M$, in particular
\[ \mb C(\Phi, a, \mb U(\mb v + \alpha \mb h, T, x_0) ) = 0.\]
 Hence, there exists a solution $\Phi$ satisfying
\begin{align*}
\Phi(\tau) = \mb S_{a_{\infty}}(\tau) \mb U(\mb v + \alpha \mb h, T, x_0)  + \int_0^{\tau} 
 \mb S_{a_{\infty}}(\tau - \sigma)[ \mb G_{a(\sigma)}(\Phi(\sigma)) - \partial_{\sigma} \Psi^*_{a(\sigma)} ] d \sigma
\end{align*}
with $\|\Phi(\tau) \|_{H^3 \times H^2(\B^7)} \lesssim e^{-\omega \tau}$ for all $\tau \geq 0$ and some $\omega > 0$. Reverting to the original coordinates yields Theorem \ref{Th:Main}

\subsection{Notation and Conventions}
Throughout the whole paper the Einstein summation convention is in force, i.e., we sum over repeated upper and lower indices, where latin indices run from $1,\dots,d$.

We write $\N$ for the natural numbers $\{1,2,3, \dots\}$, $\N_0 := \{0\} \cup \N$. Furthermore, $\R^+ := \{x \in \R: x >0\}$. Also, $\Hb$ stands for the closed complex right half-plane.
By $\mathbb B_R^d(x_0)$ we denote the open ball of radius $R >0$ in $\R^d$ centered at $x_0 \in \R^d$. The unit ball is abbreviated by $\B^d := \B_1^d(0)$
and $\mathbb S^{d-1} := \partial \B^d$.

By $L^2(\B_R^d(x_0))$ and $H^{k}(\B_R^d(x_0))$, $k \in \N_0$, we denote the  Lebesgue  
and Sobolev spaces obtained from the completion of $C^{\infty}(\B_R^d(x_0))$ with respect to the usual norm
\[ \|  u \|^2_{H^k(\B_R^d(x_0))} := \sum_{ |\alpha| \leq k} \|\partial^{\alpha}  u \|^2_{L^2(\B_R^d(x_0))},\]
with $\alpha \in \N_0^d$ denoting a multi-index and $\partial^{\alpha} u = \partial_1^{\alpha_1} \dots \partial_d^{\alpha_d} u$, where $\partial_i u(x) = \partial_{x_j} u(x)$. 
For vector-valued functions, we use boldface letters, e.g., $\mb f = (f_1,f_2)$ and we sometime write $[\mb f]_1 := f_1$ to extract a single component. 
Throughout the paper,
$W(f,g)$ denotes the Wronskian of two functions $f,g \in C^{1}(I)$, $I \subset \R$, where we use the convention
$W(f,g)=fg'-f'g$ with $f'$ denoting the first derivative.

On a Hilbert space $\mc H$ we denote by $\mc B(\mc H)$ the set of bounded linear operators.
For a closed linear operator $(L, \mc D(L))$ on $\mc H$, we define the resolvent set $\rho(L)$ as the set of all $\la\in\mathbb{C}$ such that $R_{L}(\lambda):=(\lambda- L)^{-1}$ exists as a bounded operator on the whole underlying space.
Furthermore, the spectrum of $L$ is defined as $\sigma(L):=\mathbb{C}\setminus \rho(L)$ and the point spectrum is denoted by $\sigma_p(L) \subset  \sigma(L)$.
The notation $a\lesssim b$ means $a\leq Cb$ for an absolute constant $C>0$ and we write $a\simeq b$ if $a\lesssim b$ and $b \lesssim a$.  	
If $a \leq C_{\varepsilon} b$ for a constant $C_{\varepsilon}>0$ depending on some parameter $\varepsilon$, we write $a \lesssim_{\varepsilon} b$.

\subsubsection*{Spherical harmonics}
For fixed $d \geq 3$ we denote by  $Y_{\ell}:  \mathbb S^{d-1} \to \C$ a spherical harmonic function of degree $\ell \in \N_0$ (i.e., the restriction of a harmonic homogeneous polynomial $H_{\ell}(x_1, \dots,x_d)$ of degree $\ell$ in $\R^d$ to the $(d-1)$-sphere). In particular,   $Y_{\ell}$ is an eigenfunction for the Laplace-Beltrami operator  on $\mathbb S^{d-1}$ with eigenvalue $\ell(\ell + d -2)$ and 
\[ \int_{\s^d} Y_{\ell}(\omega) \overline{Y_{\ell'}(\omega)} d \sigma(\omega)  = \delta_{\ell \ell'}.  \]
For each $\ell \in \N$, we denote by $M_{d,\ell} \in \N$ the number of linearly independent spherical harmonics, and we designate by $\{Y_{\ell, m} : m \in \Omega_{\ell} \}$, $\Omega_{\ell} = \{1,\dots, M_{d,\ell}\}$ a linearly independent orthonormal set such that 
\[ \int_{\s^d} Y_{\ell,m}(\omega) \overline{Y_{\ell,m'}(\omega)}d \sigma(\omega)  = \delta_{m m'}. \]
Obviously, one has $\Omega_{0} = \{1 \}$, $\Omega_{1} = \{1, \dots, d\}$,
and $Y_{0,1}(\omega) = c_1$, $Y_{1, m}(\omega) = \tilde  c_{m} \omega_m$ 
for suitable normalization constants $c_1, \tilde c_{m} \in \R$. For $g \in C^{\infty}(\mathbb S^{d-1})$, we define
$\mc P_{\ell}: L^2(\mathbb S^{d-1}) \to L^2(\mathbb S^{d-1})$ by
\begin{align*}
\mc P_{\ell} g(\omega)  := \sum_{m \in \Omega_{\ell}} (g|Y_{\ell, m} )_{L^2(\mathbb S^{d-1})} Y_{\ell, m}(\omega).
\end{align*}
It is well-known, see e.g. \cite{Atkinson}, that $\mc P_{\ell}$ defines a self-adjoint projection on $L^2(\mathbb S^{d-1})$ and that 
$ \lim_{n \to \infty}   \| g  - \sum_{\ell = 0}^{n} \mc P_{\ell} g \|_{L^2(\mathbb S^{d-1})} = 0 $.
This can be extended to Sobolev spaces, in particular,  $\lim_{n \to \infty} \|g-   \sum_{\ell = 0}^{n} \mc P_{\ell}  g  \|_{H^k(\mathbb S^{d-1})} \to 0$
for all $g \in C^{\infty}(\mathbb S^{d-1})$, see e.g.~\cite{DonSch14b}, Lemma A.1. Furthermore, for $f \in C^{\infty}(\overline{\B^d})$ and
\begin{align}\label{Decomp:Projection}
[ P_{\ell} f](x)  := \sum_{m \in \Omega_{\ell}} (f(|x| \cdot)|Y_{\ell, m} )_{L^2(\mathbb S^{d-1})} Y_{\ell, m} \left (\tfrac{x}{|x|}\right)
\end{align}
we have, see for example Lemma A.2,
\begin{align}\label{Decomp:SpherHarm_Hk}
\lim_{n \to \infty} \|f- \sum_{\ell = 0}^{n} P_{\ell} f  \|_{H^k(\B^d)} \to 0.
\end{align}

\section{Evolution equation for the perturbation in similarity coordinates}\label{Sec:Problem_formulation}

In Section \ref{Sec:Intro_Self_Sim_Prop} we reformulated the cubic wave equation \eqref{Eq:cubicNLW} with the perturbed blowup initial data 
\[u[0] = u^*_{1,0,0}[0]+(f,g)\]
as a first order system  in similarity coordinates given by
\begin{align}\label{NLW_selfsim_sys_withdata}
\begin{split}
\partial_{\tau} \Psi(\tau) & =   \tilde{\mb L} \Psi(\tau) + \mb N(\Psi(\tau)), \quad \tau > 0    \\
 \Psi(0)  & =\Psi_0^* +  \mb U((f,g), T, x_0) 
\end{split}
\end{align}
where $ \mb N(\mb u) = ( 0, u_1^3)$,
\begin{align}\label{Def:tildeL}
 \tilde{\mb L}  \mb u(\xi) =  \left ( \begin{array}{c} - \xi^j \partial_j u_1(\xi) -  u_1(\xi) + u_2(\xi)\\ 					\partial_j\partial^j u_1(\xi)  - \xi^j \partial_j u_2(\xi) - 2  u_2(\xi)  \end{array} \right),
\end{align}
and 
\begin{align}\label{Eq:InitialData_Op}
 \mb U((f,g), T, x_0) := \mc R((f,g), T, x_0) + \mc R(\Psi^*_0, T, x_0) -  \Psi^*_0
\end{align}
with
\begin{align*}
 \mc R((f,g), T, x_0) :=   \left ( \begin{array}{c}  T f(T\cdot+x_0)   \\ T^2 g(T\cdot+x_0) 	 \end{array} \right).
 \end{align*}
 
 In the following, we assume  that 
 \[\Psi(\tau) = \Psi^*_a +\Phi(\tau),\]
where $\Phi$ is a small perturbation. The parameter $a \in \R^7$ depends on the initial data in general. We therefore insert the modulation ansatz  
\[ \Psi(\tau) = \Psi^*_{a(\tau)} + \Phi(\tau),\]
into  Eq.~\eqref{NLW_selfsim_sys_withdata} assuming that $a(0) = 0$ and $\lim_{\tau \to \infty} a(\tau) = a_{\infty} \in \R^7$. As a result, we obtain the equation
\begin{align}\label{Eq:ODE_NLW_Pert}
\begin{split}
\partial_{\tau} \Phi(\tau)   & =  [\tilde{\mb L}   + \mb L'_{a(\tau)} ] \Phi(\tau)  + \mb F_{a(\tau)}(\Phi(\tau))  - \partial_{\tau} \Psi^*_{a(\tau)} , \quad \tau > 0    \\
  \Phi(0) & = \mb U((f,g), T, x_0) 
\end{split}
\end{align}
where 
\begin{align}\label{Def:Potential}
 \mb F_{a(\tau)}(\mb u) = 
  \left ( \begin{array}{c} 0 \\ u_1^3 + 3 \psi^*_{a(\tau)} u_1^2 \end{array} \right), \quad \mb L'_{a} \mb u  =  \left ( \begin{array}{c} 0 \\ V_{a} 	u_1 \end{array} \right), \quad 
 V_{a}(\xi) := 3 \psi^*_{a}(\xi)^2.
\end{align}
For the potential, recall from Eq.~\eqref{Eq:BlowupSol_selfsimilar} that there is a $\delta^* >0$ such that $\psi^*_a  \in C^{\infty}(\R^7)$ for all $a \in \overline{\mathbb B^7_{\delta^*}}$. Furthermore, $\psi^*_a$ depends smoothly on $a$ and by the fundamental theorem of calculus,
\[ \psi_a^*(\xi) - \psi_b^*(\xi) = (a^j - b^j) \int_0^{1} \partial_{\alpha_j} \psi_{\alpha(s)}^*(\xi) ds ,\]
for $\alpha(s) = b + s(a-b)$. This implies the Lipschitz estimate
\begin{align}\label{Eq:Selfsim_Sol_Lipschitz}
\| \psi^*_a - \psi^*_b \|_{\dot H^k(\mathbb B^7)} \lesssim_k |a - b|
\end{align}
for all $a,b, \in \mathbb B^7_{\delta^*}$ and $k \in \N_0$. Analogously, one can show that for $k \in \N_0$,
\begin{align*}
\| V_{a} - V_b \|_{\dot H^k(\mathbb B^7)} \lesssim_k |a - b|
\end{align*}
for all sufficiently small $a,b \in \R^7$. \\

To deal with the time-dependent potential in  Eq.~\eqref{Eq:ODE_NLW_Pert} we rewrite the equation as
\begin{align}\label{Eq:SelfSim_Perturb}
\begin{split}
\partial_{\tau} \Phi(\tau) & =  [\tilde {\mb L}     + \mb L'_{a_{\infty}}] \Phi(\tau) + \mb G_{a(\tau)}(\Phi(\tau))  - \partial_{\tau} \Psi^*_{a(\tau)} , \quad \tau > 0   \\
\Phi(0) &  = \mb U((f,g), T, x_0)
\end{split}
\end{align}
where 
\begin{align*}
 \mb G_{a(\tau)}(\Phi(\tau)) := [\mb L'_{a(\tau)}  - \mb L'_{a_{\infty}}]\Phi(\tau) + \mb F_{a(\tau)}(\Phi(\tau)). 
 \end{align*}
 
The evolution equation  \eqref{Eq:SelfSim_Perturb} will be investigated in the Sobolev space 
\[\mc H = H^3(\B^7) \times H^2(\B^7)\]
equipped with the norm
\[ \|\mb u \| = \|(u_1,u_2)\|_{H^3 \times H^2(\B^7)}. \]

\section{Semigroup theory}\label{Sec:Semigroup_theory}

\subsection{The free time evolution}\label{Sec:FreeTime_Evol}
The differential operator $ \tilde{\mb L}$ given in \eqref{Def:tildeL} describes the free wave evolution in similarity coordinates. As an operator on $\mc H$ we define it as 
$  \tilde{\mb L}: \mc D(  \tilde{\mb L} ) \subset \mc H \to \mc H$ for $ \mc D(  \tilde{\mb L} ) = C^4(\overline{\B^7}) \times C^3(\overline{\B^7})$ such that it is densely defined. 
In the following, we prove that the closure of $ \tilde{\mb L} $ is the generator of a strongly-continuous semigroup. However, to prove  a suitable growth bound, we work with a different norm on our function space defined in the following, see also Remark \ref{Rem:norm} below.

\subsubsection*{Equivalent norm on $H^3 \times H^2(\B^7)$.} On $C^3(\overline{\B^7}) \times C^2(\overline{\B^7})$ we define 
\begin{align*}
(\mb u| \mb v)_1 & := \int_{\B^7} \partial_i \partial_j \partial_k u_1(\xi) \overline{\partial^i \partial^j \partial^k v_1(\xi)} d\xi
+ \int_{\B^7}  \partial_i \partial_j  u_2(\xi) \overline{\partial^i \partial^j v_2(\xi) } d\xi 
\\
& + \int_{\mathbb S^6}  \partial_i \partial_j  u_1(\omega)\overline{ \partial^i \partial^j v_1(\omega) }d\sigma(\omega) \\
(\mb u| \mb v)_2 & := \int_{\B^7} \partial_i \Delta u_1(\xi) \overline{\partial^i  \Delta  v_1(\xi)} d\xi
+ \int_{\B^7}  \partial_i \partial_j  u_2(\xi) \overline{\partial^i \partial^j v_2} (\xi) d\xi 
+ \int_{\mathbb S^6}  \partial_i u_2(\omega)\overline{ \partial^i  v_2(\omega) } d\sigma(\omega) 
\end{align*}
and set
\begin{align*}
(\mb u | \mb v)_{\mc H}  & :=
 4 (\mb u| \mb v)_1 +  (\mb u| \mb v)_2 \\
 &  +  
 \int_{\mathbb S^6}  \partial_i u_1(\omega) \overline{ \partial^i  v_1(\omega)} d\sigma(\omega) + \int_{\mathbb S^6}   u_1(\omega)  \overline{v_1(\omega)} d\sigma(\omega) 
+ \int_{\mathbb S^6}  u_2(\omega)  \overline{v_2(\omega) }d\sigma(\omega).   
\end{align*}
Furthermore, we let 
 \[ \| \mb u\|_{\mc H} := \sqrt{(\mb u | \mb u)_{\mc H}}\]
and show that this defines an equivalent norm on $H^3 \times H^2(\B^7)$. 
\begin{lemma}\label{Le:Equiv_Norm1} 
We have  
\[ \| \mb u \|_{\mc H} \simeq  \|\mb u \|  \]
for all $\mb u \in C^3(\overline{\B^7}) \times C^2(\overline{\B^7})$.
In particular, $\|\cdot \|_{\mc H}$ defines an equivalent norm on $\mc H$.
\end{lemma}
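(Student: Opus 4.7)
The plan is to establish the two bounds $\|\mb u\|_{\mc H} \lesssim \|\mb u\|$ and $\|\mb u\| \lesssim \|\mb u\|_{\mc H}$ on $C^3(\overline{\B^7}) \times C^2(\overline{\B^7})$ separately. The equivalence on all of $\mc H$ then follows by density, using the trace theorem to extend the boundary integrals continuously to $\mc H$.

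The upper bound is routine: every volume integral appearing in $(\mb u | \mb u)_{\mc H}$ is a finite sum of squares of partial derivatives of $u_1$ of order at most three, or of $u_2$ of order at most two, hence is dominated termwise by $\|\mb u\|^2$. For the surface contributions I would invoke the standard trace inequality $\|w\|_{H^s(\s^6)} \lesssim \|w\|_{H^{s+1/2}(\B^7)}$ at the scales $s \in \{0,1,2\}$; for instance, $\int_{\s^6} |\partial_i \partial_j u_1|^2 \, d\sigma(\omega) \lesssim \|u_1\|_{H^{5/2}(\B^7)}^2 \lesssim \|u_1\|_{H^3(\B^7)}^2$, and the $u_1$, $\nabla u_1$, $u_2$ and $\nabla u_2$ terms on $\s^6$ are handled identically.

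The substantive direction is the lower bound, which I would reduce to the following Poincar\'e-type inequality with boundary term:
\begin{equation*}
\|v\|_{L^2(\B^7)}^2 \lesssim \|\nabla v\|_{L^2(\B^7)}^2 + \|v\|_{L^2(\s^6)}^2, \qquad v \in C^1(\overline{\B^7}).
\end{equation*}
This I would derive by applying the divergence theorem to the vector field $v^2 \xi / 7$ on $\B^7$, using $\mathrm{div}(\xi/7) = 1$, and absorbing the cross term $\tfrac{2}{7} \int_{\B^7} v(\xi \cdot \nabla v)\, d\xi$ into the two terms on the right via Young's inequality. Applying this estimate componentwise to each derivative $\partial^\alpha u_1$ with $|\alpha| \in \{0,1,2\}$ and iterating three times gives
\begin{equation*}
\|u_1\|_{H^3(\B^7)}^2 \lesssim \sum_{|\alpha|=3} \|\partial^\alpha u_1\|_{L^2(\B^7)}^2 + \sum_{j=0}^{2} \int_{\s^6} |\nabla^j u_1|^2 \, d\sigma(\omega),
\end{equation*}
and the analogous two-step chain yields the corresponding $H^2$-bound for $u_2$ with surface terms of orders zero and one. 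Every term on the right-hand side is a summand of $\|\mb u\|_{\mc H}^2$ or is trivially dominated by one, which closes the lower bound.

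No serious obstacle is expected. The only point worth flagging is that the block $(\cdot|\cdot)_2$ containing $\|\partial_i \Delta u_1\|_{L^2(\B^7)}^2$ is not actually required for the equivalence: the full sum $\sum_{i,j,k} \|\partial_i \partial_j \partial_k u_1\|_{L^2(\B^7)}^2$ in $(\cdot|\cdot)_1$ already supplies the top-order control needed in the Poincar\'e recursion above. That term is included in $\|\mb u\|_{\mc H}$ with a view toward the forthcoming dissipativity estimate for $\tilde{\mb L}$ in the semigroup argument, and plays no role in the present lemma.
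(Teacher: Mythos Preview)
Your proposal is correct and follows essentially the same route as the paper: the trace theorem gives the upper bound, and the lower bound is obtained by iterating the Poincar\'e-type inequality $\|v\|_{L^2(\B^7)}^2 \lesssim \|\nabla v\|_{L^2(\B^7)}^2 + \|v\|_{L^2(\s^6)}^2$ through the orders of differentiation. Your observation that the $(\cdot|\cdot)_2$ block is redundant for the norm equivalence (and is present only for the later dissipativity estimate) is also accurate.
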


\begin{proof}
By the definition
\begin{align*}
 \|u\|^2_{H^2(\B^7)}  = \| u \|^2_{L^2(\B^7)} +  \| \nabla u \|^2_{L^2(\B^7)}  + \int_{\B^7} \partial_i \partial_j u(\xi) \overline{\partial^i \partial^j u(\xi)} d\xi 
 \end{align*}
and 
\begin{align*}
 \|u\|^2_{H^3(\B^7)}  = \| u \|^2_{L^2(\B^7)} +  \| \nabla u \|^2_{L^2(\B^7)}  + \int_{\B^7} \partial_i \partial_j u(\xi) \overline{\partial^i \partial^j u(\xi)} d\xi + 
  \int_{\B^7} \partial_i \partial_j \partial_k u(\xi) \overline{\partial^i \partial^j  \partial^k u(\xi)} d\xi.
 \end{align*}

To prove the statement we use that for $u \in C^1(\overline{\B^d})$
\begin{equation}\label{Eq:Rev_Trace}
	\| u \|^2_{L^2(\B^7)} \lesssim  \| \nabla u \|^2_{L^2(\B^7)} +  \|u\|^2_{L^2(\s^6)}.
\end{equation}
This follows simply from the divergence theorem. Namely,
\begin{equation}\label{Eq:Diver}
\int_{\mathbb{S}^6}|u(\omega)|^2d\sigma(\omega) =  \int_{\B^7}\text{div}\big(\xi |u(\xi)|^2\big)d\xi = \int_{\B^7} \big( 7|u(\xi)|^2 + \xi^i u(\xi) \overline{\partial_iu(\xi)} + \xi^i \overline{u(\xi)}  \partial_iu(\xi)\big) d\xi,
\end{equation}
and therefore
\begin{multline*}
	7\| u \|^2_{L^2(\B^7)} \leq \int_{\B^7} 2| \xi^i \Re \big(u(\xi) \overline{\partial_iu(\xi)}\big)|d\xi + \| u \|^2_{L^2(\mathbb{S}^6)}\\ \leq \int_{\B^7} 2 |\xi| |u(\xi)||\nabla u(\xi)| d\xi + \| u \|^2_{L^2(\mathbb{S}^6)}
	 \leq \| u \|^2_{L^2(\B^7)} + \| \nabla u \|^2_{L^2(\B^7)} + \| u \|^2_{L^2(\mathbb{S}^6)},
\end{multline*}
from which we get the estimate \eqref{Eq:Rev_Trace}. For $u \in C^2(\overline {\B^7})$ we then have that 
\[\| \partial_j u \|^2_{L^2(\B^7)} \lesssim   \int_{\B^7}  \partial_i \partial_j  u(\xi) \overline{\partial^i \partial_j u (\xi)} d\xi  +  \int_{\s^6} \partial_j u(\omega) \overline{ \partial_j u(\omega)} d\sigma(\omega)\]
for every $j = 1, \dots, 7$, and therefore 
\[\| \nabla u \|^2_{L^2(\B^7)} \lesssim   \int_{\B^7}  \partial_i \partial_j  u(\xi) \overline{\partial^i \partial^j u (\xi)} d\xi  +  \int_{\s^6} \partial_j u(\omega) \overline{ \partial^j u(\omega)} d\sigma(\omega).\]
Thus,
\[ \| u \|^2_{H^2(\B^7)} \lesssim \int_{\B^7}  \partial_i \partial_j  u(\xi) \overline{\partial^i \partial^j u (\xi)} d\xi  +  \int_{\s^6} \partial_j u(\omega) \overline{ \partial^j u(\omega)} d\sigma(\omega) +  \int_{\s^6} |u(\omega)|^2 d\sigma(\omega).\]
Similarly, for $u \in C^3(\overline {\B^7})$
\[\| \partial_j  \partial_k u \|^2_{L^2(\B^7)} \lesssim   \int_{\B^7}  \partial_i \partial_j   \partial_k u(\xi) \overline{\partial^i \partial_j  \partial_k u (\xi)} d\xi  +  \int_{\s^6} \partial_j  \partial_k u(\omega) \overline{ \partial_j  \partial_k u(\omega)} d\sigma(\omega),\]
which implies that 
\begin{align*}
 \int_{\B^7}  \partial_i \partial_j  u(\xi) \overline{\partial^i \partial^j u (\xi)} d\xi
  \lesssim \int_{\B^7}  \partial_i \partial_j   \partial_k u(\xi) \overline{\partial^i \partial^j  \partial^k u (\xi)} d\xi  +  \int_{\s^6} \partial_i  \partial_j u(\omega) \overline{ \partial^i  \partial^i u(\omega)} d\sigma(\omega).
\end{align*}
Hence, 
\begin{align*}
\| u \|^2_{H^3(\B^7)} &  \lesssim  \int_{\B^7}  \partial_i \partial_j   \partial_k u(\xi) \overline{\partial^i \partial_j  \partial_k u (\xi)} d\xi   +  \int_{\s^6} \partial_i  \partial_j u(\omega) \overline{ \partial^i  \partial^i u(\omega)} d\sigma(\omega) \\
&  +  \int_{\s^6} \partial_j u(\omega) \overline{ \partial^j u(\omega)} d\sigma(\omega) 
+  \int_{\s^6} |u(\omega)|^2 d\sigma(\omega),
\end{align*}
which proves that 
\[  \| \mb u \|^2_{H^3 \times H^2(\B^7)} = \|u_1\|^2_{H^3(\B^7)} + 
\|u_2\|^2_{H^2(\B^7)} \lesssim \|\mb u \|_{\mc H} \]
for all $\mb u = (u_1,u_2) \in C^3(\overline {\B^7}) \times C^2(\overline {\B^7})$.
The reverse inequality follows from the trace theorem, which simply follows from the divergence theorem \eqref{Eq:Diver}, and which asserts that 
\[ \int_{\s^6} |u(\omega)|^2 d \sigma(\omega) \lesssim \| u \|^2_{H^1(\B^7)}, \]
for all $u \in  C^1(\overline {\B^7})$,   
\end{proof}

By an approximation argument, the result extends to all of $H^3 \times H^2(\B^7)$, where the boundary integrals are understood in the sense of traces. 
This new inner product is tailor-made to derive the following estimate for the operator $\tilde{\mb L}$.

\begin{lemma}\label{Le:LuPhi_H}
We have
$\mathrm{Re} (  \tilde{\mb L} \mb u | \mb u )_{\mc H}  \leq  - \frac12 \| \mb u \|_{\mc H}^2$
for all $\mb u  \in \mc D(  \tilde{\mb L} )$.
\end{lemma}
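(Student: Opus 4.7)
The plan is to carry out a direct computation of $\mathrm{Re}(\tilde{\mb L} \mb u | \mb u)_{\mc H}$ piece by piece, exploiting the tailored structure of the inner product to absorb all boundary terms and bulk cross terms with favorable signs. Throughout, I will repeatedly invoke two elementary identities: the commutator relation
\[
\partial^{\alpha}(-\xi^j \partial_j v) = -\xi^j \partial_j \partial^{\alpha} v - |\alpha|\,\partial^{\alpha} v
\]
for any multi-index $\alpha$, and the radial vector field identity (valid for $v \in C^1(\overline{\B^7})$):
\[
\mathrm{Re}\int_{\B^7}(-\xi^j \partial_j v)\,\overline{v}\,d\xi
  \;=\; \tfrac{7}{2}\int_{\B^7}|v|^2 d\xi \;-\; \tfrac{1}{2}\int_{\s^6}|v|^2\,d\sigma,
\]
obtained from the divergence theorem applied to $\xi^j|v|^2$. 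Together with the scalar dissipation $-u_1$ (resp. $-2u_2$) appearing in the definition of $\tilde{\mb L}$, each scaling-plus-weight bulk contribution yields a coercive term of the form $-c\|\partial^{\alpha} u_i\|_{L^2(\B^7)}^2$ plus a boundary piece $-\tfrac{1}{2}\|\partial^{\alpha} u_i\|_{L^2(\s^6)}^2$.

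I would then treat the cross terms individually. In the $(\cdot\,|\,\cdot)_1$ contribution, $\tilde{\mb L}\mb u$ produces two mixing pieces: $\partial_i\partial_j\partial_k u_2$ paired against $\partial^i\partial^j\partial^k u_1$, and $\partial_i\partial_j(\partial^m\partial_m u_1)$ paired against $\partial^i\partial^j u_2$. Integrating by parts once in $\partial_k$ on the first term converts it into minus the second, up to a boundary integral $\int_{\s^6}\partial_i\partial_j u_2 \,\overline{\partial^i\partial^j u_1}\,\nu^k/|\xi|\,d\sigma$—which, after a further tangential integration by parts on $\s^6$, pairs with the boundary term $\int_{\s^6}\partial_i\partial_j u_1 \,\overline{\partial^i\partial^j v_1}\,d\sigma$ built into $(\cdot\,|\,\cdot)_1$ and is controlled. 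An analogous cancellation occurs in $(\cdot\,|\,\cdot)_2$, where the Laplacian-gradient pair $\partial_i\Delta u_1$ vs.\ $\partial^i\partial^j\partial_j(\ldots u_2)$ is handled by one radial integration by parts, producing a boundary trace that matches $\int_{\s^6}\partial_i u_2\,\overline{\partial^i v_2}\,d\sigma$. The lower-order pieces of $\|\cdot\|_{\mc H}$ (the $H^1(\s^6)$, $L^2(\s^6)$ boundary terms on $u_1$ and the $L^2(\s^6)$ on $u_2$) contribute purely zero-order boundary computations via the same radial identity on $\s^6$, providing the matching absorbers for the remaining tangential boundary residuals.

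The factor of $4$ in front of $(\cdot\,|\,\cdot)_1$ is precisely what is needed so that the coefficients in front of the boundary sesquilinear forms appearing after all integrations by parts dominate, with room to spare, the coefficient $\tfrac{1}{2}$ needed to close the estimate. Collecting all terms, the bulk contributions give coercivity strictly better than $-\tfrac{1}{2}$ on every component (since the commutator shifts $|\alpha|+\tfrac{1}{2}\cdot 7$ far exceed $\tfrac{1}{2}$), while the boundary terms combine into exactly $-\tfrac{1}{2}$ times the boundary part of $\|\mb u\|_{\mc H}^2$, completing the bound. The chief bookkeeping obstacle is to track the signs and coefficients of the boundary traces arising from each integration by parts, and in particular to verify that the tangential gradients on $\s^6$ reassemble correctly after symmetrizing in $i,j,k$; this is where the specific weights $4$, $1$, and $1$ in the definition of $(\cdot\,|\,\cdot)_{\mc H}$ become indispensable, as they were engineered precisely to make this final boundary accounting come out non-negative when moved to the right-hand side.
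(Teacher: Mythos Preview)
Your overall strategy is the same as the paper's: compute $\mathrm{Re}(\tilde{\mb L}\mb u|\mb u)_{\mc H}$ term by term using the commutator identity $\partial^{\alpha}(\xi^j\partial_j v)=\xi^j\partial_j\partial^{\alpha}v+|\alpha|\partial^{\alpha}v$ together with the divergence identity, and check that the bulk cross terms cancel after one integration by parts. That much is right.

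However, several of your claims about how the remaining terms close are inaccurate, and one is a genuine gap:

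\emph{Bulk coercivity is exactly $-\tfrac12$, not strictly better.} For the top-order piece of $(\cdot|\cdot)_1$ you get $\tfrac{7}{2}-3-1=-\tfrac12$ on $\|\partial_i\partial_j\partial_k u_1\|_{L^2(\B^7)}^2$, and similarly $\tfrac{7}{2}-2-2=-\tfrac12$ on $\|\partial_i\partial_j u_2\|_{L^2(\B^7)}^2$. There is no slack in the bulk; the entire content of the lemma is that the leftover boundary and cross terms are \emph{nonpositive}.

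\emph{The boundary cross terms are not handled by tangential integration by parts.} After the bulk cancellation, what survives on $\s^6$ is a sum of the form
\[
-\tfrac12|\partial_i\partial_j\partial_k u_1|^2-\tfrac12|\partial_i\partial_j u_2|^2-\tfrac12|\partial_i\partial_j u_1|^2
+\mathrm{Re}\big[\omega^k\partial_k\partial_i\partial_j u_1(\overline{\partial^i\partial^j u_2}-\overline{\partial^i\partial^j u_1})\big]
+\mathrm{Re}\,\partial_i\partial_j u_2\overline{\partial^i\partial^j u_1}.
\]
The problematic factor $\omega^k\partial_k\partial_i\partial_j u_1$ is a \emph{normal} derivative on the sphere and cannot be integrated by parts tangentially. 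The paper instead uses the pointwise algebraic inequality
\[
\mathrm{Re}(a\bar b)+\mathrm{Re}(a\bar c)-\mathrm{Re}(b\bar c)\le\tfrac12(|a|^2+|b|^2+|c|^2)
\]
with $a=\omega^k\partial_k\partial_i\partial_j u_1$, $b=\partial_i\partial_j u_2$, $c=\partial_i\partial_j u_1$, together with $|\omega^k\partial_k\partial_i\partial_j u_1|^2\le \partial_k\partial_i\partial_j u_1\overline{\partial^k\partial^i\partial^j u_1}$. This is the step your outline is missing.

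\emph{The factor $4$ is tight, not ``with room to spare''.} The lower-order boundary pieces, after Cauchy--Schwarz, produce $+\int_{\s^6}|\Delta u_1|^2 d\sigma$ and $+\int_{\s^6}\partial_i\partial_j u_1\overline{\partial^i\partial^j u_1}\,d\sigma$, while $4(\cdot|\cdot)_1$ contributes $-8\int_{\s^6}\partial_i\partial_j u_1\overline{\partial^i\partial^j u_1}\,d\sigma$. Closing requires $|\Delta u_1|^2\le 7\,\partial_i\partial_j u_1\overline{\partial^i\partial^j u_1}$ (Cauchy--Schwarz in $\R^7$), and $-8+1+7=0$ exactly. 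This dimension-dependent step is the reason the weight is $4$; you should state it explicitly.
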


\begin{proof}
In the following, we frequently use the identities
\begin{align}\label{Eq:Id1}
2 \mathrm{Re}[\xi^j \partial_j f(\xi) \overline f(\xi)]  = \partial_{\xi^j}[ \xi^j | f(\xi)|^2] - d |f(\xi)|^2
\end{align}
for $d=7$ and 
\begin{align*}
\partial_{i_1} \partial_{i_2} \dots  \partial_{i_k} [\xi^{i} \partial_i f(\xi) ]= k  \partial_{i_1} \partial_{i_2} \dots  \partial_{i_k} f(\xi) + \xi^i \partial_i  \partial_{i_1} \partial_{i_2} \dots  \partial_{i_k} f(\xi) 
\end{align*}
for all $k \in \N$. With this the divergence theorem implies that
\begin{align*}
\mathrm{Re} & \int_{\B^7} \partial_i \partial_j \partial_k [  \tilde{\mb L} \mb u]_1(\xi) \overline{\partial^i \partial^j \partial^k u_1(\xi)} d\xi   = - \frac12 
\int_{\B^7} \partial_i \partial_j \partial_k u_1(\xi)  \overline{\partial^i \partial^j \partial^k u_1(\xi) }d\xi  \\
&  +  \mathrm{Re}  \int_{\B^7} \partial_i \partial_j \partial_k u_2(\xi) \overline{\partial^i \partial^j \partial^k u_1(\xi)} d\xi 
 - \frac12
\int_{\mathbb S^6} \partial_i \partial_j \partial_k u_1(\omega) \overline{\partial^i \partial^j \partial^k u_1(\omega)} d\sigma(\omega) 
\end{align*}
Similarly, 
\begin{align*}
\mathrm{Re}&  \int_{\B^7} \partial_i \partial_j  [  \tilde{\mb L} \mb u]_2(\xi) \overline{\partial^i \partial^j  u_2(\xi)} d\xi   = - \frac12 
\int_{\B^7} \partial_i \partial_j  u_2(\xi)  \overline{\partial^i \partial^j  u_2(\xi) }d\xi \\
&  - \mathrm{Re}   \int_{\B^7} \partial_i \partial_j \partial_k u_2(\xi) \overline{\partial^i \partial^j \partial^k u_1(\xi)} d\xi 
  - \frac12 
\int_{\mathbb S^6} \partial_i \partial_j  u_2(\omega) \overline{\partial^i \partial^j u_2(\omega)} d\sigma(\omega) \\ & 
+ \mathrm{Re} 
\int_{\mathbb S^6} \omega^k \partial_k \partial_i \partial_j  u_1(\omega) \overline{\partial^i \partial^j u_2(\omega)} d\sigma(\omega)
\end{align*}
and 
\begin{align*}
 \mathrm{Re}  & \int_{\mathbb S^6}   \partial_i \partial_j   [  \tilde{\mb L} \mb u]_1(\omega)\overline{ \partial^i \partial^j u_1(\omega) }d\sigma(\omega) = - 3 \int_{\mathbb S^6}  \partial_i \partial_j   u_1(\omega)\overline{ \partial^i \partial^j u_1(\omega) }d\sigma(\omega) \\
& - \mathrm{Re}  \int_{\mathbb S^6} \omega^k \partial_k \partial_i \partial_j   u_1(\omega)\overline{ \partial^i \partial^j u_1(\omega) }d\sigma(\omega) + 
\mathrm{Re}  \int_{\mathbb S^6}  \partial_i \partial_j   u_2(\omega)\overline{ \partial^i \partial^j u_1(\omega) }d\sigma(\omega).
\end{align*}
Hence, 
\begin{align}\label{Eq:Est1}
\mathrm{Re} (  \tilde{\mb L} \mb u | \mb u )_1 = - \frac{1}{2} (\mb u | \mb u )_1 - 2 \int_{\mathbb S^6}  \partial_i \partial_j   u_1(\omega)\overline{ \partial^i \partial^j u_1(\omega) }d\sigma(\omega) + \int_{\mathbb S^6} A(\omega) d\sigma(\omega) 
\end{align}
for 
\begin{align*}
A(\omega)& =  - \frac12
 \partial_i \partial_j \partial_k u_1(\omega) \overline{\partial^i \partial^j \partial^k u_1(\omega)}  - \frac12
 \partial_i \partial_j  u_2(\omega) \overline{\partial^i \partial^j u_2(\omega)} 
 - \frac{1}{2}   \partial_i \partial_j   u_1(\omega)\overline{ \partial^i \partial^j u_1(\omega) }\\
&+ \mathrm{Re} \,
 \omega^k \partial_k \partial_i \partial_j  u_1(\omega) \overline{\partial^i \partial^j u_2(\omega)} - 
  \mathrm{Re}\,   \omega^k \partial_k \partial_i \partial_j   u_1(\omega)\overline{ \partial^i \partial^j u_1(\omega) }  + 
\mathrm{Re} \,   \partial_i \partial_j   u_2(\omega)\overline{ \partial^i \partial^j u_1(\omega) }.
\end{align*}
An application of the inequality
\[ \mathrm{Re}(a \overline{b}) + \mathrm{Re}(a \overline{c}) - \mathrm{Re}(b \overline{c}) \leq \tfrac{1}{2} ( |a|^2 + |b|^2 + |c|^2 ), \quad a,b,c \in \C, \]
shows that $A(\omega) \leq 0$. Analogously, one can show that 
\begin{align}\label{Eq:Est2}
\mathrm{Re} (  \tilde{\mb L} \mb u | \mb u )_2 = - \frac{1}{2} (\mb u | \mb u )_2 - 2 \int_{\mathbb S^6}  \partial_i  u_2(\omega)\overline{ \partial^i  u_2(\omega) }d\sigma(\omega) + \int_{\mathbb S^6} B(\omega) d\sigma(\omega) 
\end{align}
with $B(\omega) \leq 0$.
Next, we consider
\begin{align*}
 \mathrm{Re}  \int_{\mathbb S^6}  \partial_i [  \tilde{\mb L} \mb u]_1(\omega) \overline{ \partial^i  u_1(\omega)} d\sigma(\omega)& = 
 -2  \int_{\mathbb S^6}    \partial_i u_1(\omega) \overline{ \partial^i  u_1(\omega)} d\sigma(\omega)  \\
 & - \mathrm{Re} 
  \int_{\mathbb S^6}  \omega^k \partial_k  \partial_i u_1(\omega) \overline{ \partial^i  u_1(\omega)} d\sigma(\omega)
   + \mathrm{Re} 
  \int_{\mathbb S^6}  \partial_i u_2(\omega) \overline{ \partial^i  u_1(\omega)} d\sigma(\omega).
\end{align*}
The Cauchy-Schwarz inequality implies that 
\begin{align*}
 \mathrm{Re} & 
  \int_{\mathbb S^6}  [ \partial_i u_2(\omega)   - \omega^k \partial_k  \partial_i u_1(\omega)] \overline{ \partial^i  u_1(\omega)} d\sigma(\omega) \leq  
\frac{1}{2} \int_{\mathbb S^6}  \partial_i  u_1(\omega) \overline{ \partial^i  u_1(\omega)} d\sigma(\omega) \\
& +
 \int_{\mathbb S^6}  \partial_i  u_2(\omega) \overline{ \partial^i  u_2(\omega)} d\sigma(\omega) +
  \int_{\mathbb S^6}  \partial_i \partial_j u_1(\omega) \overline{ \partial^i \partial^j  u_1(\omega)} d\sigma(\omega)
 \end{align*}
such that 
\begin{align*}
 \mathrm{Re}   \int_{\mathbb S^6}  \partial_i [  \tilde{\mb L} \mb u]_1(\omega) \overline{ \partial^i  u_1(\omega)} d\sigma(\omega)& \leq 
 -\frac32  \int_{\mathbb S^6}    \partial_i u_1(\omega) \overline{ \partial^i  u_1(\omega)} d\sigma(\omega)  +  \int_{\mathbb S^6}  \partial_i  u_2(\omega) \overline{ \partial^i  u_2(\omega)} d\sigma(\omega) \\
 & +
  \int_{\mathbb S^6}  \partial_i \partial_j u_1(\omega) \overline{ \partial^i \partial^j  u_1(\omega)} d\sigma(\omega).
\end{align*}
Similarly, 
\begin{align*}
  \mathrm{Re}  \int_{\mathbb S^6}   [  \tilde{\mb L} \mb u]_2(\omega)  \overline{u_2(\omega) }d\sigma(\omega)  &  =  -2  \int_{\mathbb S^6}    |u_2(\omega)|^2 d\sigma(\omega)  +  \mathrm{Re} \int_{\mathbb S^6}    \Delta u_1 (\omega) \overline{  u_2(\omega)}  d\sigma(\omega)   \\
& - \mathrm{Re}
 \int_{\mathbb S^6}\omega^k \partial_k u_2(\omega) \overline{u_2(\omega)}   d\sigma(\omega) \\
 & 
  \leq -\frac32 \int_{\mathbb S^6}    |u_2(\omega)|^2 d\sigma(\omega)  +  \int_{\mathbb S^6}   | \Delta u_1 (\omega)|^2  d\sigma(\omega) +  \int_{\mathbb S^6} \partial_i u_2(\omega) \overline{ \partial^i u_2(\omega)}   d\sigma(\omega),
\end{align*}
and
\begin{align*}
 \mathrm{Re}  &  \int_{\mathbb S^6}    [  \tilde{\mb L} \mb u]_1(\omega)  \overline{u_1(\omega)} d\sigma(\omega)    = 
 -  \int_{\mathbb S^6}    |u_1(\omega)|^2 d\sigma(\omega)
 - \mathrm{Re} 
  \int_{\mathbb S^6}  \omega^k \partial_k  u_1(\omega) \overline{   u_1(\omega)} d\sigma(\omega)  \\
  &  + \mathrm{Re} 
  \int_{\mathbb S^6}  u_2(\omega) \overline{u_1(\omega)} d\sigma(\omega)
   \leq  - \frac12  \int_{\mathbb S^6}    |u_1(\omega)|^2 d\sigma(\omega)
  +  \int_{\mathbb S^6}    \partial_i u_1(\omega) \overline{  \partial^i u_1(\omega) } d\sigma(\omega)  + \int_{\mathbb S^6}    |u_2(\omega)|^2 d\sigma(\omega).
  \end{align*}
  
In view of Eqn.~\eqref{Eq:Est1} and \eqref{Eq:Est2} we obtain
\begin{align*}
  \mathrm{Re} (  \tilde{\mb L} \mb u | \mb u ) & = 4 \mathrm{Re} (  \tilde{\mb L} \mb u | \mb u )_1 +  \mathrm{Re} (  \tilde{\mb L} \mb u | \mb u )_2 + 
   \mathrm{Re}   \int_{\mathbb S^6}  \partial_i [  \tilde{\mb L} \mb u]_1(\omega) \overline{ \partial^i  u_1(\omega)} d\sigma(\omega)v\\
   & +
   \mathrm{Re}  \int_{\mathbb S^6}   [  \tilde{\mb L} \mb u]_2(\omega)  \overline{u_2(\omega) }d\sigma(\omega)
   + \mathrm{Re}    \int_{\mathbb S^6}    [  \tilde{\mb L} \mb u]_1(\omega)  \overline{u_1(\omega)} d\sigma(\omega)  \\ 
   & \leq  - 2 (\mb u | \mb u )_1 - \frac{1}{2} (\mb u | \mb u )_2  
  \\
  &    -\frac12  \int_{\mathbb S^6}    \partial_i u_1(\omega) \overline{ \partial^i  u_1(\omega)} d\sigma(\omega) -\frac12 \int_{\mathbb S^6}    |u_2(\omega)|^2d\sigma(\omega) -\frac12 \int_{\mathbb S^6}    |u_1(\omega)|^2 d\sigma(\omega) \\
     &  - 7 \int_{\mathbb S^6}  \partial_i \partial_j   u_1(\omega)\overline{ \partial^i \partial^j u_1(\omega) }d\sigma(\omega)   + \int_{\mathbb S^6}   | \Delta u_1 (\omega)|^2  d\sigma(\omega)
 \end{align*}
An application of the Cauchy-Schwarz inequality shows that 
\begin{align*}
  |\Delta u(\xi)|^2 =\left | \sum_{i=1}^{7} \partial^2_i u(\xi)\right |^2 \leq 7 \sum_{i=1}^{7} |\partial^2_i u(\xi)|^2 \leq   7 \sum_{i,j=1}^{7} |\partial_i \partial_j u(\xi)|^2 =  7 \partial_i \partial_j u(\xi) \overline{\partial^i \partial^j u(\xi)}.
\end{align*}
This finishes the proof.
\end{proof}

\begin{remark}\label{Rem:norm}
By arguments similar to the ones used above in the proof of Lemma \ref{Le:LuPhi_H} one can easily show that for the standard $\dot H^{k} \times \dot H^{k-1}(\B^7)$-seminorms one obtains
\begin{align}\label{Eq:dotH_bound}
\mathrm{Re} (  \tilde{\mb L} \mb u | \mb u )_{\dot H^{k} \times \dot H^{k-1}(\B^7)}  \leq  (\tfrac{5}{2} - k ) \| \mb u \|_{\dot H^{k} \times \dot H^{k-1}(\B^7)}
\end{align}
for all $\mb u  \in \mc D(  \tilde{\mb L} )$ and $k \geq 1$. In fact, this bound can be anticipated by recalling that $\tilde{\mb L}$ represents the free linear wave equation in different coordinates. Generic solutions of $\Box u(t,x) = 0$ satisfy  $\|u(t,\cdot) \|_{\dot H^{k}(\B^7_{T-t})}\lesssim 1$ for $k \geq 1$ by standard energy estimates.
For the rescaled variable defined in Eq.~\eqref{Eq:new_variable} this implies that 
\[\| \psi(-\log(T-t)+ \log T,\cdot) \|_{\dot H^{k}(\B^7)}  =  (T-t)^{-\frac{5}{2}+k} \| u(t,\cdot) \|_{\dot H^{k}(\B^7_{T-t})}\lesssim  (T-t)^{-(\frac{5}{2}-k)}, \]
which explains \eqref{Eq:dotH_bound}. In particular, in these seminorms one can show decay for $t \to T$ only if $k \geq 3$, which explains our choice of the function space. However, it is obvious that a naive use of the standard norm on $H^{3} \times H^{2}(\B^7)$ does not work, which is why we work instead with the equivalent norm $\| \cdot \|_{\mc H}$ that includes integrals over $\B^7$ only on the $\dot H^{3} \times \dot H^{2}-$level and lower order terms are substituted by suitable boundary terms. \\
Finally, we note that by the transformation \eqref{Eq:new_variable} the constant solution $u(t,x) = 1$ to the linear wave equation is transformed into $\psi(\tau,\rho) = T e^{-\tau}$. Hence, by increasing the regularity to $H^{k} \times H^{k-1}(\B^7)$ for $k \geq 4$ the anticipated optimal bound is
\begin{align*}
\mathrm{Re} (  \tilde{\mb L} \mb u | \mb u )_{H^{k} \times H^{k-1}(\B^7)}  \leq  - \| \mb u \|_{H^{k} \times H^{k-1}(\B^7)}.
\end{align*}
which cannot be improved further.
\end{remark}

\begin{lemma}\label{Le:LuPhi_DenseRange}
	Let $\lambda = \frac{5}{2}$. Then  
	\[\mathrm{rg}(\lambda - \tilde{\mb L}) \subset \mc H\]
	is a dense subset.
\end{lemma}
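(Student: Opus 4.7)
The plan is to produce, for every element of a dense subspace of $\mc H$, an explicit preimage under $\lambda - \tilde{\mb L}$. The natural dense subspace is the space of polynomial pairs in $\xi \in \R^7$: it is contained in $\mc D(\tilde{\mb L}) = C^4(\overline{\B^7}) \times C^3(\overline{\B^7})$, it is preserved by $\tilde{\mb L}$, and it is dense in $\mc H$ because polynomials are dense in each $H^k(\B^7)$ (smooth functions on $\overline{\B^7}$ can be approximated in $C^k$-norm, hence in $H^k$-norm, by their Taylor polynomials).

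The key structural observation is that $\tilde{\mb L}$ respects homogeneity modulo a two-degree drop. Let $V_n$ denote the finite-dimensional space of pairs $(u_1,u_2)$ of homogeneous polynomials of degree $n$, and set $\Pi_N := \bigoplus_{n=0}^{N} V_n$. Split $\tilde{\mb L} = \tilde{\mb L}_0 + \mb L_{-2}$ with
\begin{align*}
\tilde{\mb L}_0(u_1,u_2) &= \bigl(-\xi^j \partial_j u_1 - u_1 + u_2,\; -\xi^j \partial_j u_2 - 2 u_2 \bigr),\\
\mb L_{-2}(u_1,u_2) &= (0,\, \Delta u_1).
\end{align*}
By Euler's identity $\xi^j\partial_j = n\,\mathrm{id}$ on homogeneous polynomials of degree $n$, so $\tilde{\mb L}_0$ preserves each $V_n$ and acts there as the upper-triangular block with diagonal entries $-(n+1)$ and $-(n+2)$, while $\mb L_{-2}$ sends $V_n$ into $V_{n-2}$. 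In the ordered direct sum $V_N \oplus V_{N-1} \oplus \cdots \oplus V_0$, the restriction $\tilde{\mb L}\rst{\Pi_N}$ is therefore block lower triangular with the $\tilde{\mb L}_0\rst{V_n}$ as diagonal blocks.

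Consequently, on the finite-dimensional space $\Pi_N$, surjectivity of $(\lambda - \tilde{\mb L})\rst{\Pi_N}$ reduces to invertibility of each diagonal block $(\lambda - \tilde{\mb L}_0)\rst{V_n}$, whose eigenvalues are $\lambda + n + 1$ and $\lambda + n + 2$. Both are strictly positive for $\lambda = \tfrac{5}{2}$ and every $n \in \N_0$, so $(\lambda - \tilde{\mb L})\rst{\Pi_N}$ is bijective on $\Pi_N$. Every polynomial pair therefore lies in $\rg(\lambda - \tilde{\mb L})$, and the density of polynomial pairs in $\mc H$ yields the lemma. No step presents a real obstacle; the crucial point is that $\lambda = \tfrac{5}{2}$ lies to the right of the polynomial spectrum $\{-1,-2,-3,\ldots\}$ of $\tilde{\mb L}$, which is what lets us avoid confronting directly the degeneracy of the principal symbol $(\xi^j\xi^k - \delta^{jk})\partial_j\partial_k$ on $\partial\B^7$ that one would face when solving $(\lambda - \tilde{\mb L})\mb u = \mb f$ for general $\mb f \in \mc H$.
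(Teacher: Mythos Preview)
Your argument is correct and takes a genuinely different, more elementary route than the paper. The paper approximates a given $\mb f\in\mc H$ by a finite spherical-harmonic sum $\mb f_N$, reduces $(\lambda-\tilde{\mb L})\mb u=\mb f_N$ to a decoupled family of singular ODEs on $(0,1)$, and solves these via hypergeometric functions, Frobenius analysis at the endpoints $\rho=0,1$, and a final appeal to elliptic regularity to get $u_1\in C^\infty(\overline{\B^7})$. Your block lower-triangular argument on spaces of homogeneous polynomial pairs bypasses all of this: you only need that the degree-preserving part $\tilde{\mb L}_0$ has polynomial spectrum $\{-1,-2,-3,\dots\}$, which $\lambda=\tfrac52$ avoids, together with finite-dimensional linear algebra. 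What your approach buys is brevity and the avoidance of any ODE or regularity machinery; what the paper's approach buys is an explicit Sobolev-level solution operator built from the same radial ODE framework that reappears (with a potential) in the spectral analysis of Section~\ref{Sec:ODE_analyis}, so the investment is amortized.

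One small correction: your parenthetical justification of polynomial density is not quite right, since Taylor polynomials of a merely $C^\infty$ function on $\overline{\B^7}$ need not converge. Replace this by the $C^k$ Weierstrass approximation theorem (polynomials are dense in $C^k(\overline{\B^7})$ for every $k$), which immediately yields density in $H^k(\B^7)$ and hence of polynomial pairs in $\mc H$.
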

\begin{remark}
		For the anticipated application of this lemma it is enough to prove the statement for some $\la > -\frac{1}{2}$, and our particular choice might seem arbitrary at the first sight. We made it since we can thereby in part reduce our analysis to a problem that has already been treated elsewhere. This will become apparent from the proof.
	\end{remark}
	\begin{proof}
		We have to prove that for all $\mb f$ in a dense subset of $\mc H$ the equation
			$(\lambda - \tilde{\mb L}) \mb u = \mb f$
			is solvable in $\mc D(\tilde{\mb L})$.
		Let $\tilde {\mb f} \in \mc H$ and $\varepsilon > 0$ be arbitrary. By density, there is an $\mb f \in C^{\infty}(\overline{\mathbb B^{7}}) \times C^{\infty}(\overline{\mathbb B^{7}})$ such that $\|\mb  f - \tilde {\mb f} \| < \frac{\varepsilon}{2}$. Furthermore, for $n \in \N$, define $\mb f_n = (f_{1,n},  f_{2,n})$ by
		\begin{equation*}\label{Eq:Harmonics_N}
		f_{1,n} :=  \sum_{\ell = 0}^{n} P_{\ell} f_1, \quad f_{2,n} := \sum_{\ell = 0}^{n} P_{\ell} f_2.
		\end{equation*}
		Eq.~\eqref{Decomp:SpherHarm_Hk} implies that there is an $N  \in \N$ such that $\| \mb f_N - \mb f \| < \frac{\varepsilon}{2}$. Hence, it suffices to consider the equation
		\begin{equation}\label{Eq:Eigenv_N}
		(\lambda - \tilde{\mb L}) \mb u = \mb f_N 
		\end{equation}
		and to construct a solution $\mb u \in \mc D(\tilde{\mb L})$. 
			The dense subset of $\mc H$ we implicitly defined is convenient for the following reason. For $\mb f$ whose coordinate functions are finite sums of spherical harmonics the equation  $(\lambda - \tilde{\mb L}) \mb u = \mb f$ can be decoupled into finitely many ODEs, all of which are of the hypergeometric type and can therefore be solved by existing ODE methods. Now, Eq.~\eqref{Eq:Eigenv_N} is equivalent to two equations satisfied by its coordinate functions.
		Namely, for $u_2$, we obtain
		\begin{align}\label{Eq:DenseRg1}
		u_2(\xi) = \xi^i \partial_i u_1(\xi) + (\lambda + 1) u_1(\xi) - f_{1,N}(\xi),
		\end{align}
		while $u_1$ satisfies the degenerate elliptic problem
		\begin{align}\label{Eq:Elliptic}
		- (\delta^{ij} - \xi^i \xi^j) \partial_i \partial_j u_1(\xi) + 2(\lambda +2) \xi^i \partial_i u_1(\xi) + (\lambda + 1)(\lambda + 2) u_1(\xi) = g_N(\xi),
		\end{align}
		for $g_N \in C^{\infty}(\overline{\B^7})$ given by
		\[ g_N(\xi) = \xi^i \partial_i f_{N,1}(\xi) + (\lambda +2) f_{N,1}(\xi) + f_{N,2}(\xi). \]
		For $\lambda = \frac{5}{2}$, Eq.~\eqref{Eq:Elliptic} reduces to 
		\begin{align}\label{Eq:Elliptic1}
		- (\delta^{ij} - \xi^i \xi^j) \partial_i \partial_j u_1(\xi) + 9 \xi^i \partial_i u_1(\xi) + \frac{63}{4} u_1(\xi) = g_N(\xi)
		\end{align}
		and by introducing polar coordinates $\rho = |\xi|$, $\omega = \frac{\xi}{|\xi|}$ the right hand side can be written as   
		\[g_N(\rho \omega) =  \sum_{\ell = 0}^{N} \sum_{m  \in \Omega_{\ell} } g_{\ell,m}(\rho) Y_{\ell,m}(\omega),  \]
		with $g_{\ell,m} \in C^{\infty}[0,1]$.
			We then try a solution ansatz of the same form
		\begin{align}\label{Eq:DenseRg1_2}
		u_1(\rho \omega) =  \sum_{\ell = 0}^{N} \sum_{m  \in \Omega_{\ell} } u_{\ell,m}(\rho) Y_{\ell,m}(\omega).  
		\end{align}
			In addition, from the following relation
			\begin{equation}\label{Eq:Partial_polar_form}
			\partial_{\xi^i} = \frac{\delta_i^j-\omega^j \omega_i}{\rho}\partial_{\omega^j} + \omega_i \partial_{\rho}
			\end{equation}
			we get the polar form of the differential operator in Eq.~\eqref{Eq:Elliptic1}
			\begin{equation}\label{Eq:PolarForm}
			- (\delta^{ij} - \xi^i \xi^j) \partial_{\xi^i} \partial_{\xi^j} + 9 \xi^i \partial_{\xi^i} = -(1- \rho^2) \partial_{\rho}^2 - \frac{6}{\rho}\partial_{\rho} +9\rho\partial_\rho - \frac{1}{\rho^2}\triangle_\omega^{\mathbb{S}^6}.
			\end{equation}
			Here $\triangle_\omega^{\mathbb{S}^6}$ is the Laplace-Beltrami operator on the 6-sphere, namely
			\begin{equation*}
			\triangle_\omega^{\mathbb{S}^6}=(\delta^{ij}-\omega^i \omega^j) \partial_{\omega^i} \partial_{\omega^j} - 6\omega^i\partial_{\omega^i}.
			\end{equation*}
			Now, by means of Eq.~\eqref{Eq:PolarForm}, the fact that  $Y_{\ell,m}$ is an eigenfunction of $-\triangle_\omega^{\mathbb{S}^6}$ with eigenvalue $\ell(\ell+5)$, and the fact that $Y_{\ell,m}$ are orthogonal to each other, Eq.~\eqref{Eq:Eigenv_N}
		decouples into a system of ODEs 
		\begin{align}\label{Eq:DenseRg2}
		\left [-(1- \rho^2) \partial_{\rho}^2 - \frac{6}{\rho} \partial_{\rho} + 9 \rho \partial_{\rho} + \frac{\ell(\ell + 5)}{\rho^2} + \frac{63}{4} \right ]u_{\ell,m}(\rho) = g_{\ell,m}(\rho),
		\end{align}
		for $\ell = 0, \dots, N$ and $m \in \Omega_{\ell}$ (however, note that the coefficients only depend on $\ell$).
		We set
		\begin{equation}\label{Def:v_lm}
			v_{\ell,m}(\rho) := \rho^2  u_{\ell, m}(\rho)
		\end{equation} 
	such that  Eq.~\eqref{Eq:DenseRg2} transforms into
		\begin{align}\label{Eq:DenseRg3}
		\left [-(1- \rho^2) \partial_{\rho}^2 - \frac{2}{\rho} \partial_{\rho} + 5 \rho \partial_{\rho} + \frac{(\ell+2)(\ell + 3)}{\rho^2} + \frac{15}{4} \right ]v_{\ell,m}(\rho) = \rho^2 g_{\ell,m}(\rho).
		\end{align} 
			Problems of this type have already been treated in
			\cite{DonZen14}, \cite{DonSch14b} and \cite{ChaDon19}, and we borrow some results from there. In particular, from \cite{ChaDon19} we know that the homogeneous version of Eq.~\eqref{Eq:DenseRg3} has a fundamental system of solutions $\{ \psi_{\ell,0},\psi_{\ell,1}  \}$ where $\psi_{\ell,j}(\rho)=\rho^{\ell+2}\phi_{\ell,j}(\rho^2)$ for $j \in \{0,1  \}$ and
			\begin{align}
			\phi_{\ell,0}(z)&=\tfrac{1}{\sqrt{1-z}}\left(\tfrac{2}{1+\sqrt{1-z}}\right)^{\frac{5}{2}+\ell}, \label{Eq:Expl1}\\
			\phi_{\ell,1}(z)&= \tfrac{1}{\sqrt{1-z}}\left[\left(\tfrac{1}{1-\sqrt{1-z}}\right)^{\frac{5}{2}+\ell}-\left(\tfrac{1}{1+\sqrt{1-z}}\right)^{\frac{5}{2}+\ell}\right].\label{Eq:Expl2}
			\end{align}
			Furthermore, the Wronskian is $W(\psi_{\ell,0},\psi_{\ell,1})(\rho) = C_\ell ( 1- \rho^2)^{-\frac32} \rho^{-2}$ for some non-zero $C_\ell$.
			Expressions \eqref{Eq:Expl1} and \eqref{Eq:Expl2} are obtained by first expressing Eq.~\eqref{Eq:DenseRg3} in a hypergeometric form, and then using explicit solution formulas, e.g.~from \cite{NIST10}. Of course, unrelated to the origin of these expressions, one can check by a straightforward calculation that $\psi_{\ell,0},\psi_{\ell,1}$ are indeed solutions. Now, by the variation of constants formula we obtain a solution to Eq.~\eqref{Eq:DenseRg3}
			\begin{align}
			v_{\ell,m}(\rho) &= - \psi_{\ell,0}(\rho) \int_{\rho}^{1} \frac{\psi_{\ell,1}(s)}{W(\psi_{\ell,0},\psi_{\ell,1})(s)} \frac{s^2 g_{\ell,m}(s)}{1-s^2} ds 
			-  \psi_{\ell,1}(\rho)  \int_{0}^{\rho} \frac{\psi_{\ell,0}(s)}{W(\psi_{\ell,0},\psi_{\ell,1})(s)} \frac{s^2 g_{\ell,m}(s)}{1-s^2} ds \nonumber\\
			&=- \psi_{\ell,0}(\rho) \int_{\rho}^{1} \psi_{\ell,1}(s)\sqrt{1-s}h_{\ell,m}(s) ds 
			-  \psi_{\ell,1}(\rho)  \int_{0}^{\rho} \psi_{\ell,0}(s)\sqrt{1-s}h_{\ell,m}(s) ds, \label{Eq:Nonhom_sol}
			\end{align}
			where $h_{\ell,m} \in C^\infty([0,1])$. We claim that for $u_{\ell,m}$ defined by Eq.~\eqref{Def:v_lm} the function $u_1$ defined by Eq.~\eqref{Eq:DenseRg1_2} belongs to $C^\infty(\overline{\B^d})$. To show this, we first prove that $v_{\ell,m} \in C^\infty((0,1])$. To this end, we express $v_{\ell,m}$ a bit differently.
			First, note that the set of Frobenius indices of Eq.~\eqref{Eq:DenseRg3} at $\rho=1$ is $\{-\tfrac{1}{2},0\}$. Therefore, since $\psi_{\ell,1}$ is the analytic Frobenius solution at $\rho=1$, the other Frobenius solution has the following form $(1-\rho)^{-1/2}\psi_{\ell,2}(\rho)$ for some $\psi_{\ell,2}$ which is analytic at $\rho=1$. Furthermore, by linearity we have that
			\begin{equation}\label{Eq:Frob_sol1}
			\psi_{\ell,0}(\rho)= c_{\ell,1}\psi_{\ell,1}(\rho) + c_{\ell,2}\frac{\psi_{\ell,2}(\rho)}{\sqrt{1-\rho}}
			\end{equation}
			for some constants $c_{\ell,1},c_{\ell,2}$.
			Also, note that the second integral in Eq.~\eqref{Eq:Nonhom_sol} converges as $\rho \rightarrow 1^-$, and we denote its value by $\alpha_{\ell,m}$. Now, by using this and substituting Eq.~\eqref{Eq:Frob_sol1} in Eq.~\eqref{Eq:Nonhom_sol} we get
			\begin{align*}
			v_{\ell,m}(\rho) =
			-c_{\ell,2} \frac{\psi_{\ell,2}(\rho)}{\sqrt{1-\rho}} &\int_{\rho}^{1} \psi_{\ell,1}(s)\sqrt{1-s}h_{\ell,m}(s) ds \\
			&-\alpha_{\ell,m}\psi_{\ell,1}(\rho)
			+  c_{\ell,2}\psi_{\ell,1}(\rho)  \int_{\rho}^{1} \psi_{\ell,2}(s)h_{\ell,m}(s) ds,
			\end{align*}
			which is smooth near $\rho=1$. Indeed, the second and the third term are manifestly so, while for the first one this can be easily seen by means of substitution $s=\rho+(1-\rho)t$. Namely, in this way, the first term becomes
			\begin{equation}
			-c_{\ell,2}\psi_{\ell,2}(\rho)(1-\rho) \int_{0}^{1}\psi_{\ell,1}\big(t+(1-t)\rho\big)h_{\ell,m}\big(t+(1-t)\rho\big)\sqrt{1-t}dt;
			\end{equation}
			note that for $\rho$ away from zero, one can differentiate under the integral sign and smoothness up to $\rho=1$ follows. As a consequence, from Eq.~\eqref{Eq:DenseRg1_2} we have that 
			\begin{equation}\label{Eq:Regul_away_from_0}
			u_1 \in C^\infty(\overline{\B^7}\setminus \{ 0\}),
			\end{equation} with $u_1$ solving Eq.~\eqref{Eq:Elliptic1} away from zero classically.
			To understand the behavior of $u_{1}$ near zero we look back at Eq.~\eqref{Eq:Nonhom_sol}. Based on the asymptotic behavior of the integrands, we conclude that both $u_{\ell,m}$ and $u_{\ell,m}'$ are bounded near zero. Therefore, based on 
			Eqs.~\eqref{Eq:DenseRg1_2} and \eqref{Eq:Partial_polar_form} we have that $u_1 \in H^1(\B^7)$. In addition, $u_1$ is a weak solution to Eq.~\eqref{Eq:Elliptic1} on $\B^7$, and by elliptic regularity we conclude that $u_1 \in C^\infty(\B^7)$. This together with \eqref{Eq:Regul_away_from_0} and \eqref{Eq:DenseRg1} yields $u_1,u_2 \in C^\infty(\overline{\B^7})$, which in turn implies that the solution to Eq.~\eqref{Eq:Eigenv_N} we constructed belongs to $\mc D(\tilde{\mb L})$, and this finishes the proof.
\end{proof}

\subsubsection*{The free time-evolution}
In view of Lemma \ref{Le:LuPhi_H}, Lemma \ref {Le:LuPhi_DenseRange}, the Lumer-Phillips Theorem \cite{engel}, p.~83, Theorem 3.15 and the equivalence of norms, we obtain the following result for the free time-evolution. 

\begin{proposition}\label{Prop:FreeEvol}
The operator $\tilde{\mb L}: \mc D(  \tilde{\mb L} ) \subset \mc H \to \mc H$ is closable and its closure $(\mb L , \mc D( \mb L))$ generates a strongly-continuous one-parameter semigroup $\mb S: [0,\infty) \to \mc B(\mc H)$ satisfying
\[ \|\mb S(\tau)\| \leq M e^{-\frac{1}{2} \tau}\]
for all $\tau \geq 0$ and some constant $M > 1$. This implies that the spectrum of  $\mb L$ is contained in a left half-plane,
\[\sigma(\mb L) \subset \{\lambda \in \C: \mathrm{Re}\la \leq -\tfrac12 \} \]
and the resolvent is bounded by 
\[ \| \mb R_{\mb L}(\la) \| \leq \frac{M}{ \mathrm{Re} \la +  \frac12 }  \]
for all $\la \in \C$ with $\mathrm{Re} \la > -\frac12$.
\end{proposition}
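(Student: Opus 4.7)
The plan is to invoke the Lumer--Phillips theorem with respect to the equivalent inner product $(\cdot|\cdot)_{\mc H}$ from Lemma \ref{Le:Equiv_Norm1}, and then translate the resulting contraction semigroup bound back to the original Sobolev norm. All the heavy lifting has already been done in Lemmas \ref{Le:Equiv_Norm1}, \ref{Le:LuPhi_H}, and \ref{Le:LuPhi_DenseRange}; what remains is really an assembly step.

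First I would work on the Hilbert space $(\mc H, (\cdot|\cdot)_{\mc H})$, which is legitimate thanks to the norm equivalence. Lemma \ref{Le:LuPhi_H} says that $\tilde{\mb L} + \tfrac12$ is dissipative on $\mc D(\tilde{\mb L})$ with respect to this inner product, since
\[
\mathrm{Re}\bigl((\tilde{\mb L} + \tfrac12)\mb u \,\big|\, \mb u\bigr)_{\mc H} = \mathrm{Re}(\tilde{\mb L}\mb u|\mb u)_{\mc H} + \tfrac12\|\mb u\|_{\mc H}^2 \leq 0
\]
for all $\mb u \in \mc D(\tilde{\mb L})$. Lemma \ref{Le:LuPhi_DenseRange} provides the range condition: $\mathrm{rg}(\tfrac{5}{2} - \tilde{\mb L}) = \mathrm{rg}\bigl(2 - (\tilde{\mb L}+\tfrac12)\bigr)$ is dense in $\mc H$, and $2 > 0$, so the standard version of Lumer--Phillips (see \cite{engel}, p.~83, Theorem~3.15) yields that $\tilde{\mb L}+\tfrac12$ is closable and its closure generates a strongly-continuous contraction semigroup on $(\mc H, \|\cdot\|_{\mc H})$. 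Consequently, $\tilde{\mb L}$ itself is closable, its closure $\mb L$ generates a strongly-continuous semigroup $\mb S(\tau)$, and
\[
\|\mb S(\tau) \mb u\|_{\mc H} \leq e^{-\tau/2} \|\mb u\|_{\mc H}, \qquad \tau \geq 0.
\]

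Next I would transfer this estimate to the original norm using Lemma~\ref{Le:Equiv_Norm1}: there exists $c \geq 1$ with $c^{-1}\|\cdot\| \leq \|\cdot\|_{\mc H} \leq c\|\cdot\|$, so setting $M := c^2 > 1$ gives $\|\mb S(\tau)\| \leq M e^{-\tau/2}$. The spectral inclusion then follows from the general fact that the spectrum of the generator lies in the closed half-plane determined by the growth bound, i.e.\ $\sigma(\mb L) \subset \{\la : \mathrm{Re}\,\la \leq -\tfrac12\}$. Finally, for $\mathrm{Re}\,\la > -\tfrac12$ the resolvent admits the Laplace representation
\[
\mb R_{\mb L}(\la)\mb u = \int_0^\infty e^{-\la\tau}\mb S(\tau)\mb u\, d\tau,
\]
and the stated bound $\|\mb R_{\mb L}(\la)\| \leq M/(\mathrm{Re}\,\la + \tfrac12)$ is obtained by taking norms under the integral and using $\|\mb S(\tau)\| \leq M e^{-\tau/2}$. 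There is no genuine obstacle left at this stage: the only subtlety is keeping careful track of the constant $M$ arising from the norm equivalence, which is why the semigroup bound is a quasi-contraction rather than a contraction in the original norm.
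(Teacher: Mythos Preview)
Your proposal is correct and follows essentially the same route as the paper: the paper simply cites Lemmas \ref{Le:LuPhi_H} and \ref{Le:LuPhi_DenseRange}, the Lumer--Phillips theorem, and the equivalence of norms, then refers to \cite{engel}, p.~55, Theorem~1.10 for the spectral inclusion and resolvent bound. You have filled in exactly the details behind that one-line argument.
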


For the last two statements see \cite{engel}, p.~55, Theorem 1.10. 

\subsection{Perturbations of the free evolution}
In the following, we define $\mb L'_a$ according to Eq.~\eqref{Def:Potential}. We fix $\delta^* > 0$ such that $\psi^*_a$ is smooth for all $a \in \overline{\mathbb B^7_{\delta^*}}$. For the rest of the paper, we assume that $0 < \delta < \delta^*$. 

\begin{lemma}\label{Le:Perturbation}
Let $\delta > 0$ be sufficiently small, then for every $a \in \overline{\mathbb{B}^7_{\delta}}$ the operator $\mb L'_{a}: \mc H \to \mc H$
is compact. Furthermore, the family of operators $\mb L'_{a}$ s uniformly bounded with respect to $a \in \overline{\mathbb{B}^7_{\delta}}$ and Lipschitz continuous, i.e., there is a $K > 0$ such that 
\[ \|\mb L'_{a} - \mb L'_{b} \| \leq K |a-b| \]
for all $a,b  \in \overline{\mathbb{B}^7_{\delta}}$.
\end{lemma}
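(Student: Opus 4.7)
The plan is to exploit the very simple structure of $\mb L'_a$: it is essentially multiplication of the first component by the smooth function $V_a=3(\psi^*_a)^2$, followed by placement into the second slot of $\mc H$. Since $\psi^*_a\in C^\infty(\overline{\B^7})$ for $a\in\overline{\B^7_{\delta^*}}$ and $0<\delta<\delta^*$, the function $V_a$ and all its derivatives are uniformly bounded on $\overline{\B^7}$ for $a\in\overline{\B^7_\delta}$, so all three claims will reduce to standard multiplication and embedding results.

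For compactness I factor $\mb L'_a$ as
\[
\mc H\xrightarrow{\pi_1}H^3(\B^7)\xrightarrow{M_{V_a}}H^3(\B^7)\hookrightarrow H^2(\B^7)\xrightarrow{\iota_2}\mc H,
\]
where $\pi_1(\mb u):=u_1$, $M_{V_a}u:=V_au$ and $\iota_2 u:=(0,u)$. The extraction $\pi_1$ and placement $\iota_2$ are plainly bounded, and multiplication by the smooth function $V_a$ is bounded on $H^3(\B^7)$ by the ordinary product rule, since every derivative up to order three falls either on $V_a$ (bounded in $L^\infty$) or on $u$ (bounded in $L^2$). The middle inclusion $H^3(\B^7)\hookrightarrow H^2(\B^7)$ is compact by Rellich--Kondrachov on the bounded Lipschitz domain $\B^7$, so the composition is compact.

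For the uniform bound and the Lipschitz estimate the only input I need is Lipschitz dependence of $V_a$ on $a$ in a sufficiently strong norm. This is already recorded immediately before the lemma: $\|V_a-V_b\|_{\dot H^k(\B^7)}\lesssim_k|a-b|$ for every $k\in\N_0$ and all sufficiently small $a,b$. Picking any $k$ with $H^k(\B^7)\hookrightarrow C^2(\overline{\B^7})$ (so $k\geq 6$ in dimension seven) yields $\|V_a-V_b\|_{C^2(\overline{\B^7})}\lesssim|a-b|$, and the standard multiplier estimate $\|fg\|_{H^2}\lesssim\|f\|_{C^2}\|g\|_{H^2}$ then gives
\[
\|(\mb L'_a-\mb L'_b)\mb u\|_{\mc H}=\|(V_a-V_b)u_1\|_{H^2}\lesssim|a-b|\,\|u_1\|_{H^2}\lesssim|a-b|\,\|\mb u\|_{\mc H},
\]
which is the desired Lipschitz bound with a uniform constant $K$. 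Uniform boundedness of $\{\mb L'_a\}_{a\in\overline{\B^7_\delta}}$ then follows either by combining this estimate with the triangle inequality against $\mb L'_0$, or more directly from the uniform bound $\sup_{a\in\overline{\B^7_\delta}}\|V_a\|_{C^2(\overline{\B^7})}<\infty$, which holds because $a\mapsto V_a$ is continuous into $C^2(\overline{\B^7})$ on the compact parameter set.

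I anticipate no genuine obstacle: every estimate is a standard multiplier or embedding statement on the bounded smooth domain $\B^7$. The only choice requiring a modicum of care is picking the Sobolev exponent $k$ large enough to embed into $C^2(\overline{\B^7})$, which is harmless because the Lipschitz bound on $V_a$ is available for every $k\in\N_0$.
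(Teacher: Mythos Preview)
Your proof is correct and follows essentially the same route as the paper: compactness via the Rellich--Kondrachov embedding $H^3(\B^7)\hookrightarrow H^2(\B^7)$, and the Lipschitz bound via a $C^2$-type control of $V_a-V_b$ used as a multiplier on $H^2$. The only cosmetic difference is that the paper states the needed bound directly as $\|V_a-V_b\|_{W^{2,\infty}(\B^7)}\lesssim|a-b|$ (which is immediate from smooth dependence of $\psi^*_a$ on $a$), whereas you derive it from the $\dot H^k$ Lipschitz estimate together with a Sobolev embedding into $C^2$; both arguments are equivalent and rest on the same smoothness input.
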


\begin{proof}
The uniform boundedness of $\mb L'_{a}$ follows from the smoothness of the potential $V_a \in C^{\infty}(\overline{ \B^7})$ with respect to $a$ and the assumption that $a \in \overline{\mathbb B^7_{\delta^*}}$.
For the Lipschitz estimate we use that 
\[ \| V_a - V_b \|_{W^{2,\infty}(\B^7)} \lesssim | a - b | \]
for all $a,b  \in \overline{\mathbb B^7_{\delta}}$. Hence, 
\[ \|(V_a - V_b)u_1 \|_{H^2(\B^7)} \lesssim |a-b| \|u_1\| _{H^2(\B^7)}  \lesssim |a-b| \|u_1\| _{H^3(\B^7)}  \]
for all $\mb u \in \mc H$. For fixed $a \in \overline{\mathbb{B}^7_{\delta}}$, the compactness of the operator $\mb L'_{a}$ is a consequence of the compact embedding $H^{3}(\B^7) \hookrightarrow H^{2}(\B^7)$.
\end{proof}

As a consequence of the Bounded Perturbation Theorem, see \cite{engel} p.~158, we obtain the following result. 
 
\begin{corollary}\label{Cor:TimeEvol_La}
Let $\delta > 0$ be sufficiently small. For any $a \in \overline{\mathbb{B}^7_{\delta}}$, the operator   
\[\mb L_a := \mb L + \mb L_a'. \quad  \mc D(\mb L_a) = \mc D( \mb L) \subset \mc H \to \mc H\] generates a strongly-continuous one-parameter semigroup $\mb S_a: [0,\infty) \to \mc B(\mc H)$.
Furthermore, 
\[ \| \mb L_a - \mb L_b \| \leq K |a - b| \]
for all $a,b  \in \overline{\mathbb{B}^7_{\delta}}$ and $K > 0$ as in Lemma \ref{Le:Perturbation}
\end{corollary}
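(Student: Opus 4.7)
The corollary follows almost immediately by combining the free-evolution result of Proposition \ref{Prop:FreeEvol} with the bounded/Lipschitz properties of $\mb L'_a$ established in Lemma \ref{Le:Perturbation}. My plan is therefore to separate the statement into its two assertions and reduce each to an already-available tool.

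For generation of the semigroup, the plan is to invoke the Bounded Perturbation Theorem (\cite{engel}, p.~158). Proposition \ref{Prop:FreeEvol} provides that $(\mb L, \mc D(\mb L))$ generates a strongly-continuous semigroup $\mb S(\tau)$ on $\mc H$. By Lemma \ref{Le:Perturbation}, for each $a \in \overline{\mathbb{B}^7_\delta}$ the perturbation $\mb L'_a$ is a bounded operator on all of $\mc H$ (in fact compact, but boundedness is all we need here). Hence $\mb L_a = \mb L + \mb L'_a$ with domain $\mc D(\mb L_a) = \mc D(\mb L)$ is the generator of a strongly-continuous one-parameter semigroup $\mb S_a : [0,\infty) \to \mc B(\mc H)$, as claimed.

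For the Lipschitz estimate, the key observation is that although $\mb L_a$ and $\mb L_b$ are unbounded operators sharing the common domain $\mc D(\mb L)$, their difference is
\begin{equation*}
\mb L_a - \mb L_b = \mb L'_a - \mb L'_b,
\end{equation*}
which is a bounded operator defined on all of $\mc H$. Thus the norm $\|\mb L_a - \mb L_b\|$ is interpreted as the $\mc B(\mc H)$-norm of this bounded difference, and the estimate then follows directly from the Lipschitz bound $\|\mb L'_a - \mb L'_b\| \leq K |a-b|$ proved in Lemma \ref{Le:Perturbation}.

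There is really no substantive obstacle here; the proof is a bookkeeping application of the two preceding results. The only point that deserves a brief comment in the writeup is the interpretation of $\|\mb L_a - \mb L_b\|$ as an operator norm on $\mc H$ rather than a norm of unbounded operators, which is well-defined precisely because the unbounded parts cancel.
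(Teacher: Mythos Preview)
Your proposal is correct and matches the paper's approach exactly: the paper simply cites the Bounded Perturbation Theorem (\cite{engel}, p.~158) together with Lemma~\ref{Le:Perturbation}, and your observation that $\mb L_a - \mb L_b = \mb L'_a - \mb L'_b$ is bounded on all of $\mc H$ is precisely what justifies the Lipschitz inequality.
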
 

Unfortunately, the growth estimate obtained from this abstract argument is too weak for our purpose. Hence, we analyze the spectrum of the generator $\mb L_a$.  First, we consider the case $a = 0$. The general case will then be treated perturbatively. 

\section{ODE Analysis}\label{Sec:ODE_analyis}

For $a=0$, the potential is radial and it will be shown in Section \ref{Sec:Spetrum_L0} that the eigenvalue problem can be reduced to ODE problems by decomposition
into spherical harmonics. We define the formal differential operator
\begin{align}\label{Eq:ODE_L0}
	\begin{split}
		\mc T_{\ell}(\lambda) f(\rho) := (1-\rho^2) f''(\rho) + & \left(\tfrac{6}{\rho}   - 2(\lambda +2) \rho \right) f'(\rho) \\
		&   - \left( (\lambda + 1)(\lambda + 2) + \tfrac{\ell ( \ell +5)}{\rho^2} - \tfrac{48}{(1+\rho^2)^2} \right )f(\rho) 
	\end{split}
\end{align}

We are furthermore interested in the unstable eigenvalues of $\mb L_0$, namely the ones that belong to $\Hb:= \{ z \in \mathbb{C}: \Re z \geq 0 \}$. In this case, as it will be made clear in the proof of Proposition \ref{Prop:Spectrum_L0}, due to elliptic regularity and the asymptotic behavior of eigenfunctions at $\rho=1$  it suffices to study solutions of Eq.~\eqref{Eq:ODE_L0} that are smooth on $[0,1]$. Therefore, for each $\ell \in \N_0$, we define the set 
\begin{align*}
	\Sigma_{\ell}  := \{ \lambda \in \C: \mathrm{Re \la \geq 0} \text{ and there exist }  f_{\ell}(\cdot;\lambda) \in C^{\infty}[0,1] \text { satisfying } \mc T_{\ell}(\lambda) f_{\ell}(\cdot;\lambda) = 0 \}.
\end{align*}

The investigation of the structure of $\Sigma_{\ell}$ is based on a refinement of the approach developed in \cite{CosDonGlo17}. Since different strategies are required for different values of $\ell$, two results will be proved in the following. 

\begin{proposition}\label{prop:l >1_l=0}
	For all $\ell \geq 2$,  $\Sigma_{\ell} = \emptyset$. Furthermore, for $\ell = 0$,  $\Sigma_{0} = \{1,3\}$ with unique solutions 
	\begin{equation*}
		f_0(\rho;1) =\frac{1-\rho^2}{(1+\rho^2)^2} \quad \text{and} \quad f_0(\rho;3)=\frac{1}{(1+\rho^2)^2}.
	\end{equation*}
\end{proposition}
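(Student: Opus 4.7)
I would proceed by Frobenius analysis at the two regular singular points $\rho = 0$ and $\rho = 1$ of the operator $\mc T_\ell(\la)$, combined with an explicit substitution tailored to the potential. At the origin the indicial equation $\nu^2 + 5\nu - \ell(\ell+5) = 0$ has roots $\ell$ and $-(\ell+5)$, so any nonzero $f \in C^{\infty}[0,1]$ with $\mc T_\ell(\la) f = 0$ is necessarily proportional near zero to the Frobenius branch $\rho^\ell$. At $\rho = 1$ the indicial roots are $0$ and $2-\la$, and for $\mathrm{Re}\,\la \geq 0$ (away from the integer collisions $\la \in \{2, 3, \dots\}$) smoothness at $\rho = 1$ uniquely selects the index-$0$ branch.

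For the case $\ell = 0$, I would first verify the two explicit solutions by direct substitution, and then perform the change of dependent variable $f(\rho) = (1+\rho^2)^{-2} h(\rho)$, which is designed to absorb the potential $48/(1+\rho^2)^2$. A short computation shows that the resulting coefficient of $h$ factors as $-(\la-3)\bigl[(\la-2)\rho^2 + \la + 6\bigr]/(1+\rho^2)$. At $\la = 3$ this coefficient vanishes identically and the $h$-equation degenerates to a first-order equation for $h'$, whose only smooth solution on $[0,1]$ yields $f = (1+\rho^2)^{-2}$. For $\la \neq 3$ the reduced second-order equation can be analyzed through Frobenius theory at the two endpoints (the explicit factor $(\la-3)$ and the simple structure of the remaining coefficient make the connection problem tractable), and the only additional eigenvalue in $\{\mathrm{Re}\,\la \geq 0\}$ is $\la = 1$, corresponding to $h(\rho) = 1 - \rho^2$.

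For the case $\ell \geq 2$, I would apply a refinement of the Costin--Donninger--Glogi\'c connection-coefficient technique. After the substitution $f(\rho) = \rho^\ell (1+\rho^2)^{-2} h(\rho)$ and the change of variable $z = \rho^2$, the equation becomes a Heun-type ODE on $(0,1)$ with four regular singular points $z \in \{0, 1, -1, \infty\}$ and coefficients depending polynomially on $\la$. I would construct the unique (up to scalar) Frobenius solution $\phi_0(\,\cdot\,;\la)$ at $\rho = 0$ with leading behavior $\rho^\ell$ and decompose it at $\rho = 1$ as $\phi_0 = A(\la)\,\phi_1^{\mathrm{reg}} + B(\la)\,\phi_1^{\mathrm{sing}}$, where $\phi_1^{\mathrm{reg}}$ and $\phi_1^{\mathrm{sing}}$ are the Frobenius solutions at $\rho = 1$ with exponents $0$ and $2-\la$ respectively. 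Elements of $\Sigma_\ell \cap \{\mathrm{Re}\,\la \geq 0\}$ are then exactly the zeros of the connection coefficient $B(\la)$, and the refined version of the method of \cite{CosDonGlo17} produces a concrete series or integral representation of $B(\la)$ from which the necessary non-vanishing estimates can be extracted.

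The principal obstacle will be establishing non-vanishing of $B(\la)$ throughout the closed right half-plane for $\ell \geq 2$. Real $\la$ should yield to a monotonicity/positivity argument on the Frobenius recursion coefficients (possibly combined with an explicit check at a finite number of candidate values and asymptotics as $\la \to \infty$); complex $\la$ is harder, because the auxiliary regular singular point at $z = -1$ coming from the potential precludes a direct appeal to the classical hypergeometric connection formulas on which \cite{CosDonGlo17} relies. Uniform control of $B(\la)$ on vertical strips, together with its asymptotics as $|\mathrm{Im}\,\la| \to \infty$, will most likely require a Phragm\'en--Lindel\"of-type estimate adapted to the Heun structure, and handling this extra singularity is exactly the refinement of the method advertised in the introduction.
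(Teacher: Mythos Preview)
Your framework is right---this is a connection problem for Frobenius solutions---but the concrete mechanism is missing, and your chosen transformation makes the gap harder to close than it needs to be. Under $z=\rho^2$ the potential $48/(1+\rho^2)^2$ contributes a genuine regular singular point at $z=-1$, so you are left with a Heun problem whose connection coefficients have no closed form; you correctly identify this as the obstacle but offer no way past it. The paper's first key idea is the change of variables $\rho=\sqrt{x/(2-x)}$, $f(\rho)=x^{\ell/2}(2-x)^{(\la+1)/2}y(x)$, which sends $1+\rho^2\mapsto 2/(2-x)$ and hence makes the potential \emph{polynomial} in $x$. The resulting equation is Heun with singularities only at $x\in\{0,1,2,\infty\}$, so the Taylor series of the Frobenius solution at $x=0$ has radius of convergence exactly $1$ or at least $2$, and smoothness on $[0,1]$ is equivalent to the latter.

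The paper then bypasses connection coefficients entirely. The Taylor coefficients satisfy a three-term recursion $a_{n+2}=A_n a_{n+1}+B_n a_n$ whose characteristic roots are $1$ and $\tfrac12$; by Poincar\'e's theorem the ratio $r_n=a_{n+1}/a_n$ tends to one of these. The paper constructs an \emph{explicit} approximate solution $\tilde r_n(\ell,\la)$ (a rational function of $n,\ell,\la$) and proves $|r_n/\tilde r_n-1|\le\tfrac13$ uniformly in $\la\in\overline{\mathbb H}$ and $\ell\ge2$, forcing $r_n\to1$ and hence radius $1$. The uniform bound is established by proving the auxiliary estimates $|\varepsilon_n|$ and $|C_n|$ on the imaginary axis by explicit polynomial positivity, then extending via Phragm\'en--Lindel\"of after checking (Routh--Hurwitz) that the rational functions are analytic on $\overline{\mathbb H}$. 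For $\ell=0$ the same scheme works, with the twist that the eigenfunctions at $\la=1,3$ become polynomials $1-x$ and $1$ after the transformation, so one factors $(\la-1)(\la-3)$ out of $a_2,a_3$ and starts the ratio analysis at $n=2$. Your substitution $f=(1+\rho^2)^{-2}h$ does give the nice factor $(\la-3)$ in the zeroth-order term, but the first-order coefficient still carries $1/(1+\rho^2)$, so the $h$-equation retains the extra singularity and the connection problem you describe remains just as hard.
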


\begin{proposition}\label{prop:l=1}
	For $\ell =1$, we have $\Sigma_{1} = \{0,1\}$ with unique solutions 
	\begin{equation*}
		f_1(\rho;0) =\frac{3\rho-\rho^3}{(1+\rho^2)^2} \quad \text{and} \quad  f_1(\rho;1)  =\frac{\rho}{(1+\rho^2)^2}.
	\end{equation*} 
\end{proposition}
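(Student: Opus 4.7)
The proposition contains two independent assertions: that $f_1(\cdot;0)$ and $f_1(\cdot;1)$ solve $\mc T_1(\lambda) f = 0$ at $\lambda = 0$ and $\lambda = 1$, respectively, and that no other $\lambda$ with $\mathrm{Re}\lambda \geq 0$ admits a $C^\infty[0,1]$ solution. The first part is a direct substitution. For the second, I would start by reading off the Frobenius indices: the indicial equation at $\rho = 0$ is $s^2 + 5s - 6 = 0$ with roots $\{1,-6\}$, so the space of solutions smooth at the origin is one-dimensional; at $\rho = 1$ the indices are $\{0,\,2-\lambda\}$, and for $\mathrm{Re}\lambda\geq 0$ any $C^\infty[0,1]$ solution must coincide, up to a scalar, with the index-$0$ Frobenius solution there. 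Since both distinguished Frobenius solutions $\phi_0(\cdot;\lambda)$ and $\phi_1(\cdot;\lambda)$ depend entirely on $\lambda$, the set $\Sigma_1 \cap \{\mathrm{Re}\lambda\geq 0\}$ coincides with the zero set in the right half-plane of an entire Wronskian $W(\lambda)$ built from these two normalized solutions.

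To remove the Bessel-type singularity at the origin, I would substitute $f(\rho)=\rho\,g(\rho)$, which converts $\mc T_1(\lambda)f=0$ into
$$(1-\rho^2) g''(\rho) + \left[\tfrac{8}{\rho} - 2(\lambda+3)\rho\right] g'(\rho) - \left[(\lambda+2)(\lambda+3) - \tfrac{48}{(1+\rho^2)^2}\right] g(\rho) = 0,$$
so that smoothness of $f$ on $[0,1]$ corresponds to the regular Frobenius branch of $g$ at the origin being matched against the index-$0$ branch at $\rho=1$. The known solutions then become $g(\rho;0)=(3-\rho^2)(1+\rho^2)^{-2}$ and $g(\rho;1)=(1+\rho^2)^{-2}$, which give $W(0)=W(1)=0$, and uniqueness of the eigenfunction (up to scaling) at each value of $\lambda$ is automatic from the one-dimensionality of the smooth branch at $\rho=0$.

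The remaining task is to rule out further zeros of $W$ in $\{\mathrm{Re}\lambda \geq 0\}$. The plan is to adapt the framework of \cite{CosDonGlo17}: pass to a Riccati equation for $y=g'/g$, construct an explicit rational quasi-solution $\tilde y(\rho;\lambda)$ that matches the true solution at $\rho=0$ and at $\rho=1$ to leading order, and then use a Volterra/contraction estimate to bound $y-\tilde y$ uniformly on $[0,1-\varepsilon]$. Propagating this control across the regular singular point at $\rho=1$ via the Frobenius expansion yields a quantitative lower bound on $|W(\lambda)|$ away from the candidate zeros $\{0,1\}$, while local transversality (non-vanishing of $\partial_\lambda W$) at $\lambda=0,1$ shows that these zeros are simple and isolated.

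The main obstacle is the uniform control of the Riccati equation across the entire closed right half-plane. The potential $48/(1+\rho^2)^2$ is not of hypergeometric type, so no reduction to classical special functions is available and the error bounds must instead be extracted by hand from the quasi-solution. The bookkeeping is especially delicate as $|\mathrm{Im}\,\lambda|\to\infty$, where the oscillatory behavior of $y(\rho;\lambda)$ must still be tamed by a careful choice of $\tilde y$; this is precisely the part of the argument in which the refinement of \cite{CosDonGlo17} announced in Section \ref{Sec:ODE_analyis} is needed. Once the uniform estimate is in place, the argument closes with $\Sigma_1\cap\{\mathrm{Re}\lambda\geq 0\}=\{0,1\}$ and the stated explicit eigenfunctions.
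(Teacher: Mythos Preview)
Your plan outlines a connection-problem/Wronskian approach with a continuous Riccati quasi-solution in the variable $\rho$. This is \emph{not} the route the paper takes. After the substitution $\hat f(\rho)=\rho f(\rho)$ (which you also make), the paper removes the two known eigenvalues $\lambda=1$ and $\lambda=0$ one at a time by the supersymmetric reformulation of \cite{CosDonGlo17}: each step replaces the equation by a new Fuchsian ODE whose smooth spectrum is the old one with one point deleted. The resulting eigenvalue-free equation is then mapped to Heun form via $x=2\rho^2/(1+\rho^2)$, and the absence of $C^\infty[0,1]$ solutions is proved by a \emph{discrete} Poincar\'e-type analysis of the ratio $r_n=a_{n+1}/a_n$ of Taylor coefficients at $x=0$: one constructs an explicit rational quasi-solution $\tilde r_n(\lambda)$ to the nonlinear recursion $r_{n+1}=A_n+B_n/r_n$ and proves $|r_n/\tilde r_n-1|\le \tfrac13$ uniformly in $n\ge 1$ and $\lambda\in\overline{\mathbb H}$ (via estimates on the imaginary axis extended by Phragm\'en--Lindel\"of), forcing $\lim_n r_n=1$ and hence radius of convergence exactly $1$.

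Your sketch differs in two substantive ways. First, you keep $\lambda=0,1$ in the problem and propose to bound $|W(\lambda)|$ from below away from them together with a transversality check; this is workable in principle but adds a layer of local analysis that the SUSY removal sidesteps entirely. Second, your quasi-solution is for the continuous Riccati equation $y=g'/g$ in $\rho$, whereas the machinery actually used in \cite{CosDonGlo17} and in this paper lives on the discrete side---the quasi-solution is for the coefficient-ratio recurrence, not for an ODE. As written, the proposal leaves all of the hard quantitative input (the explicit form of $\tilde y$ and the uniform error bounds across $\overline{\mathbb H}$, including the large-$|\mathrm{Im}\,\lambda|$ regime you flag as delicate) unspecified, so it is a plausible outline rather than a proof.
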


\subsection{Discussion of our approach and some heuristics}	
We start with an observation that solutions to Eq.~\eqref{Eq:ODE_L0} which are smooth on $[0,1]$ are in fact analytic there. Analyticity on $(0,1)$ follows from the analyticity of coefficient functions in the standard normalized form of Eq.~\eqref{Eq:ODE_L0}. Furthermore, $\rho=0$ and $\rho=1$ are regular singular points and, by Frobenius theory, solutions near these points are given as linear combinations of Frobenius solutions.

At $\rho=0$ the Frobenius solutions of Eq.~\eqref{Eq:ODE_L0} have the following forms
\begin{equation}
	f_{0,1}(\rho) = \rho^\ell \sum_{n=0}^{\infty}a_1(n)\rho^{2n} \quad \text{and} \quad f_{0,2}(r) = C\log(r)f_{0,1}(\rho) + \rho^{-\ell-5} \sum_{n=0}^{\infty}a_2(n)\rho^{2n}, 
\end{equation}
for $a_1(0)=a_2(0)=1$ and a certain (possibly vanishing) parameter $C$. Consequently, due to the singular behavior of $f_{0,2}$ at the origin,  analytic solutions at $\rho=0$ are precisely constant multiples of $f_{0,1}$. On the other hand, if $\la \in  \Hb \setminus \{ 0,1,2 \}$ then the Frobenius solutions at $\rho=1$ have the following forms\footnote{For $\la \in \{ 0,1,2 \} $ the problem has to be treated case by case, but this is done in  similar manner.}
\begin{equation}
	f_{1,1}(\rho) = \sum_{n=0}^{\infty}b_1(n)(1-\rho)^{n} \quad \text{and} \quad f_{1,2}(\rho) = C\log(r)f_{1,1}(\rho) + \rho^{2-\la} \sum_{n=0}^{\infty}b_2(n)(1-\rho)^{n}, 
\end{equation}
where $b_1(0)=b_2(0)=1$ and $C$ is a parameter which is (possibly) non-zero only if $\la \in \mathbb{N}$. Then, since Eq.~\eqref{Eq:ODE_L0} is linear we have that
\begin{equation}
	f_{0,1}= c_1(\la)f_{1,1} + c_2(\la)f_{1,2}.
\end{equation}
Furthermore, since $f_{1,2}$ is singular at $\rho=1$, $\la \in \Sigma_\ell$ precisely when $c_2(\la)=0$. Finding zeros of the connection coefficient $c_2$ falls into the category of the so-called connection problems in ODE theory. At the moment, these problems are resolved only for equations with at most three regular singular points, while Eq.~\eqref{Eq:ODE_L0} has six of them. Their number can in fact be reduced to four, and we do this later on, however this is still of no help in the context of using the existing theory to solve the connection problem.

We therefore undertake a different approach. Namely, we study $f_{0,1}$ only, and look for values of $\la$ for which this function can be analytically continued past $\rho=1$. One could naively look for $\la$ for which the series defining $f_{0,1}$ has radius of convergence larger than one. This would be fine only if $\rho=1$ was the only singular point on the unit circle. However, there are three more, $\rho=\pm i$ and $\rho=-1$. Nevertheless, remarkably, there is a change of variables that moves the problematic singularities away, while preserving the analyticity of solutions at $\rho=0$ and $\rho=1$, see \eqref{Def:Heun_change_var} below. In particular, this leads to an ``isospectral" equation with four regular singular points, $0,$ $1$, $2$ and $\infty$, see \eqref{Eq:ODEheun}. 

For the new equation we generate the normalized series solution which is analytic at the origin
\begin{equation*}\label{Eq:Exspa}
	y_{\ell,\la}(x)=\sum_{n=0}^{\infty}a_n(\ell,\la) x^n, \quad a_0(\ell,\la)=1.
\end{equation*}
In this new setting, $\la \in \Sigma_\ell$ precisely when the radius of convergence of this series is larger than one. We then naturally approach this problem through the analysis of the asymptotics of the series coefficients. Namely, we study the limiting behavior of the quotient
\begin{equation*}
	r_n(\ell,\la):=\frac{a_{n+1}(\ell,\la)}{a_{n}(\ell,\la)}.
\end{equation*} 
Since $y_{\ell,\la}$ solves a differential equation with four regular singular points (in its canonical form) the sequence of coefficients $a_n$ satisfies a three term recurrence relation 
\begin{equation}\label{Eq:RecRel_outline}
	a_{n+2}(\ell,\la)=A_n(\ell,\la)a_{n+1}(\ell,\la)+B_n(\ell,\la)a_{n}(\ell,\la),
\end{equation}
where, in this particular case, $A_n$ and $B_n$ have rational function forms. 
Subsequently, for the quotient $r_n$ the following nonlinear relation holds
\begin{equation}\label{Eq:RecRel2_outline}
	r_{n+1}(\ell,\la)=A_n(\ell,\la)+\frac{B_n(\ell,\la)}{r_n(\ell,\la)}.
\end{equation}
Now we concentrate on the $\ell \geq 2$ case, where $\Sigma_{\ell} = \emptyset$ is to be shown. For $\ell\in \{ 0,1 \}$, on the other hand, the set $\Sigma_{\ell}$ is non-empty, and in this case we first conveniently ``remove" its elements and then follow the $\ell \geq 2$ approach. We will explain the ``removal" process in the proofs of Propositions \ref{prop:l >1_l=0} and \ref{prop:l=1}.

Now, from the theory of difference equations, we conclude that if $a_n(\ell,\la)$ is not eventually zero (i.e., $y_{\ell,\la}$ is not a polynomial) then $\lim_{n \rightarrow \infty}r_{n}(\ell,\la)$ exists and it is equal to either $1$ or $1/2$. Hence, analyticity at $x=1$ fails precisely when
\begin{equation}\label{Eq:Lim_rn}
	\lim_{n \rightarrow \infty}r_{n}(\ell,\la)=1,
\end{equation}
and our objective is therefore to prove that this holds for all $\ell \geq 2$ and $\la \in \Hb$.

Unfortunately, the explicit form of $r_{n}(\ell,\la)$ becomes increasingly complicated even for small values of $n$, and proving that $\lim_{n \rightarrow \infty}r_{n}(\ell,\la)=1$ is quite difficult even for a specific choice of $\ell,\la$, and much more so for all $\ell \geq 2$, $\la \in \Hb$. 
We nevertheless circumvent this difficulty in the following way. First, we construct a (simple) approximate solution $\tilde{r}_n$ to Eq.~\eqref{Eq:RecRel2_outline}, whose limiting value is 1. Then by showing that the actual solution $r_n$ is in a neighborhood of $\tilde{r}_n$, we conclude \eqref{Eq:Lim_rn}.
%
To construct an approximation $\tilde{r}_n(\ell,
\la)$ that emulates $r_n(\ell,\la)$ globally in $\ell$ and $\la$, we carefully analyze the asymptotics of $r_n(\ell,\la)$ in different parameter regimes and then model this behavior by simple rational function expressions. In particular, for fixed $\la$, $r_n(\ell,\la)$ is linear in $\ell$, and for fixed $\ell$ it is quadratic in $\la$. With some more exploration of the behavior of $r_n(\ell,\la)$ for fixed $\ell,\la$ and large $n$, we end up with the following approximation
\begin{equation}\label{Eq:QuasiSol_outline}
	\tilde{r}_n(\ell,\la)=\frac{\la^2}{4(n+1)(2n+2\ell+7)}+\frac{(4n+2\ell+5)\la}{2(n+1)(2n+2\ell+7)}+\frac{n-1}{n+1}+\frac{3\ell}{8(n+1)}.
\end{equation}

Note that $\lim_{n \rightarrow \infty}\tilde{r}_n(\ell,\la)=1$, which is the behavior we want to prove for $r_n$. Furthermore, by some basic complex function theory and elementary calculations we show that the relative difference $r_n(\ell,\la)/\tilde{r}_n(\ell,\la)-1$
is globally small. This rules out the possibility of $\lim_{n \rightarrow \infty} r_n(\ell,\la)=1/2$, and the desired claim follows.
\subsection{Proof of Proposition \ref{prop:l >1_l=0}} 

\subsubsection{Preliminary transformations} 

We study solutions of the equation  
\begin{align}\label{Eq:ODEeigenv}
	\begin{split}
		(1-\rho^2) f''(\rho) + & \left(\tfrac{6}{\rho}   - 2(\lambda +2) \rho \right) f'(\rho) \\
		&   - \left( (\lambda + 1)(\lambda + 2) + \tfrac{\ell ( \ell +5)}{\rho^2} - \tfrac{48}{(1+\rho^2)^2} \right )f(\rho)  = 0
	\end{split}
\end{align}
First, we transform Eq.~\eqref{Eq:ODEeigenv} into an ``isospectral" equation\footnote{Isospectral in the sense that the set of values of $\la \in\overline{\mathbb{H}}$ that yield $C^{\infty}[0,1]$ solutions is the same for both equations.} with four regular singularities. This is achieved by a change of variables
\begin{equation}\label{Def:Heun_change_var}
	\rho=\sqrt{\tfrac{x}{2-x}} \quad \text{and} \quad f(\rho)=x^{\frac{\ell}{2}}(2-x)^{\frac{\la+1}{2}}y(x).
\end{equation}
Thereby, Eq.~\eqref{Eq:ODEeigenv} is transformed into a Heun equation in its canonical form, see \cite{NIST10},
\begin{multline}\label{Eq:ODEheun} 
	y''(x)+\left(\frac{2\ell+7}{2x}+\frac{\la-1}{x-1}+\frac{1}{2(x-2)}\right)y'(x)\\+\frac{(\la+\ell+7)(\la+\ell-3)x-3\ell^2-4(\la+3)\ell-(\la+13)(\la-3)}{4x(x-1)(x-2)}y(x)=0.
\end{multline}
By Frobenius theory, any $y_{\ell,\la}\in C^{\infty}[0,1]$ that solves Eq.~\eqref{Eq:ODEheun} is in fact analytic on $[0,1]$. Furthermore, the Frobenius indices of Eq.~\eqref{Eq:ODEheun} at $x=0$ are $s_1=0$ and $s_2=-\ell-\frac{5}{2}$. Therefore, without loss of generality, we can assume that $y_{\ell,\la}(x)$ has the following expansion
\begin{equation}\label{Eq:Expansion}
	y_{\ell,\la}(x)=\sum_{n=0}^{\infty}a_n(\ell,\la) x^n, \quad a_0(\ell,\la)=1,
\end{equation} 
near $x=0$. Since the finite singular points of Eq.~\eqref{Eq:ODEheun} are $x=0$, $x=1$ and $x=2$, $y_{\ell,\la}(x)$ fails to be analytic at $x=1$ precisely when the radius of convergence of series \eqref{Eq:Expansion} is equal to one. We therefore consider the sequence of coefficients $\{a_n(\ell,\la)\}_{n\geq 0}$ and aim to show that
\[ \lim\limits_{n \ra \infty}\frac{a_{n+1}(\ell,\la)}{a_n(\ell,\la)}=1, \]
whenever the ratio inside the limit is defined for large $n$.
To that end, we first derive the recurrence relation for coefficients $a_n.$ Namely, by putting \eqref{Eq:Expansion} into Eq.~\eqref{Eq:ODEheun} we obtain
\begin{equation}\label{Eq:RecRel}
	a_{n+2}(\ell,\la)=A_n(\ell,\la)a_{n+1}(\ell,\la)+B_n(\ell,\la)a_{n}(\ell,\la),
\end{equation}
where
\begin{gather*}
	A_n(\ell,\la)=\frac{12n^2+4(3\ell+2\la+12)n+\la^2+2(2\ell+9)\la+3(\ell^2+8\ell-1)}{4(2n+2\ell+9)(n+2)}, \\ B_n(\ell,\la)=-\frac{(\ell+\la+2n+7)(\ell+\la+2n-3)}{4(2n+2\ell+9)(n+2)},
\end{gather*}
and the initial condition is $a_{-1}(\ell,\la)=0, a_0(\ell,\la)= 1$. Now for every nonnegative integer $n$ we define 
\begin{equation*}
	r_n(\ell,\la):=\frac{a_{n+1}(\ell,\la)}{a_{n}(\ell,\la)}.
\end{equation*} 
Since $\lim_{n\ra\infty}A_n(\ell,\la)=\frac{3}{2}$ and $\lim_{n\ra\infty}B_n(\ell,\la)=-\frac{1}{2}$, the so-called characteristic equation of Eq.~\eqref{Eq:RecRel} is 
\begin{equation}\label{Eq:CharEq}
	t^2-\frac{3}{2}t+\frac{1}{2}=0.
\end{equation}
Furthermore, $t_1=\frac{1}{2}$ and $t_2=1$ are the solutions to Eq.~\eqref{Eq:CharEq} and by Poincar\'e's theorem (see Appendix~\ref{App:Results}) we conclude that given $\la\in\Hb$, either $a_n(\ell,\la)=0$ eventually in $n$ or 
\begin{equation}\label{Eq:Lim1}
	\lim\limits_{n\ra\infty}r_n(\ell,\la)=1
\end{equation}
or
\begin{equation}\label{Eq:Lim2}
	\lim\limits_{n\ra\infty}r_n(\ell,\la)=\tfrac{1}{2}.
\end{equation}
We now proceed by separately treating the cases $\ell\geq2$ and $\ell=0$.
\subsubsection{The case $\ell \geq 2$}

We claim that $a_n(\ell,\la)$ does not equal zero eventually in $n$. Indeed, assumptions that $a_n(\ell,\la)=0$ for large $n$ and $a_{-1}(\ell,\la)=0$ imply (by backwards substitution in Eq.~\eqref{Eq:RecRel}) that $a_0(\ell,\la)=0$, which is in contradiction with the assumption that $a_0(\ell,\la)=1$. Therefore, either \eqref{Eq:Lim1} or \eqref{Eq:Lim2} is true. We claim that Eq.~\eqref{Eq:Lim1} holds for all $\la\in\Hb$.  To establish this, we first derive
from Eq.~\eqref{Eq:RecRel} the recurrence relation for $r_n$. Namely, we have
\begin{equation}\label{Eq:RecRel2}
	r_{n+1}(\ell,\la)=A_n(\ell,\la)+\frac{B_n(\ell,\la)}{r_n(\ell,\la)},
\end{equation}
where 
\begin{equation}\label{Eq:InitCond}
	r_0(\ell,\la)=A_{-1}(\ell,\la)=\frac{\la^2+2(2\ell+5)\la+3(\ell^2+4\ell-13)}{4(2\ell+7)}.
\end{equation}
Note that $r_n(\ell,\la)$ can be brought to the form of the ratio of two polynomials in $\la$ of degrees which increase linearly with $n$. The explicit form of $r_n(\ell,\la)$ in fact gets very complicated even for small values of $n$. We therefore desire to construct a ``simple" approximate solution $\tilde{r}_n$ to the recurrence relation \eqref{Eq:RecRel2} which is provably close to $r_n$ uniformly in $\ell$ and $\la$. Furthermore, proving that a limiting value in $n$  of $\tilde{r}_n(\ell,\la)$ is 1 will then imply \eqref{Eq:Lim1}.  
The approximation we use is the following
\begin{equation}\label{Eq:QuasiSol}
	\tilde{r}_n(\ell,\la)=\frac{\la^2}{4(n+1)(2n+2\ell+7)}+\frac{(4n+2\ell+5)\la}{2(n+1)(2n+2\ell+7)}+\frac{n-1}{n+1}+\frac{3\ell}{8(n+1)}.
\end{equation}
This expression is carefully chosen in order to emulate the behavior of $r_n(\ell,\la)$ for different values of the participating parameters. Let us elaborate on this. Firstly, notice that $r_n(\ell,\la)$ is a ratio of two polynomials in $\la$ whose degrees differ by two. It is therefore expected that $r_n(\ell,\la)$ behaves like a second degree polynomial for large values of $\la$. In fact, this asymptotic behavior can be obtained precisely. Simple induction in $n$ based on \eqref{Eq:RecRel2} and \eqref{Eq:InitCond} yields
\[
r_n(\ell,\la)=\frac{\la^2}{4(n+1)(2n+2\ell+7)}+\frac{(4n+2\ell+5)\la}{2(n+1)(2n+2\ell+7)} + O_{n,\ell}(1)\quad \text{as} \quad\la \rightarrow \infty.	
\]
This justifies the first two terms in \eqref{Eq:QuasiSol}. On the other side, we similarly obtain the following asymptotic
\[
r_n(\ell,\la)=\frac{3\ell}{8(n+1)}+O_{n,\la}(1) \quad \text{as} \quad \ell \rightarrow +\infty.	
\]
This gives rise to the last term in \eqref{Eq:QuasiSol}. Finally, to find an approximation to $r_n(\ell,\la)$ for small values of both $\ell$ and $\la$, we generate the sequence\footnote{Of course, the upper limit of this range can be any number that is large enough to serve the purpose.
	According to our experience, 20 is just fine.} $\{r_n(0,0)\}_{1\leq n\leq 20}$ and fit to it an appropriate rational function in $n$. This  leads to the choice of the remaining term in the approximation \eqref{Eq:QuasiSol}.  We note that the procedure we used here to construct $\tilde{r}_n$ differs from the one in \cite{CosDonGlo17}. The new method is more transparent and much more efficient; in particular, it avoids polynomial approximation altogether in order to find multipliers of the powers of $\la$ in \eqref{Eq:QuasiSol}.

\noindent Now, to prove that $r_n$ and $\tilde{r}_n$ are indeed ``close", we define
\begin{equation}\label{Def:Delta} 
	\delta_n(\ell,\la):=\frac{r_n(\ell,\la)}{\tilde{r}_n(\ell,\la)}-1,
\end{equation}
and show that $\delta_n$ is small uniformly in $\ell$ and $\la$. To that end we substitute Eq.~\eqref{Def:Delta} into Eq.~\eqref{Eq:RecRel2} and derive the recurrence relation for $\delta_n$
\begin{equation}\label{Eq:DeltaRec} 
	\delta_{n+1}=\ve_n-C_n\frac{\delta_n}{1+\delta_n},
\end{equation}
where
\begin{equation}\label{Eq:Eps_and_C} 
	\ve_n=\frac{A_n\tilde{r}_n+B_n}{\tilde{r}_n\tilde{r}_{n+1}}-1 \quad \text{and} 
	\quad C_n=\frac{B_n}{\tilde{r}_n\tilde{r}_{n+1}}.
\end{equation}
Now, for every $\ell\geq2, \la\in\Hb$ and $n\geq 3$, we have the following estimates
\begin{align}\label{Eq:Est_l>1}
	\begin{split}
		|\delta_3(\ell,\la)|&\leq\tfrac{1}{3}, \\ |\ve_n(\ell,\la)|&\leq\tfrac{1}{12}+\tfrac{\ell}{6(\ell+n+5)}, \\ |C_n(\ell,\la)|&\leq\tfrac{1}{2}-\tfrac{\ell}{3(\ell+n+5)}. 
	\end{split}
\end{align}
We illustrate the proof of the third estimate above; the other two are established analogously. Namely, we first bring $C_n(\ell,\la)$ to the form of the ratio of two polynomials $P_1(n,\ell,\la)$ and $P_2(n,\ell,\la)$. Also, to remove any possible ambiguity, we provide explicit forms of these polynomials in the Appendix ~\ref{App:Expr_l>1}. Our aim is to establish the desired estimate on the imaginary line only, prove that $C_n(\ell,\la)$ is analytic and polynomially bounded for $\la\in\Hb$ and then use Phragm\'en-Lindel\"of principle to extend the estimate from the imaginary line to the whole of $\Hb$. We therefore proceed by computing the polynomials
$$Q_1(n,\ell,t):=|P_1(n,\ell,it)|^2 \quad \text{and} \quad Q_2(n,\ell,t):=|P_2(n,\ell,it)|^2,$$
for $t$ real. Now, a straightforward calculation shows that the polynomial
\[
[6(\ell+n+10)]^2\,Q_1(n+3,\ell+2,t)-[\ell+3n+26]^2\,Q_2(n+3,\ell+2,t)
\]
has manifestly negative coefficients (note the shift in $n$ and $\ell$). This in turn proves that for $\ell\geq2$ and $n\geq3$ the estimate for $C_n(\ell,\la)$ holds on the imaginary line. Since $C_n(\ell,\la)=P_1(n,\ell,\la)/P_2(n,\ell,\la)$ is obviously polynomially bounded for $\la\in\Hb$, so it remains to prove that it is analytic there. This follows from the fact that for fixed $n\geq3$ and $\ell\geq2$, all of the zeros of $P_2(n,\ell,\cdot)$ are contained in the (open) left complex half plane. This can be shown in various ways, and as a canonical method we use Wall's formulation of the Routh-Hurwitz criterion, see Appendix \ref{App:Wall}. 

Now, having established estimates \eqref{Eq:Est_l>1}, we employ a simple inductive argument to conclude from \eqref{Eq:DeltaRec} that
\begin{equation}\label{Eq:FinalEst}
	|\delta_n(\ell,\la)|\leq\tfrac{1}{3}
\end{equation}
for all $n\geq3, \la\in\Hb$ and $\ell\geq2$. Since $\lim_{n\rightarrow\infty}\tilde{r}_n(\ell,\la)=1$, Eqs.~\eqref{Eq:FinalEst} and \eqref{Def:Delta} rule out \eqref{Eq:Lim2} and we therefore finally conclude that \eqref{Eq:Lim1} holds. This proves the proposition for $\ell\geq2$.

\subsubsection{The case $\ell =0$}\label{Sec:Proof_l=0}
By direct inspection one can check that for $\ell=0$, $\{1,3\} \subset \Sigma_{0}$ with respective solutions  given by
\begin{equation}\label{Eq:Eigenfs_l=0}
	f_0(\rho;1) =\frac{1-\rho^2}{(1+\rho^2)^2} \quad \text{and} \quad f_0(\rho;3)=\frac{1}{(1+\rho^2)^2}.
\end{equation}	
These are subsequently transformed (up to a constant multiple) into solutions of Eq.~\eqref{Eq:ODEheun},
\begin{equation}\label{Eq:PolyEigenf}
	y_1(x)=1-x \quad \text{and} \quad y_3(x)=1.
\end{equation}
The fact that both functions in Eq.~\eqref{Eq:PolyEigenf} are polynomials will be crucial for the rest of the proof.
We now go on and prove that $\Sigma_{0}$ consists of $\la=1$ and $\la=3$ only.  Let $\la\in\Hb \setminus\{1,3\}$.  We first observe that that $a_n(0,\la)$ can not be equal to zero eventually in $n$. Indeed, similarly to the case $\ell\geq2$, let us assume that $a_{n}(0,\la)=0$ for some $\la\in\Hb$ and all $n\geq n_0\in\mathbb{N}$. This together with $a_{-1}(0,\la)=0$ (by backward substitution in Eq.~\eqref{Eq:RecRel}) yields $a_0(0,\la)=0$, unless\footnote{This is in fact a reflection of the fact that the eigenfunctions corresponding to $\la=1$ and $\la=3$ are polynomials, see Eq.~\eqref{Eq:PolyEigenf}.} $\la\in\{1,3\}$.
We therefore conclude, by the theorem of Poincar\'e, that either Eq.~\eqref{Eq:Lim1} or  Eq.~\eqref{Eq:Lim2} holds.	Note that $$a_2(0,\la)=\tfrac{1}{2016}(\la-1)(\la-3)(\la^2+32\la+235)$$ and $$a_3(0,\la)=\tfrac{1}{266112}(\la-1)(\la-3)(\la^4+58\la^3+1052\la^2+6350\la+4971).$$ 
The linear factors $\la-1$ and $\la-3$ in the expressions above reflect the fact that the analytic solutions corresponding to $\la \in \{1,3\}$ are polynomials. Now we ``remove" these values by canceling the common linear factors in the definition of $r_2(0,\la)$, i.e., we let
$$r_2(0,\la):=\frac{\la^4+58\la^3+1052\la^2+6350\la+4971}{132(\la^2+32\la+235)},$$
which is analytic in $\Hb$, and we define $r_n(0,\la)$ for $n\geq3$ by Eq.~\eqref{Eq:RecRel2}. 	
As an approximate solution, we use
\begin{equation*}
	\tilde{r}_n(0,\la)=\frac{\la^2}{4(n+1)(2n+7)}+\frac{(4n+3)\la}{2(n+1)(2n+7)}+\frac{n-1}{n+1},
\end{equation*}
instead of Eq.~\eqref{Eq:QuasiSol} (note the difference in the second term) and define $\delta_n, \varepsilon_n$ and $C_n$ as in Eqs.~\eqref{Def:Delta}-\eqref{Eq:Eps_and_C}. 	
\noindent Now, we have the following estimates
\begin{align*}
	|\delta_5(0,\la)|\leq\tfrac{1}{3}, \quad |\ve_n(0,\la)|\leq\tfrac{1}{12}, \quad \text{and} \quad |C_n(0,\la)|\leq\tfrac{1}{2}, 
\end{align*}
for all $n\geq5$ and $\la\in\Hb$. The proof is analogous to the one in the case $\ell\geq2$ and the explicit forms are given in the Appendix \ref{App:Expr_l>1}. Finally, by induction we show that $|\delta_n(0,\la)|\leq\frac{1}{3}$ for all $n\geq5$. This rules out Eq.~\eqref{Eq:Lim2} and therefore Eq.~\eqref{Eq:Lim1} holds throughout $\Hb$.

\subsection{Proof of Proposition \ref{prop:l=1}}
By direct inspection one can check that for $\ell=1$, $\{0,1\} \subset \Sigma_{1}$ with respective solutions given by
\begin{equation*}
	f_1(\rho;0) =\frac{3\rho-\rho^3}{(1+\rho^2)^2} \quad \text{and} \quad f_1(\rho;1)=\frac{\rho}{(1+\rho^2)^2}.
\end{equation*}	
To prove that there are no other elements of $\Sigma_{1}$ other than $\la=0$ and $\la=1$ we closely follow the method developed in \cite{CosDonGlo17}. Namely, we first perform what we call the supersymmetric removal of $\la=1$ and $\la=0$. This process relies on a well-known procedure in supersymmetric quantum mechanics (hence the name), and in case $\la=1$, this method is explained in detail in \cite{CosDonGlo17}, Section 3. Although the adjustment of this method for the removal of a general eigenvalue is straightforward, for reader's convenience, we explicitly do this in Appendix \ref{App:SUSY_removal}. Subsequently, we successively remove $\la=1$ and $\la=0$,  arriving thereby at a new equation which is ``isospectral" to Eq.~\eqref{Eq:ODE_L0} modulo $\la \in \{ 0,1\}$. We then follow the approach from above and prove that the new equation does not admit smooth solutions for $\la \in \Hb$.	
	To proceed we first let
$
\hat{f}(\rho):=\rho f(\rho)
$
and thereby transform Eq.~\eqref{Eq:ODEeigenv} into
\begin{align}\label{Eq:ODE_wm_form}
	(1-\rho^2)\hat{f}''(\rho)+\left( \frac{4}{\rho}-2\left( \la + 1 \right)\rho \right)\hat{f}'(\rho)-\la(\la+1)\hat{f}(\rho)-\frac{2(5\rho^4-14\rho^2+5)}{\rho^2(1+\rho^2)^2}\hat{f}(\rho)=0.
\end{align}
Now, since $ \hat{f}_1(\rho)=\frac{\rho^2}{(1+\rho^2)^2}$ solves Eq.~\eqref{Eq:ODE_wm_form} for $\la=1$ and does not vanish inside $(0,1)$, this allows for the supersymmetric removal of $\la=1$, see Appendix \ref{App:SUSY_removal}. The corresponding supersymmetric reformulation of Eq.~\eqref{Eq:ODE_wm_form} is
\begin{align}\label{Eq:ODE_SUSY_l=1}
	(1-\rho^2)\tilde{f}''(\rho)+\left( \frac{4}{\rho}-2\left( \la + 1 \right)\rho \right)\tilde{f}'(\rho)-\la(\la+1)\tilde{f}(\rho)+\frac{2(\rho^6+\rho^4-\rho^2-9)}{\rho^2(1+\rho^2)^2}\tilde{f}(\rho)=0.
\end{align}
Direct computation shows that 
$\tilde{f_0}(\rho)=\frac{\rho^3}{1+\rho^2}$
solves Eq.~\eqref{Eq:ODE_SUSY_l=1} for $\la=0$. Furthermore, since $\tilde{f}_0$ has no zeros within $(0,1)$, again by following the process in Appendix \ref{App:SUSY_removal} we perform the supersymmetric removal of $\la=0$ relative to Eq.~\eqref{Eq:ODE_SUSY_l=1}. The equation we thereby get is
\begin{align}\label{Eq:ODE_SUSY_l=0}
	(1-\rho^2)\tilde{f}''(\rho)+\left( \frac{4}{\rho}-2\left( \la + 1 \right)\rho \right)\tilde{f}'(\rho)-\la(\la+1)\tilde{f}(\rho)-\frac{4(\rho^2+7)}{\rho^2(1+\rho^2)}\tilde{f}(\rho)=0.
\end{align}
It remains to prove that Eq.~\eqref{Eq:ODE_SUSY_l=0} has no smooth solutions for $ \lambda \in \Hb $.  To that end we proceed as in the proof of Proposition \ref{prop:l >1_l=0}. Namely, we first define 
\begin{equation*}
	\rho=\sqrt{\frac{x}{2-x}} \quad \text{and} \quad \tilde f(\rho)=x^2(2-x)^{\frac{\la}{2}}y(x),
\end{equation*}
and by that transform \eqref{Eq:ODE_SUSY_l=0} into a Heun equation
\begin{multline}\label{Eq:ODEheun1} 
	y''(x)+\left(\frac{13}{2x}+\frac{\la-1}{x-1}+\frac{1}{2(x-2)}\right)y'(x)+\frac{(\la+6)(\la+4)x-\la^2-22\la-48}{4x(x-1)(x-2)}y(x)=0.
\end{multline}
Then we consider the normalized analytic solution of Eq.~\eqref{Eq:ODEheun1} at $x=0$	\begin{equation*}
	\sum_{n=0}^{\infty}a_n(\lambda)x^{n},\quad a_0(\la)=1. 
\end{equation*}
The recurrence relation that the sequence of coefficients $\{a_n(\la)\}_{n\geq0}$ obeys is
\begin{equation}\label{eq:recrel1}
	a_{n+2}(\la)=A_n(\la)\,a_{n+1}(\la)+B_n(\la)\,a_n(\la),
\end{equation}
\noindent where
\[
A_n(\la)=\frac{\la^2+2(4n+15)\la+12(n+5)(n+2)} {4(2n+15)(n+2)}
\]
and
\[
B_n(\la)=-\frac{(\la+2n+6)(\la+2n+4)}{4(2n+15)(n+2)}.
\]
and 	 $a_{-1}(\la)=0$ and $a_0(\la)=1$.
We now let $r_n(\la):=\frac{a_{n+1}(\la)}{a_n(\la)}$,
and thereby transform Eq.~\eqref{eq:recrel1} into
\begin{equation}\label{eq:recrel}
	r_{n+1}(\la)=A_n(\la)+\frac{B_n(\la)}{r_n(\la)},
\end{equation}
with the initial condition 
\begin{equation*}
	r_0(\la)=\frac{a_1(\la)}{a_0(\la)}=A_{-1}(\la)=\frac{\la^2}{52}+\frac{11\la}{26}+\frac{12}{13}.
\end{equation*}
By backwards substitution we see that if $\la\in\Hb$ then $a_n(\la)$ can not be eventually zero.  Also, by Poincar\'e's theorem we have again either $\lim_{n\rightarrow \infty} r_n(\la) = 1$ or $	\lim_{n\rightarrow \infty} r_n(\la) = \tfrac{1}{2}$. 
Then, by observing the behavior of $r_n$ for different values of $\la$ we construct the following approximate solution to Eq.~\eqref{eq:recrel}
\begin{equation*}
	\tilde{r}_n(\lambda):=\frac{\lambda^2}{4(2n+13)(n+1)}+\frac{(2n+5)\lambda}{(2n+13)(n+1)}+\frac{2n+9}{2n+13}.
\end{equation*}
Subsequently we define
$\delta_n(\la):=\frac{r_n(\la)}{\tilde{r}_n(\la)}-1$,
and derive the corresponding recurrence relation
\begin{equation*}
	\delta_{n+1}=\varepsilon_n-C_n\frac{\delta_n}{1+\delta_n},
\end{equation*}
where
$\varepsilon_n=\frac{A_n\tilde{r}_n+B_n}{\tilde{r}_n\tilde{r}_{n+1}}-1$ and $C_n=\frac{B_n}{\tilde{r}_n\tilde{r}_{n+1}}$. Now for every $n\geq1$ and $\la\in\Hb$ the following estimates hold
\begin{align*}
	|\delta_1(\la)|\leq\tfrac{1}{3}, \quad
	|\varepsilon_n(\la)|\leq\tfrac{1}{12}, \quad 
	|C_n(\la)|\leq\tfrac{1}{2}.
\end{align*}
They are proven in the same way as the corresponding ones in Proposition \ref{prop:l >1_l=0}; the relevant explicit expressions are given in Appendix \ref{App:Expr_l=1}. Finally, by induction we conclude that  $\delta_n(\la)\leq\tfrac{1}{3}$ for all $n\geq1$ and $\la\in \Hb$.
This and the fact that $\lim_{n\rightarrow\infty}\tilde{r}_n(\la)=1$ rule out $	\lim_{n\rightarrow \infty} r_n(\la) = \tfrac{1}{2}$. Therefore $\lim_{n\rightarrow \infty} r_n(\la) = 1$ holds throughout $\Hb$. This finishes the proof.

\begin{remark}\label{Rem:SUSY_fails}
	In the case $\ell = 0$, both solutions in \eqref{Eq:Eigenfs_l=0} are nonzero on the interval $(0,1)$. However, when trying to perform a supersymmetric reformulations of Eq.~\eqref{Eq:ODEeigenv} as in the case $\ell = 1$, the following scenario occurs: the supersymmetric reformulation relative to either of the two values $\lambda =1$ and $\lambda = 3$ is the same and it removes $\la=3$ only. What is more, any further attempt to remove $\la=1$ in this way, fails, see Appendix \ref{App:SUSY_removal} for an explanation of this.
	Therefore, a different approach was required. The new method we devised in Section \ref{Sec:Proof_l=0} crucially relies on the fact that the function in Eq.~\eqref{Eq:PolyEigenf} are polynomials. The process does not require any reformulation of Eq.~\eqref{Eq:ODEheun} and even allows for solutions that vanish inside $(0,1)$. For that reason, the method is, in a sense, more general than the one exhibited in \cite{CosDonGlo17} and would in addition yield even shorter solutions to similar spectral problems in \cite{CosDonGloHua16,CosDonGlo17,DonGlo19}.
\end{remark}

\section{The spectrum of $\mb L_a$ - Growth bounds for $\mb S_a(\tau)$}\label{Sec:Spectrum_growthbounds}

With the result of the previous section, we are now able to investigate the spectrum of the operator $\mb L_a:  \mc D(\mb L_a) \subset \mc H \to \mc H$. First, we make some general observations.

\begin{lemma}\label{Le:ResolventBounds_La}
Fix $\varepsilon >0$. There are constants $\kappa^* > 0$  and $c > 0$ such that for all 
 $a \in \overline{\B^7_{\delta}}$ with $\delta > 0$ sufficiently small 
\begin{equation*}
 \|\mb R_{\mb L_a}(\lambda) \| \leq c
 \end{equation*} 
for all $\lambda \in \C$ satisfying $\mathrm{Re} \la \geq -\frac12 + \varepsilon$ and  $|\la| \geq \kappa^*$.
\end{lemma}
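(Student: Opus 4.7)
The plan is to treat $\mb L'_a$ as a lower-order perturbation of $\mb L$ for large $|\lambda|$ and invert $I - \mb R_{\mb L}(\lambda)\mb L'_a$ by a Neumann series, exploiting the factorization $\lambda - \mb L_a = (\lambda - \mb L)(I - \mb R_{\mb L}(\lambda)\mb L'_a)$. On the half-plane $\mathrm{Re}\,\lambda \geq -\tfrac12 + \varepsilon$, Proposition~\ref{Prop:FreeEvol} gives $\|\mb R_{\mb L}(\lambda)\| \leq M/\varepsilon$, and Lemma~\ref{Le:Perturbation} gives $\|\mb L'_a\| \leq C$ uniformly in $a\in\overline{\B^7_\delta}$. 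The crude bound $\|\mb R_{\mb L}(\lambda)\mb L'_a\|\leq MC/\varepsilon$ is only bounded, not small, so the task is to gain an extra factor of $|\lambda|^{-1}$.

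The key step, which I expect to be the main technical point, is to show that $\mb L'_a$ actually maps $\mc H$ into $\mc D(\mb L)$ with the uniform bound $\|\mb L \mb L'_a\mb u\|_{\mc H} \lesssim \|\mb u\|_{\mc H}$. Since $\mb L'_a\mb u = (0, V_a u_1)$ with $V_a\in C^\infty(\overline{\B^7})$ depending smoothly on $a$, and $u_1\in H^3(\B^7)$ implies $V_a u_1\in H^3(\B^7)$, a formal computation yields
\[
\tilde{\mb L}(0, V_a u_1) = \bigl(V_a u_1,\, -\xi^j\partial_j(V_a u_1) - 2V_a u_1\bigr) \in H^3\times H^2,
\]
with $\mc H$-norm controlled by $\|u_1\|_{H^3}\leq \|\mb u\|_{\mc H}$ uniformly in $a\in\overline{\B^7_\delta}$. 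Approximating $u_1\in H^3$ by functions in $C^\infty(\overline{\B^7})$ produces approximants of $\mb L'_a\mb u$ lying in $\mc D(\tilde{\mb L}) = C^4\times C^3$ that converge in the graph norm, so by definition of the closure $\mb L'_a\mb u\in\mc D(\mb L)$.

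With this in hand, I would use the identity $\mb R_{\mb L}(\lambda)\mb L\mb v = \lambda\mb R_{\mb L}(\lambda)\mb v - \mb v$ valid on $\mc D(\mb L)$, applied with $\mb v = \mb L'_a\mb u$, to write
\[
\mb R_{\mb L}(\lambda)\mb L'_a\mb u \;=\; \frac{1}{\lambda}\bigl[\mb L'_a\mb u + \mb R_{\mb L}(\lambda)\mb L\mb L'_a\mb u\bigr],
\]
from which
\[
\|\mb R_{\mb L}(\lambda)\mb L'_a\|_{\mc H\to\mc H} \;\leq\; \frac{C'}{|\lambda|}\Bigl(1 + \frac{M}{\varepsilon}\Bigr)
\]
uniformly in $a\in\overline{\B^7_\delta}$. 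Choosing $\kappa^*$ sufficiently large that the right-hand side is at most $\tfrac12$ whenever $|\lambda|\geq \kappa^*$, the Neumann series converges with $\|(I - \mb R_{\mb L}(\lambda)\mb L'_a)^{-1}\| \leq 2$, and the factorization above yields
\[
\mb R_{\mb L_a}(\lambda) \;=\; (I - \mb R_{\mb L}(\lambda)\mb L'_a)^{-1}\,\mb R_{\mb L}(\lambda),
\]
giving the claimed uniform bound with $c := 2M/\varepsilon$. The whole argument is uniform in $a\in\overline{\B^7_\delta}$ precisely because the smoothness of $V_a$ and the bound $\|V_a\|_{W^{3,\infty}(\B^7)}\lesssim 1$ are uniform on this ball; the smallness of $\delta$ plays no essential role here beyond ensuring $\psi^*_a$ and hence $V_a$ are smooth.
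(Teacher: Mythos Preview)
Your proof is correct and follows essentially the same Neumann-series strategy as the paper, but you extract the crucial $|\lambda|^{-1}$ decay by a genuinely different mechanism.

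The paper uses the factorization $\lambda - \mb L_a = [I - \mb L'_a\mb R_{\mb L}(\lambda)](\lambda - \mb L)$ (the opposite order from yours) and gains the factor $|\lambda|^{-1}$ by reading off the first component of the equation $(\lambda - \mb L)\mb u = \mb f$: since $\xi^i\partial_i u_1 + (\lambda+1)u_1 - u_2 = f_1$, one obtains $\|u_1\|_{H^2} \lesssim |\lambda|^{-1}(\|\mb u\| + \|\mb f\|)$, and as $\mb L'_a$ only sees $u_1$ through the $H^2$-norm of $V_a u_1$, this gives $\|\mb L'_a\mb R_{\mb L}(\lambda)\| \lesssim |\lambda|^{-1}$ directly. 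Your route instead observes that $\mb L'_a\mb u = (0,V_a u_1)$ lands in $\mc D(\mb L)$ (because $V_a u_1 \in H^3$, one degree better than the ambient $H^2$), and then invokes the resolvent identity $\mb R_{\mb L}(\lambda)\mb v = \lambda^{-1}(\mb v + \mb R_{\mb L}(\lambda)\mb L\mb v)$ on $\mc D(\mb L)$. Both arguments exploit the same underlying smoothing of $\mb L'_a$ relative to $\mb L$; the paper's version is slightly more hands-on and avoids the closure argument, while yours is a bit more abstract and would transfer verbatim to any setting where the perturbation maps boundedly into the generator's domain.
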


\begin{proof}
In view of Proposition \ref{Prop:FreeEvol} and the identity
 \begin{align}\label{Eq:Operator_La_Id}
\lambda - \mb L_a = [1 - \mb L'_a \mb R_{\mb L}(\lambda)](\lambda - \mb L)
\end{align}
 it suffices to show that under suitable assumptions on $a$ and $\la$,
 $  \| \mb L'_a \mb R_{\mb L}(\lambda) \| \leq \frac12$
 such that the Neumann series $\sum_{k=0}^{\infty} [ \mb L'_a \mb R_{\mb L}(\lambda)]^k$  
converges. Then $\mb R_{\mb L_a}(\lambda) = \mb R_{\mb L}(\lambda) \sum_{k=0}^{\infty} [ \mb L'_a \mb R_{\mb L}(\lambda)]^k$ and the claimed bounds follow from Proposition \ref{Prop:FreeEvol}.  By definition of $\mb L'_a$ and the properties of the potential we have 
\[ \| \mb L'_a \mb R_{\mb L}(\lambda) \mb f\|  = \| V_a [\mb R_{\mb L}(\lambda) \mb f]_1 \|_{H^2(\B^7)} \lesssim   \| [\mb R_{\mb L}(\lambda) \mb f]_1 \|_{H^2(\B^7)},  \]
for all $a \in \overline{\B^7_{\delta}}$ with $0 < \delta < \delta^*$ suitably small. 
To estimate the last term, we use the structure of $\mb L$.  For $\mb f \in \mc H$, set $\mb u = \mb R_{\mb L}(\lambda) \mb f \in \mc D(\mb L)$ such that $(\la - \mb L) \mb u = \mb f$.  The first component of this equation reads 
\[ \xi^i \partial_i u_1(\xi) + (\la +1 ) u_1(\xi) - u_2(\xi) = f_1(\xi).\]
Hence,
\begin{align*}
\|u_1\|_{H^2(\B^7)} \lesssim \frac{1}{|\la + 1| } (\|u_1\|_{H^3 (\B^7)} + \|u_2\|_{H^2(\B^7)} + \|f_1\|_{H^2(\B^7)}).
\end{align*}
With Proposition \ref{Prop:FreeEvol} we infer that 
\[
\| [\mb R_{\mb L}(\lambda) \mb f]_1 \|_{H^2({\mathbb B^7)}} \lesssim |\lambda|^{-1} \| ( \|\mb R_{\mb L}(\lambda) \mb f  \| +   \| \mb f \|) \lesssim |\lambda|^{-1}   \| \mb f \|
\]
for all $\lambda \in \C$ with $\mathrm{Re} \lambda \geq -\frac12 + \varepsilon$ such that 
\[
 \| \mb L'_a \mb R_{\mb L}(\lambda) \mb f \| \lesssim |\lambda|^{-1} \|\mb f \| \]
for $a$ sufficiently small. The statement holds if $|\la| \geq \kappa^*$ for some suitably large $\kappa^* > 0$.
\end{proof}

\begin{lemma}\label{Le:SpecLa_Prelim}
Let  $\delta > 0$  be sufficiently small and $a \in \overline{\mathbb B^7_{\delta}}$.  If $\lambda \in \sigma(\mb L_a)$ and $\mathrm{Re} \la > -\frac12$, then $\la$ is an isolated eigenvalue.
\end{lemma}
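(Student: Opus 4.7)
The strategy is to exploit the compactness of $\mb L_a'$ together with the identity
\begin{equation*}
\lambda - \mb L_a = [1 - \mb L_a'\, \mb R_{\mb L}(\lambda)](\lambda - \mb L),
\end{equation*}
which is valid whenever $\lambda \in \rho(\mb L)$, i.e.\ for all $\lambda$ in the half-plane $H := \{\lambda \in \mathbb{C} : \mathrm{Re}\,\lambda > -\tfrac12\}$ by Proposition~\ref{Prop:FreeEvol}. Since $(\lambda - \mb L)\colon \mc D(\mb L) \to \mc H$ is invertible for $\lambda \in H$, the operator $\lambda - \mb L_a$ fails to be invertible on $\mc H$ precisely when $1 - \mb L_a'\,\mb R_{\mb L}(\lambda)$ does. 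By Lemma~\ref{Le:Perturbation}, $\mb L_a' : \mc H \to \mc H$ is compact, hence so is $\mb K(\lambda) := \mb L_a'\, \mb R_{\mb L}(\lambda)$ for every $\lambda \in H$. Moreover, $\lambda \mapsto \mb K(\lambda)$ is holomorphic on $H$ as a $\mc B(\mc H)$-valued map, since $\lambda \mapsto \mb R_{\mb L}(\lambda)$ is.

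Now the plan is to apply the analytic Fredholm theorem to the family $\lambda \mapsto 1 - \mb K(\lambda)$ on the connected open set $H$. For this we must verify that there exists at least one $\lambda_0 \in H$ for which $1 - \mb K(\lambda_0)$ is invertible. This follows directly from Lemma~\ref{Le:ResolventBounds_La}: fixing any $\varepsilon \in (0, \tfrac12)$, we know $\lambda_0 \in \rho(\mb L_a)$ for every $\lambda_0$ with $\mathrm{Re}\,\lambda_0 \geq -\tfrac12 + \varepsilon$ and $|\lambda_0| \geq \kappa^*$, so in particular such a $\lambda_0$ lies in $H$ and $1 - \mb K(\lambda_0)$ is invertible there. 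The analytic Fredholm theorem then yields that the set
\begin{equation*}
\Sigma := \{\lambda \in H : 1 - \mb K(\lambda) \text{ is not invertible}\}
\end{equation*}
is a discrete (in particular isolated) subset of $H$, and that on $H \setminus \Sigma$ the map $\lambda \mapsto (1 - \mb K(\lambda))^{-1}$ is meromorphic.

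It remains to verify that each $\lambda \in \Sigma$ actually belongs to the point spectrum of $\mb L_a$. Since $1 - \mb K(\lambda)$ is a compact perturbation of the identity, it is Fredholm of index $0$, so non-invertibility forces a nontrivial kernel: pick $\mb f \in \mc H \setminus \{0\}$ with $\mb f = \mb K(\lambda)\mb f$ and set $\mb u := \mb R_{\mb L}(\lambda)\mb f \in \mc D(\mb L) = \mc D(\mb L_a)$. Then $\mb u \neq 0$ (otherwise $\mb f = (\lambda - \mb L)\mb u = 0$) and
\begin{equation*}
(\lambda - \mb L_a)\mb u = (\lambda - \mb L)\mb u - \mb L_a' \mb u = \mb f - \mb L_a' \mb R_{\mb L}(\lambda)\mb f = 0,
\end{equation*}
so $\lambda \in \sigma_p(\mb L_a)$. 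Combined with the previous step, every $\lambda \in \sigma(\mb L_a) \cap H$ is an isolated point of $\Sigma \subset H$ and an eigenvalue, which is the desired conclusion. No step here presents a substantial obstacle; the only tool-level input one must be careful with is the verification of the non-triviality hypothesis of the analytic Fredholm theorem, which is precisely what Lemma~\ref{Le:ResolventBounds_La} supplies.
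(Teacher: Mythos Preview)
Your proof is correct and follows essentially the same approach as the paper: both use the identity $\lambda - \mb L_a = [1 - \mb L_a'\,\mb R_{\mb L}(\lambda)](\lambda - \mb L)$, invoke compactness of $\mb L_a'\,\mb R_{\mb L}(\lambda)$, and apply the analytic Fredholm theorem with Lemma~\ref{Le:ResolventBounds_La} supplying the non-triviality hypothesis. Your write-up is slightly more explicit about why $\mb u \neq 0$ and about the Fredholm-index-$0$ reasoning, but the argument is the same.
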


\begin{proof}
The assumptions on $\la$ imply that $\la \notin \sigma(\mb L)$.  We use again Eq.~\eqref{Eq:Operator_La_Id}
to see that $\la \in \sigma(\mb L_a)$ if and only if the operator $ 1 - \mb L'_a \mb R_{\mb L}(\la)$ is not bounded invertible. This means that 
$1$ is an eigenvalue of the compact operator $ \mb L'_a \mb R_{\mb L}(\la)$. Hence, there is an eigenfunction $\mb f \in \mc H$ satisfying $(1 - \mb L'_a \mb R_{\mb L}(\la)) \mb f = 0$. We set $\mb u := \mb R_{\mb L}(\la) \mb f \in \mc D(\mb L) = \mc D(\mb L_a)$. Then $(\la - \mb L) \mb u = \mb f$ and infer that 
$(\la - \mb L_a)\mb u = ( 1 - \mb L'_a \mb R_{\mb L}(\la) ) (\la - \mb L) \mb u =0$. We conclude that $\la$ is an eigenvalue of $\mb L_a$. Next, we apply the Analytic Fredholm Theorem, see e.g.~\cite{Simon2015_4}, Theorem 3.14.3, p.~194, to the function $\la \mapsto \mb L'_a \mb R_{\mb L}(\la)$ defined on the open half plane $\mathbb H_{-\frac12} := \{ \la \in \C: \mathrm{Re} \la > -\frac12 \}$.  By Lemma \ref{Le:ResolventBounds_La}, there are points in $\mathbb H_{-\frac12}$ such that  $1 - \mb L'_a \mb R_{\mb L}(\la)$ is invertible. Thus, it is invertible on all of  $\mathbb H_{-\frac12}$ except for a discrete set $S \subset \mathbb H_{-\frac12}$ and we infer that $\la$ is isolated. 
\end{proof}

\subsection{The spectrum of $\mb L_a$, $a=0$} \label{Sec:Spetrum_L0}
 \begin{proposition}\label{Prop:Spectrum_L0}
There exists an $0 < \omega_0 \leq \frac12$, such that  
\[ \sigma(\mb L_0) \subset \{ \lambda \in \C: \mathrm{Re} \lambda  \leq - \omega_0 \} \cup \{\lambda_0,\lambda_1,\lambda_2\},\]
where $\lambda_0 = 0$, $\lambda_1 = 1$ and $\lambda_2 = 3$ are eigenvalues. The geometric eigenspace of the eigenvalue $\lambda_2$ is one-dimensional and spanned by $\mb h_0 = (h_{0,1},h_{0,2})$,
\begin{align}\label{Eq:Eigenfunct_la=3}
  h_{0,1}(\xi) =   \frac{1}{(1+ |\xi|^2)^{2}} , \quad h_{0,2}(\xi) = 4 h_{0,1}(\xi)+ \xi^j \partial_j h_{0,1}(\xi).
  \end{align}
Furthermore, the geometric eigenspaces of  $\lambda_1$ and $\lambda_0$ are spanned by functions $\{\mb g^{(k)}_{0} \}_{k = 0,\dots,7}$, and  $\{\mb q^{(j)}_{0}\}_{j=1,\dots,7}$, respectively. Explicitly, we have 
\begin{align}\label{Eq:Eigenfunct_la=1}
\begin{split}
 g^{(0)}_{0,1}(\xi)   =  \frac{1-|\xi|^2}{\left( 1+ |\xi|^2\right)^2 }, \quad g^{(0)}_{0,2}(\xi) = 2 g^{(0)}_{0,1}(\xi) + \xi^j \partial_j g^{(0)}_{0,1}(\xi), \\
 g^{(j)}_{0,1}(\xi)   =  \frac{ \xi_j }{\left( 1+ |\xi|^2\right)^2 }, \quad g^{(j)}_{0,2}(\xi)= 2 g^{(j)}_{0,1}(\xi) + \xi^j \partial_j g^{(j)}_{0,1}(\xi),
\end{split}
 \end{align}
 and
 \begin{align}\label{Eq:Eigenfunct_la=0}
q^{(j)}_{0,1}(\xi)  =  \frac{ \xi_j ( 3-|\xi|^2) }{\left( 1+ |\xi|^2\right)^2 }, \quad q^{(j)}_{0,2}(\xi)  = q^{(j)}_{0,1}(\xi) + \xi^j \partial_j q^{(j)}_{0,1}(\xi).
\end{align}
\end{proposition}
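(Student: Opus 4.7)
The plan is threefold: first reduce the eigenvalue equation to the scalar ODEs $\mathcal{T}_\ell(\lambda) f = 0$ studied in Section \ref{Sec:ODE_analyis}; then read off the unstable point spectrum and the associated eigenspaces using Propositions \ref{prop:l >1_l=0} and \ref{prop:l=1}; and finally use Lemma \ref{Le:ResolventBounds_La} together with the Analytic Fredholm Theorem to extract a spectral gap $\omega_0 > 0$.

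For the reduction, any $\mb u = (u_1, u_2) \in \mc D(\mb L_0)$ satisfying $(\lambda - \mb L_0)\mb u = 0$ has $u_2(\xi) = \xi^j \partial_j u_1(\xi) + (\lambda + 1) u_1(\xi)$ by the first component, and substituting this into the second component together with $V_0(\xi) = 48/(1+|\xi|^2)^2$ yields the degenerate elliptic equation
\begin{equation*}
-(\delta^{ij} - \xi^i \xi^j)\partial_i\partial_j u_1 + 2(\lambda + 2)\xi^i\partial_i u_1 + (\lambda + 1)(\lambda + 2) u_1 - V_0 u_1 = 0.
\end{equation*}
Passing to polar coordinates and projecting onto spherical harmonics via \eqref{Decomp:Projection} decouples this into the ODEs $\mc T_\ell(\lambda) u_{\ell,m} = 0$ for each $\ell \in \N_0$ and $m \in \Omega_\ell$. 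Interior elliptic regularity on $(0,1)$, together with a Frobenius analysis at the regular singular points $\rho = 0$ and $\rho = 1$ exactly as in the treatment of Eq.~\eqref{Eq:DenseRg3} inside the proof of Lemma \ref{Le:LuPhi_DenseRange}, forces each nonzero $u_{\ell,m}$ to belong to $C^\infty[0,1]$, so that $\lambda \in \Sigma_\ell$.

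Invoking Propositions \ref{prop:l >1_l=0} and \ref{prop:l=1} then restricts $\lambda$ with $\mathrm{Re}\,\lambda \geq 0$ to the set $\{0,1,3\}$, and uniqueness of the $\Sigma_\ell$-solutions (up to scalars) combined with the dimensions $M_{7,0} = 1$ and $M_{7,1} = 7$ yields the advertised eigenspaces. For $\lambda = 3$ only $\ell = 0$ contributes, producing the one-dimensional eigenspace $\langle \mb h_0 \rangle$ with $h_{0,1}(\xi) = (1+|\xi|^2)^{-2}$. For $\lambda = 1$, $\ell = 0$ gives $g_{0,1}^{(0)}(\xi) = (1-|\xi|^2)(1+|\xi|^2)^{-2}$ and $\ell = 1$ gives $g_{0,1}^{(j)}(\xi) = \xi_j (1+|\xi|^2)^{-2}$ for $j = 1, \ldots, 7$, so the geometric eigenspace is $8$-dimensional. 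For $\lambda = 0$ only $\ell = 1$ contributes, giving the $7$-dimensional eigenspace spanned by the $\mb q_0^{(j)}$. In every case the second components are recovered from the relation $u_2 = \xi^j \partial_j u_1 + (\lambda+1) u_1$, matching \eqref{Eq:Eigenfunct_la=3}--\eqref{Eq:Eigenfunct_la=0}.

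It remains to produce $\omega_0 > 0$. By Lemma \ref{Le:SpecLa_Prelim}, every $\lambda \in \sigma(\mb L_0)$ with $\mathrm{Re}\,\lambda > -\tfrac12$ is an isolated eigenvalue. Applying the Analytic Fredholm Theorem to the holomorphic compact-operator-valued map $\lambda \mapsto \mb L'_0 \mb R_{\mb L}(\lambda)$ on any strip $\{\mathrm{Re}\,\lambda > -\tfrac12 + \varepsilon\}$ shows that the eigenvalue set is discrete, and the norm bound of Lemma \ref{Le:ResolventBounds_La} localizes it to $|\lambda| \leq \kappa^*$, so that only finitely many eigenvalues lie in $\{-\tfrac12 + \varepsilon < \mathrm{Re}\,\lambda \leq 0\}$. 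Combined with the identification $\sigma(\mb L_0) \cap \{\mathrm{Re}\,\lambda \geq 0\} = \{0, 1, 3\}$, this finiteness delivers some $\omega_0 \in (0, \tfrac12]$ with $\sigma(\mb L_0) \cap \{-\omega_0 < \mathrm{Re}\,\lambda < 0\} = \emptyset$. The main obstacle I expect is the regularity step: one must carefully translate the Hilbert-space regularity $\mb u \in \mc D(\mb L)$ into $C^\infty[0,1]$-regularity of every radial profile, so that the spectral problem is genuinely equivalent to the family $\bigcup_\ell \Sigma_\ell$ analyzed in Section \ref{Sec:ODE_analyis}.
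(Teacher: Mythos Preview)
Your proposal is correct and follows essentially the same route as the paper: reduce to the elliptic equation for $u_1$, decompose into spherical harmonics to land on $\mc T_\ell(\lambda)u_{\ell,m}=0$, invoke Propositions~\ref{prop:l >1_l=0}--\ref{prop:l=1}, and extract $\omega_0$ via Lemmas~\ref{Le:ResolventBounds_La}--\ref{Le:SpecLa_Prelim}. One small caveat: the Frobenius argument at $\rho=1$ is not ``exactly as in'' Lemma~\ref{Le:LuPhi_DenseRange}, since there $\lambda=\tfrac52$ is fixed whereas here the indices are $\{0,\,2-\lambda\}$ and depend on $\lambda$; the paper does this case analysis explicitly (distinguishing $\lambda\notin\N_0$ from $\lambda\in\{0,1,2\}$ with logarithmic branches and $\lambda\ge 3$) and uses the $H^3(\tfrac12,1)$ membership of $u_{\ell,m}$---not interior elliptic regularity---to exclude the non-analytic branch.
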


\begin{proof}
We prove that
\begin{align*}
 \{ \la \in \C: \mathrm{Re} \la \geq 0 \} \setminus \{0,1,3\}  \subset \rho(\mb L_0).
\end{align*}
Let $\mathrm{Re} \la \geq 0$ and assume that $\la \in \sigma(\mb L_0)$. By Lemma \ref{Le:SpecLa_Prelim}, $\la$ is an eigenvalue and there is a non-trivial $\mb u \in \mc D(\mb L_0)$ satisfying the eigenvalue equation $(\lambda - \mb L_0) \mb u =0$ (in a suitable weak sense). As in the proof of Lemma \ref{Le:LuPhi_DenseRange} this equation reduces to a degenerate elliptic problem for $u_1$
\begin{align}\label{Eq:Spectrum_L0}
- (\delta^{ij} - \xi^i \xi^j) \partial_i \partial_j u_1(\xi) + 2(\lambda +2) \xi^i \partial_i u_1(\xi) + (\lambda + 1)(\lambda + 2) u_1(\xi)  - V_0(\xi) u_1(\xi) = 0,
\end{align}
and $u_2$ is given by 
\begin{align}\label{Eq:Spectrum_L0_2}
 u_2(\xi) = \xi^i \partial_i u_1(\xi) + (\lambda + 1) u_1(\xi).
\end{align}
Now, $\mb u \in \mc H$ implies that $u_1 \in H^3(\mathbb B^7)$ and by elliptic regularity, we infer that  $u_1 \in C^{\infty}(\mathbb B^7) \cap H^3(\mathbb B^7)$. 
Since the potential $V_0$ is radial we can do a decomposition into spherical harmonics. We expand $u_1$ according to 
\[ u_1(\rho \omega) = \sum_{\ell = 0}^{{\infty}}  \sum_{m \in \Omega_{\ell}} (u_1 (\rho \cdot)|Y_{\ell, m} )_{L^2(\mathbb S^{d-1})} Y_{\ell, m} \left (\omega \right) =: \sum_{\ell = 0}^{{\infty}}  \sum_{m \in \Omega_{\ell}} u_{\ell, m}(\rho)  Y_{\ell, m} \left (\omega \right),\]
where the sum converges in $H^{k}(\B^7_{1-\varepsilon})$ for any $k \in \N_0$ and fixed $\varepsilon >0$, see Eq.~\eqref{Decomp:SpherHarm_Hk}. With this Eq.~\eqref{Eq:Spectrum_L0} decouples into infinitely many ODEs. The fact that $u_1$ is non-trivial implies that there are indices $\ell \in \N_0$, $m \in \Omega_{\ell}$, such that that $u_{\ell, m}$ is non-trivial and satisfies
\begin{align*}
\mc T_{\ell}(\lambda) u_{\ell,m}(\rho) = 0,
\end{align*}
where $\mc T_{\ell}(\lambda)$ is given in Eq.~\eqref{Eq:ODE_L0}. The properties of $u_1$ imply that $u_{\ell, m} \in C^{\infty}[0,1) \cap H^3(\frac12,1)$.
By inspection, the Frobenius indices at $\rho = 1$ are $\{0, 2 - \lambda \}$ . If $\la \notin \N_0$, then $u_{\ell, m}$ is either analytic or it behaves like $ (1-\rho)^{2 - \lambda }$. For $\la \in \N$, $\la \geq 3$, we come to the same conclusion.  If $\la =2,1$ or $0$ then the possible non-analytic behavior is described by $\log(1-\rho)$, $\log(1-\rho)(1-\rho)$ and $\log(1-\rho)(1-\rho)^2$, respectively. In all cases, non-analyticity can be excluded by the requirement  $u_{\ell, m} \in H^3(\frac12,1)$ and we infer that $u_{\ell, m} \in C^{\infty}[0,1]$. In view of the Propositions \ref{prop:l >1_l=0} - \ref{prop:l=1} we have $\ell \in \{0,1\}$ and we conclude that if $\la \notin \{0,1,3\}$ then $\la \in \rho(\mb L_0)$ by contradiction.

 If $\la = 3$, then $\ell = 0$ and 
$u_{0,m}(\rho) = (1+\rho^2)^{-2}$ where $m \in \Omega_m = \{1\}$, which implies Eq.~\eqref{Eq:Eigenfunct_la=3} since
$Y_{0,m}$ is a constant. If $\la = 1$, then either $\ell = 0$ and $u_{0,m} = f_0(\cdot;1)$ or $\ell = 1$ and $u_{1,m} = f_1(\cdot;1)$. Since $Y_{1,m}(\omega)$ is a constant multiple of $\omega^m$ for $m=1,\dots,7$, we obtain Eq.~\eqref{Eq:Eigenfunct_la=1}. Finally, if $\la = 0$, then $\ell = 1$ and $u_{1,m} = f_1(\cdot;0)$, which yields  Eq.~\eqref{Eq:Eigenfunct_la=0}. The claim now follows from the openness of $\rho(\mb L)$,  Lemma \ref{Le:ResolventBounds_La} and Lemma \ref{Le:SpecLa_Prelim}.
\end{proof}

\subsubsection{Spectral projections}
We define Riesz projections associated to the unstable eigenvalues of $\mb L_0$. We set
\begin{align*}
\mb H_0 := \frac{1}{2 \pi i} \int_{\gamma_2} \mb R_{\mb L_0}(\lambda) d\lambda
\quad \mb P_0 := \frac{1}{2 \pi i} \int_{\gamma_1} \mb R_{\mb L_0}(\lambda) d\lambda, \quad \mb Q_0 := \frac{1}{2 \pi i} \int_{\gamma_0} \mb R_{\mb L_0}(\lambda) d\lambda 
\end{align*}
where $\gamma_j (s) := \lambda_j +  \frac{\omega_0}{2} e^{2\pi i s}$ for $s \in [0,1]$.

\begin{lemma}\label{Le:Rank_P0}
	We have
	\[ \mathrm{dim}~\mathrm{rg }~\mb H_0  = 1, \quad  \mathrm{dim}~ \mathrm{rg }~\mb P_0 = 8, \quad \mathrm{dim}~\mathrm{rg }~\mb Q_0  = 7. \]
\end{lemma}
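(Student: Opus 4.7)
The rank of each Riesz projection equals the algebraic multiplicity of the corresponding isolated eigenvalue (finite by Lemma~\ref{Le:SpecLa_Prelim} together with the Analytic Fredholm argument in its proof), and its range is the generalized eigenspace $\ker((\lambda_j - \mb L_0)^{n_j})$ for some $n_j \in \N$. Since Proposition~\ref{Prop:Spectrum_L0} exhibits $1$, $8$, and $7$ linearly independent genuine eigenfunctions at $\lambda_2 = 3$, $\lambda_1 = 1$, and $\lambda_0 = 0$ respectively, we have the lower bounds $\dim \mathrm{rg}\,\mb H_0 \geq 1$, $\dim \mathrm{rg}\,\mb P_0 \geq 8$, and $\dim \mathrm{rg}\,\mb Q_0 \geq 7$. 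It therefore suffices to prove that each of $\lambda_0, \lambda_1, \lambda_2$ is semisimple, i.e., $\ker((\lambda_j - \mb L_0)^2) = \ker(\lambda_j - \mb L_0)$.

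The plan is: for every eigenfunction $\mb v$ from the list of Proposition~\ref{Prop:Spectrum_L0}, show that the equation $(\lambda_j - \mb L_0)\mb u = \mb v$ admits no solution $\mb u \in \mc D(\mb L_0)$. Solving the first component for $u_2 = (\lambda_j + 1) u_1 + \xi^k \partial_k u_1 - v_1$ and substituting into the second, one reduces (exactly as in the proof of Proposition~\ref{Prop:Spectrum_L0}) to an inhomogeneous degenerate elliptic equation for $u_1$ with smooth right-hand side $\tilde v := (\lambda_j + 2)v_1 + \xi^k \partial_k v_1 + v_2$. Because $V_0$ is radial and $\tilde v$ inherits the angular content of $\mb v$, projecting onto spherical harmonics decouples the problem into finitely many ODEs of the form $\mc T_\ell(\lambda_j) u_{\ell,m}(\rho) = g_{\ell,m}(\rho)$ with $g_{\ell,m} \in C^\infty[0,1]$. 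Elliptic regularity together with $\mb u \in \mc H$ forces $u_{\ell,m} \in C^\infty[0,1) \cap H^3(\tfrac{1}{2},1)$, and the same Frobenius analysis at $\rho = 1$ used in the proof of Proposition~\ref{Prop:Spectrum_L0} then yields $u_{\ell,m} \in C^\infty[0,1]$.

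Only four pairs $(\ell,\lambda_j) \in \{(0,3), (0,1), (1,1), (1,0)\}$ carry nontrivial inhomogeneous data, and rotational symmetry of $\mb L_0$ reduces the check for $\lambda_1$ and $\lambda_0$ to a single representative eigenfunction in each angular sector. For each such pair, Propositions~\ref{prop:l >1_l=0} and \ref{prop:l=1} supply an explicit polynomial solution to the homogeneous equation, so a fundamental system $\{\psi_{\ell,0}, \psi_{\ell,1}\}$ with $\psi_{\ell,0}$ regular at $\rho = 0$ and $\psi_{\ell,1}$ regular at $\rho = 1$ is available in closed form. Writing a particular solution by variation of parameters, fixing one constant by regularity at $\rho = 0$, and examining the Frobenius indices $\{0, 2-\lambda_j\}$ at $\rho = 1$ reduces the question to the vanishing of a single residue-type integral, which is the sole obstruction to $C^\infty$ extension across $\rho = 1$. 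A direct computation with the explicit $g_{\ell,m}$ then shows this obstruction is non-zero in each of the four cases, ruling out Jordan chains of length two and giving the claimed equalities.

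The hard part is precisely this last step. For the symmetry-induced eigenvalues $\lambda_0$ and $\lambda_1$, the absence of Jordan blocks is conceptually expected since the underlying Lorentz and translation groups produce only first-order variations of $u^*_{T,x_0,a}$, but this intuition still has to be converted into genuine non-vanishing of the boundary integral. The case $\lambda_2 = 3$ is the most delicate, since it is not tied to any continuous symmetry and one must verify the non-vanishing by hand using the explicit eigenfunction $\mb h_0$ from \eqref{Eq:Eigenfunct_la=3}; a potential resonance between the $s=0$ and $s=-1$ Frobenius branches at $\rho=1$ could in principle introduce a logarithmic correction that one has to track carefully.
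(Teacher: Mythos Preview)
Your proposal is correct and follows essentially the same strategy as the paper: reduce the absence of Jordan chains to the nonexistence of sufficiently regular solutions of four explicit inhomogeneous ODEs $\mc T_\ell(\lambda_j)u = g$, then rule these out via variation of parameters using the closed-form homogeneous solutions. A few remarks on where your write-up diverges from the paper.

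First, the paper works with the threshold $C^2[0,1]$ (from the one-dimensional Sobolev embedding $H^3(\tfrac12,1)\hookrightarrow C^2$) rather than $C^\infty[0,1]$. Your sentence ``the same Frobenius analysis at $\rho=1$ \dots\ then yields $u_{\ell,m}\in C^\infty[0,1]$'' is a shortcut that does not transfer verbatim from the homogeneous case: for the inhomogeneous equation one must first check that the particular solution's singular part is of the same Frobenius type as the non-analytic homogeneous branch (so that $H^3$ indeed forces analyticity). This is true here, but the paper sidesteps the issue entirely by simply showing the candidate solution fails to be $C^2$.

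Second, your phrase ``a single residue-type integral'' is accurate for $\lambda_2=3$ (where the obstruction is positivity of $\int_0^1 \hat u_1 H (1-s^2)s^6\,ds$), but for $\lambda_1=1$, $\ell=0$ the paper has to combine \emph{two} logarithmic contributions and verify that the resulting coefficient $6C+\tfrac12$ is nonzero with $C=-2\int_0^1 s^6(1+s^2)^{-4}ds$; similarly for $\lambda_0=0$ the singularity appears only at the level of $u''$. So the explicit checks are slightly more delicate than a single residue in three of the four cases.

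Finally, the paper obtains finite rank via invariance of the essential spectrum under the compact perturbation $\mb L'_0$, rather than via Analytic Fredholm; both routes are valid.
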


\begin{proof}
	First, we observe that the projection operators have finite rank. If this was not the case, they would belong to the the essential spectrum of $\mb L_a$
	see 	\cite{kato}, p.~239, Theorem 5.28 and (5.33) on p.~242. However, since the essential spectrum is invariant under compact perturbations, see \cite{kato}, Theorem 5.35, p.~244, this contradicts 						Proposition \ref{Prop:FreeEvol}. Next, we show that $\mathrm{dim}~ \mathrm{rg }~\mb P_0 = 8$ (the abstract part of the argument follows analogously for the other projections).
	 First, we have $\mc H = \mathrm{ker} \mb P_0 \oplus \mathrm{rg} \mb P_0$ and $	\mb L_0$ is decomposed into parts on the respective subspaces. Note that $\mathrm{rg} \mb P_0 \subset \mc D(\mb L_0)$ and that the spectrum of the part of $\mb L_0$ on $\mathrm{rg} \mb P_0$ is given by 
	\[ \sigma(\mb L_0|_{\mathrm{rg} \mb P_0}) = \{1\}. \]
	Since $\mb P_0$ has finite rank, $\mb L_0|_{\mathrm{rg} \mb P_0}$ is a bounded operator. We now consider the operator $\lambda - \mb L_0|_{\mathrm{rg} \mb P_0}$ for $\lambda = 1$,  which is nilpotent due to the spectral structure of $ \mb L_0|_{\mathrm{rg} \mb P_0}$. Hence, there exists a minimal $m \in \N$ such that 
	\begin{align}\label{Eq:AM}
	(\la -  \mb L_0|_{\mathrm{rg} \mb P_0})^m \mb u = 0
	\end{align}
	for $\la = 1$ and all $\mb u \in \mathrm{rg} \mb P_0$. Obviously, $\mathrm{ker}(1 - \mb L_0) = \mathrm{span} ( \mb g_0^{(0)},\mb g_0^{(1)},\dots,\mb g_0^{(7)} ) \subset \mathrm{rg} \mb P_0$. If $m =1$, then the reverse inclusion holds and the claim follows. We argue by contradiction and assume that $m \geq 2$. Then there is  $\mb u \in \mathrm{rg} \mb P_0$ such that 
	\begin{equation}\label{Eq:AlgebMult}
	(1 -  \mb L_0)\mb u =\mb v,
	\end{equation}
	for some nontrivial $\mb v \in \ker(1 - \mb L_0)$. Eq.~\eqref{Eq:AlgebMult} yields an elliptic equation for $u_1$ given by 
	\begin{align}\label{Eq:AlgebMult_Elliptic}
	- (\delta^{ij} - \xi^i \xi^j) \partial_i \partial_j u_1(\xi) + 2(\lambda +2) \xi^i \partial_i u_1(\xi) + (\lambda + 1)(\lambda + 2) u_1(\xi) - V_0(\xi) u_1(\xi) = F(\xi),
	\end{align}
	for $\la =1$, where
	\[ F(\xi) = \xi^i \partial_i v_{1}(\xi) + (\lambda +2) v_{1}(\xi) + v_{2}(\xi). \]
	Now, $\mb v \in \ker(1 - \mb L_0)$ implies that $\mb v =  \sum_{k=0}^{7 }\tilde {\alpha_k} \mb g_0^{(k)}$ for some constants $ \tilde {\alpha_k} \in \C$, see Proposition \ref{Prop:Spectrum_L0}. To abbreviate the notation we set $g_k := g^{(k)}_{0,1}$ to obtain
	\begin{align*}
	F(\xi) =\sum_{k=0 }^{7} \tilde {\alpha_k}   (2 \xi^i\partial_i  g_k+ 5 g_k).
	\end{align*}

	Note that $g_0(\xi)$ and $g_j(\xi)$ are constant multiples of $f_0(|\xi|) Y_{0,1}(\tfrac{\xi}{|\xi|})$ and $f_1(|\xi|) Y_{1,j}(\tfrac{\xi}{|\xi|})$ respectively,
	for $j=1,\dots, 7$, 
	with $f_0  = f_0(\cdot;1)$,  $f_1 =f_1(\cdot;1)$ defined in Proposition \ref{prop:l >1_l=0} and Proposition \ref{prop:l=1}. Hence, in polar coordinates the right hand side of Eq.~\eqref{Eq:AlgebMult_Elliptic} can be written as 
	\[ F(\rho \omega) = \alpha_0   [ 2 \rho f_0'(\rho) +  5 f_0(\rho)]  Y_{0,1}(\omega) + \sum_{j=1}^{7}   \alpha_j   [2 \rho f'_1(\rho) +  5 f_1(\rho)]  Y_{1,j}(\omega) \] 
	with $\alpha_k \neq 0$ for at least one $k \in \{0, \dots, 7\}$.
	Since $u_1$ is smooth on $\mathbb B^7$ we decompose it into spherical harmonics and set
	$u_{\ell,m}(\rho) := (u_1(\rho \cdot)|Y_{\ell,m})$. The properties of $u_1$ imply that $u_{\ell,m} \in C^{\infty}[0,1) \cap H^3(\frac12,1)$ such that $u_{\ell,m}  \in C^2[0,1]$ by Sobolev embedding. In view of Eq.~\eqref{Eq:AlgebMult_Elliptic} 
	the following ODEs are satisfied,
	\begin{align}\label{Eq:AlgebMult_ODEs}
	\mc T_{0}(1) u_{0,1} = -\alpha_0 G_0, \quad \mc T_{1}(1) u_{1,j} = -\alpha_j G_1,
	\end{align}
	for $j = 1, \dots, 7$ and $G_i(\rho) = 2 \rho f_i'(\rho) +  5 f_i(\rho)$, $i = 0,1$. Assume that  $\alpha_0 \neq 0$. Without loss of generality we set $\alpha_0  = -1$ and study the ODE problem
	\begin{equation}\label{Eq:NonHom1}
	(1-\rho^2)u''(\rho)+(\tfrac{6}{\rho}-6\rho )u'(\rho)-(6-\tfrac{48}{(1+\rho^2)^2}) u(\rho)=G_0(\rho)
	\end{equation}
	with
	\begin{equation*}
	G_0(\rho)=-\frac{\rho^4+12\rho^2-5}{(1+\rho^2)^3}.
	\end{equation*}
	We claim that there is no $C^2[0,1]$ solution to Eq.~\eqref{Eq:NonHom1}. For the homogeneous version of that equation a fundamental system is given by $\{\hat u_1, \hat u_2 \}$, where 
	\[ \hat{u}_1(\rho)=f_0(\rho)=\frac{1-\rho^2}{(1+\rho^2)^2} \]
	and
	\[ \hat{u}_2(\rho) = \hat{u}_1(\rho) \int_{\frac{1}{2}}^{\rho} \frac{ds}{s^6\,\hat{u}_1(s)^2} =\frac{1-\rho^2}{(1+\rho^2)^2}\int_{\frac{1}{2}}^{\rho}\frac{(1+s^2)^4}{s^6(1-s^2)^2}ds\]
	for $\rho\in (0,1).$ Note that
	\begin{equation}\label{Eq:Asympt0}
	\hat{u}_2(\rho)\simeq \rho^{-5} \quad \text{as} \quad \rho \ra 0^{+}
	\end{equation}
	and
	\begin{equation}\label{Eq:Asympt1}
	\hat{u}_2(\rho)=2-6(1-\rho)\ln(1-\rho)+O(1-\rho) \quad \text{as} \quad \rho \ra 1^{-}.
	\end{equation}
	Since $W(\hat{u}_1,\hat{u}_2)(\rho) = \rho^{-6}$, we solve Eq.~\eqref{Eq:NonHom1} by the method of variation of parameters. Namely, for $\rho\in(0,1)$ we have
	\begin{equation}\label{Eq:SolGen}
	u(\rho)=c_1\hat{u}_1(\rho)+c_2\hat{u}_2(\rho)+\hat{u}_2(\rho)\int_{0}^{\rho}\frac{\hat{u}_1(s)G_0(s)s^6}{1-s^2}dx-\hat{u}_1(\rho)\int_{0}^{\rho}\frac{\hat{u}_2(s)G_0(s)s^6}{1-s^2}ds
	\end{equation}
	for some $c_1,c_2\in \mathbb{C}$. If $u\in C^2[0,1]$ then $c_2=0$ in the above expression. Subsequently, by differentiating Eq.~\eqref{Eq:SolGen} we get
	\begin{equation}\label{Eq:SolGen2}
	u'(\rho)=c_1\hat{u}_1'(\rho)+\hat{u}_2'(\rho)\int_{0}^{\rho}\frac{\hat{u}_1(s)G_0(s)s^6}{1-s^2}dx-\hat{u}_1'(\rho)\int_{0}^{\rho}\frac{\hat{u}_2(s)G_0(s)s^6}{1-s^2}ds,
	\end{equation}
	for $\rho\in(0,1)$. We claim that 
	\begin{equation}\label{Eq:claim}
	u'(\rho)\simeq \ln(1-\rho) \quad \text{as} \quad \rho \ra 1^{-}.
	\end{equation}
	To establish this we study the asymptotics of all three terms on the right hand side of Eq.~\eqref{Eq:SolGen2}. First of all, the first term is bounded near $\rho=1$. Then, from
	\[ C:=\int_{0}^{1}\frac{\hat{u}_1(s)G_0(s)s^6}{1-s^2}ds=\int_{0}^{1}\frac{s^2}{1-s^2}\frac{d}{ds}\left[\frac{s^5(1-s^2)^2}{(1+s^2)^4}\right]ds=-2\int_{0}^{1}\frac{s^6}{(1+s^2)^4}ds<0 \] 
	and Eq.~\eqref{Eq:Asympt1} we have
	\[
	\hat{u}_2'(\rho)\int_{0}^{\rho}\frac{\hat{u}_1(s)G_0(s)s^6}{1-s^2}ds \sim 6C \ln(1-\rho) \quad \text{as} \quad \rho\ra 1^-. 
	\]
	Also,
	\[
	\hat{u}_1'(\rho)\int_{0}^{\rho}\frac{\hat{u}_2(s)G(s)s^6}{1-s^2}ds \sim -\frac{1}{2} \ln(1-\rho) \quad \text{as} \quad \rho \ra 1^-.
	\]
	Now we easily observe that  
	$6C+\frac{1}{2}>0$ and then infer from these asymptotics and Eq.~\eqref{Eq:SolGen2} that \eqref{Eq:claim} holds. We therefore conclude that there is no $C^2[0,1]$ solution to Eq.~\eqref{Eq:NonHom1} and hence $\alpha_0=0$.
	
	Then, $\alpha_j \neq 0$ for at least one $j \in \{1,\dots, 7\}$. Without loss of generality, we set $\alpha_1 = -1$. Eq.~\eqref{Eq:AlgebMult_ODEs} implies that the ODE
	\begin{align}\label{Eq:NonHom3}
	(1-\rho^2) u''(\rho) + (\tfrac{6}{\rho}   - 6 \rho ) u'(\rho)   - ( 6 + \tfrac{6}{\rho^2} - \tfrac{48}{(1+\rho^2)^2} ) u(\rho) = G_1(\rho),
	\end{align}
	with $G_1(\rho) = \frac{\rho (7-\rho^2)}{(1+\rho^2)^2}$
	has a solution which is in $C^2[0,1]$. We will again show that this is impossible. More precisely, we prove that any solution to Eq.~\eqref{Eq:NonHom3} that is bounded near $\rho=0$ has an unbounded derivative near $\rho=1.$ Note that	$\hat{u}_1(\rho)=f_1(\rho)=\frac{\rho}{(1+\rho^2)^2}$,
	solves the homogeneous version of Eq.~\eqref{Eq:NonHom3}. We can therefore compute another solution, namely
	\[ \hat{u}_2(\rho) = \hat{u}_1(\rho) \int_{1}^{\rho} \frac{ds}{s^6\,\hat{u}_1(s)^2},\]
	such that $\hat{u}_1$ and $\hat{u}_2$ are linearly independent.
	In particular, we have
	$\hat{u}_2(\rho)\simeq \rho^{-6}$ as $\rho \ra 0^{+}$ and
	$\hat{u}_2(\rho)\simeq 1-\rho$ as $\rho \ra 1^{-}$.
	Now, the general solution to Eq.~\eqref{Eq:NonHom3} for $\rho\in(0,1)$ is
	\begin{equation*}
	u(\rho)=c_1 \hat{u}_1(\rho) + c_2\hat{u}_2(\rho)+\hat{u}_2(\rho)\int_{0}^{\rho}\frac{\hat{u}_1(s)G_1(s)s^6}{1-s^2}\,ds-\hat{u}_1(\rho)\int_{0}^{\rho}\frac{u_2(s)G_1(s)s^6}{1-s^2}\,ds.
	\end{equation*}
	Boundedness of $u$ at the origin implies $c_2=0$. Then we simply observe that
	$u'(\rho) \simeq \ln(1-\rho)$ as $\rho \ra 1^{-}$.  We finally infer that $m = 1$ in Eq.~\eqref{Eq:AM}, which in turn implies the claim for $\mb P_0$. 
	\\
	
	For $\mb H_0$, the right hand side of the analogue of Eq.~\eqref{Eq:AlgebMult_Elliptic} is given by $F(\xi) =  2 \xi^{j} \partial_j h_{0,1}(\xi) + 9 h_{0,1}(\xi)$ which leads to the claim that  
	the ODE  
	\begin{align}\label{Eq:NonHom2}
	\begin{split}
	(1-\rho^2) u''(\rho) + (\tfrac{6}{\rho}   - 10 \rho ) u'(\rho)  - (20  - \tfrac{48}{(1+\rho^2)^2}) u(\rho)  = H(\rho)
	\end{split}
	\end{align}
	with $H(\rho) = \frac{9+\rho^2}{(1+\rho^2)^3}$
	has a solution $u \in C^2[0,1]$. However, we exclude this in a similar way as above. Namely, we show that every solution to Eq.~\eqref{Eq:NonHom2} that is bounded near the origin is necessarily unbounded near $\rho=1$. Since $\hat{u}_1(\rho)=\frac{1}{(1+\rho^2)^2}$ solves the homogeneous version of Eq.~\eqref{Eq:NonHom2}, another (linearly independent) solution is
	\[ \hat{u}_2(\rho) = \hat{u}_1(\rho) \int_{\frac{1}{2}}^{\rho} \frac{ds}{s^6(1-s^2)^2\,\hat{u}_1(s)^2} =\frac{1}{(1+\rho^2)^2}\int_{\frac{1}{2}}^{\rho}\frac{(1+s^2)^4}{s^6(1-s^2)^2}ds\]
	for $\rho\in(0,1).$ Note that $\hat{u}_2$ is singular at both endpoints of the interval $(0,1)$. More precisely, we have
	$\hat{u}_2(\rho)\simeq \rho^{-5}$ as $\rho \ra 0^{+}$	and
	$\hat{u}_2(\rho)\simeq (1-\rho)^{-1}$ as  $\rho \ra 1^{-}$.
	Now, for $\rho\in(0,1)$ we have	\begin{align}\label{Eq:SolGen3}
	\begin{split}
	u(\rho)=c_1\hat{u}_1(\rho)+c_2\hat{u}_2(\rho)&+ \hat{u}_2(\rho)\int_{0}^{\rho}\hat{u}_1(s)H(s)(1-s^2)s^6ds \\&-\hat{u}_1(\rho)\int_{0}^{\rho}\hat{u}_2(s)H(s)(1-s^2)s^6ds,
	\end{split}
	\end{align}
	for some $c_1,c_2\in \mathbb{C}$. Boundedness of $u$ at the origin  forces $c_2=0$ in the above expression. 
	Note that the first and the last term on the right hand side of Eq.~\eqref{Eq:SolGen3} are bounded near $\rho=1$. However, the remaining term is unbounded, unless the integral multiplying $\hat{u}_2(\rho)$ is equal to zero for $\rho=1$. This is impossible since the integrand is strictly positive on $(0,1).$
	Therefore, $u(\rho)\simeq \hat{u}_2(\rho)\simeq(1-\rho)^{-1}$ near $\rho=1$.
	\\
	
	Finally, for $\mb Q_0$, 
	\[ F(\xi) = \sum_{j=1}^7 \alpha_j   (2 \xi^{j} \partial_j q_{0,1}^{(j)}(\xi) + 3 q_{0,1}^{(j)}(\xi))\] 
	and we have to exclude the existence of $C^2[0,1]$ solutions of the equation
	\begin{align}\label{Eq:NonHom4}
	\begin{split}
	(1-\rho^2) u''(\rho) + (\tfrac{6}{\rho}   - 4 \rho ) u'(\rho)  - (2 + \tfrac{6}{\rho^2} - \tfrac{48}{(1+\rho^2)^2}) u(\rho)  = Q(\rho)
	\end{split}
	\end{align}
	with $Q(\rho) = \frac{- \rho(5 \rho^4+ 6\rho^2-15)}{(1+\rho^2)^2}$.
	We start with the observation that
	$\hat{u}_1(\rho)=\frac{3\rho-\rho^3}{(1+\rho^2)^2}$
	solves the homogeneous version of Eq.~\eqref{Eq:NonHom4}. Then another (linearly independent) solution is
	\[ \hat{u}_2(\rho) = \hat{u}_1(\rho) \int_{1}^{\rho} \frac{1-s^2}{s^6\,\hat{u}_1(s)^2}ds.\]
	Furthermore, we have the following asymptotics,
	$\hat{u}_2(\rho)\simeq \rho^{-6}$ as $\rho \ra 0^{+}$
	and
	$\hat{u}_2(\rho)\simeq (1-\rho)^{2}$ as $\rho \ra 1^{-}$.
	Now, every solution to Eq.~\eqref{Eq:NonHom4} is of the following form
	\begin{equation*}
	u(\rho)=c_1\hat{u}_1(\rho)+c_2\hat{u}_2(\rho)+\hat{u}_2(\rho)\int_{0}^{\rho}\frac{\hat{u}_1(s)Q(s)s^6}{(1-s^2)^2}ds-\hat{u}_1(\rho)\int_{0}^{\rho}\frac{\hat{u}_2(s)Q(s)s^6}{(1-s^2)^2}ds,
	\end{equation*}
	for some complex constants $c_1, c_2$ and $\rho\in(0,1)$.
	Singular behavior of $\hat{u}_2(\rho)$ at $\rho=0$ forces $c_2=0$. Then simple analysis yields
	$u''(\rho) \simeq \ln(1-\rho)$ as $\rho \ra 1^{-}$ and the claim follows.
\end{proof}

\subsection{The spectrum of $\mb L_a$ for $a \neq 0$}

We turn to the investigation of the spectrum of $\mb L_a$ for small $a$. 

\begin{lemma}\label{Le:Spectrum_La}
Let  $\delta > 0$  be sufficiently small. Then for all $a \in \overline{\mathbb B^7_{\delta}}$ the following holds: \[ \sigma(\mb L_a) \subset \{ \lambda \in \C: \mathrm{Re} \lambda  < - \tfrac{\omega_0}{2} \} \cup \{\lambda_0,\lambda_1,\lambda_2\},\]
where $\omega_0 > 0$ is the constant in Propositions \ref{Prop:Spectrum_L0} and $\lambda_0 =0$, $\la_1=1$, $\la_2=3$ are eigenvalues.  The eigenspace of $\lambda_2$ is one-dimensional and spanned by $\mb h_a = (h_{a,1}, h_{a,2})$, where    
\[   h_{a,1}(\xi) =  \frac{1}{(2\gamma(\xi,a)^2 + |\xi|^2 - 1)^2}, \quad h_{a,2}(\xi) = 4 h_{a,1}(\xi) + \xi^j \partial_j h_{a,1}(\xi).  \]
Furthermore, the eigenspaces of $\lambda_0$ and $\lambda_1$  are spanned by $\{\mb g^{(k)}_{a} \}_{k = 0,\dots,7}$ and  $\{\mb q^{(j)}_{a}\}_{j=1,\dots,7}$, respectively. Explicitly, we have 
\begin{align*}
g_{a,1}^{(0)}(\xi) & =\frac{1}{2\gamma(\xi,a)^2+|\xi|^2-1}\left(A_0(a)\gamma(\xi,a)^2-2\frac{\gamma(\xi,a)+A_0(a)(|\xi|^2-1)}{2\gamma(\xi,a)^2+|\xi|^2-1}\right), \\
 g^{(0)}_{a,2}(\xi)  & = 2 g^{(0)}_{a,1}(\xi) + \xi^j \partial_j g^{(0)}_{a,1}\xi), \\
g_{a,1}^{(j)}(\xi) & =\frac{1}{2\gamma(\xi,a)^2+|\xi|^2-1}\left(\frac{A_j(a)}{\gamma(\xi,a)^2}+2\frac{\xi^j \gamma(\xi,a)^2+A_j(a)(|\xi|^2-1)}{2\gamma(\xi,a)^2+|\xi|^2-1}\right),   \\
g^{(j)}_{a,2}(\xi) & = 2 g^{(j)}_{a,1}(\xi) + \xi^j \partial_j g^{(j)}_{a,1}(\xi),\\
q_{a,1}^{(j)}(\xi) & =4 \partial_{a_j}\gamma(\xi,a)\cdot\frac{-2\gamma(\xi,a)^2+|\xi|^2-1}{(2\gamma(\xi,a)^2+|\xi|^2-1)^2}, \\
q^{(j)}_{a,2}(\xi)  & = q^{(j)}_{a,1}(\xi) + \xi^j \partial_j q^{(j)}_{a,1}(\xi),\\
\end{align*}
and the eigenfunctions depend Lipschitz continuously on the parameter $a$, i.e., 
\begin{align*}
 \| \mb h_a - \mb h_b \| +  \|\mb g_{a}^{(k)} - \mb g_{b}^{(k)}\| + \|\mb q_{a}^{(j)}-\mb q_{b}^{(j)} \| \lesssim | a- b |,
\end{align*}
for all $a,b \in \overline{\mathbb B^7_{\delta}}$.
\end{lemma}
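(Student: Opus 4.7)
The plan is to combine an explicit, symmetry-based construction of the eigenfunctions with a Neumann-series/Riesz-projection perturbation argument from Proposition~\ref{Prop:Spectrum_L0}.

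First, I would verify the explicit eigenfunction formulas by direct substitution. The underlying symmetries of \eqref{Eq:cubicNLW} furnish them naturally: differentiating the family of static solutions $\Psi_a^*$ in $a^j$ gives kernel elements of $\mb L_a$, producing the $\mb q_a^{(j)}$; the eight spacetime translations of $u^*_{T,x_0,a}$ pulled back to similarity coordinates produce the $\mb g_a^{(k)}$ at $\lambda=1$; and $\mb h_a$ at $\lambda=3$ arises as the Lorentz transform of the $a=0$ eigenfunction from Proposition~\ref{Prop:Spectrum_L0}. As in the proof of that proposition, the eigenvalue equation $(\lambda-\mb L_a)\mb u=0$ reduces to a degenerate elliptic equation in the first component, so verification is an algebraic check. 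Smoothness on $\overline{\B^7}$ is clear because $2\gamma(\xi,a)^2+|\xi|^2-1$ equals $1+|\xi|^2 \geq 1$ at $a=0$ and therefore stays bounded below by, say, $\tfrac12$ for $|a|$ small.

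To exclude other spectrum in the right half-plane away from $\{0,1,3\}$, I would fix small $\varepsilon>0$ with pairwise disjoint discs $B_\varepsilon(\lambda_j)$ and set
\[
\Omega:=\bigl\{\lambda:\mathrm{Re}\lambda\geq-\tfrac{\omega_0}{2}\bigr\}\setminus\bigcup_{j=0}^{2}B_\varepsilon(\lambda_j).
\]
By Proposition~\ref{Prop:Spectrum_L0} and Lemma~\ref{Le:ResolventBounds_La} applied at $a=0$, the resolvent $\mb R_{\mb L_0}(\lambda)$ is uniformly bounded on $\Omega$: Lemma~\ref{Le:ResolventBounds_La} handles $|\lambda|\geq \kappa^*$, and continuity on the remaining compact set provides the rest. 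Using the factorization $\lambda-\mb L_a=[I-(\mb L_a'-\mb L_0')\mb R_{\mb L_0}(\lambda)](\lambda-\mb L_0)$ together with the Lipschitz bound $\|\mb L_a'-\mb L_0'\|\leq K|a|$ from Lemma~\ref{Le:Perturbation}, the Neumann series converges uniformly on $\Omega$ for $\delta$ small enough, yielding $\Omega\subset\rho(\mb L_a)$ and a uniform resolvent bound there.

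For the discs around the $\lambda_j$ I would use Riesz projections. The uniform bound on the contour $\gamma_j=\partial B_\varepsilon(\lambda_j)$ makes
\[
\mb P_a^{(j)}:=\frac{1}{2\pi i}\oint_{\gamma_j}\mb R_{\mb L_a}(\lambda)\,d\lambda
\]
continuous in $a$ in operator norm, so for $\delta$ sufficiently small $\|\mb P_a^{(j)}-\mb P_0^{(j)}\|<1$, which forces equal ranks (cf.~\cite{kato}). By Lemma~\ref{Le:Rank_P0} these ranks are $1,8,7$, matching exactly the numbers of linearly independent eigenfunctions $\mb h_a$, $\mb g_a^{(k)}$, $\mb q_a^{(j)}$ already exhibited; consequently $\mathrm{rg}\,\mb P_a^{(j)}$ coincides with the geometric eigenspace and no extra spectral points nor Jordan blocks hide inside $B_\varepsilon(\lambda_j)$. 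The claimed Lipschitz estimates on the eigenfunctions then follow from smoothness of the closed formulas in $a$ combined with the uniform positive lower bound on the denominators, analogously to \eqref{Eq:Selfsim_Sol_Lipschitz}. The main obstacle is the Riesz-projection/rank-counting step: the dimension count from Lemma~\ref{Le:Rank_P0} must agree exactly with the number of explicit eigenfunctions to rule out Jordan structure developing as $a$ departs from zero, which is precisely why producing the closed-form $\mb h_a$, $\mb g_a^{(k)}$, $\mb q_a^{(j)}$ is indispensable.
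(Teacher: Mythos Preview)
Your proposal is correct and follows essentially the same approach as the paper: verify the explicit eigenfunctions by direct computation and symmetry considerations, use the Neumann-series factorization $\lambda-\mb L_a=[I-(\mb L_a'-\mb L_0')\mb R_{\mb L_0}(\lambda)](\lambda-\mb L_0)$ together with the Lipschitz bound from Lemma~\ref{Le:Perturbation} to transfer resolvent control from $a=0$ to small $a$, and then apply a Riesz-projection rank-counting argument based on Lemma~\ref{Le:Rank_P0} to exclude further spectrum. The only cosmetic difference is that the paper encloses all three eigenvalues in a single compact region $\Omega=\{\mathrm{Re}\,\lambda\geq-\tfrac{\omega_0}{2},\ |\lambda|\leq\kappa^*\}$ and counts total rank $16$ along $\partial\Omega$, whereas you use three disjoint discs and count ranks $1,8,7$ separately; the latter decomposition is in fact carried out by the paper immediately afterwards in Lemma~\ref{Le:Projections_Pa}.
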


\begin{proof}
Let $\varepsilon =- \frac{\omega_0}{2} + \frac{1}{2}$, see Proposition \ref{Prop:Spectrum_L0}, and let $\kappa^*$ be the constant associated via Lemma \ref{Le:ResolventBounds_La}. We define
\begin{align}\label{Def:Omega}
 \Omega := \{ \lambda \in \C: \mathrm{Re} \lambda \geq  -\tfrac{\omega_0}{2}, |\la| \leq \kappa^* \}, 
 \end{align}
and
 $\Omega'  :=\{ \lambda \in \C: \mathrm{Re} \lambda \geq -\tfrac{\omega_0}{2}\} \setminus \Omega$.
By Lemma \ref{Le:ResolventBounds_La},  we have $\Omega' \subset \rho(\mb L_a)$ and it is left to investigate the spectrum in the compact region $\Omega$. By Lemma \ref{Le:SpecLa_Prelim}, we know that there are only isolated eigenvalues.  First, one can check by a direct calculation that $\mb h_a, \mb g^{(k)}_a, \mb q^{(j)}_{a}$ are eigenfunctions corresponding to the eigenvalues $3$, $1$ and $0$. The Lipschitz estimates for the eigenfunctions follow from the fact the they are smooth provided that $a$ is small enough. It is left to show that there are no other eigenvalues. First, we claim that $\partial \Omega \in \rho(\mb L_a)$. For this, we need information on the line segment $\Gamma:= \{\la \in \C:\mathrm{Re} \la =   -\tfrac{\omega_0}{2}, |\la| \leq \kappa^*\}$, which  is contained in the resolvent set of $\mb L_0$ by Proposition \ref{Prop:Spectrum_L0}. In view of the identity 
\begin{align}\label{Eq:IdentityLa}
 \lambda - \mb L_a = [1 - (\mb L'_a - \mb L'_0) \mb R_{\mb L_0}(\lambda)](\lambda  - \mb L_0), 
 \end{align}
it suffices to show that 
\[ \|\mb L'_0 - \mb L'_a \| \| \mb R_{\mb L_0}(\lambda)\| < 1 ,\]
for all $\la \in \Gamma$ and for all $a \in \overline{\mathbb B^7_{\delta}}$ with $\delta >0$ small enough. 
This follows from the Lipschitz continuity of $ \mb L'_a$, see Lemma \ref{Le:Perturbation}, if we require that 
$\delta < \frac{1}{2KC}$, where $C := \max_{\la \in \Gamma} \| \mb R_{\mb L_0}(\lambda) \|$.
Having this, we define a projection
\[ \mb {\tilde  T_a}=  \frac{1}{2 \pi i } \int_{\partial \Omega} \mb R_{\mb L_a}(\lambda) d \lambda,\]
which depends continuously on $a$ for small enough values of the parameter. This follows from the continuity of $a \mapsto \mb R_{\mb L_a}(\lambda)$ for small enough $a$, which can be seen for example from Eq.~\eqref{Eq:IdentityLa}.
 For $a = 0$,   $ \mb  {\tilde T_0}$ has rank $16$, see Lemma  \ref{Le:Rank_P0}. By \cite{kato}, p.~34, Lemma $4.10$, we infer that $\mathrm{dim}~\mathrm{rg }~  \mb  {\tilde T_a}= 16$ for $a$ small enough, which excludes the existence of other eigenvalues. 
\end{proof}

\begin{remark}\label{Remark:Lorentz_instability}
The eigenfunctions corresponding to the eigenvalues $\la = 0$ and $\la = 1$ originate from the fact that we are perturbing around a family of solutions depending on 
several symmetry parameters. For Lorentz  boosts, this can be seen most easily. Since $ \Psi_a^*$ satisfies the equation $\mb L \Psi_a^* + \mb N( \Psi_a^*) = 0$, the chain rule implies that $(\mb L  + \mb N'(  \Psi_a^*)) \partial_{a^j} \Psi_a^* = \mb L_a  \partial_{a^j} \Psi_a^*  =0$, i.e., $\partial_{a^j} \Psi_a^* $ solves the eigenvalue problem for $\la = 0$. In fact, it can easily be checked that $ \partial_{a^j} \Psi_a^* =  \mb q_a^{(j)}$. 
\end{remark}

\subsection{Growth bounds for the semigroup}

Define 
\begin{align*}
\mb H_a := \frac{1}{2 \pi i} \int_{\gamma_2} \mb R_{\mb L_a}(\lambda) d\lambda,
 \quad \mb P_a := \frac{1}{2 \pi i} \int_{\gamma_1} \mb R_{\mb L_a}(\lambda) d\lambda, \quad \mb Q_a := \frac{1}{2 \pi i} \int_{\gamma_0} \mb R_{\mb L_a}(\lambda) d\lambda,
\end{align*}
where $\gamma_j (s) := \lambda_j +  \frac{\omega_0}{4} e^{2\pi i s}$ for $s \in [0,1]$.

\begin{lemma}\label{Le:Projections_Pa}
Let  $ \delta > 0$  be sufficiently small, then,
\[ \mathrm{rg} \mb H_a =  \mathrm{span}( \mb h_a ), \quad  \mathrm{rg}\mb P_a  = \mathrm{span}(\mb g_{a}^{(0)}, \dots, \mb g_{a}^{(7)}  ),  \quad \mathrm{rg} \mb Q_a = \mathrm{span}(   \mb q_{a}^{(1)}, \dots, \mb q_{a}^{(7)}) \]
for all $a \in \overline{\mathbb B^7_{\delta}}$.
Furthermore, the projections are mutually transversal, 
\[ \mb H_a \mb P_a = \mb P_a \mb H_a  = \mb H_a \mb Q_a =  \mb Q_a  \mb H_a =  \mb Q_a \mb P_a =  \mb P_a  \mb Q_a  = 0 \]
and  depend Lipschitz continuously on the Lorentz parameter, i.e.,  
\[ \|\mb H_a - \mb H_b  \| + \| \mb P_a  -\mb P_b   \|   + \| \mb Q_a  - \mb Q_b  \| \lesssim | a - b | \]
for all $a,b \in  \overline{\mathbb B^7_{\delta}}$. 
\end{lemma}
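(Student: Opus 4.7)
The plan is to establish the lemma in three steps: (a) norm continuity of the three projections via a Lipschitz estimate on the resolvent along the fixed contours $\gamma_j$; (b) identification of each range by comparing dimensions and using the explicit eigenfunctions of Lemma~\ref{Le:Spectrum_La}; and (c) mutual transversality by standard Riesz functional calculus for disjoint spectral subsets.

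For (a), I first note that by Lemma~\ref{Le:Spectrum_La}, for $\delta$ sufficiently small each contour $\gamma_j$ lies entirely in $\rho(\mb L_a)$ for every $a \in \overline{\mathbb B^7_\delta}$, and $\mb R_{\mb L_a}(\la)$ is uniformly bounded on $\gamma_0 \cup \gamma_1 \cup \gamma_2$ (the uniform bound follows from Lemma~\ref{Le:ResolventBounds_La} for large $|\la|$, and from a Neumann-series perturbation of $\mb R_{\mb L_0}$ on the compact piece, as in the proof of Lemma~\ref{Le:Spectrum_La}). The second resolvent identity
\[
\mb R_{\mb L_a}(\la) - \mb R_{\mb L_b}(\la) \;=\; \mb R_{\mb L_a}(\la)(\mb L'_a - \mb L'_b)\mb R_{\mb L_b}(\la),
\]
combined with the Lipschitz estimate $\|\mb L'_a - \mb L'_b\| \lesssim |a-b|$ from Lemma~\ref{Le:Perturbation}, then yields $\|\mb R_{\mb L_a}(\la) - \mb R_{\mb L_b}(\la)\| \lesssim |a-b|$ uniformly on the contours. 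Integrating along $\gamma_0, \gamma_1, \gamma_2$ gives the claimed Lipschitz bounds on $\mb Q_a, \mb P_a, \mb H_a$.

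For (b), Lemma~\ref{Le:Spectrum_La} asserts that $\mb h_a$, $\{\mb g^{(k)}_a\}_{k=0,\dots,7}$ and $\{\mb q^{(j)}_a\}_{j=1,\dots,7}$ are eigenfunctions of $\mb L_a$ for $\la = 3, 1, 0$, so they lie in $\mathrm{rg}\,\mb H_a$, $\mathrm{rg}\,\mb P_a$, $\mathrm{rg}\,\mb Q_a$, respectively. At $a = 0$, Lemma~\ref{Le:Rank_P0} identifies the ranks as $1, 8, 7$. By the norm continuity established in (a) together with the classical fact that the rank of a Riesz projection is locally constant (\cite{kato}, Lemma 4.10, p.~34), these dimensions persist for all sufficiently small $a$. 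At $a=0$ the eigenfunctions are linearly independent: $\mb g_0^{(0)}$ sits in the $\ell=0$ spherical-harmonic sector while $\{\mb g_0^{(j)}\}_{j=1,\dots,7}$ and $\{\mb q_0^{(j)}\}_{j=1,\dots,7}$ lie in the seven distinct $\ell=1$ sectors, yielding $1+7 = 8$ and $7$ linearly independent vectors respectively. Linear independence is preserved for $\delta$ small by the Lipschitz continuity of the eigenfunctions, and a dimension count forces equality of the spans.

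For (c), mutual transversality is a standard property of Riesz projections associated to disjoint spectral subsets. Substituting the resolvent identity
\[
\mb R_{\mb L_a}(\la)\mb R_{\mb L_a}(\mu) \;=\; \frac{\mb R_{\mb L_a}(\la) - \mb R_{\mb L_a}(\mu)}{\mu - \la}
\]
into the double integral defining $\mb H_a \mb P_a$ and applying Fubini, the two resulting single contour integrals $\int_{\gamma_1}(\mu - \la)^{-1} d\mu$ (with $\la \in \gamma_2$) and $\int_{\gamma_2}(\mu - \la)^{-1} d\la$ (with $\mu \in \gamma_1$) both vanish by Cauchy's theorem, since the contours are disjoint and each lies in the exterior of the other. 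The same argument gives all six vanishing products. The main obstacle is the rank-preservation/linear-independence step (b), which hinges on the clean spherical-harmonic separation available at $a = 0$; once this is in hand the rest is essentially a direct application of abstract semigroup/perturbation machinery.
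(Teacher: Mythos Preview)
Your proof is correct and follows essentially the same approach as the paper: Lipschitz continuity via the second resolvent identity, rank preservation via \cite{kato}, Lemma~4.10, and transversality from the standard Riesz calculus for disjoint contours. Your argument is in fact more detailed than the paper's, which dispatches transversality with ``follows from the definition'' and leaves the linear independence of the eigenfunctions implicit in Lemma~\ref{Le:Rank_P0}.
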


\begin{proof}
As in the proof of Lemma \ref{Le:Spectrum_La}, the dimension of the ranges of the projections is a consequence of the continuity of the projections with respect to the parameter $a$. Transversality follows from the definition. For the  Lipschitz bounds, we use the second resolvent identity and Corollary \ref{Cor:TimeEvol_La} which imply that 
\[ \|\mb R_{\mb L_a}(\lambda)  -\mb R_{\mb L_b}(\lambda)  \| \leq \|\mb R_{\mb L_a}(\lambda)\|\|\mb L_a - \mb L_b \| \| \mb R_{\mb L_b}(\lambda)  \| \lesssim |a-b| \]
for all $\la \in \mathrm{rg~}\gamma_j$ and $a,b \in  \overline{\mathbb B^7_{\delta}}$ for $\delta > 0$ small enough.
\end{proof}

Since $\mb P_a$ and $\mb Q_a$ are operators of finite rank, every $\mb f \in \mc H$ has the unique expansion
\[ \mb P_a \mb f = \sum_{k = 0}^{7} \alpha_k \mb g_{a}^{(k)}, \quad  \mb Q_a \mb f = \sum_{j = 1}^{7} \beta_j \mb q_{a}^{(j)}, \]
for $\alpha_k, \beta_j \in \mathbb C$.  We define 
\[  \mb P_a^{(k)}\mb f := \alpha_k \mb g_{a}^{(k)},  \quad \mb Q^{(j)}_a \mb f := \beta_j  \mb q_{a}^{(j)}, \]
such that $\mb P_a = \sum_{k=0}^{7}   \mb P_a^{(k)}$, 
 $\mb Q_a = \sum_{j=1}^{7}   \mb Q_a^{(j)}$ and
\[ \mb P_a^{(k)} \mb P_a^{(l)} = \delta_{kl} \mb P_a^{(k)}, \quad \mb Q_a^{(i)} \mb Q_a^{(j)} = \delta_{ij } \mb Q_a^{(j)}. \]
Finally, we define
\[ \mb T_a : = \mb I - \mb H_a -   \mb P_a  -   \mb Q_a,\]
which is Lipschitz continuous with respect to $a$ by Lemma \ref{Le:Projections_Pa}.
Note that the projections $\mb T_a, \mb H_a$, $\mb P_a^{(k)}$ and $\mb Q_a^{(j)}$ are mutually transversal. Furthermore, is easy to see that 
the Lipschitz continuity of $\mb Q_a$ and the eigenfunctions $\mb q_a^{(j)}$, $j= 1, \dots,7$ imply that 
\begin{align}\label{Eq:Lipschitz_Q_a,j}
\| \mb Q_a^{(j)}  - \mb Q_b^{(j)}  \| \lesssim |a-b|,
\end{align}
for all $a,b \in \overline{\mathbb B^7_{\delta}}$.
Similarly,
\begin{align*}
\| \mb P_a^{(k)}  - \mb P_b^{(k)}  \| \lesssim |a-b|,
\end{align*}
for $k =0,\dots,7$. 

\begin{theorem}\label{Th:Decay_linearized}
The projections commute with the semigroup, 
\[ [ \mb S_a(\tau) ,  \mb H_a ] = [ \mb S_a(\tau) ,  \mb P_a^{(k)} ] = [ \mb S_a(\tau) ,\mb Q_a^{(j)}  ]   = 0,\]
and there are constants $\delta >0$ and $\omega > 0$ such that
\begin{align*}
\mb S_a(\tau) \mb H_a  = e^{3 \tau} \mb H_a, \quad \mb S_a(\tau) \mb P^{(k)}_{a}  = e^{\tau}  \mb P^{(k)}_{a} , \quad \mb S_a(\tau) \mb Q^{(j)}_{a}  = \mb Q^{(j)}_{a},
\end{align*}
and
\begin{align}\label{Eq:Growthbound_stable}
 \| \mb S_a(\tau) \mb T_a \mb u\| \lesssim e^{-\omega \tau} \| \mb T_a \mb u\|  
\end{align}
for all $\tau \geq 0$,  $\mb u \in \mc H$,  $a \in \overline{\mathbb B^7_{\delta}}$. Furthermore, we have 
\begin{align}\label{Eq:Evol_Lipschitz_stablesubspace}
 \| \mb S_a(\tau) \mb T_a - \mb S_b(\tau) \mb T_b  \|  \lesssim e^{-\omega \tau} |a-b|, 
 \end{align}
for all  $a,b \in \overline{\mathbb B^7_{\delta}}$ and all $\tau \geq 0$.
\end{theorem}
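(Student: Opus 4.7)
The statement decomposes naturally into four parts: (i) the commutation relations, (ii) the explicit exponential action on the three finite-dimensional eigenspaces, (iii) the decay bound on $\mathrm{rg~}\mb T_a$, and (iv) the Lipschitz estimate \eqref{Eq:Evol_Lipschitz_stablesubspace}.

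The first two parts are essentially bookkeeping. Since $\mb H_a$, $\mb P_a$, $\mb Q_a$ are defined as contour integrals of $\mb R_{\mb L_a}(\lambda)$, they commute with $\mb L_a$, and hence with $\mb S_a(\tau)$ by the standard semigroup fact that bounded operators commuting with the generator commute with the semigroup; for $\mb P_a^{(k)}$ and $\mb Q_a^{(j)}$ this is inherited since they are the components of $\mb P_a$ and $\mb Q_a$ in the eigenbasis, and the latter is preserved by $\mb S_a(\tau)$. For the explicit formulas, Lemma \ref{Le:Rank_P0} gives $\dim \mathrm{rg~}\mb H_0 = 1$, $\dim \mathrm{rg~}\mb P_0 = 8$, $\dim \mathrm{rg~}\mb Q_0 = 7$, matching the dimensions of $\ker(\lambda_j - \mb L_0)$; the rank-continuity argument from the proof of Lemma \ref{Le:Spectrum_La} extends this to small $a$. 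Hence algebraic and geometric multiplicities coincide, so on each range $\mb L_a$ acts as $\lambda_j \cdot \mathrm{Id}$ (no Jordan parts), and exponentiation gives the three identities.

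For part (iii), I will apply the Gearhart--Pr\"uss--Greiner theorem to the restriction $\mb L_a|_{\mathrm{rg}~\mb T_a}$, whose spectrum lies in $\{\Re\lambda \leq -\omega_0/2\}$ by Lemma \ref{Le:Spectrum_La}. Fix any $\omega \in (0,\omega_0/2)$. Lemma \ref{Le:ResolventBounds_La} provides a uniform-in-$a$ resolvent bound on $\{\Re \lambda \geq -\omega,\ |\lambda|\geq \kappa^*\}$, while on the compact complementary region (intersected with $\Re \lambda \geq -\omega$), the function $(a,\lambda) \mapsto \mb R_{\mb L_a|_{\mathrm{rg}~\mb T_a}}(\lambda)$ is continuous and therefore uniformly bounded on the compact parameter set $\overline{\mathbb B^7_\delta}$. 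Gearhart--Pr\"uss then yields $\|\mb S_a(\tau)\mb T_a\| \lesssim e^{-\omega\tau}$ uniformly in $a$.

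For part (iv), the plan is to combine Duhamel's formula with the commutation relations so that every factor of $\mb S_a$ or $\mb S_b$ that survives is sandwiched between stabilizing projections. Starting from
\begin{equation*}
\mb S_a(\tau)\mb T_a - \mb S_b(\tau)\mb T_b = (\mb T_a - \mb T_b)\mb S_a(\tau)\mb T_a + \mb T_b\bigl[\mb S_a(\tau) - \mb S_b(\tau)\bigr]\mb T_a + \mb S_b(\tau)\mb T_b(\mb T_a - \mb T_b),
\end{equation*}
the outer two summands are $\lesssim |a-b|\,e^{-\omega\tau}$ by part (iii) and the Lipschitz bound $\|\mb T_a - \mb T_b\| \lesssim |a-b|$ from Lemma \ref{Le:Projections_Pa}. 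For the middle term I apply Duhamel,
\begin{equation*}
\mb T_b\bigl[\mb S_a(\tau) - \mb S_b(\tau)\bigr]\mb T_a = \int_0^\tau \mb T_b\mb S_b(\tau-\sigma)(\mb L_b - \mb L_a)\mb S_a(\sigma)\mb T_a\,d\sigma,
\end{equation*}
and use the commutation relations to pull $\mb T_b$ past $\mb S_b(\tau-\sigma)$ and $\mb T_a$ past $\mb S_a(\sigma)$; the integrand is then bounded by $e^{-\omega(\tau-\sigma)}\cdot K|a-b|\cdot e^{-\omega\sigma}$, producing a factor $\tau e^{-\omega \tau}$ which is absorbed by slightly lowering $\omega$.

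\textbf{Main obstacle.} The delicate point is part (iv): the stable subspaces $\mathrm{rg}~\mb T_a$ and $\mathrm{rg}~\mb T_b$ are genuinely different, and one cannot compare $\mb S_a(\tau)$ and $\mb S_b(\tau)$ in operator norm because they grow exponentially on the unstable modes. The trick is to insert the projections $\mb T_a$, $\mb T_b$ so that every surviving propagator is restricted to its own stable subspace, and to convert mismatches between the projections themselves into $|a-b|$-terms via Lemma \ref{Le:Projections_Pa} before any exponential growth can take effect.
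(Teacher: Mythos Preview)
Your proposal is correct and, for parts (i)--(iii), follows the paper's argument essentially verbatim (commutation via Riesz projections, absence of Jordan blocks via rank continuity, decay via Gearhart--Pr\"uss with the uniform resolvent bound from Lemma~\ref{Le:ResolventBounds_La} plus compactness in $(a,\lambda)$).

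For part (iv) the paper organizes the argument slightly differently: instead of your three-term algebraic splitting, it sets $\Phi_{a,b}(\tau):=(\mb S_a(\tau)\mb T_a\mb u-\mb S_b(\tau)\mb T_b\mb u)/|a-b|$ and observes directly that this solves the inhomogeneous problem $\partial_\tau\Phi_{a,b}=\mb L_a\mb T_a\Phi_{a,b}+\tfrac{\mb L_a\mb T_a-\mb L_b\mb T_b}{|a-b|}\mb S_b(\tau)\mb T_b\mb u$, then applies Duhamel once. The key observation there is that $\mb L_a\mb T_a=\mb L_a-3\mb H_a-\mb P_a$, so $\mb L_a\mb T_a-\mb L_b\mb T_b=(\mb L_a'-\mb L_b')-3(\mb H_a-\mb H_b)-(\mb P_a-\mb P_b)$ is bounded with norm $\lesssim|a-b|$. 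Your route---splitting off the two $(\mb T_a-\mb T_b)$ boundary terms and applying Duhamel only to $\mb T_b[\mb S_a(\tau)-\mb S_b(\tau)]\mb T_a$---is equally valid and uses the same ingredients (boundedness of $\mb L_a-\mb L_b=\mb L_a'-\mb L_b'$, Lipschitz continuity of $\mb T_a$, commutation to sandwich each propagator with its own projection). Both arguments produce the $\tau e^{-\omega\tau}$ factor that is absorbed by taking a slightly smaller $\omega$. The paper's version is marginally more compact; yours makes the role of the projection mismatch more transparent.
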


\begin{proof}

For the semigroup acting on the ranges of the projections $\mb H_a$, $\mb P_{a}$ and $\mb Q_{a}$ we have
\begin{align*}
\mb S_a(\tau) \mb H_a \mb u  = e^{3 \tau} \mb H_a \mb u , \quad \mb S_a(\tau) \mb P_{a}\mb u  = e^{\tau}  \mb P_{a}\mb u , \quad \mb S_a(\tau) \mb Q_{a} \mb u = \mb Q_{a}\mb u,
\end{align*}
for all $\mb u \in \mc H$, $\tau \geq 0$ and sufficiently small $a$. Furthermore, the semigroup commutes with the resolvent and therefore with the projection operators,
\[ [ \mb S_a(\tau) ,  \mb H_a ] = [ \mb S_a(\tau) ,  \mb P_a] = [ \mb S_a(\tau) ,\mb Q_a  ]   = 0.\]
This implies that for $k=0, \dots,7$, we have
\begin{align*}
\mb P^{(k)}_a  \mb S_a(\tau) \mb u =  \mb P_a \mb P_a^{(k)}    \mb S_a(\tau) \mb u  =  \mb P_a^{(k)}    \mb S_a(\tau) \mb P_a\mb u  = e^{\tau}  \mb P_a^{(k)}  \mb P_a   \mb u = \mb S_a(\tau) \mb P_a^{(k)} \mb u.
\end{align*}
The argument for $\mb Q_a^{(j)}$ is analogous. Now, $\mb R_{\mb L_a} (\la) \mb T_a$ is holomorphic in $\Omega$, see Eq.~\eqref{Def:Omega}, and uniformly bounded with respect to $a \in \overline{\mathbb B^7_{\delta}}$. In view of Lemma \ref{Le:ResolventBounds_La}, we infer that there is a constant $c  > 0$ such that 
\[ \| \mb R_{\mb L_a} (\la) \mb T_a \| \leq c \]
for all $\la \in \C$ with $\mathrm{Re} \la \geq -\frac{\omega_0}{2} $ and all $a \in \overline{\mathbb B^7_{\delta}}$.
An  application of the Gearhart-Pr\"uss Theorem, see \cite{Pru84}, Proposition 2, shows that for every $\varepsilon >0$, there exists a constant $C_{\varepsilon}$ such that 
\begin{align}\label{Eq:Growthbound_stable1}
 \| \mb S_a(\tau) \mb T_a \mb u\| \leq C_{\varepsilon} e^{-(\frac{\omega_0}{2} - \varepsilon)\tau } \| \mb T_a \mb u\|  
\end{align}
for all $\mb u \in \mc H$ and all $a \in  \overline{\mathbb B^7_{\delta}}$. Eq.~\eqref{Eq:Evol_Lipschitz_stablesubspace} is obtained analogously to \cite{DonSch14b}, Lemma $3.9$. One can easily check that for $\mb u \in \mc D(\mb L_a)$, the function 
\[ \Phi_{a,b}(\tau) := \frac{\mb S_a(\tau) \mb T_a \mb u - \mb S_b(\tau) \mb T_b \mb u}{|a-b|} \]
satisfies the inhomogeneous equation
\begin{align}\label{Eq:Aux_abstractODE}
\partial_{\tau} \Phi_{a,b}(\tau)  = \mb L_a \mb T_a \Phi_{a,b}(\tau) + \frac{\mb L_a \mb T_a - \mb L_b \mb T_b}{|a-b|} \mb S_b(\tau) \mb T_b \mb u
\end{align}
with initial data $\Phi_{a,b}(0) =  \frac{ \mb T_a \mb u - \mb T_b \mb u}{|a-b|}  $ for all $\tau \geq 0$.
We have $\mb L_a \mb T_a = \mb L_a ( 1 - \mb H_a - \mb P_a - \mb Q_a) = \mb L_a - 3 \mb H_a - \mb P_a$, such that
\[ \mb L_a \mb T_a - \mb L_b \mb T_b =  \mb L'_a - \mb L_b' - 3 (\mb H_a - \mb H_b) - (\mb P_a -\mb P_b), \]
which implies that 
\[ \| \mb L_a \mb T_a - \mb L_b \mb T_b\| \lesssim |a-b| \]
by Lemma \ref{Le:Perturbation} and Lemma \ref{Le:Projections_Pa}. The integral equation associated to Eq.~\eqref{Eq:Aux_abstractODE} by the Duhamel principle is given by 
\begin{align*}
\Phi_{a,b}(\tau) = \mb S_a(\tau)  \mb T_a \frac{ \mb T_a \mb u - \mb T_b \mb u}{|a-b|} + \int_0^{\tau} \mb S_a(\tau - \tau')\mb T_a  
\frac{\mb L_a \mb T_a - \mb L_b \mb T_b}{|a-b|} \mb S_b(\tau') \mb T_b \mb u ~ d\tau'. 
\end{align*}
 Eq.~\eqref{Eq:Growthbound_stable1} yields the bound
\[ \| \Phi_{a,b}(\tau) \| \lesssim  e^{-(\frac{\omega_0}{2} - \varepsilon)\tau } (1 + \tau)  \|\mb u \| \lesssim e^{-(\frac{\omega_0}{2} - 2 \varepsilon)\tau }   \|\mb u \|,\]
which extends to all of $\mc H$ by density.  We fix $\varepsilon > 0$ such that  $\omega := \frac{\omega_0}{2} - 2 \varepsilon > 0$. This yields Eq.~\eqref{Eq:Evol_Lipschitz_stablesubspace} and Eq.~\eqref{Eq:Growthbound_stable}.
\end{proof}

\section{Nonlinear perturbation theory}\label{Sec:Nonlin_Pert}

\subsection{Function spaces and basic estimates}

Let $\omega > 0$ be fixed by Theorem \ref{Th:Decay_linearized}. We define spaces
\[ \mc X := \{ \Phi \in C([0,\infty), \mc H) : \| \Phi \|_{\mc X} < \infty \}, \quad  \| \Phi \|_{\mc X} :=   \sup_{\tau > 0} e^{ \omega \tau} \| \Phi(\tau) \|, \]
\[ X := \{ a \in C^1([0,\infty), \R^7) : a(0) = 0, \|a \|_X < \infty \}, \quad \|a \|_X := \sup_{\tau > 0} [ e^{\omega \tau} |\dot a(\tau)| + |a(\tau)|]. \]
For $a \in X$, we can write $a(\tau) = \int_{0}^{\tau} \dot a (\sigma) d\sigma$
and the integral converges in the limit $\tau \to \infty$. Hence, we define
\[ a_{\infty}  = \lim_{\tau \to \infty} a(\tau).\]
In the following, we assume that $\tilde \delta > 0$ is sufficiently small, such that the results of the  preceding sections hold for all $a \in \overline{\mathbb B^7_{\tilde \delta}}$.  
For $\delta  > 0$ satisfying $\frac{\delta}{\omega} < \tilde \delta$, we set
\[ \mc X_{\delta} := \{ \Phi \in \mc X: \| \Phi \| \leq \delta \}, \quad   X_{\delta} := \{ a \in X: \sup_{\tau > 0} [e^{\omega \tau} |\dot a(\tau)|]  \leq \delta  \}. \]

For $a \in X_{\delta}$, $|a(\tau)| \leq \delta/\omega < \tilde \delta$ for all $\tau \geq 0$. In the following, we will frequently use that 
\begin{align}\label{Eq:LorentzPar1}
 |a_{\infty} - a(\tau)| \leq \int_{\tau}^{\infty} | \dot a (\sigma)| d\sigma  \leq \tfrac{\delta}{\omega} e^{-\omega \tau},
\end{align}
and  
$ |a(\tau)- b(\tau) | \leq  \| a - b \|_X $
for  all $a,b \in X_{\delta}$ and all $\tau \geq 0$. In particular, $|a_{\infty}- b_{\infty} | \leq  \| a - b \|_X$.

In the following, we provide some bounds for the function $\mb G_a$ defined by 
 \[ \mb G_{a(\tau)}(\Phi(\tau)) = [\mb L'_{a(\tau)} - \mb L'_{a_{\infty}}]\Phi(\tau) + \mb F_{a(\tau)}(\Phi(\tau)),\]
with $\mb F_a$ given in Eq.~\eqref{Def:Potential}. We start with some basic estimates for the nonlinear part and denote by $\mc B \subset \mc H$ the unit ball in $\mc H$. 

\begin{lemma}\label{Est:Nonlinearity}
Let $\delta > 0$ be sufficiently small. Then
\[ \| \mb F_a(\mb u) - \mb F_b(\mb v) \| \lesssim (\| \mb u \| + \| \mb v \|) \| \mb u - \mb v\| + (\| \mb u \|^2 + \| \mb  v\|^2) |a - b| \]
for all $\mb u, \mb v \in \mc B \subset \mc H$ and all $a, b \in \overline{\mathbb B^7_{\delta}}$.
\end{lemma}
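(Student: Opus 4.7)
The plan is to reduce everything to Leibniz-rule manipulations plus Sobolev embeddings in seven dimensions, leveraging the fact that only the second component of $\mb F_a$ is non-zero so the norm to bound is $\|\cdot\|_{H^2(\B^7)}$. First I would write
\[
\mb F_a(\mb u) - \mb F_b(\mb v) = \bigl(0,\;(u_1^3 - v_1^3) + 3(\psi_a^* u_1^2 - \psi_b^* v_1^2)\bigr)
\]
and factor the two pieces as
\[
u_1^3 - v_1^3 = (u_1 - v_1)(u_1^2 + u_1 v_1 + v_1^2), \qquad \psi_a^* u_1^2 - \psi_b^* v_1^2 = \psi_a^*(u_1 + v_1)(u_1 - v_1) + (\psi_a^* - \psi_b^*) v_1^2,
\]
which isolates the difference $u_1 - v_1$ in the first three summands and the Lipschitz factor $\psi_a^* - \psi_b^*$ in the last.

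The analytic heart is a multiplication estimate of the form $\|fg\|_{H^2(\B^7)} \lesssim \|f\|_{H^3(\B^7)}\|g\|_{H^3(\B^7)}$, and its triple-product analog $\|fgh\|_{H^2(\B^7)} \lesssim \|f\|_{H^3}\|g\|_{H^3}\|h\|_{H^3}$. Since $d/2 = 7/2 > 3$, the space $H^3(\B^7)$ is \emph{not} an algebra; however, the sub-critical Sobolev embeddings $H^3(\B^7) \hookrightarrow L^{14}(\B^7)$, $H^2(\B^7) \hookrightarrow L^{14/3}(\B^7)$, and $H^1(\B^7) \hookrightarrow L^{14/5}(\B^7)$ provide exactly the margin needed: each of the terms arising from applying the Leibniz rule to $\partial^\alpha(fg)$ or $\partial^\alpha(fgh)$ for $|\alpha|\leq 2$ can be estimated in $L^2(\B^7)$ via H\"older's inequality with balanced exponents such as $\tfrac{1}{14} + \tfrac{5}{14} + \tfrac{1}{14} \leq \tfrac12$ or $\tfrac{1}{14} + \tfrac{3}{14} + \tfrac{3}{14} \leq \tfrac12$, any leftover slack being absorbed by the finite measure of $\B^7$.

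For the potential factors, smoothness of $\xi \mapsto \psi_a^*(\xi)$ on $\overline{\B^7}$ and of $a \mapsto \psi_a^*$ on $\overline{\B^7_{\delta^*}}$ yields $\|\psi_a^*\|_{W^{2,\infty}(\B^7)} \lesssim 1$ and, by the fundamental theorem of calculus along $\psi_{\alpha(s)}^*$ as in~\eqref{Eq:Selfsim_Sol_Lipschitz}, $\|\psi_a^* - \psi_b^*\|_{W^{2,\infty}(\B^7)} \lesssim |a-b|$ uniformly in $a,b \in \overline{\B^7_\delta}$, so these factors are harmless multipliers on $H^2(\B^7)$. Combining the factorizations with the multiplication bounds and using $\|u_1\|_{H^3}, \|v_1\|_{H^3} \leq \|\mb u\|+\|\mb v\| \leq 2$ produces
\[
\|u_1^3 - v_1^3\|_{H^2} + \|\psi_a^*(u_1+v_1)(u_1-v_1)\|_{H^2} \lesssim (\|\mb u\| + \|\mb v\|)\|\mb u - \mb v\|
\]
and $\|(\psi_a^* - \psi_b^*) v_1^2\|_{H^2} \lesssim \|\mb v\|^2 |a-b|$, which sum to the claim. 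The only real obstacle is keeping the Sobolev exponents aligned in $d=7$: the gap between $3$ and $d/2 = 7/2$ is tight but positive, and it is precisely this room that the argument exhausts, which (as remarked in Section~\ref{Sec:Overview_proof}) motivates choosing the threshold regularity $k = 3 > s_c = \tfrac{5}{2}$ rather than the higher regularity used in \cite{DonSch14b,ChaDon19}.
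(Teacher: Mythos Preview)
Your proposal is correct and follows essentially the same route as the paper: both reduce to the trilinear estimate $\|fgh\|_{H^2(\B^7)} \lesssim \|f\|_{H^3}\|g\|_{H^3}\|h\|_{H^3}$, proved via Leibniz rule, H\"older, and the critical Sobolev embeddings $H^3\hookrightarrow L^{14}$, $H^2\hookrightarrow L^{14/3}$, $H^1\hookrightarrow L^{14/5}$ in $d=7$, with your exponents $\tfrac{1}{14}+\tfrac{5}{14}+\tfrac{1}{14}=\tfrac{1}{2}$ and $\tfrac{1}{14}+\tfrac{3}{14}+\tfrac{3}{14}=\tfrac{1}{2}$ being exactly the paper's choices $q_j = \tfrac{14}{1+2|\alpha_j|}$. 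The only cosmetic difference is that for the $(\psi_a^*-\psi_b^*)v_1^2$ term you treat $\psi_a^*-\psi_b^*$ as a $W^{2,\infty}$ multiplier with norm $\lesssim |a-b|$, whereas the paper feeds it as a third $H^3$ factor into the trilinear estimate via~\eqref{Eq:Selfsim_Sol_Lipschitz}; both are valid.
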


\begin{proof}
First, we show that 
\begin{align*}
\| \mb F_a(\mb u) - \mb F_a(\mb v) \| \lesssim (\| \mb u \| + \| \mb v \|) \| \mb u - \mb v\| 
\end{align*}
for all $a \in \overline{\mathbb B^7_{\delta}}$ and all $\mb u,\mb v  \in \mc B$, more precisely, we prove that
\begin{align*}
\| u^3 - v^3 \|_{H^2(\B^7)} + \| \psi^*_a (u^2 - v^2) \|_{H^2(\B^7)} \lesssim  (\|u\|_{H^3(\B^7)} + \|v\|_{H^3(\B^7)} ) \|u - v\|_{H^3(\B^7)}
\end{align*}
for all $u,v \in H^3(\mathbb B^7)$. The Sobolev embedding $W^{j+m,2}(\B^d) \hookrightarrow W^{j,q}(\B^d) $ for $2 \leq q \leq \frac{2d}{d-2m}$, $j \in \N_0$, implies that 
\[ \| \partial^{\alpha} u \|_{L^q(\B^7)} \lesssim \|u\|_{H^3(\B^7)} \]
for multi-indices $\alpha \in \N^7$ with $0 \leq |\alpha| \leq 2$ and $2 < q \leq \frac{14}{1+2|\alpha|}$. Using this, we show that 
\begin{align}\label{Eq:Trlin_Est}
\|u v w\|_{H^2(\mathbb B^7)} \lesssim \|u \|_{H^3(\mathbb B^7)} \|v \|_{H^3(\mathbb B^7)} \|w \|_{H^3(\mathbb B^7)}
\end{align} 
for all $u,v,w \in H^3(\mathbb B^7)$. First, observe that H\"older's inequality with $q_1 = \frac{14}{5}$, $q_2 = 14$, $\frac{1}{q_1} + \frac{2}{q_2} = \frac12$ implies 
\begin{align*}
\| u v w \|_{L^2(\mathbb B^7)}  \lesssim 
\|u\|_{L^{q_1}(\mathbb B^7)} \|v\|_{L^{q_2}(\mathbb B^7)} \|w\|_{L^{q_2}(\mathbb B^7)}  \lesssim \|u \|_{H^3(\mathbb B^7)} \|v \|_{H^3(\mathbb B^7)} \|w \|_{H^3(\mathbb B^7)} .
 \end{align*}
For $|\alpha| = 1$ we have
\[ \|v  w  \partial^{\alpha} u  \|_{L^2(\mathbb B^7)}  \lesssim \| \partial^{\alpha} u\|_{L^{q_1}(\mathbb B^7)}   \|v\|_{L^{q_2}(\mathbb B^7)} \|w\|_{L^{q_2}(\mathbb B^7)}  \lesssim \|u \|_{H^3(\mathbb B^7)} \|v \|_{H^3(\mathbb B^7)} \|w \|_{H^3(\mathbb B^7)},  \]
where $q_1 = \frac{14}{3}, q_2 = 7$.  For $|\alpha| = 2$, $\alpha_j \in \N^{7}$, $j=1,2,3$, with $\sum_{j} \alpha_j = \alpha$ we set $q_j =\frac{14}{1+2|\alpha_j|}$ to obtain 
\begin{align*}
\| \partial^{\alpha_1} u   \partial^{\alpha_2} v   \partial^{\alpha_3} w \|_{L^2(\mathbb B^7)}  \lesssim 
\|\partial^{\alpha_1} u \|_{L^{q_1}(\mathbb B^7)} \|\partial^{\alpha_2}  v\|_{L^{q_2}(\mathbb B^7)} \|  \partial^{\alpha_3}  w\|_{L^{q_3}(\mathbb B^7)}  \lesssim \|u \|_{H^3} \|v \|_{H^3(\mathbb B^7)} \|w \|_{H^3(\mathbb B^7)}.
 \end{align*} 
Eq.~\eqref{Eq:Trlin_Est} now follows by applying the Leibnitz rule.  Consequently, 
\[ \| u^3 - v^3 \|_{H^2(\mathbb B^7)} \lesssim (\|u\|_{H^3(\mathbb B^7)}^2 + \|v\|_{H^3(\mathbb B^7)}^2) \|u-v\|_{H^3(\mathbb B^7)}. \]
Since  $\psi^*_a$ is smooth and uniformly bounded for sufficiently small $a$, we obtain
\[ \| \psi^*_a (u^2 - v^2) \|_{H^2(\B^7)}  \lesssim  (\|u\|_{H^3(\mathbb B^7)} + \|v\|_{H^3(\mathbb B^7)}) \|u-v\|_{H^3(\mathbb B^7)}. \] 
Finally, in view of Eqns. \eqref{Eq:Trlin_Est} and \eqref{Eq:Selfsim_Sol_Lipschitz} we have 
\begin{align*}
 \| \mb F_a(\mb u) - \mb F_b(\mb u) \| \lesssim \|u^2 (\psi^*_a - \psi^*_b) \|_{H^2(\mathbb B^7)} \lesssim \|u \|^2_{H^3(\mathbb B^7)} \|\psi^*_a - \psi^*_b \|_{H^3(\mathbb B^7)}  \lesssim  \|u \|^2_{H^3(\mathbb B^7)}  |a - b|,
\end{align*}
for all $a, b \in \overline{\mathbb B^7_{\delta}}$ which implies the claim.
\end{proof}

\begin{lemma}\label{Le:Bound_Ga}
Let $\delta > 0$ be sufficiently small. Then
\begin{align}\label{Eq:Est_RHS}
\begin{split}
\|  \mb G_{a(\tau)}(\Phi(\tau)) \| &  \lesssim \delta^2 e^{-2 \omega \tau},\\
\|  \mb G_{a(\tau)}(\Phi(\tau)) -   \mb G_{b(\tau)}(\Psi(\tau))\|  &  \lesssim \delta e^{-2 \omega \tau} ( \| \Phi - \Psi \|_{\mc X} + \| a - b \|_X ),
\end{split}
\end{align}
for all $\Phi, \Psi \in \mc X_{\delta}, a,b \in X_{\delta}$, and $\tau \geq 0$. 
\end{lemma}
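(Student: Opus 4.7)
My plan is to treat each summand of $\mb G_{a(\tau)}(\Phi(\tau)) = [\mb L'_{a(\tau)} - \mb L'_{a_{\infty}}]\Phi(\tau) + \mb F_{a(\tau)}(\Phi(\tau))$ separately, exploiting at every step the decay built into $\mc X_\delta$ and $X_\delta$.

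For the first bound, Lemma \ref{Le:Perturbation} combined with Eq.~\eqref{Eq:LorentzPar1} yields $\|\mb L'_{a(\tau)} - \mb L'_{a_\infty}\| \leq K|a(\tau) - a_\infty| \leq (K\delta/\omega)e^{-\omega\tau}$, so the linear piece is controlled by this bound times $\|\Phi(\tau)\| \leq \delta e^{-\omega\tau}$, giving $\lesssim \delta^2 e^{-2\omega\tau}$. Lemma \ref{Est:Nonlinearity} applied with $\mb v = 0$ (using $\mb F_a(0) = 0$) yields $\|\mb F_{a(\tau)}(\Phi(\tau))\| \lesssim \|\Phi(\tau)\|^2 \lesssim \delta^2 e^{-2\omega\tau}$, finishing the first estimate.

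For the Lipschitz bound I would decompose
\begin{align*}
\mb G_{a(\tau)}(\Phi(\tau)) - \mb G_{b(\tau)}(\Psi(\tau)) & = [\mb L'_{a(\tau)} - \mb L'_{a_\infty}](\Phi(\tau) - \Psi(\tau)) \\
& \quad + \bigl\{[\mb L'_{a(\tau)} - \mb L'_{a_\infty}] - [\mb L'_{b(\tau)} - \mb L'_{b_\infty}]\bigr\}\Psi(\tau) \\
& \quad + \mb F_{a(\tau)}(\Phi(\tau)) - \mb F_{b(\tau)}(\Psi(\tau)).
\end{align*}
The first line is bounded by $(K\delta/\omega)e^{-\omega\tau}\cdot e^{-\omega\tau}\|\Phi - \Psi\|_{\mc X}$ exactly as above; the last line is handled by Lemma \ref{Est:Nonlinearity} with $\mb u = \Phi(\tau)$, $\mb v = \Psi(\tau)$ and parameters $a(\tau), b(\tau)$, combining $\|\Phi(\tau)\| + \|\Psi(\tau)\| \lesssim \delta e^{-\omega\tau}$ and $|a(\tau) - b(\tau)| \leq \|a-b\|_X$ to give $\lesssim \delta e^{-2\omega\tau}(\|\Phi - \Psi\|_{\mc X} + \|a-b\|_X)$.

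The main obstacle is the middle cross-difference: the crude Lipschitz bound from Lemma \ref{Le:Perturbation} applied to each of the two pairs yields only $\lesssim \|a-b\|_X$, a full factor of $e^{-\omega\tau}$ short of what is needed. To recover it I would exploit smoothness of $a \mapsto V_a = 3(\psi_a^*)^2$ (inherited from smoothness of $\psi_a^*$) to write
\[ \mb L'_{a(\tau)} - \mb L'_{a_\infty} = (a^j(\tau) - a_\infty^j)\int_0^1 \partial_{\alpha_j}\mb L'_{\alpha(s)}\, ds, \quad \alpha(s) = a_\infty + s(a(\tau) - a_\infty), \]
and analogously for $b$ with $\beta(s) = b_\infty + s(b(\tau) - b_\infty)$. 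Subtracting and splitting produces two pieces: one containing $(a^j(\tau) - a_\infty^j) - (b^j(\tau) - b_\infty^j)$, which via the representation $a(\tau) - a_\infty = -\int_\tau^\infty \dot a(\sigma)\, d\sigma$ and the definition of $\|\cdot\|_X$ is bounded by $\omega^{-1}\|a-b\|_X e^{-\omega\tau}$; and another containing $\partial_{\alpha_j}\mb L'_{\alpha(s)} - \partial_{\alpha_j}\mb L'_{\beta(s)}$, which is Lipschitz of order $\|a-b\|_X$ in operator norm, multiplied by $|b(\tau) - b_\infty| \lesssim \delta e^{-\omega\tau}/\omega$. Both pieces contribute $\lesssim e^{-\omega\tau}\|a-b\|_X$ in operator norm; multiplication by $\|\Psi(\tau)\| \lesssim \delta e^{-\omega\tau}$ then delivers the required $\delta e^{-2\omega\tau}\|a-b\|_X$.
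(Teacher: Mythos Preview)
Your proof is correct and follows essentially the same strategy as the paper: the same three-term decomposition, the same use of Lemma~\ref{Le:Perturbation}, Eq.~\eqref{Eq:LorentzPar1}, and Lemma~\ref{Est:Nonlinearity}, and the same key observation that the cross-difference $[\mb L'_{a(\tau)} - \mb L'_{a_\infty}] - [\mb L'_{b(\tau)} - \mb L'_{b_\infty}]$ requires an integral representation exploiting smoothness of $a \mapsto V_a$ to recover the missing $e^{-\omega\tau}$. The only cosmetic difference is that the paper parametrizes this via the curve $s \mapsto a(s)$, writing $V_{a_\infty} - V_{a(\tau)} = \int_\tau^\infty \dot a^k(s)\,\partial_{a^k}V_{a(s)}\,ds$ (and refers to \cite{DonSch14b} for the details), whereas you use the straight-line interpolation between $a_\infty$ and $a(\tau)$; both yield the same estimate.
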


\begin{proof}
 Since $\mb F_a(0) = 0$ for all $a$, Lemma \ref{Est:Nonlinearity}, Lemma \ref{Le:Perturbation} and Eq.~\eqref{Eq:LorentzPar1} immediately imply the first estimate. For the Lipschitz bounds, we obtain
 \[ \|  \mb F_{a(\tau)}(\Phi(\tau)) -   \mb F_{b(\tau)}(\Psi(\tau))\|   \lesssim \delta e^{-2 \omega \tau} ( \| \Phi - \Psi \|_{\mc X} + \| a - b \|_X ) \]
from Lemma \ref{Est:Nonlinearity}. Furthermore, 
\[ \|[\mb L'_{b(\tau)} - \mb L'_{b_{\infty}} ](\Phi(\tau) -  \Psi(\tau)) \| \lesssim \delta e^{-2 \omega \tau} \| \Phi - \Psi \|_{\mc X} \]
by Lemma \ref{Le:Perturbation} and Eq.~\eqref{Eq:LorentzPar1}. Finally, we use that 
\[  \psi^*_{a_{\infty}}(\xi)^2 - \psi^*_{a(\tau)}(\xi)^2 =  \int_{\tau}^{\infty} \dot a^{k}(s) \varphi_{a(s),k}(\xi) ds \]
with $\varphi_{a,k}(\xi) = \partial_{a^k} \psi^*_a(\xi)^2$,
to obtain the estimate
\[  \| [\mb L'_{a(\tau)} - \mb L'_{a_{\infty}}] - [\mb L'_{b(\tau)} - \mb L'_{b_{\infty}}] \|  \lesssim e^{- \omega \tau} \|a - b \|_X, \]
see the proof of Lemma 5.5 in \cite{DonSch14b} for the details. These bounds imply the second line in Eq.~\eqref{Eq:Est_RHS}.
\end{proof}

\subsection{Integral equation for the peturbation}\label{Sec:Modulation_CorrectedData}

To solve Eq.~\eqref{Eq:SelfSim_Perturb}, we first study the general  initial value problem,
\begin{align}\label{Eq:SelfSimEvol_CP}
\begin{split}
\partial_{\tau} \Phi(\tau) & = \mb L_{a_{\infty}}\Phi(\tau) + \mb G_{a(\tau)}(\Phi(\tau))  - \partial_{\tau} \Psi^*_{a(\tau)} , \quad \tau > 0   \\
\Phi(0) &  = \mb u
\end{split}
\end{align}
for $\mb u \in \mc H$. In fact, we are interested in solutions of the corresponding integral equation
\begin{align}\label{Eq:Integral_Equation_Perturbation}
\Phi(\tau) = \mb S_{a_{\infty}}(\tau) \mb u + \int_0^{\tau} 
 \mb S_{a_{\infty}}(\tau - \sigma)[ \mb G_{a(\sigma)}(\Phi(\sigma)) - \partial_{\sigma} \Psi^*_{a(\sigma)} ] d \sigma.
\end{align}

In the following, we use modulation theory to prove the existence of solutions in $\mc X_{\delta}$ under a co-dimension 9 condition on the initial data $\mb u 
\in \mc H$ (corresponding
to the unstable directions defined by $\mb g_a^{(k)}$, $k = 0,\dots,7$ and the genuine unstable mode $\mb h_a$). This is along the lines of \cite{DonSch14b}, Section 5.2 - Section 5.4. 
In Section \ref{Sec:Codim1} below we use the specific form of the initial data in Eq.~\eqref{Eq:SelfSim_Perturb} to remove the translation instabilities by fixing suitable parameters $(T, x_0)$ and to prove co-dimension one stability. 

\subsubsection{The modulation equation}
 
First, we take care of the Lorentz instability by deriving a suitable modulation equation for the parameter $a$. Here, it is crucial that $\partial_{\tau} \Psi^*_{a(\tau)} = \dot a_{j}(\tau) \mb q^{(j)}_{a(\tau)} = \sum_{j=1}^{7} \dot a^{j}(\tau) \mb q^{(j)}_{a(\tau)} $, see Remark \ref{Remark:Lorentz_instability}.

We introduce a  cut-off function $\chi: [0,\infty) \to [0,1]$ satisfying $\chi(\tau) = 1$ for $\tau \in [0,1]$,  $| \chi'(\tau) | \leq 1$ for all $\tau \geq 0$ and $\chi(\tau) = 0$ for $\tau \geq 4$, and require that
\begin{align} \label{Eq:Modulation_Lorentzinstability}
\mb Q^{(j)}_{a_{\infty}} \Phi(\tau)= \chi(\tau) \mb Q^{(j)}_{a_{\infty}}  \mb u
\end{align}
for all $\tau \geq 0$. 
Applying $\mb Q^{(j)}_{a_{\infty}}$ to Eq.~\eqref{Eq:Integral_Equation_Perturbation} and using Theorem \ref{Th:Decay_linearized} yields 
\begin{align*}
[ 1 - \chi(\tau) ]\mb Q^{(j)}_{a_{\infty}}   \mb u  +  \int_0^{\tau} 
  [\mb Q^{(j)}_{a_{\infty}}  \mb G_{a(\sigma)}(\Phi(\sigma)) -  \mb Q^{(j)}_{a_{\infty}} \dot a_{i}(\sigma) \mb q^{(i)}_{a(\sigma)}
 ] d \sigma = 0.
\end{align*}
In view of  $\mb Q^{(j)}_{a_{\infty}} \mb q^{(i)}_{a_{\infty}} = \delta^{ij}  \mb q^{(j)}_{a_{\infty}}$ and  $a(0) =0$ this can be written as
\begin{align*}
a^j(\tau) \mb q^{(j)}_{a_{\infty}} & =  - \int_0^{\tau} \chi'(\sigma)\mb Q^{(j)}_{a_{\infty}}   \mb u~ d \sigma + \int_0^{\tau}\left[\mb Q^{(j)}_{a_{\infty}}   \mb G_{a(\sigma)}(\Phi(\sigma))  -
\mb Q^{(j)}_{a_{\infty}} \dot a_{i}(\sigma)[\mb q^{(i)}_{a(\sigma)}-\mb q^{(i)}_{a_{\infty}}] \right]  d \sigma \\
 & =: \int_0^{\tau} \mb A_j(a, \Phi, \mb u) (\sigma) d\sigma.
\end{align*}
Thus, we obtain the equation
\begin{align}\label{Eq:Modulation}
a(\tau) = A(a, \Phi, \mb u)(\tau)
\end{align} 
for $a \in X_{\delta}$, where $A = (A_1, \dots, A_7)$, and 
\[ A_j(a, \Phi, \mb u)(\tau) :=   \| \mb q^{(j)}_{a_{\infty}} \|^{-2}  \int_0^{\tau}(\mb A_j(a, \Phi, \mb u)(\sigma)| \mb q^{(j)}_{a_{\infty}}  ) d\sigma .\]

\begin{lemma}\label{Le:Modulation}
Let $\delta > 0$ be sufficiently small and $c > 0$ be sufficiently large. For every
$\mb u \in \mc H$ satisfying $\|\mb u \| \leq \frac{\delta}{c}$ and every $\Phi \in \mc X_{\delta}$, there is a unique $a = a(\Phi, \mb u)  \in X_{\delta}$ such that Eq.~\eqref{Eq:Modulation} holds. Furthermore, 
\[ \|a(\Phi, \mb u)  - a(\Psi, \mb v)  \|_{X} \lesssim  \|\Phi - \Psi \|_{\mc X}  + \|\mb u - \mb v\| \]
for all $\Psi, \Phi \in \mc X_{\delta} $ and $\mb u,\mb v \in \mc B_{\delta/c}$.
\end{lemma}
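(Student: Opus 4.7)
The plan is to solve Eq.~\eqref{Eq:Modulation} via Banach's fixed point theorem applied to the map $a \mapsto A(a,\Phi,\mb u)$ on the closed subset $X_\delta \subset X$, with $\Phi \in \mc X_\delta$ and $\mb u \in \mc H$, $\|\mb u\|\leq\delta/c$, treated as parameters. Since $A_j(a,\Phi,\mb u)(0)=0$ by construction, the $X$-norm is controlled once $\sup_\tau e^{\omega\tau}|\dot a(\tau)|$ is, so both the self-mapping and the contraction properties reduce to pointwise bounds on
\[
\dot A_j(a,\Phi,\mb u)(\tau) = \|\mb q^{(j)}_{a_\infty}\|^{-2}\bigl(\mb A_j(a,\Phi,\mb u)(\tau)\mid \mb q^{(j)}_{a_\infty}\bigr).
\]
The normalization $\|\mb q^{(j)}_{a_\infty}\|^{-2}$ is bounded and bounded away from zero uniformly for small $a$ by continuity of $a\mapsto \mb q^{(j)}_a$, cf.~Lemma~\ref{Le:Spectrum_La}.

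For the self-mapping I will bound the three contributions to $\mb A_j$ separately. The $\chi'(\sigma)$-piece is supported in $[1,4]$, so that after multiplication by $e^{\omega\tau}$ it contributes at most $C\|\mb u\| \lesssim \delta/c$. The nonlinear piece $\mb Q^{(j)}_{a_\infty}\mb G_{a(\sigma)}(\Phi(\sigma))$ is dominated by $\delta^2 e^{-2\omega\sigma}$ via Lemma~\ref{Le:Bound_Ga}. The remainder $\dot a_i(\sigma)[\mb q^{(i)}_{a(\sigma)}-\mb q^{(i)}_{a_\infty}]$ is controlled by combining $|\dot a_i(\sigma)|\lesssim \delta e^{-\omega\sigma}$, the bound $|a(\sigma)-a_\infty|\lesssim (\delta/\omega)e^{-\omega\sigma}$ from Eq.~\eqref{Eq:LorentzPar1}, and the Lipschitz continuity of $a\mapsto \mb q^{(i)}_a$ from Lemma~\ref{Le:Spectrum_La}, giving again $\delta^2 e^{-2\omega\sigma}$. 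Summing, $\sup_\tau e^{\omega\tau}|\dot A(\tau)|\lesssim \delta/c + \delta^2$, which is forced below $\delta$ by taking $c$ large and $\delta$ small.

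For the contraction in $a$, the same decomposition yields $\|A(a_1,\Phi,\mb u)-A(a_2,\Phi,\mb u)\|_X \leq C\delta\,\|a_1-a_2\|_X$: the $\chi'$-piece uses Eq.~\eqref{Eq:Lipschitz_Q_a,j} applied to $\mb Q^{(j)}_{a_{k,\infty}}$ together with the estimate $|a_{1,\infty}-a_{2,\infty}|\leq \|a_1-a_2\|_X$ and carries the prefactor $\|\mb u\|\lesssim \delta/c$; the nonlinear piece uses the second inequality of Lemma~\ref{Le:Bound_Ga}; the third piece combines the Lipschitz continuity of $\mb q^{(i)}_a$ with the decay of $\dot a_i$. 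For $\delta$ small one has $C\delta\leq 1/2$, so Banach's theorem yields a unique $a=a(\Phi,\mb u)\in X_\delta$ solving Eq.~\eqref{Eq:Modulation}.

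The Lipschitz dependence on $(\Phi,\mb u)$ follows by the standard trick: writing $a_k=a(\Phi_k,\mb v_k)$ and decomposing
\[
a_1-a_2 = [A(a_1,\Phi_1,\mb v_1)-A(a_2,\Phi_1,\mb v_1)] + [A(a_2,\Phi_1,\mb v_1)-A(a_2,\Phi_2,\mb v_2)],
\]
the first bracket is absorbed into the left-hand side via the contraction inequality, while the second is bounded by $C(\|\Phi_1-\Phi_2\|_{\mc X}+\|\mb v_1-\mb v_2\|)$ using Lemma~\ref{Le:Bound_Ga} and the support properties of $\chi'$. The only genuine subtlety throughout is the nonlocal dependence of $\mb Q^{(j)}_{a_\infty}$ and $\|\mb q^{(j)}_{a_\infty}\|^{-2}$ on the full trajectory $a$ through its limit at infinity; since all Lipschitz constants needed to handle this have been collected in Section~\ref{Sec:Spectrum_growthbounds}, this amounts to careful bookkeeping rather than a real obstacle.
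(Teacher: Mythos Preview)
Your proposal is correct and follows essentially the same approach as the paper's proof: a Banach fixed point argument on $X_\delta$, with the self-mapping and contraction properties obtained by separately estimating the three contributions to $\mb A_j$ (the $\chi'$-piece, the nonlinear piece via Lemma~\ref{Le:Bound_Ga}, and the $\dot a_i[\mb q^{(i)}_{a(\sigma)}-\mb q^{(i)}_{a_\infty}]$ piece via Eq.~\eqref{Eq:LorentzPar1} and the Lipschitz continuity of $a\mapsto\mb q^{(i)}_a$), followed by the standard absorption argument for the Lipschitz dependence on $(\Phi,\mb u)$. The only minor difference is that for the contraction estimate of the third piece the paper derives the sharper bound $\|(\mb q^{(j)}_{a(\tau)}-\mb q^{(j)}_{a_\infty})-(\mb q^{(j)}_{b(\tau)}-\mb q^{(j)}_{b_\infty})\|\lesssim e^{-\omega\tau}\|a-b\|_X$ via an integral representation, yielding $\delta e^{-2\omega\tau}$ decay; your cruder use of plain Lipschitz continuity gives only $\delta e^{-\omega\tau}$ decay there, but this is still sufficient for the contraction.
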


\begin{proof}
We use a fixed point argument and show that under the above assumptions $A( \cdot, \Phi, \mb u) : X_{\delta} \to X_{\delta}$ defines a contraction.  
We have
\[   \| \mb A_j(a, \Phi, \mb u)(\tau)\| \lesssim (\tfrac{\delta}{c} + \delta^2) e^{-2 \omega \tau} \]
provided $\delta >0$ is sufficiently small and $c > 0$ is sufficiently large. This can be seen by using the bounds of Lemma \ref{Le:Bound_Ga}, the fact that
$\| \chi'(\tau)\mb Q^{(j)}_{a_{\infty}}   \mb u \| \lesssim \tfrac{\delta}{c} e^{-2 \omega \tau}$
and
\[ \|\dot a^{j}(\tau)[\mb q^{(j)}_{a(\tau)}-\mb q^{(j)}_{a_{\infty}}] \| \lesssim \delta e^{-\omega \tau} |a(\tau) -a_{\infty}| \lesssim \delta^2 e^{-2 \omega \tau}. \]
This implies that $| \dot A(a, \Phi, \mb u)(\tau)| \leq \delta e^{-2 \omega \tau}$ and hence $A( \cdot, \Phi, \mb u) : X_{\delta} \to X_{\delta}$. Next, we show that 
\begin{align}\label{Eq:Lipschitz_Aj}
\| \mb A_j(a, \Phi, \mb u)(\tau) -  \mb A_j(b, \Phi, \mb u)(\tau)   \| \lesssim \delta e^{-2 \omega \tau} \|a -b \|_X,
\end{align}
 for all $a,b, \in X_{\delta}$, which implies that 
\begin{align}\label{Eq:Lipschitz_A}
  \|A(a, \Phi, \mb u) -  A(b, \Phi, \mb u) \|_X \lesssim \delta \|a - b\|_X,
  \end{align}
by using the Lipschitz continuity of $a\mapsto\mb q_{a}^{(j)}$.
To prove Eq.~\eqref{Eq:Lipschitz_Aj} we use the Lipschitz bounds of Lemma \ref{Le:Bound_Ga} and Eq.~\eqref{Eq:Lipschitz_Q_a,j} to obtain
\[   \| \mb Q^{(j)}_{a_{\infty}}    \mb G_{a(\tau)}(\Phi(\tau)) - \mb Q^{(j)}_{b_{\infty}}    \mb G_{b(\tau)}(\Psi(\tau)) \| \lesssim \delta e^{-2 \omega \tau} \left ( \|a -b \|_X  +   \|\Phi - \Psi \|_{\mc X}  \right),\]
and 
$\| \chi'(\tau)[\mb Q^{(j)}_{a_{\infty}}    -\mb Q^{(j)}_{b_{\infty}}   ]  \mb u \| \lesssim \delta e^{-2 \omega \tau} \|a -b \|_X$. 
Furthermore, we have
\begin{align}\label{Eq:Bounds_hatq}
\|\mb q^{(j)}_{a(\tau)} - \mb q^{(j)}_{a_{\infty}} \| \lesssim \delta e^{-\omega \tau} 
\end{align}
and 
\begin{align}\label{Eq:Lipschitzbounds_hatq}
\|(\mb q^{(j)}_{a(\tau)} - \mb q^{(j)}_{a_{\infty}}) - (\mb q^{(j)}_{b(\tau)} - \mb q^{(j)}_{b_{\infty}})   \| \lesssim  e^{-\omega \tau} \|a - b\|_X.
\end{align}
The last estimate is a consequence of the fact that 
\[ \mb q^{(j)}_{a(\tau)}(\xi) - \mb q^{(j)}_{a_{\infty}}(\xi)  = - \int_{\tau}^{\infty} \dot a^{i} (\sigma) \varphi^{(j)}_{i}(\xi, a(\sigma))  d\sigma, \]
where $\varphi^{(j)}_{i}(\xi,a)  = \partial_{a^{i}} \mb q^{(j)}_{a}(\xi) $ is smooth with respect to both variables for small enough $a$. 
Thus,
\[ \| \mb Q^{(j)}_{a_{\infty}} \dot a^{i}(\tau)(\mb q^{(i)}_{a(\tau)} - \mb q^{(i)}_{a_{\infty}})   -    \mb Q^{(j)}_{b_{\infty}} \dot b^{i}(\tau)(\mb q^{(i)}_{b(\tau)} - \mb q^{(i)}_{b_{\infty}})   \| \lesssim
 \delta e^{-2 \omega \tau}   \|a -b \|_X. \]
By combining these estimates, we obtain Eq.~\eqref{Eq:Lipschitz_Aj}. We conclude that Eq.~\eqref{Eq:Modulation} has a unique fixed point in $X_{\delta}$ for $\delta >0$ sufficiently small.  It is left to show the Lipschitz continuity of the solution map. Let
$a = A(a, \Phi, \mb u)$, $b = A(b, \Psi, \mb v)$. It is easy to see that 
\[  \| A(b,\Phi, \mb u)- \ A(b,\Phi, \mb v)\|_X \lesssim \| \mb u - \mb v\|. \]
and $ \| A(b,\Phi, \mb v) - \ A(b,\Psi, \mb v) \|_X \lesssim \delta \|\Phi - \Psi \|_{\mc X} $.
By combining these bounds with Eq.~\eqref{Eq:Lipschitz_A} we infer that 
\begin{align*}
\| a - b\|_X  & \leq \| A(a,\Phi, \mb u)-  A(b,\Phi, \mb u)\|_X +   \| A(b,\Phi, \mb u)-  A(b,\Phi, \mb v)\|_X   \\
& +  \| A(b,\Phi, \mb v)-  A(b,\Psi, \mb v)\|_X   \lesssim  \delta  \|a-b\|_X + \|\mb u -\mb v\| +  \delta \|\Phi - \Psi \|_{\mc X}.
\end{align*}
The claim follows by choosing $\delta >0$ sufficiently small.
\end{proof}

\subsubsection{Global existence for modified initial data} 
To control the remaining instabilities we  define correction terms 
\begin{align*}
\mb C_1(\Phi,a,\mb u) &  : = \mb P_{a_{\infty}} \mb u +\mb P_{a_{\infty}} \int_0^{\infty} e^{-\sigma} [\mb G_{a(\sigma)}(\Phi(\sigma)) - \partial_{\sigma} \Psi^*_{a(\sigma)} ] d \sigma, \\
\mb C_2(\Phi,a,\mb u) &  : = \mb H_{a_{\infty}} \mb u +\mb H_{a_{\infty}} \int_0^{\infty} e^{-3 \sigma} [\mb G_{a(\sigma)}(\Phi(\sigma)) - \partial_{\sigma} \Psi^*_{a(\sigma)} ] d \sigma,
\end{align*}
and set $\mb C := \mb C_1 + \mb C_2$.  In the following, we prove that Eq.~\eqref{Eq:SelfSimEvol_CP} with modified initial data $\Phi(0) = \mb u -  \mb C(\Phi, a, \mb u)$ has a solution in $\mc X$. For this we study the integral equation
\begin{align}\label{Eq:Mod_Integral}
\begin{split}
\Phi(\tau) & = \mb S_{a_{\infty}}(\tau) [ \mb u - \mb C(\Phi, a, \mb u)] + \int_0^{\tau} 
 \mb S_{a_{\infty}}(\tau - \sigma)[ \mb G_{a(\sigma)}(\Phi(\sigma)) - \partial_{\sigma} \Psi^*_{a(\sigma)} ] d \sigma \\
&  =: \mb K(\Phi, a, \mb u)(\tau)  
\end{split}
\end{align}
\begin{proposition}\label{Prop:NonlEvolution_Corr}
Let $c > 0$ be sufficiently large and let $\delta > 0$ be sufficiently small. If $\|\mb u \| \leq \frac{\delta}{c}$ then there exist functions $\Phi \in \mc X_{\delta}$ and  $a \in X_{\delta}$ such that Eq.~\eqref{Eq:Mod_Integral} holds for all $\tau \geq 0$. Furthermore, the map $\mb u \mapsto (\Phi(\mb u), a(\mb u))$ is Lipschitz continuous, i.e., 
\[  \|\Phi(\mb u) - \Phi(\mb v)  \|_{\mc X} + \|a(\mb u ) - a(\mb v) \|_X \lesssim \| \mb u - \mb v\| \]
for all $\mb u, \mb v \in \mc B_{\delta/c}$ .
\end{proposition}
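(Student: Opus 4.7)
My plan is to combine Lemma~\ref{Le:Modulation} with Banach's fixed point theorem on $\mc X_\delta$. Given $\mb u \in \mc H$ with $\|\mb u\| \leq \delta/c$, I would define $\widetilde{\mb K}(\Phi) := \mb K(\Phi, a(\Phi, \mb u), \mb u)$, where $a(\Phi,\mb u) \in X_\delta$ is supplied by Lemma~\ref{Le:Modulation}. The goal is to show that $\widetilde{\mb K}$ maps $\mc X_\delta$ into itself and is a contraction, provided $c$ is large and $\delta$ is small; Lipschitz dependence of the fixed point on $\mb u$ would then follow from the usual fixed-point perturbation argument applied between different data $\mb u,\mb v$.

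The key structural observation I would exploit is that the corrections $\mb C_1,\mb C_2$ together with the modulation equation for $a$ exactly neutralize all growing modes of $\mb S_{a_\infty}(\tau)$. Concretely, I would decompose $\widetilde{\mb K}(\Phi)(\tau) = (\mb T_{a_\infty} + \mb H_{a_\infty} + \mb P_{a_\infty} + \mb Q_{a_\infty})\widetilde{\mb K}(\Phi)(\tau)$ using the transversal projections from Lemma~\ref{Le:Projections_Pa}, which commute with $\mb S_{a_\infty}(\tau)$ by Theorem~\ref{Th:Decay_linearized}, and estimate each piece separately. For the stable part, transversality yields $\mb T_{a_\infty}\mb C = 0$, so the exponential decay of $\mb S_{a_\infty}(\tau)\mb T_{a_\infty}$ combined with Lemma~\ref{Le:Bound_Ga} gives $\|\mb T_{a_\infty}\widetilde{\mb K}(\Phi)(\tau)\| \lesssim e^{-\omega\tau}(\|\mb u\| + \delta^2)$. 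For the $\mb P_{a_\infty}$-part, using $\mb S_{a_\infty}(\tau)\mb P_{a_\infty} = e^{\tau}\mb P_{a_\infty}$ and the definition of $\mb C_1$, I anticipate the telescoping cancellation
\[ \mb P_{a_\infty}\widetilde{\mb K}(\Phi)(\tau) = -\mb P_{a_\infty}\int_\tau^\infty e^{\tau-\sigma}\bigl[\mb G_{a(\sigma)}(\Phi(\sigma)) - \partial_\sigma\Psi^*_{a(\sigma)}\bigr]\, d\sigma, \]
which by Lemma~\ref{Le:Bound_Ga} is $\lesssim \delta^2 e^{-2\omega\tau}$ (note $2\omega < 1$ since $\omega_0 \leq 1/2$). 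The $\mb H_{a_\infty}$-part is analogous with $\mb S_{a_\infty}(\tau)\mb H_{a_\infty} = e^{3\tau}\mb H_{a_\infty}$ and $\mb C_2$. For the $\mb Q_{a_\infty}$-part, the choice of $a(\Phi,\mb u)$ in Lemma~\ref{Le:Modulation} is precisely designed so that $\mb Q_{a_\infty}^{(j)}\widetilde{\mb K}(\Phi)(\tau) = \chi(\tau)\mb Q_{a_\infty}^{(j)}\mb u$ for $j=1,\dots,7$; since $\chi$ has compact support, this yields $\|\mb Q_{a_\infty}\widetilde{\mb K}(\Phi)(\tau)\| \lesssim e^{-\omega\tau}\|\mb u\|$. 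Summing gives $\|\widetilde{\mb K}(\Phi)\|_{\mc X} \lesssim \delta/c + \delta^2$, which is at most $\delta$ for $c$ large and $\delta$ small.

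For the contraction I would repeat the same decomposition on $\widetilde{\mb K}(\Phi) - \widetilde{\mb K}(\Psi)$, now invoking the Lipschitz bounds of Lemma~\ref{Le:Bound_Ga} for $\mb G$, of Lemma~\ref{Le:Projections_Pa} for the projections and the eigenfunctions, of Theorem~\ref{Th:Decay_linearized} Eq.~\eqref{Eq:Evol_Lipschitz_stablesubspace} for $\mb S_{a_\infty}\mb T_{a_\infty}$, and of Lemma~\ref{Le:Modulation} for the map $\Phi \mapsto a(\Phi,\mb u)$. Each resulting term should pick up at least one factor of $\delta$, yielding $\|\widetilde{\mb K}(\Phi) - \widetilde{\mb K}(\Psi)\|_{\mc X} \lesssim \delta\|\Phi - \Psi\|_{\mc X}$, and the contraction follows by shrinking $\delta$ further.

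The main technical obstacle I anticipate is the bookkeeping forced by the $a_\infty$-dependence of the projections themselves: when comparing $\widetilde{\mb K}(\Phi)$ with $\widetilde{\mb K}(\Psi)$, the objects $\mb T_{a_\infty(\Phi)}$, $\mb P_{a_\infty(\Phi)}$, $\mb H_{a_\infty(\Phi)}$, $\mb Q_{a_\infty(\Phi)}$ and their analogs for $\Psi$ are all genuinely different. Handling this cleanly requires repeatedly adding and subtracting intermediate terms, applying the Lipschitz bounds from Lemma~\ref{Le:Projections_Pa} and Theorem~\ref{Th:Decay_linearized} to absorb each projection difference, and closing the loop via $|a_\infty(\Phi) - a_\infty(\Psi)| \leq \|a(\Phi,\mb u) - a(\Psi,\mb u)\|_X \lesssim \|\Phi - \Psi\|_{\mc X}$ provided by Lemma~\ref{Le:Modulation}. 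This is routine but tedious, and is the part where the construction would occupy the most space.
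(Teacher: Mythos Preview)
Your proposal is correct and follows essentially the same route as the paper: define $\mb K_{\mb u}(\Phi):=\mb K(\Phi,a(\Phi,\mb u),\mb u)$ via Lemma~\ref{Le:Modulation}, decompose along $\mb T_{a_\infty}\oplus\mb H_{a_\infty}\oplus\mb P_{a_\infty}\oplus\mb Q_{a_\infty}$, use the telescoping cancellations from $\mb C_1,\mb C_2$ on the $\mb P$- and $\mb H$-parts, the modulation equation on the $\mb Q$-part, and the decay from Theorem~\ref{Th:Decay_linearized} on the $\mb T$-part. One small point: to get the $\delta^2 e^{-2\omega\tau}$ bound on the $\mb P_{a_\infty}$, $\mb H_{a_\infty}$, $\mb T_{a_\infty}$ pieces you need not only Lemma~\ref{Le:Bound_Ga} but also the transversality identity $\mb P_{a_\infty}\mb q^{(j)}_{a_\infty}=\mb H_{a_\infty}\mb q^{(j)}_{a_\infty}=0$, which turns $\partial_\sigma\Psi^*_{a(\sigma)}=\dot a_j(\sigma)\mb q^{(j)}_{a(\sigma)}$ into $\dot a_j(\sigma)[\mb q^{(j)}_{a(\sigma)}-\mb q^{(j)}_{a_\infty}]$ under these projections and yields the extra factor of $\delta e^{-\omega\sigma}$ (the paper records this as Eqs.~\eqref{Bounds:Proj_dtauPsi^*_1}--\eqref{Bounds:Proj_dtauPsi^*_2}).
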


\begin{proof}
First, let $c> 0, \delta > 0$ be such that Lemma \ref{Le:Modulation} holds.  For fixed $\mb u \in \mc B_{\delta/c}$ there is a unique $a = a_{\mb u}(\Phi) \in X_{\delta}$ associated to every $\Phi \in  \mc X_{\delta}$, such that Eq.~\eqref{Eq:Modulation} and hence Eq.~\eqref{Eq:Modulation_Lorentzinstability} are satisfied. We define $ \mb K_{\mb u}( \Phi) := \mb K(\Phi, a, \mb u)$ and show that 
$\mb K_{\mb u}$ maps $\mc X_{\delta}$ into itself.  By transversality of the projections we have 
\begin{align*}
\mb P_{a_\infty} \mb K_{\mb u}(\Phi)(\tau) = - \int_{\tau}^{\infty} e^{\tau - \sigma} \mb P_{a_\infty}  [ \mb G_{a(\sigma)}(\Phi(\sigma)) - \partial_{\sigma} \Psi^*_{a(\sigma)} ] d \sigma,\\
\mb H_{a_\infty} \mb K_{\mb u}(\Phi)(\tau) = - \int_{\tau}^{\infty} e^{3(\tau - \sigma)} \mb H_{a_\infty}  [ \mb G_{a(\sigma)}(\Phi(\sigma)) - \partial_{\sigma} \Psi^*_{a(\sigma)} ] d \sigma.
\end{align*}
To estimate these expressions we write  $\partial_{\tau} \Psi^*_{a(\tau)} = \dot a_j(\tau) \mb q^{(j)}_{a_{\infty} }+ \dot a_j(\tau)  [\mb q^{(j)}_{a(\tau)}   -\mb q^{(j)}_{a_{\infty}}  ]$ and use $ \mb H_{a_\infty}  \mb q^{(j)}_{a_{\infty} } = \mb P^{(k)}_{a_\infty}   \mb q^{(j)}_{a_{\infty} }    = 0$ for $k = 0,\dots,7$ together with
Eq.~\eqref{Eq:Bounds_hatq} to infer that 
\begin{align} \label{Bounds:Proj_dtauPsi^*_1}
\| \mb H_{a_\infty} \partial_{\tau} \Psi^*_{a(\tau)} \| + \| \mb P^{(k)}_{a_\infty} \partial_{\tau} \Psi^*_{a(\tau)} \|  \lesssim \delta^2 e^{-2 \omega \tau} 
\end{align}
for all $a \in X_{\delta}$.  Note that by the same reasoning, we get
\begin{align}\label{Bounds:Proj_dtauPsi^*_2}
 \|(1 -  \mb Q_{a_\infty})\partial_{\tau} \Psi^*_{a(\tau)} \| \lesssim \delta^2 e^{-2 \omega \tau}. 
 \end{align}
With Lemma \ref{Le:Bound_Ga} we obtain
\[ \|\mb H_{a_\infty} \mb K_{\mb u}(\Phi)(\tau) \| + \|\mb P_{\infty} \mb K_{\mb u}(\Phi)(\tau) \| \lesssim \delta^2 e^{-2 \omega \tau}. \]
By Eq.~\eqref{Eq:Modulation_Lorentzinstability}, $\mb Q_{a_\infty} \mb K_{\mb u}(\Phi)(\tau) = \chi(\tau) \mb Q_{a_\infty} \mb u$
which implies that 
\[ \| \mb Q_{a_\infty} \mb K_{\mb u}(\Phi)(\tau) \| \lesssim \tfrac{\delta}{c} e^{-2 \omega \tau}. \]
On the stable subspace we have
\begin{align*}
 \mb T_{a_{\infty}}  \mb K_{\mb u}( \Phi)(\tau)  & =  [1 - \mb H_{a_\infty} - \mb P_{a_\infty} - \mb Q_{a_\infty} ] \mb K_{\mb u}( \Phi)(\tau)  \\
 & =  \mb S_{a_{\infty}}(\tau)  \mb T_{a_{\infty}}   \mb u    + \int_0^{\tau} 
 \mb S_{a_{\infty}}(\tau - \sigma)  \mb T_{a_{\infty}}  [ \mb G_{a(\sigma)}(\Phi(\sigma)) - \partial_{\sigma} \Psi^*_{a(\sigma)} ] d \sigma.
\end{align*}
With Lemma \ref{Le:Bound_Ga} and Theorem \ref{Th:Decay_linearized},
\begin{align*}
\| \mb T_{a_{\infty}}  \mb K_{\mb u}( \Phi)(\tau) \| \lesssim (\tfrac{\delta}{c} + \delta^2 ) e^{-\omega \tau}. 
\end{align*}
These bounds imply that $\mb K_{\mb u}: \mc X_{\delta} \to \mc X_{\delta}$ for $\delta>0$ sufficiently small and $c > 0$ sufficiently large. We claim that 
\begin{align}\label{Eq:K_Lipschitz}
\|\mb K_{\mb u}(\Phi) - \mb K_{\mb u}(\Psi) \|_{\mc X} \lesssim \delta \| \Phi - \Psi \|_{\mc X} 
\end{align}
for all $\Phi, \Psi \in  \mc X_{\delta}$ and $\delta$ sufficiently small. For this, we use that for all $a,b \in X_{\delta}$ with $\delta$ sufficiently small,
\begin{align}\label{Bounds:Proj_dtauPsi^*_3}
\begin{split}
\|  \mb P^{(k)}_{a_\infty} \partial_{\tau} \Psi^*_{a(\tau)}  & - \mb P^{(k)}_{b_\infty} \partial_{\tau} \Psi^*_{b(\tau)}  \|  + \|\mb H_{a_\infty} \partial_{\tau} \Psi^*_{a(\tau)} - \mb H_{b_\infty} \partial_{\tau} \Psi^*_{b(\tau)}  \|  \lesssim \delta e^{-2 \omega \tau}  \| a - b\|_X,
\end{split}
\end{align}
for $k = 0,\dots,7$ and
\begin{align*}
\begin{split}
\| (1 -  \mb Q_{a_\infty})\partial_{\tau} \Psi^*_{a(\tau)} - (1 -  \mb Q_{b_\infty})\partial_{\tau} \Psi^*_{b(\tau)}  \|  \lesssim \delta e^{-2 \omega \tau}  \| a - b\|_X.
\end{split}
\end{align*}
These estimates are proved similarly to Eqns.~\eqref{Bounds:Proj_dtauPsi^*_1}-\eqref{Bounds:Proj_dtauPsi^*_2} by using Eq.~\eqref{Eq:Lipschitzbounds_hatq} and Lemma \ref{Le:Projections_Pa} in addition. Now, let $a = a_{\mb u}(\Phi)$, $b = b_{\mb u}(\Psi) \in X_{\delta}$ be assigned to $\Phi, \Psi \in \mc X_{\delta}$ via Lemma  \ref{Le:Modulation}. With Eq.~\eqref{Bounds:Proj_dtauPsi^*_3}, Lemma \ref{Le:Bound_Ga}, Lemma \ref{Le:Projections_Pa}, 
\begin{align} \label{Eq:Lipschitz_ProjK}
\begin{split}
 \|  \mb P_{a_\infty} \mb K_{\mb u}(\Phi)(\tau)  & - \mb P_{b_\infty} \mb K_{\mb u}(\Psi)(\tau) \| + \|  \mb H_{a_\infty} \mb K_{\mb u}(\Phi)(\tau) - \mb H_{b_\infty} \mb K_{\mb u}(\Psi)(\tau) \| \\
&  \lesssim  \delta  e^{-2 \omega \tau} ( \| \Phi - \Psi\|_{\mc X} - \|a - b \|_{X} ) \lesssim   \delta  e^{-2 \omega \tau}   \|\Phi - \Psi\|_{\mc X},
\end{split}
\end{align}
where we have used the Lipschitz bounds of Lemma \ref{Le:Modulation} in the last step. Eq.~\eqref{Eq:Modulation_Lorentzinstability} implies that
\begin{align*}
\|  \mb Q_{a_\infty} \mb K_{\mb u}(\Phi)(\tau) - \mb Q_{b_\infty} \mb K_{\mb u}(\Psi)(\tau) \|&  \lesssim \delta  e^{-2 \omega \tau} \|a - b\|_{X} \lesssim  \delta  e^{-2 \omega \tau} \| \Phi - \Psi\|_{\mc X}.
\end{align*}
Finally, it is easy to check that  
\begin{align}\label{Eq:Lipschitz_ProjK_stable}
\|  \mb T_{a_{\infty}} \mb K_{\mb u}(\Phi)(\tau)  -   \mb T_{b_{\infty}} \mb K_{\mb u}(\Psi)(\tau)  \| \lesssim \delta  e^{-\omega \tau}  \| \Phi - \Psi\|_{\mc X}.
\end{align}
By combining these estimates we infer that Eq.~\eqref{Eq:K_Lipschitz} holds and the existence of a solution follows from the contraction mapping principle, which applies provided $\delta >0$ is chosen sufficiently small.  

Finally, we verify the Lipschitz continuity of the solution map. For $\mb u, \mb v \in \mc B_{\delta/c}$, there are unique $(\Phi, a), (\Psi, b) \in \mc X_{\delta} \times X_{\delta}$ such that $\mb Q_{a_{\infty}} \Phi= \chi \mb Q_{a_{\infty}}  \mb u$,  $\mb Q_{b_{\infty}} \Psi= \chi \mb Q_{b_{\infty}}  \mb v$  and 
$\Phi = \mb K(\Phi, a,\mb u)$ as well as $\Psi = \mb K(\Psi,b,\mb v)$. We show that 
\begin{align}\label{Eq:Lipschitz_Sol}
\| \Phi - \Psi \|_{\mc X}  \lesssim \|\mb u - \mb v\| 
\end{align}
for all $\mb u, \mb v \in \mc B_{\delta/c}$. First, we have
\begin{align*}
\|\mb Q_{a_\infty}  \mb K(\Phi, a,\mb u)(\tau)  & - \mb Q_{b_\infty}   \mb K(\Psi,b,\mb v)(\tau) \|  \lesssim \|\chi(\tau)[  \mb Q_{a_\infty}\mb u -  \mb Q_{b_\infty} \mb v]\ \| \\
& \lesssim  e^{-\omega \tau} (\delta \| a - b \|_X + \|\mb u - \mb v\|) \lesssim  e^{-\omega \tau} (\delta \| \Phi - \Psi\|_{\mc X} +  \|\mb u - \mb v\|)
\end{align*}
by Lemma \ref{Le:Modulation}. The expressions $ \mb P_{a_\infty}  \mb K(\Phi, a,\mb u)$ and $ \mb H_{a_\infty}  \mb K(\Phi, a,\mb u)$ do not depend on $\mb u$ explicitly and analogously to Eq.~\eqref{Eq:Lipschitz_ProjK} we obtain 
\begin{align*}
\|\mb P_{a_\infty}  \mb K(\Phi, a,\mb u)(\tau) & - \mb P_{b_\infty}   \mb K(\Psi,b,\mb v)(\tau) \| \\
& + \|\mb H_{a_\infty}  \mb K(\Phi, a,\mb u)(\tau) -\mb H_{b_\infty}   \mb K(\Psi,b,\mb v)(\tau) \| \lesssim \delta e^{-2 \omega \tau}  \| \Phi - \Psi\|_{\mc X}.
\end{align*}
Finally, 
\begin{align*}
\|  \mb S_{a_{\infty}}(\tau)  \mb T_{a_{\infty}} \mb u & - \mb S_{b_{\infty}}(\tau)  
\mb T_{b_{\infty} }\mb v\| \lesssim e^{-\omega \tau} \| a- b \|_{X}  \| \mb u\|   +  e^{-\omega \tau} \| \mb u - \mb v \|   \\
& \lesssim e^{-\omega \tau} (\delta \| \Phi - \Psi\|_{\mc X} +  \|\mb u - \mb v\|)  
\end{align*}
by Theorem \ref{Th:Decay_linearized}. With this, the bound
\begin{align*}
\| \mb T_{a_{\infty}}   \mb K(\Phi, a,\mb u)(\tau) -\mb T_{b_{\infty} }  \mb K(\Psi,b,\mb v)(\tau) \|  \lesssim  e^{-\omega \tau} (\delta \| \Phi - \Psi\|_{\mc X} +  \|\mb u - \mb v\|)  
\end{align*}
can be proved analogous to Eq.~\eqref{Eq:Lipschitz_ProjK_stable}.
A combination of these estimates shows that 
\begin{align*}
\|\Phi - \Psi \|_{\mc X} = \|  \mb K(\Phi, a,\mb u) -  \mb K(\Psi,b,\mb v) \|_{\mc X}  \lesssim  \delta \| \Phi - \Psi\|_{\mc X} +  \| \mb u - \mb v \|
\end{align*}
and Eq.~\eqref{Eq:Lipschitz_Sol} follows for $\delta >0$ chosen sufficiently small. Hence, by Lemma \ref{Le:Modulation}, 
\[ \|a - b\|_X \lesssim \|\Phi - \Psi \|_{\mc X} +  \|\mb u - \mb v\| \lesssim  \|\mb u - \mb v\|, \]
which implies the claim.
\end{proof}
 
\subsection{ Co-dimension 9 stability in similarity variables}

In the following we construct a co-dimension 9 manifold determined by the vanishing of the correction term. Let 
\[B_{\delta/c}:= \{\mb u \in \mc H: \| u \| \leq \tfrac{\delta}{c}  \} \]
 and 
\[ \mb C(\mb u) := \mb C(\Phi_{\mb u}, \mb a_{\mb u}, \mb u ) \]
where $(\Phi_{\mb u},  a_{\mb u}) \in  \mc X_{\delta} \times X_{\delta}$ are associated to $\mb u \in B_{\delta/c}$ via 
Proposition \ref{Prop:NonlEvolution_Corr}. Furthermore, we define a projection operator combining all unstable directions 
\begin{align*}
\mb J_a := \mb P_a + \mb H_a =\sum_{k=0}^7 \mb P_a^{(k)} + \mb H_a.
\end{align*}
By definition, $\mb J_{a_{\infty}}   \mb C(\mb u) =  \mb C(\mb u)$
and we have the Lipschitz property
\begin{align*}
\|\mb J_a - \mb J_b\| \lesssim |a - b |
\end{align*}
for all $a,b \in X_{\delta}$ suitably small.

\begin{proposition}\label{Prop:Stable_manifold}
Let $\delta >0$ be sufficiently small and $\tilde c > c >0$ be sufficiently large. There exists a co-dimension $9$ Lipschitz manifold $\mc M \subset \mc H$ with $\mb 0 \in \mc M$ defined as the graph of a Lipschitz function $\mb M: \ker \mb J_0 \mapsto \rg \mb J_0$,

\begin{align*}
\mc M := \{\mb v + \mb M(\mb v)  \mid \mb v \in \ker \mb J_0 , \|\mb v\| \leq \tfrac{\delta}{\tilde c} \} \subseteq 
\{ \mb u \in B_{\delta/c} \mid \mb C(\mb u) = \mb 0 \} \subseteq \ker \mb J_0 \oplus \rg \mb J_0,
\end{align*}

such that for any $\mb u \in \mc M$ there exist unique $\Phi_{\mb u} \in \mc X_{\delta}$  and $a_{\mb u} \in X_{\delta}$ such that the equation
\begin{align}\label{Evol_Pert_Duhamel}
\Phi(\tau) = \mb S_{a_{\infty}}(\tau) \mb u + \int_0^{\tau} 
 \mb S_{a_{\infty}}(\tau - \sigma)[ \mb G_{a(\sigma)}(\Phi(\sigma)) - \partial_{\sigma} \Psi^*_{a(\sigma)} ] d \sigma
\end{align}
 is satisfied for all $\tau \geq 0$. Furthermore, there is a constant $K >\tilde c$ such that if $\mb u \in \mc H$, 
$ \| \mb u \|\leq \frac{\delta}{K} $ and $\mb C(\mb u) = 0$, then $\mb u \in \mc M$.
\end{proposition}

\begin{proof}
First, we convince ourselves that $\mb C(\mb u) = 0$ if and only if $\mb J_0 \mb C(\mb u) =0$. In fact, if $\mb J_0 \mb C(\mb u) =0$, then the bound
\begin{align*}
\| \mb C(\mb u)  \| \leq \|\mb J_0 \mb C(\mb u) + (\mb J_{a_{\mb u,\infty}} -\mb J_0 ) \mb C(\mb u) \| \lesssim   | a_{\mb u,\infty}|\|\mb C(\mb u) \|
\end{align*}
and the fact that $a_{\mb u,\infty} = \mc O(\delta)$ imply that $\mb C(\mb u)  = 0$. The other direction is obvious. \\

Next, we write $\mc H = \ker \mb J_0 \oplus \rg \mb J_0$, $\mb u = \mb v +\mb w \in \ker \mb J_0 \oplus \rg \mb J_0$ and require that $\mb C( \mb u) = 0$. For this, we fix $\mb v \in \ker \mb J_0$ and define 
\begin{align*}
\tilde {\mb  C}_{\mb v} : \rg \mb J_0 \to \rg \mb J_0, \quad \mb w \mapsto \tilde {\mb C}_{\mb v} :=  \mb J_0 \mb C(\mb v +\mb w ).
\end{align*}
We will show that this map is invertible at zero as long as $\mb v$ is sufficiently small to obtain $\mb w = \tilde {\mb  C}^{-1}_{\mb v}(\mb 0)$.  This determines the map
\[ \mb M:  \ker \mb J_0  \to \rg \mb J_0, \quad \mb v \mapsto \mb M(\mb v)  :=  \tilde {\mb  C}^{-1}_{\mb v}( \mb 0 ).\]
We prove this by a fixed point argument. Recall that  $\mb C = \mb C_1 + \mb C_2$, $\mb C_1 = \sum_{k=0}^{7} \mb C_1^{(k)}$ with
\begin{align}\label{Eq:Corr1}
 \mb C_1^{(k)}(\Phi,a,\mb u) &   = \mb P_{a_{\infty}}^{(k)}\mb u +\mb P_{a_{\infty}}^{(k)}  \mb I_1(\Phi,a),
 \end{align}
 and
 \begin{align}\label{Eq:Corr2}
\mb C_2(\Phi,a,\mb u)   = \mb H_{a_{\infty}} \mb u   + \mb H_{a_{\infty}} \mb I_2(\Phi,a),
\end{align}
where
\begin{align}\label{Eq:Corr3}
\begin{split}
\mb I_1(\Phi,a) & :=  \int_0^{\infty} e^{-\sigma} [\mb G_{a(\sigma)}(\Phi(\sigma)) - \partial_{\sigma} \Psi^*_{a(\sigma)} ] d \sigma, \\
\mb I_2(\Phi,a) &  : = \int_0^{\infty} e^{-3 \sigma} [\mb G_{a(\sigma)}(\Phi(\sigma)) - \partial_{\sigma} \Psi^*_{a(\sigma)} ] d \sigma.
\end{split}
\end{align}
To abbreviate the notation, we define $\mb F_1(\mb u)  := \sum_{k=0}^{7}  \mb F_1^{(k)}(\mb u)$, where
\begin{align*}
 \mb F_1^{(k)}(\mb u) := \mb P_{a_{\infty}}^{(k)}  \mb I_1(\Phi_{\mb u},a_{\mb u})
\end{align*}
as well as 
\[ \mb F_2(\mb u) := \mb H_{a_{\infty}} \mb I_2(\Phi_{\mb u},a_{\mb u}). \]
In view of Lemma \ref{Le:Bound_Ga} and Eq.~\eqref{Bounds:Proj_dtauPsi^*_1} we have 
\begin{align}\label{Eq:Integral_terms_delta}
\| \mb F_1^{(k)}(\mb u)\| \lesssim \delta^2, \quad  \|\mb F_2(\mb u)\| \lesssim \delta^2.   
\end{align}
In order to rewrite the equation we note that since $\mb v \in \ker \mb J_0$, 
\begin{align*}
\tilde {\mb  C}_{\mb v}  & = \mb J_0 \mb C(\mb v +\mb w )  = \mb J_0 \mb J_{a_{\infty}} (\mb v +\mb w ) + \mb J_0 ( \mb F_1(\mb v +\mb w) +\mb F_2(\mb v +\mb w)) \\
& = \mb J_0^2 \mb w + \mb J_0 (\mb J_{a_{\infty}} - \mb J_0) \mb w + \mb J_0 \mb J_{a_{\infty}}  \mb v+ \mb J_0 ( \mb F_1(\mb v +\mb w) +\mb F_2(\mb v +\mb w)) \\
& = \mb w +  \mb J_0 (\mb J_{a_{\infty}} - \mb J_0)( \mb v+ \mb w) + \mb J_0 ( \mb F_1(\mb v +\mb w) +\mb F_2(\mb v +\mb w)) 
\end{align*}
By setting
\begin{align*}
\Omega_{\mb v}(\mb w) :=  \mb J_0 (\mb J_0 -\mb J_{a_{\infty}})( \mb v+ \mb w) - \mb J_0 ( \mb F_1(\mb v +\mb w) +\mb F_2(\mb v +\mb w)) ,
\end{align*}
the equation $\tilde {\mb  C}_{\mb v}  = 0$ is equivalent to 
\begin{align}\label{Eq:FP_Omega} 
\mb w = \Omega_{\mb v}(\mb w).
\end{align}
We define
\begin{align*}
\tilde B_{\delta/c}(\mb v) := \{ \mb w \in \rg \mb J_0 \mid \|\mb v+ \mb w\| \leq \tfrac{\delta}{c} \}
\end{align*}
and prove that $\Omega_{\mb v}: \tilde B_{\delta/c}(\mb v) \to \tilde B_{\delta/c}(\mb v)$ is a contraction for small enough $\mb v$. In fact, let $\mb v \in \mc H$ satisfy 
 $\|\mb v\| \leq \frac{\delta}{2c}$ and let $\mb v \in \tilde B_{\delta/c}(\mb v)$. Using the bound given in Eq.~\eqref{Eq:Integral_terms_delta} we obtain  
 \begin{align*}
\| \Omega_{\mb v}(\mb w) \| \leq \| \mb J_0 -\mb J_{a_{\infty}}\| \| \mb v+ \mb w \| + \|\mb F_1(\mb v +\mb w)\| +\|\mb F_2(\mb v +\mb w)\| 
\lesssim \tfrac{\delta^2}{c}  + \delta^2.
 \end{align*}
 Thus, $\| \mb v +  \Omega_{\mb v}(\mb w)  \| \leq \frac{\delta}{c}$ for $\delta$ sufficiently small.  Now for $\mb w, \tilde {\mb w} \in \tilde B_{\delta/c}(\mb v)$ we associate to $\mb v + \mb w$ and $\mb v + \tilde {\mb w}$ by Proposition \ref{Prop:NonlEvolution_Corr} functions $(\Phi,a)$ and $(\Psi,b) \in  \mc X_{\delta} \times X_{\delta}$ and estimate
\begin{align*}
\| \Omega_{\mb v}(\mb w) -  \Omega_{\mb v}(\tilde {\mb w}) \| \leq & \|\mb J_0 (\mb J_0 -\mb J_{a_{\infty}})( \mb v+ \mb w) - \mb J_0 (\mb J_0 -\mb J_{b_{\infty}})( \mb v+ \tilde {\mb w}) \|\\
& + \|\mb F_1(\mb v +\mb w) -\mb F_1(\mb v +\tilde {\mb w})\| + \|\mb F_2(\mb v +\mb w) -\mb F_2(\mb v +\tilde {\mb w})\|.
\end{align*}
We write 
\begin{align*}
\mb J_0 &  (\mb J_0 -\mb J_{a_{\infty}})( \mb v+ \mb w) - \mb J_0 (\mb J_0 -\mb J_{b_{\infty}})( \mb v+ \tilde {\mb w}) \\
& = \mb J_0  (\mb J_0 -\mb J_{a_{\infty}})( \mb w- \tilde {\mb w}) - \mb J_0 (\mb J_{a_{\infty}} -\mb J_{b_{\infty}}) ( \mb v+ \tilde {\mb w})
\end{align*}
and estimate 
\begin{align*}
\|\mb J_0   (\mb J_0 -\mb J_{a_{\infty}})( \mb w- \tilde {\mb w})\| & + \| \mb J_0 (\mb J_{a_{\infty}} -\mb J_{b_{\infty}}) ( \mb v+ \tilde {\mb w}) \| \lesssim |a_{\infty}| \| \mb w-\tilde {\mb w}\| + |a_{\infty} - b_{\infty}| \|\mb v+ \tilde {\mb w}\|  \\
& \lesssim \delta \| \mb w- \tilde {\mb w}\|  + \tfrac{\delta}{c} \|a -b \|_X  \lesssim \delta  \| \mb w- \tilde {\mb w}\| 
\end{align*}
by Proposition \ref{Prop:NonlEvolution_Corr}.

Note that by Lemma \ref{Le:Bound_Ga} and Eq.~\eqref{Bounds:Proj_dtauPsi^*_3}, for $k = 0,\dots, 7$, 
\begin{align*}
\|\mb P_{a_{\infty}}^{(k)}  \mb I_1(\Phi,a) - \mb P_{b_{\infty}}^{(k)}  \mb I_1(\Psi,b )   & \| \lesssim \delta ( \|a-b\|_X + \|\Phi - \Psi\|_{\mc X} ) 
\end{align*}
and 
\begin{align*}
  \|\mb H_{a_{\infty}}  \mb I_2(\Phi,a) - \mb H_{b_{\infty}}  \mb I_2(\Psi,b)  \|  &   \lesssim \delta ( \|a-b\|_X + \|\Phi - \Psi\|_{\mc X} ). 
\end{align*}
Hence,
\begin{align*}
 \|\mb F_1(\mb v +\mb w)  &  -\mb F_1(\mb v +\tilde {\mb w})\| + \|\mb F_2(\mb v +\mb w) -\mb F_2(\mb v +\tilde {\mb w})\| \lesssim \delta \| \mb w- \tilde {\mb w}\|,
\end{align*} 
which implies the claimed Lipschitz continuity. By the Banach fixed point theorem we infer that for every $\mb v \in \ker \mb J_0$ with $\| \mb v \| \leq \frac{\delta}{2c}$ there exists a unique $\mb w \in \tilde B_{\delta/c}(\mb v)$ that solves Eq.~\eqref{Eq:FP_Omega} and thus $\mb C ( \mb v+ \mb w ) =\tilde {\mb  C}_{\mb v}(\mb w) = 0$. 
Finally, we prove that the map $\mb v \mapsto \mb M(\mb v)$ is Lipschitz continuous. Let $\mb v, \tilde {\mb v}  \in \ker \mb J_0$, $\|\mb v\|, \|\tilde {\mb v}\| \leq \frac{\delta}{2c}$ and $\mb w \in \tilde B_{\delta/c}(\mb v)$, $\tilde {\mb w}  \in \tilde B_{\delta/c}(\tilde {\mb v})$ the solutions to Eq.~\eqref{Eq:FP_Omega}. Then,
\begin{align*}
\| \mb M(\mb v)  - \mb M(\tilde {\mb v}) \| & = \|\mb w - \tilde {\mb w} \| \leq  
\| \Omega_{\mb v}(\mb w)  - \Omega_{\mb v}(\tilde {\mb w})  \| + \|\Omega_{\mb v}(\tilde {\mb w}) - \Omega_{\tilde {\mb v}}(\tilde {\mb w}) \|  \\
& \leq \tfrac{1}{2}  \|\mb w - \tilde {\mb w} \| + \|\Omega_{\mb v}(\tilde {\mb w}) - \Omega_{\tilde {\mb v}}(\tilde {\mb w})  \|.
\end{align*}
With
\begin{align*}
\|\Omega_{\mb v}(\tilde {\mb w}) &  - \Omega_{\tilde {\mb v}}(\tilde {\mb w})\|   =
 \|\mb J_0 (\mb J_0 -\mb J_{a_{\mb v+\tilde {\mb w},\infty}})( \mb v+ \tilde {\mb w}) - \mb J_0 ( \mb F_1(\mb v +\tilde {\mb w}) +\mb F_2(\mb v +\tilde {\mb w})) \\
 & -   \mb J_0 (\mb J_0 -\mb J_{a_{\tilde {\mb v}+\tilde {\mb w},\infty}})( \tilde {\mb v}+ \tilde {\mb w}) - \mb J_0 ( \mb F_1(\tilde {\mb v}+\tilde {\mb w}) +\mb F_2(\tilde {\mb v} +\tilde {\mb w})) \| \\
 & \lesssim \| \mb J_0 (  \mb J_{a_{\tilde {\mb v}+\tilde {\mb w},\infty}} - \mb J_{a_{\mb v+\tilde {\mb w},\infty}}) \tilde {\mb w} \|
 + \| \mb J_0( \mb J_{a_{\mb v+\tilde {\mb w},\infty}} \mb v - \mb J_{a_{\tilde {\mb v}+\tilde {\mb w},\infty}}  \tilde {\mb v} )  \|
 +\| \mb J_0 ( \mb F_1(\tilde {\mb v}+\tilde {\mb w}) +\mb F_2(\tilde {\mb v} +\tilde {\mb w}))  \| \\
 & \lesssim |a_{\tilde {\mb v}+\tilde {\mb w},\infty} - a_{\mb v+\tilde {\mb w},\infty}| \|\tilde {\mb w} \|  
+ \|\tilde {\mb v}  - \mb v \|   +  |a_{\tilde {\mb v}+\tilde {\mb w},\infty} - a_{\mb v+\tilde {\mb w},\infty}| \|\tilde {\mb v} \| +   \delta   \| \tilde {\mb v}  - \mb v  \| \\
 & \lesssim \tfrac{\delta}{c} \|\tilde {\mb v}  - \mb v \| + \|\tilde {\mb v}  - \mb v \| + \tfrac{\delta}{2 c}  \| \tilde {\mb v}  - \mb v  \| + \delta  \| \tilde {\mb v}  - \mb v  \|  \lesssim \|\tilde {\mb v}  - \mb v \|.
\end{align*}
This implies the claimed Lipschitz bound
\begin{align*}
\| \mb M(\mb v)  - \mb M(\tilde {\mb v}) \| \leq 2 \|\Omega_{\mb v}(\tilde {\mb w}) - \Omega_{\tilde {\mb v}}(\tilde {\mb w})  \| \lesssim \|\tilde {\mb v}  - \mb v \|.
\end{align*}

We note that for $\mb u =\mb 0$, the associated $(\Phi, a)$ are trivial, $\Phi = \mb 0$ and $a = 0$. Hence, $\mb C( \mb 0) =\mb F_1( \mb 0) + \mb F_2( \mb 0) = 0$.
On the other hand  $\mb u = \mb v + \mb w = \mb 0$ if and only if $\mb  v = \mb w = \mb 0$. However, since in that case $\mb v$ obviously satisfies the smallness condition, the associated $ \mb w$ solving $\mb C(\mb 0 +\mb w ) = 0$ is unique, hence $\mb M(\mb 0) = \mb 0$ which shows that $\mb 0 \in \mc M$. In particular, we have 
\begin{align*}
\| \mb v + \mb M(\mb v) \| \leq \| \mb v \| + \| \mb M(\mb v) - \mb M(\mb 0)  \| \lesssim \| \mb v \| 
\end{align*}
and thus, $\| \mb v + \mb M(\mb v) \| \leq \frac{\delta}{c}$ whenever $\| \mb v \| \leq \frac{\delta}{\tilde c}$ for sufficiently large $\tilde c > 2c$.\\

We prove the final statement. Let $\mb u \in \mc H$ satisfy $\mb C(\mb u) = \mb 0$. Then
\[ \|(1-\mb J_0) \mb u\| \lesssim \| \mb u \| \]
and we have $\mb v_{\mb u} := (1-\mb J_0)  \mb u \in \ker \mb J_0$ and $\|\mb v_{\mb u}\|  \leq \frac{\delta}{\tilde c}$ as long as $\|\mb u \| \leq \frac{\delta}{K}$ for sufficiently large $K > \tilde c$. Again, by uniqueness, $\mb w_{\mb u} := \mb J_0  \mb u = \mb M(\mb v_{\mb u})$ and thus $\mb u \in \mc M$. 
\end{proof}

Due to the structure of the correction term $\mb C = \mb C_1 + \mb C_2$ we can repeat the same arguments to derive the following result.

\begin{lemma}
Let $\delta >0$ be sufficiently small and $c, \tilde c_1, \tilde c_2 >0$ be sufficiently large. There exist manifolds $\mc M_1, \mc M_2 \subset \mc H$ with $\mb 0 \in \mc M_1, \mc M_2$ defined as the graph of Lipschitz functions, 
\[ \mb M_1: \ker \mb P_0 \mapsto \rg \mb P_0  \quad \mb M_2: \ker \mb H_0 \mapsto \rg \mb H_0  ,\]
\begin{align*}
\mc M_1 := \{\mb v + \mb M_1(\mb v)  \mid \mb v \in \mathrm{ker} \mb P_0 , \|\mb v\| \leq \tfrac{\delta}{\tilde c_1} \} \subseteq 
\{ \mb u \in B_{\delta/c} \mid \mb C_1(\mb u) = \mb 0 \} \subseteq \ker \mb P_0 \oplus \rg \mb P_0,
\end{align*}
and
\begin{align*}
\mc M_2 := \{\mb v + \mb M_2(\mb v)  \mid \mb v \in \mathrm{ker} \mb H_0 , \|\mb v\| \leq \tfrac{\delta}{\tilde c_2} \} \subseteq 
\{ \mb u \in B_{\delta/c} \mid \mb C_2(\mb u) = \mb 0 \} \subseteq \ker \mb H_0 \oplus \rg \mb H_0.
\end{align*}
Furthermore, there are constants $K_j>\tilde c_j$, $j =1,2$ such that if $\mb u \in \mc H$, 
$ \| \mb u \|\leq \frac{\delta}{K_j} $ and $\mb C_j(\mb u) = 0$.
\end{lemma}  

From the above results and the fact that  $\mb C = 0$ if and only if both $\mb C_1 =\mb 0$ and $C_2 = \mb 0$ we immediately obtain the following characterization. 

\begin{corollary}
Let $\delta >0$ be sufficiently small. There is a $\tilde K >0$ such that  
\[\mc M' := \{ \mb u \in \mc M : \|\mb u \| \leq \tfrac{\delta}{\tilde K} \},\]
 a small ball within $\mc M$, is contained in $\mc M_1 \cap \mc M_2$. 
\end{corollary}

The the statement of Proposition \ref{Prop:Main} is now a direct consequence of Proposition \ref{Prop:Stable_manifold}.

\subsubsection{Proof of Proposition \ref{Prop:Main}}

\begin{proof}
Let $\delta >0$ be sufficiently small and $\tilde c > c >0$ be sufficiently large such that the manifold $\mc M$ is defined according to  Proposition \ref{Prop:Stable_manifold}. Let $\Phi_0 \in \mc M \cap \mc D(\mb L)$. In particular, $\| \Phi_0 \| \leq \frac{\delta}{c}$  and $\mb C(\Phi_0) = 0$. According to Proposition \ref{Prop:Stable_manifold} there exist unique $\Phi \in \mc X_{\delta}$ and $a \in X_{\delta}$ such that Eq.~\ref{Evol_Pert_Duhamel} with $\mb u = \Phi_0$ is satisfied. It is easy to check that the function $ \mb {\tilde G}(\sigma, \Phi)  = \mb G_{a(\sigma)}(\Phi) - \partial_{\sigma} \Psi^*_{a(\sigma)} $ is continuously differentiable with respect to both variables ($\Psi^*_a$ depends smoothly on $a \in X$, $\mb F_a(\mb u)$ is algebraic in $\mb u$ and the remaining parts are bounded linear operators). Furthermore, since  $\Phi_0 \in  \mc D(\mb L)$ we can apply standard semigroup theory,  cf. for example Theorem 6.1.5 in \cite{pazy}, to conclude that $\Phi$ solves the initial value problem
\begin{align}
\begin{split}
\partial_{\tau} \Phi(\tau)   & =  [\tilde{\mb L}   + \mb L'_{a(\tau)} ] \Phi(\tau)  + \mb F_{a(\tau)}(\Phi(\tau))  - \partial_{\tau} \Psi^*_{a(\tau)} , \quad \tau > 0,    \\
  \Phi(0) & = \Phi_0.
\end{split}
\end{align}
By defining $\Psi(\tau) := \Psi^*_{a(\tau)} + \Phi(\tau)$
we obtain a solution $\Psi \in C^1([0,\infty), \mc H)$ to 
\begin{align}\label{Eq:NonlinearWave_Sim}
\begin{split}
\partial_{\tau} \Psi(\tau) & =   \mb L \Psi(\tau) + \mb N(\Psi(\tau)), \quad \tau > 0, \\
\Psi(0) & =  \Psi^*_0 + \Phi_0.
\end{split}
\end{align}
We set $\tilde \Phi(\tau) :=  \Phi(\tau) + \Psi^*_{a(\tau)} - \Psi^*_{a_{\infty}}$
and write 
\[ \Psi(\tau) =  \Psi^*_{a_{\infty}} + \tilde \Phi(\tau). \]
Using Eqns.~\eqref{Eq:Selfsim_Sol_Lipschitz} and \eqref{Eq:LorentzPar1} we infer that 
\[ \|  \tilde \Phi(\tau)\| \leq \|\Phi(\tau) \| + \| \Psi^*_{a(\tau)} - \Psi^*_{a_{\infty}} \| \lesssim e^{- \omega \tau} \]
as claimed. 
\end{proof}

This proof completes the discussion of the problem in self-similar variables. Now we connect this to the original equation in physical variables, i.e., we consider Eq.~\eqref{Eq:Mod_Integral} with $\mb u = \mb U(\mb v, T, x_0)$ for suitable $\mb v$, see Eq.~\eqref{Eq:InitialData_Op}.

\subsection{Variation of blowup parameters}\label{Sec:Codim1}

\subsubsection{Initial data operator}

First, we properly define the initial data operator $\mb U$, see Eq.~\eqref{Eq:InitialData_Op},  which can be written as 
 \begin{align*}
\mb U(\mb v, T, x_0) = \mc R(\mb v, T, x_0) + \mc R(\Psi^*_0, T, x_0) -  \mc  R(\Psi^*_0, 1, 0),
\end{align*}
for $\mb v = (v_1,v_2)$. Since we expect $(T,x_0)$ to be close to $(1,0)$ we a priori assume that $x_0 \in \B^7_{1/2}$ and that $T \in I := [\frac12, \frac32]$. Furthermore, we define the Hilbert space 
\[  \mc Y := H^4 \times H^3(\mathbb B^7_2)\] 
and denote by $ \mc B_{\mc Y}$ the unit ball in $\mc Y$.

\begin{lemma}\label{Le:InitialData_Operator}
Let $\delta >0$  be sufficiently small. The initial data operator $\mb U: \mc B_{\mc Y}  \times I \times \mathbb B^7_{1/2}  \to \mc H$ is Lipschtiz continuous, i.e.,  
\begin{align*}
\|\mb U(\mb v, T_1, x_0) - \mb U(\mb w, T_2,  y_0) \| \lesssim \| \mb v - \mb w\|_{\mc Y} + |T_1- T_2| - |x_0 - y_0|
\end{align*}
for all $\mb v, \mb w \in \mc B_{\mc Y}$, all $T_1,T_2  \in   I$ and all $x_0, y_0 \in \mathbb  B^7_{1/2} $. Furthermore, if $\|\mb v \|_{\mc Y} \leq \delta$, then 
for all $T \in  [1-\delta, 1+ \delta]  \subset I$ and all $x_0 \in  \B^7_{\delta}$,
\begin{align*}
\|\mb U(\mb v, T, x_0) \|  \lesssim \delta.
\end{align*}
\end{lemma}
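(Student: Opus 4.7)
The plan is to reduce the lemma to two basic estimates on the rescaling--translation operator $\mc R$: a uniform bound $\mc R(\cdot, T, x_0): H^3 \times H^2(\B^7_2) \to \mc H$ with constants independent of $(T, x_0) \in I \times \B^7_{1/2}$, and a Lipschitz dependence on $(T, x_0)$ that costs one derivative of the input. The smallness claim will then follow essentially for free, since $\mb U(0, 1, 0) = 0$, so it becomes the special case of the Lipschitz statement with $\mb w = 0$, $T_2 = 1$, $y_0 = 0$.

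For the uniform bound I would perform the change of variables $\eta = T\xi + x_0$. The image lies inside $\B^7_2$ since $T + |x_0| \leq 2$, and the Jacobian $T^{-7}$ is bounded above and below uniformly in $T \in [\tfrac12, \tfrac32]$. The chain rule introduces factors of $T^{|\alpha|}$ from $\partial^\alpha_\xi$, which are likewise under control. This yields $\|\mc R(\mb u, T, x_0)\|_{\mc H} \lesssim \|\mb u\|_{H^3 \times H^2(\B^7_2)}$ uniformly in the parameters.

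For the Lipschitz estimate I would split
\[ \mb U(\mb v, T_1, x_0) - \mb U(\mb w, T_2, y_0) = \mc R(\mb v - \mb w, T_1, x_0) + \bigl[\mc R(\mb w+\Psi^*_0, T_1, x_0) - \mc R(\mb w+\Psi^*_0, T_2, y_0)\bigr]. \]
The first term is handled by the uniform bound, contributing $\|\mb v-\mb w\|_{\mc Y}$. For the second, set $\mb u = \mb w + \Psi^*_0 \in \mc Y$, which has uniformly bounded $\mc Y$-norm on $\mc B_{\mc Y}$ since $\Psi^*_0$ is smooth. I would further decompose
\[ T_1 u_1(T_1\xi+x_0) - T_2 u_1(T_2\xi+y_0) = (T_1-T_2) u_1(T_1\xi+x_0) + T_2\bigl[u_1(T_1\xi+x_0)-u_1(T_2\xi+y_0)\bigr], \]
and for the second piece apply the fundamental theorem of calculus:
\[ u_1(T_1\xi+x_0) - u_1(T_2\xi+y_0) = \int_0^1 \nabla u_1(\gamma_s(\xi)) \cdot \bigl[(T_1-T_2)\xi + (x_0-y_0)\bigr]\, ds, \]
where $\gamma_s(\xi) := s(T_1\xi+x_0)+(1-s)(T_2\xi+y_0)$.

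The main obstacle, and the reason for the asymmetric regularity $H^4 \times H^3$, is estimating this integral in $H^3(\B^7)$: up to three $\xi$-derivatives must land on $\nabla u_1$, forcing four derivatives of $u_1$ in $L^2(\B^7_2)$. Since $\gamma_s$ is an affine diffeomorphism with linear coefficient $sT_1+(1-s)T_2 \in [\tfrac12,\tfrac32]$ and image inside $\B^7_2$ uniformly in $s$, the change-of-variables argument from the first step applies uniformly and gives a bound by $(|T_1-T_2|+|x_0-y_0|)\, \|u_1\|_{H^4(\B^7_2)}$. The second component is treated identically, consuming one derivative from $u_2 \in H^3(\B^7_2)$ to bound the $H^2(\B^7)$ norm of the analogous integral. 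Summing the contributions gives the claimed Lipschitz estimate, and specialising to $\mb w = 0$, $T_2 = 1$, $y_0 = 0$ yields the smallness bound once $\delta$ is chosen small enough that the multiplicative constant times $(\delta + \delta + \delta)$ meets the claim.
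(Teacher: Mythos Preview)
Your proof is correct and follows essentially the same approach as the paper: reduce to the mapping properties of $\mc R$, use the fundamental theorem of calculus to trade one derivative of the input for Lipschitz dependence on $(T,x_0)$, and then specialise to obtain the smallness. The only cosmetic difference is that the paper treats the $T$- and $x_0$-variations separately (first shift, then scale) and handles $\Psi^*_0$ on its own via its global smoothness, whereas you combine both variations into a single straight-line path $\gamma_s$ and bundle $\Psi^*_0$ with $\mb w$; both routes are equivalent.
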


\begin{proof}
	Let $v \in C^{\infty}(\B^7_2)$. For all $T \in I$ and all $x_0, y_0 \in \B^7_{1/2}$, we have 
	\begin{align*}
	v(T\xi + x_0) - v(T\xi + y_0) = (x_0^j - y_0^j) \int_0^1 \partial_j v(T\xi + y_0 +s(x_0 - y_0)) ds,
	\end{align*}
	which implies that 
	$ \| v(T\cdot + x_0) - v(T\cdot + y_0) \|_{L^2(\B^7)} \lesssim \|v\|_{H^1(\B_{2}^7)} |x_0 - y_0|.$
	The same argument shows that for fixed $k \in \N$,  
	\begin{equation}\label{Eq:Shift_Lipschitz}
	\| v(T \cdot + x_0) - v(T \cdot + y_0) \|_{H^k(\B^7)} \lesssim \|v\|_{H^{k+1}(\B_{2}^7)} |x_0 - y_0|.
	\end{equation}
	Similarly, for all $T_1, T_2 \in I$ and all $x_0 \in  \B^7_{1/2}$
	\[ v(T_1 \xi +x_0) - v(T_2 \xi +x_0) = (T_1 - T_2)\int_0^1 \xi^{j} \partial_j v((T_2 + s (T_1 - T_2))\xi +x_0) ds, \]
	and thus
	\[ \| v(T_1 \cdot+x_0) - v(T_2 \cdot +x_0) \|_{L^2(\B^7)} \lesssim \|(\cdot) \nabla v \|_{L^2(\B^7_2)}   |T_1 - T_2| \lesssim\|v \|_{H^1(\B^7_{2})}  |T_1 - T_2|  , \]
	where we have used that $| \xi^j \partial_j v(\xi)|^2 \leq |\xi|^2 \partial_j v(\xi) \overline{\partial^j v(\xi)}$. Analogously, for fixed $k \in \N$, we have
	\begin{equation}\label{Eq:Scaling_Lipschitz}
	\| v(T_1 \cdot+x_0) - v(T_2 \cdot+x_0) \|_{H^k(\B^7)} \lesssim |T_1 - T_2|  \|v \|_{H^{k+1}(\B^7_{2})}. 
	\end{equation}
	By a density argument Eqns.~\eqref{Eq:Shift_Lipschitz} - \eqref{Eq:Scaling_Lipschitz} can be extended to all $v \in H^{k+1}(\B^7_{2})$. 
	Now let $\mb v \in \mc Y$. Then, for all $T_1,T_2 \in I$, all $x_0, y_0 \in \B^7_{1/2}$, and all $\mb v,\mb w \in \mc Y$,  Eqns.~\eqref{Eq:Shift_Lipschitz} - \eqref{Eq:Scaling_Lipschitz} imply that 
	\begin{align}\label{Eq:Lipschitz_R}
	\| \mc R(\mb v,T_1,x_0) - \mc R(\mb w,T_2,y_0) \| \lesssim \| \mb v \|_{\mc Y} ( | T_1 - T_2 | + |x_0 - y_0| ) + \|\mb v - \mb w \|_{\mc Y}.
	\end{align}
	
	Since $\Psi^*_0$ is defined and smooth on all of $\R^7$ we immediately obtain 
	\begin{align}\label{Lipschitz_RescalingPsi}
	\| \mc R(\Psi^*_0, T_1, x_0) -  \mc  R(\Psi^*_0, T_2, y_0)\| \lesssim|T_1-T_2| + |x_0 - y_0| 
	\end{align}
	for all $T_1, T_2 \in I$ and all $x_0, y_0 \in \B^7_{1/2}$.
	Finally, the bound
	\[ \| \mb U(\mb v, T, x_0)  \lesssim \|\mb v\|_{\mc Y}  + |T-1|  + |x_0|\]
	implies the claimed smallness of $\mb U$. 
\end{proof}

This yields the following corollary to Proposition \ref{Prop:NonlEvolution_Corr}.
\begin{corollary}\label{Cor:NonlEvolution_Corr}
Let $M >0$ be sufficiently large and $\delta >0$  be sufficiently small. Then, for every $\|\mb v \|_{\mc Y} \leq \frac{\delta}{M}$, every $T \in  [1-\frac{\delta}{M}, 1+ \frac{\delta}{M}]  \subset I$ and every $x_0 \in \overline{\mathbb B^7_{\delta/M}}$ there exist functions $\Phi \in \mc X_{\delta}$ and $a \in X_{\delta}$ such that the integral equation
\[ \Phi(\tau)  = \mb S_{a_{\infty}}(\tau) [ \mb U(\mb v, T, x_0)- \mb C(\Phi, a, \mb U(\mb v, T, x_0))] + \int_0^{\tau} 
 \mb S_{a_{\infty}}(\tau - \sigma)[ \mb G_{a(\sigma)}(\Phi(\sigma)) - \partial_{\sigma} \Psi^*_{a(\sigma)} ] d \sigma \]
is satisfied  for all $\tau \geq 0$. In particular, $\Phi$ decays exponentially, 
\[ \|\Phi(\tau) \| \lesssim \delta e^{-\omega \tau}, \quad \forall \tau \geq 0.\] 
Furthermore, the solution map $(\mb v, T, x_0)  \mapsto (\Phi, a)$ satisfies
\begin{align*}
\begin{split}
 \|\Phi(\mb v,T,x_0) - \Phi(\mb w, \tilde T, y_0)\|_{\mc X} & + \|a(\mb v,T,x_0) - a(\mb w, \tilde T, y_0)\|_X \\
 &  \lesssim \|\mb v - \mb w\|_{\mc Y} + |T-\tilde T| + |x_0 - \tilde x_0| 
 \end{split}
 \end{align*}
 for all $\mb v, \mb w \in \mc Y$ satisfying the smallness condition, all $T, \tilde T \in [1-\frac{\delta}{M}, 1+ \frac{\delta}{M}] $ and all $x_0, \tilde x_0  \in\overline{\mathbb B^7_{1/2}}$
\end{corollary}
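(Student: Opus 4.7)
The plan is to reduce this corollary to a direct composition of Proposition \ref{Prop:NonlEvolution_Corr} with the Lipschitz estimate for the initial data operator in Lemma \ref{Le:InitialData_Operator}. Let $c > 0$ be the constant from Proposition \ref{Prop:NonlEvolution_Corr} (governing the smallness requirement $\|\mb u\| \leq \delta/c$). By the second statement of Lemma \ref{Le:InitialData_Operator}, there is an absolute constant $C_0 > 0$ such that whenever $\|\mb v\|_{\mc Y} \leq \delta'$, $T \in [1-\delta',1+\delta']$, and $x_0 \in \overline{\mathbb B^7_{\delta'}}$ with $\delta'$ sufficiently small, one has $\|\mb U(\mb v,T,x_0)\| \leq C_0 \delta'$. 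Choosing $M \geq C_0 c$ and setting $\delta' := \delta/M$, this yields $\|\mb U(\mb v,T,x_0)\| \leq \delta/c$.

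Thus for any such $(\mb v,T,x_0)$ the element $\mb u := \mb U(\mb v,T,x_0)$ lies in the admissible ball $\mc B_{\delta/c} \subset \mc H$ of Proposition \ref{Prop:NonlEvolution_Corr}. Invoking that proposition directly produces $\Phi \in \mc X_{\delta}$ and $a \in X_{\delta}$ such that
\[
\Phi(\tau) = \mb S_{a_{\infty}}(\tau)\bigl[\mb u - \mb C(\Phi,a,\mb u)\bigr] + \int_0^{\tau}\mb S_{a_{\infty}}(\tau-\sigma)\bigl[\mb G_{a(\sigma)}(\Phi(\sigma)) - \partial_{\sigma}\Psi^*_{a(\sigma)}\bigr]\,d\sigma
\]
for every $\tau \geq 0$, which is the asserted integral equation. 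The exponential decay $\|\Phi(\tau)\| \lesssim \delta e^{-\omega\tau}$ is built into the definition of $\mc X_{\delta}$ via the weighted norm $\|\cdot\|_{\mc X}$.

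For the Lipschitz statement, Proposition \ref{Prop:NonlEvolution_Corr} provides
\[
\|\Phi(\mb u) - \Phi(\tilde{\mb u})\|_{\mc X} + \|a(\mb u) - a(\tilde{\mb u})\|_X \lesssim \|\mb u - \tilde{\mb u}\|
\]
for $\mb u,\tilde{\mb u} \in \mc B_{\delta/c}$, while the first part of Lemma \ref{Le:InitialData_Operator} gives
\[
\|\mb U(\mb v,T,x_0) - \mb U(\mb w,\tilde T,y_0)\| \lesssim \|\mb v - \mb w\|_{\mc Y} + |T - \tilde T| + |x_0 - y_0|.
\]
Composing these two bounds yields the desired Lipschitz estimate for $(\mb v,T,x_0) \mapsto (\Phi,a)$. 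No new obstacle arises here: the only mild point is to make sure that $M$ is chosen large enough so that all preceding hypotheses (in particular $|a(\tau)| \leq \delta/\omega < \tilde\delta$ and $\|\mb u\| \leq \delta/c$) are simultaneously met; this is a quantitative bookkeeping of the constants in Lemma \ref{Le:InitialData_Operator} and Proposition \ref{Prop:NonlEvolution_Corr}, with no analytic content beyond what has already been established.
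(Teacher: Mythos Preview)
Your proof is correct and follows exactly the approach the paper intends: the corollary is stated immediately after Lemma \ref{Le:InitialData_Operator} with the remark ``This yields the following corollary to Proposition \ref{Prop:NonlEvolution_Corr},'' and no separate proof is given. Your argument simply makes explicit the composition of the smallness and Lipschitz bounds from Lemma \ref{Le:InitialData_Operator} with the solution map of Proposition \ref{Prop:NonlEvolution_Corr}, which is precisely what the paper has in mind.
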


\subsubsection{Variation of blowup parameters - Proof of Proposition  \ref{Prop:Main_Manifold}} 

To shorten the notation, we set $\mb h := \mb h_0$, where $\mb h_0$ is eigenfunction of $\mb L_0$ corresponding to the eigenvalue $\lambda = 3$, see Proposition \ref{Prop:Spectrum_L0}.
Proposition \ref{Prop:Main_Manifold} follows directly from the next result. 

\begin{proposition}\label{Prop:VanishingCorrection}
Let $c > 0$ be sufficiently large and $\delta >0$ be sufficiently small. For every $\mb v \in \mc Y$ satisfying $\|\mb v \|_{\mc Y} \leq \frac{\delta}{c^2}$, there exist functions $\Phi \in \mc X_{\delta}$, $a \in X_{\delta}$ and parameters 
$\alpha \in [-\frac{\delta}{c},\frac{\delta}{c}]$, $T \in I_{\delta/c} = [1- \frac{\delta}{c}, 1+\frac{\delta}{c}] \subset I$, $x_0 \in \overline{\mathbb B^7_{\delta/c} }\subset  \mathbb B^7_{1/2} $ such that 
\begin{align}\label{Eq:Vanishing_Correction}
\mb C(\Phi, a, \mb U(\mb v + \alpha  \mb h, T, x_0)) = 0.
\end{align}
Moreover, the parameters depend Lipschitz continuously on the data, i.e., 
\begin{align*}
|\alpha(\mb v) -\alpha(\mb w)| +  | T(\mb v) - T(\mb w)| + |x_0(\mb v) - x_0(\mb w)|  \lesssim \| \mb v - \mb w\|_{\mc Y}
\end{align*}
for all $\mb v, \mb w \in \mc Y$ satisfying the above smallness assumption. In particular, $\mb U(\mb v + \alpha  \mb h, T, x_0) \in \mc M$.
\end{proposition}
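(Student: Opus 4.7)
The plan is to recast $\mb C = 0$ as a fixed-point equation for the nine parameters $(T-1, x_0, \alpha) \in \R \times \R^7 \times \R$ and invoke the Banach fixed-point theorem. Since $\mathrm{rg}(\mb P_{a_\infty})$ and $\mathrm{rg}(\mb H_{a_\infty})$ are transversal by Lemma \ref{Le:Projections_Pa}, the identity $\mb C = \mb C_1 + \mb C_2 = 0$ decouples into $\mb C_1 = 0$ and $\mb C_2 = 0$. Using the further decomposition $\mb P_{a_\infty} = \sum_{k=0}^{7} \mb P_{a_\infty}^{(k)}$, the condition $\mb C_1 = 0$ is equivalent to eight scalar equations extracting the coefficients along $\mb g_{a_\infty}^{(k)}$, $k = 0,\dots,7$, while $\mb C_2 = 0$ is a single scalar equation for the coefficient along $\mb h_{a_\infty}$. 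The count of nine conditions exactly matches the nine free parameters, and Corollary \ref{Cor:NonlEvolution_Corr} supplies the Lipschitz-continuous dependence $(\mb v, T, x_0, \alpha) \mapsto (\Phi, a)$ that feeds into $\mb C$.

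First I would extract the leading-order linearization. Since $\mb U(0,1,0) = 0$, Taylor-expansion at $(T,x_0,\alpha) = (1,0,0)$ gives
\[
\mb U(\mb v + \alpha \mb h, T, x_0) = \alpha\, \mb h_0 + 4(T-1)\, \mb g_0^{(0)} + x_0^j\, c_j\, \mb g_0^{(j)} + \mc R(\mb v, 1, 0) + R(\mb v, T{-}1, x_0, \alpha),
\]
with explicit nonzero constants $c_j$ and a remainder $R$ quadratic in the small parameters. The three key identifications $\mc R(\mb h, 1, 0) = \mb h_0$, $\partial_T \mc R(\Psi^*_0, T, x_0)|_{(1,0)} = 4\,\mb g_0^{(0)}$, and $\partial_{x_0^j} \mc R(\Psi^*_0, T, x_0)|_{(1,0)} = c_j\, \mb g_0^{(j)}$ are direct computations reflecting the time- and space-translation invariance of Eq.~\eqref{Eq:cubicNLW}: the symmetry generators become eigenfunctions at $\la=1$ in self-similar coordinates, and their profiles agree (up to explicit constants) with those in Proposition \ref{Prop:Spectrum_L0}. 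Applying $\mb P_0^{(k)}$ and $\mb H_0$ to the expansion, and using that the projections kill every off-diagonal eigenfunction, yields a principal part of diagonal form $\mathrm{diag}(1, 4, c_1,\dots, c_7)$ acting on $(\alpha, T-1, x_0^1, \dots, x_0^7)$, which is manifestly invertible.

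With this invertible principal part in hand, I would rewrite $\mb C = 0$ as a fixed-point equation $(\alpha, T-1, x_0) = \mc F(\alpha, T-1, x_0)$, where $\mc F$ collects four types of remainder: (i) the integral terms $\mb P_{a_\infty} \int_0^\infty e^{-\sigma}[\mb G_{a(\sigma)}(\Phi(\sigma)) - \partial_\sigma \Psi^*_{a(\sigma)}] d\sigma$ and the analogous $e^{-3\sigma}$-weighted integral with $\mb H_{a_\infty}$, both $O(\delta^2)$ by Lemma \ref{Le:Bound_Ga}; (ii) the quadratic Taylor remainder $R$, of size $O(\delta^2/c^2)$; (iii) the contribution $\mc R(\mb v, 1, 0)$, bounded by $\|\mb v\|_{\mc Y} \leq \delta/c^2$; and (iv) the discrepancy between projecting with $(\mb P_0, \mb H_0)$ and $(\mb P_{a_\infty}, \mb H_{a_\infty})$, controlled by $|a_\infty| \lesssim \delta$ through the Lipschitz bound of Lemma \ref{Le:Projections_Pa}. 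Choosing $c$ large and $\delta$ small, these combine so that $\mc F$ sends $[-\delta/c,\delta/c] \times I_{\delta/c} \times \overline{\B^7_{\delta/c}}$ into itself and acts as a contraction. The Lipschitz estimates of Corollary \ref{Cor:NonlEvolution_Corr}, Lemma \ref{Le:InitialData_Operator}, and Lemma \ref{Le:Projections_Pa} transfer to $\mc F$ with Lipschitz constant $O(\delta)$ in the parameters and $O(1)$ in $\mb v$, so the parameter-dependent version of the Banach fixed-point theorem yields a unique $(\alpha(\mb v), T(\mb v), x_0(\mb v))$ depending Lipschitz-continuously on $\mb v$.

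The main obstacle is establishing the precise diagonal form of the principal part, i.e., verifying that the parameter derivatives of $\mc R(\Psi^*_0, T, x_0)$ at $(T,x_0)=(1,0)$ coincide (up to nonzero scalars) with the translation eigenfunctions $\mb g_0^{(0)}$ and $\mb g_0^{(j)}$, and that $\mc R(\mb h, 1, 0)$ equals the genuine instability eigenfunction $\mb h_0$. Once this symmetry-driven linear-algebra backbone is in place, the four remainder types are all demonstrably subordinate to the linear principal part, and the argument closes via the standard contraction scheme.
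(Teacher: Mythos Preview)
Your proposal is correct and follows essentially the same strategy as the paper: both reduce $\mb C=0$ to a nine-dimensional fixed-point problem for $(\alpha,T-1,x_0)$, identify the invertible diagonal principal part from the symmetry relations $\partial_T\mc R(\Psi^*_0,T,x_0)|_{(1,0)}\propto\mb g_0^{(0)}$, $\partial_{x_0^j}\mc R(\Psi^*_0,T,x_0)|_{(1,0)}\propto\mb g_0^{(j)}$, $\mc R(\mb h,1,0)=\mb h_0$, and close by Banach's fixed-point theorem using the Lipschitz estimates of Corollary \ref{Cor:NonlEvolution_Corr} and Lemmas \ref{Le:InitialData_Operator}, \ref{Le:Projections_Pa}, \ref{Le:Bound_Ga}. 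The only organizational difference is that the paper absorbs the $a_\infty$-dependence by rewriting the principal part in terms of $\mb g_{a_\infty}^{(k)}$, $\mb h_{a_\infty}$ (putting $\mb g_0^{(k)}-\mb g_{a_\infty}^{(k)}$ into the remainder $R_{a_\infty}$), whereas you keep the principal part at $a=0$ and treat $\mb P_{a_\infty}-\mb P_0$, $\mb H_{a_\infty}-\mb H_0$ as an additional $O(\delta)$ remainder; these are equivalent bookkeeping choices.
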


\begin{proof}
First, we observe that 
$\|\mb v + \alpha  \mb h \| \leq \frac{\delta}{M}$
for $c >M$ sufficiently large, where $M >0$ is the constant in Corollary \ref{Cor:NonlEvolution_Corr}. Consequently, for fixed $\mb v$ and all $\alpha \in  [-\frac{\delta}{c},\frac{\delta}{c}]$, $T \in  I_{\delta/c}$ and $x_0 \in \overline{\mathbb B^7_{\delta/c} }$ there are functions $(\Phi, a) \in \mc X_{\delta} \times X_{\delta}$,  $\Phi = \Phi(\mb v + \alpha \mb h,T, x_0)$, $a = a(\mb v + \alpha \mb h,T, x_0)$  associated  via Corollary \ref{Cor:NonlEvolution_Corr}. Furthermore, by choosing $c$ possibly larger, we can always guarantee that $\| \mb U(\mb v + \alpha  \mb h, T, x_0)\| \leq \frac{\delta}{K}$, where $K$ is the constant in Proposition \ref{Prop:Stable_manifold}. 
For the correction term, recall Eq.~\eqref{Eq:Corr1} - \eqref{Eq:Corr3}.
We show that $(\alpha,T, x_0)$ can be chosen such that for $k = 0,\dots,7$,
\begin{align}\label{Eq:Correction_FIniteDim}
\begin{split}
\left ( \mb C_1^{(k)}(\Phi, a, \mb U(\mb v + \alpha \mb h, T, x_0)) \big | \mb g_{a_{\infty}}^{(k)} \right ) & = 0, \\
\left ( \mb C_2(\Phi, a, \mb U(\mb v + \alpha \mb h, T, x_0)) \big | \mb h_{a_{\infty}} \right ) & = 0, \\
\end{split}
\end{align}
which implies Eq.~\eqref{Eq:Vanishing_Correction}.
For this, we use the fact that 
\begin{align*}
\partial_T [ T \psi^*_{0,1}(T\xi + x_0)]\big|_{(T,x_0)=(1,0)} = c_0 g_{0,1}(\xi), \quad \partial_{x_0^j} [ T \psi^*_{0,1}(T\xi + x_0)]\big|_{(T,x_0)=(1,0)} = c_{j} g^{(j)}_{0,1}(\xi)
\end{align*}
for $j=1,\dots,7$ and some constants $c_0, c_{j} \in \R$. The analogous statement holds for the second component $\psi^*_{0,2}$. By Taylor expansion we get that 
for all $T \in I_{\delta/c}$ and $x_0 \in \overline{\mathbb B^7_{\delta/c} }$,
\begin{align*}
\mc R(\Psi^*_0, T, x_0) -  \mc  R(\Psi^*_0, 1, 0)&  =  c_0 (T-1) \mb g_{0}^{(0)}  + \sum_{j=1}^{7}  c_j x_0^j  \mb g^{(j)}_{0} +
R( T, x_0). 
\end{align*}
For all $T, \tilde T \in I_{\delta/c}$, and all $x_0, \tilde x_0 \in  \overline{\mathbb B^7_{\delta/c} }$, we have 
\begin{equation*}
\|R( T, x_0) - R( \tilde  T, \tilde x_0) \| \lesssim \delta ( |T-\tilde T| + |x_0 - \tilde x_0| ).
\end{equation*}
In the following, we write
\begin{align*}
 \mb U(\mb v + \alpha \mb h, T, x_0)  &  =   \mc R(\mb v + \alpha \mb h, T, x_0) + 
  c_0 (T-1) \mb g_{a_{\infty}}^{(0)} + \sum_{j=1}^{7}  c_j x_0^j  \mb g^{(j)}_{a_\infty}  \\
  & + c_0 (T-1)  [\mb g_{0}^{(0)} -\mb g_{a_\infty}^{(0)}] +\sum_{j=1}^{7}  c_j x_0^j  [\mb g^{(j)}_{0}  - \mb g^{(j)}_{a_\infty} ] + R(T, x_0)\\
  &  =  \mc R(\mb v + \alpha \mb h, T, x_0) + 
  c_0 (T-1) \mb g_{a_{\infty}}^{(0)} + \sum_{j=1}^{7}  c_j x_0^j  \mb g^{(j)}_{a_\infty}  + R_{a_{\infty}}(T, x_0),
\end{align*}
with
\[ R_{a_{\infty}}(T, x_0) = c_0 (T-1)  [\mb g_{0}^{(0)} -\mb g_{a_\infty}^{(0)}] +\sum_{j=1}^{7}  c_j x_0^j  [\mb g^{(j)}_{0}  - \mb g^{(j)}_{a_\infty} ] + R(T, x_0).\]
It is easy to check that the remainder term satisfies
\begin{equation}\label{Eq:Remainder_1}
\|R_{a}(T, x_0) - R_{b}(\tilde T, \tilde x_0)  \| \lesssim \delta  (|a-b | + |T - \tilde T| + |x_0 - \tilde x_0|)
\end{equation}
for all $a,b \in \B^7$ with $|a| \lesssim \delta$, $|b| \lesssim \delta$,  all $T, \tilde T \in I_{\delta/c}$, and all $x_0, \tilde x_0 \in  \overline{\mathbb B^7_{\delta/c} }$.
To further simplify the above expression, we write
\[   \mc R(\mb v + \alpha \mb h ,T , x_0) = 
 \mc R(\mb v, T, x_0) + 
\alpha  \mc R(\mb h_{a_{\infty}}, T, x_0)   + \alpha \mc R(\mb h - \mb h_{a_{\infty}} , T, x_0).\]
The last term can be estimated by
\begin{align*}
\| \mc R(\mb h - \mb h_{a_{\infty}} , T, x_0) \| \lesssim \|\mb h - \mb h_{a_{\infty}} \|_{H^3 \times H^2(\mathbb B^7_{2})} \lesssim |a_{\infty}| ,
\end{align*}
where the Lipschitz bound for $\mb h_a$ follows from its smoothness on $\overline{\B^7_2}$ for small $a$. 
By Taylor expansion of  $\mc R(\mb h_{a_{\infty}}, T, x_0) $ at $(T,x_0) = (1,0)$ we obtain 
\begin{align*}
 \mc R(\mb v + \alpha \mb h, T, x_0) =   \mc R(\mb v ,T , x_0) +  \alpha  \mb h_{a_{\infty}} +  \alpha  \tilde R_{a_\infty}(T,x_0) 
\end{align*}
where 
\begin{equation}\label{Eq:Remainder_2}
 \|  \tilde R_{a}(T,x_0) - \tilde R_{b}(\tilde T,\tilde x_0) \| \lesssim |T-\tilde T| + |x_0 -\tilde x_0| +|a-b|.
 \end{equation}
In summary, we have 
\begin{align*}
 \mb U(\mb v + \alpha \mb h, T, x_0)=  \mc R(\mb v ,T , x_0)  &+  \alpha  \mb h_{a_{\infty}}  + c_0 (T-1) \mb g_{a_{\infty}}^{(0)} + \sum_{j=1}^{7}  c_j x_0^j  \mb g^{(j)}_{a_\infty}  \\
 &  + R_{a_\infty}(T, x_0) +  \alpha  \tilde R_{a_\infty}(T,x_0). 
\end{align*}
By transversality of the projections, we infer that for $j = 1, \dots, 7$, 
\begin{align*}
\mb P_{a_\infty}^{(0)} \mb U(\mb v + \alpha \mb h, T, x_0) & = 
\mb P_{a_\infty}^{(0)} \mc R(\mb v, T, x_0) + c_0 (T-1) \mb g_{a_{\infty}}^{(0)} + \mb P_{a_\infty}^{(0)} R_{a_\infty}(T, x_0)  + \alpha  \mb P_{a_\infty}^{(0)}  \tilde R_{a_\infty}(T,x_0),
\\
\mb P_{a_\infty}^{(j)} \mb U(\mb v + \alpha \mb h, T, x_0) & = 
\mb P_{a_\infty}^{(j)} \mc R(\mb v, T, x_0) + c_j x_0^j  \mb g^{(j)}_{{a_{\infty}} } +\mb P_{a_\infty}^{(j)}   R_{a_\infty}( T, x_0)  +\alpha  \mb P_{a_\infty}^{(j)}  \tilde R_{a_\infty}(T,x_0), 
 \\
 \mb H_{a_\infty}  \mb U(\mb v + \alpha \mb h, T, x_0)  & =   \mb H_{a_\infty}  \mc R(\mb v ,T , x_0) +  \alpha  \mb h_{a_{\infty}} +  \mb H_{a_\infty}  R_{a_\infty}(T, x_0)+  \alpha \mb H_{a_\infty}  \tilde R_{a_\infty}(T,x_0). 
\end{align*}
In the following, we write $T = 1+ \beta$ for $\beta \in  \overline{\mathbb B_{\delta/c}}$ and define for $k= 0,\dots,7$,
\begin{align*}
 \mb \Gamma^{(k)}_{\mb v}(\alpha,\beta,x_0) & := \mb P_{a_\infty}^{(k)} \mc R(\mb v, 1+ \beta, x_0) + \mb P_{a_\infty}^{(k)}   R_{a_\infty}( \beta, x_0)  +\alpha  \mb P_{a_\infty}^{(k)}  \tilde R_{a_\infty}( \beta,x_0)  +\mb P_{a_{\infty}}^{(k)}  \mb I_1(\alpha,\beta,x_0),  \\
  \mb \Gamma^{(8)}_{\mb v}(\alpha,\beta,x_0) & :=  \mb H_{a_\infty}  \mc R(\mb v ,1+ \beta , x_0) +   \mb H_{a_\infty}  R_{a_\infty}(\beta, x_0)+  \alpha \mb H_{a_\infty}  \tilde R_{a_\infty}(\beta,x_0)  + \mb H_{a_\infty}   \mb I_2(\alpha,\beta,x_0)
\end{align*}
 by slight abuse of notation. Then Eq.~\eqref{Eq:Correction_FIniteDim} can be written as 
\begin{align}\label{Eq:FP_Parameters}
\begin{split}
\beta =  \Gamma^{(0)}_{\mb v}( \alpha, \beta, x_0) =  \tilde c_0  (\mb \Gamma^{(0)}_{\mb v}(\alpha,\beta,x_0)  | \mb g_{a_{\infty}}^{(0)} ) \\
  x_0^j =  \Gamma^{(j)}_{\mb v}( \alpha, \beta, x_0) =  \tilde c_j   (\mb \Gamma^{(j)}_{\mb v}(\alpha,\beta,x_0)  |  \mb g_{a_{\infty}}^{(j)} )\\
\alpha  =  \Gamma^{(8)}_{\mb v}( \alpha, \beta, x_0) =  \tilde c_8
(\mb \Gamma^{(8)}_{\mb v}(\alpha,\beta,x_0)  |  \mb h_{a_{\infty}})
\end{split}
\end{align}
for $j = 1, \dots, 7$  some constants $\tilde c_0, \tilde  c_j, \tilde  c_8 \in \R$. First, we show that  $\Gamma_{\mb v} = (\Gamma_{\mb v}^{(0)}, \dots, \Gamma_{\mb v}^{(8)})$ maps $\overline{\mathbb B^{9}_{\delta/c}}$ to itself for suitably large  $c > 0$ and suitably small 
$\delta >0$. Since $\|\mc R(\mb v, 1+ \beta, x_0)\| \lesssim  \|\mb v\|_{\mc Y}$, Eqns.~\eqref{Eq:Remainder_1} - \eqref{Eq:Remainder_2} and Eq.~\eqref{Eq:Integral_terms_delta} imply that 
\[ \Gamma^{(i)}_{\mb v}( \alpha, \beta, x_0)  = O(\tfrac{\delta}{c^2}) + O(\delta^2) \]
for $i = 0,\dots, 8$.  In particular, for $c > 0$ sufficiently large and $\delta = \delta(c)$ sufficiently small, we obtain 
\[ |\Gamma_{\mb v}( \alpha, \beta, x_0)| \leq \tfrac{\delta}{c}. \]
Next, we show that $\Gamma_{\mb v}: \overline{\mathbb B^{9}_{\delta/c}}  \to \overline{\mathbb B^{9}_{\delta/c}}$ is contracting. Let again $(\Phi, a) \in \mc X_{\delta} \times X_{\delta}$ be the functions associated to $\mb v_{\alpha} = \mb v + \alpha  \mb h$, $T = 1+ \beta$ and $x_0$ via Corollary \ref{Cor:NonlEvolution_Corr}. Furthermore, let 
$(\Psi, b) \in \mc X_{\delta} \times X_{\delta}$ be  associated to $\tilde{ \mb v}_{\alpha} = \mb v + \tilde \alpha  \mb h$, $\tilde T = 1+ \tilde \beta $ and $\tilde x_0$. By Proposition \ref{Prop:NonlEvolution_Corr} and Lemma \ref{Le:InitialData_Operator}, we have
\begin{align*}
\|\Phi - \Psi \|_{\mc X} + \|a-b\|_{X} & \lesssim 
\| \mb U(\mb v +  \alpha  \mb h, T, x_0) -  \mb U(\mb v+ \tilde \alpha  \mb h, \tilde T, \tilde x_0) \| \\
&  \lesssim |\alpha - \tilde \alpha| +  |\beta-  \tilde \beta| + |x_0 - \tilde x_0 |.
\end{align*}
Thus, by Lemma \ref{Le:Bound_Ga} and Eq.~\eqref{Bounds:Proj_dtauPsi^*_3},
\begin{align*}
\|\mb P_{a_{\infty}}^{(k)}  \mb I_1(\Phi,a) - \mb P_{b_{\infty}}^{(k)}  \mb I_1(\Psi,b )  \| \lesssim \delta ( \|a-b\|_X + \|\Phi - \Psi\|_{\mc X} ) \lesssim  \delta (|\alpha - \tilde \alpha| +  |\beta-  \tilde \beta| + |x_0 - \tilde x_0 |), 
\end{align*}
 for $k = 0,\dots, 7$, and 
\[ \|\mb H_{a_{\infty}}  \mb I_2(\Phi,a) - \mb H_{b_{\infty}}  \mb I_2(\Psi,b)  \|  \lesssim  \delta (|\alpha - \tilde \alpha| +   |\beta-  \tilde \beta| + |x_0 - \tilde x_0 |).  \]
Furthermore, using Eq.~\eqref{Eq:Lipschitz_R}  and the Lipschitz continuity of the operators $\mb P_{a}^{(k)}$, $\mb H_{a}$ we get
\begin{align*}
\|\mb P_{a_{\infty}}^{(k)} & \mc R(\mb v, T, x_0)  -  \mb P_{b_{\infty}}^{(k)}\mc R(\mb v, \tilde T, \tilde x_0)  \| +  \|\mb H_{a_{\infty}} \mc R(\mb v, T, x_0)  - \mb H_{b_{\infty}}  \mc R(\mb v, \tilde T, \tilde x_0)  \|   \\
& \lesssim \|\mb v\|_{\mc Y} ( \|a- b\|_{X} + |T-\tilde T| +  |x_0 - \tilde x_0 |) \lesssim  \delta (|\alpha - \tilde \alpha| +   |\beta-  \tilde \beta| + |x_0 - \tilde x_0 |).
\end{align*}
Furthermore,  
\begin{align*}
 \|\mb P_{a_\infty}^{(k)}   R_{a_\infty}( T, x_0) - \mb P_{b_\infty}^{(k)}   R_{b_\infty}( \tilde T,  \tilde x_0)\|  & +    \| \alpha  \mb P_{a_\infty}^{(k)}  \tilde R_{a_\infty}( T ,x_0)   -  \tilde \alpha  \mb P_{b_\infty}^{(k)}  \tilde R_{b_\infty}(  \tilde T , \tilde x_0)\|
 \\ &  \lesssim  \delta (|\alpha - \tilde \alpha| + |\beta-  \tilde \beta|  + |x_0 - \tilde x_0 |) 
 \end{align*}
 for $k = 0, \dots, 7$ and analogous estimates hold for $\mb H_{a_{\infty}}$. By combining these bounds we obtain that for $i = 0, \dots,8$, 
\begin{align}\label{Eq:Parmeters_Contr}
| \Gamma^{(i)}_{\mb v}( \alpha, \beta, x_0) -  \Gamma^{(i)}_{\mb v}(\tilde   \alpha, \tilde  \beta, \tilde  x_0)  | \lesssim \delta (|\alpha - \tilde \alpha| +  |\beta-  \tilde \beta| + |x_0 - \tilde x_0 |).
\end{align}
In particular, $ \Gamma_{\mb v}$ is contracting for $\delta$ chosen sufficiently small. An application of the Banach fixed point theorem implies the first part of the statement.

It remains to show that the parameters depend Lipschitz continuously on $\mb v$. To keep track of the dependencies we denote by $(\Phi(\mb v_{\alpha},\beta,x_0) , a(\mb v_{\alpha},\beta,x_0))$ the functions $(\Phi, a)$ associated to $\mb v + \alpha \mb h$, $T = 1+\beta$, $x_0$ via Corollary \ref{Cor:NonlEvolution_Corr}. 
For $\mb v, \mb w \in \mc Y$ satisfying the required smallness condition, let $( \alpha, \beta, x_0)$, $( \tilde \alpha, \tilde  \beta, \tilde  x_0) $ be the corresponding unique set of parameters, such that Eq.~\eqref{Eq:FP_Parameters} is satisfied. The first component of Eq.~\eqref{Eq:FP_Parameters} implies that 
\begin{align*}
 |\beta -  \tilde  \beta| & =| \Gamma^{(0)}_{\mb v}( \alpha, \beta, x_0)  - \Gamma^{(0)}_{\mb w}( \tilde \alpha, \tilde  \beta, \tilde  x_0)  |  \leq | \Gamma^{(0)}_{\mb v}( \alpha, \beta, x_0)  - \Gamma^{(0)}_{\mb w}( \alpha, \beta, x_0) |  \\
 & + |\Gamma^{(0)}_{\mb w}( \alpha, \beta, x_0) - \Gamma^{(0)}_{\mb w}( \tilde \alpha, \tilde  \beta, \tilde  x_0)| . 
 \end{align*}
For the last term, we use Eq.~\eqref{Eq:Parmeters_Contr}. To estimate the first term, we use the Lipschitz continuity of the projections together with Corollary \ref{Cor:NonlEvolution_Corr} to get
\begin{align*}
\|  \mb P_{a_{\infty}(\mb v_{\alpha},\beta,x_0)}^{(0)} & \mc R(\mb v, 1+\beta , x_0) - 
\mb P_{a_{\infty}(\mb w_{\alpha}, \beta,x_0)}^{(0)} \mc R(\mb w, 1+\beta , x_0) \|  \\
& \lesssim \|\mb v \|_{\mc Y} \|a_{\infty}(\mb v_{\alpha},\beta,x_0) - a_{\infty}(\mb w_{\alpha} ,\beta,x_0)\|_X  + \| \mb v - \mb w\|_{\mc Y}  \lesssim \|\mb v - \mb w\|_{\mc Y}.
\end{align*}
Similar estimates exploiting Eqns.~\eqref{Eq:Remainder_1}-\eqref{Eq:Remainder_2}, Lemma \ref{Le:Bound_Ga} and Eq.~\eqref{Bounds:Proj_dtauPsi^*_3} yield
\[  | \Gamma^{(0)}_{\mb v}( \alpha, \beta, x_0)  - \Gamma^{(0)}_{\mb w}( \alpha, \beta, x_0) |  \lesssim \|\mb v - \mb w \|_{\mc Y}.\]
In summary, we obtain 
\[ |\beta -  \tilde  \beta|  \lesssim  \delta (|\alpha - \tilde \alpha| +  |\beta-  \tilde \beta| + |x_0 - \tilde x_0 |) +  \|\mb v - \mb w \|_{\mc Y}.  \]
Analogous estimates for the remaining components show that 
\[ |\alpha - \tilde \alpha|  + |\beta -  \tilde  \beta| + |x_0 - \tilde x_0| \lesssim  \delta (|\alpha - \tilde \alpha| +  |\beta-  \tilde \beta| + |x_0 - \tilde x_0 |) +  \|\mb v - \mb w \|_{\mc Y}, \]
which implies the claim provided that $\delta$ is chosen sufficiently small.
\end{proof}

\begin{remark}\label{Rem:Manifold}
By an argument similar to the one above (but simpler) we can always achieve $\mb U(\mb v, T, x_0) \in \mc M_1$ for suitable $\mb v$ by choosing properly the blowup time and the blowup point (depending on $\mb v$). In this sense, the requirement $\mb U(\mb v, T, x_0) \in \mc M_{2}$ defines a co-dimension one condition on the rescaled and shifted initial data. 
\end{remark}

\subsection{Proof of Theorem \ref{Th:Main}}

\begin{proof}
Let $\delta > 0$ and $c > 0$ be such that Proposition \ref{Prop:VanishingCorrection} holds. Define $\delta' := \frac{\delta}{c}$. Let 
$\mb v = (v_1,v_2) \in C^{\infty}(\overline{\B^7_2}) \times C^{\infty}(\overline{\B^7_2})$ satisfy
\[ \| \mb v \|_{H^4 \times H^3(\B^7_2)} \leq \frac{\delta'}{c} = \frac{\delta}{c^2}.\]
For smooth  $\mb v$, $\mb U(\mb v + \alpha \mb h, T, x_0) \in \mc D(\mb{\tilde L}) \subset   \mc D(\mb L)  = \mc D(\mb L_{a_\infty})$ for small enough $\alpha \in \R$.
By application of Proposition \ref{Prop:VanishingCorrection} and Proposition \ref{Prop:Main} we find that there exist  $a_{\infty} \in \overline{\mathbb B^{7}_{c \delta/\omega}}$, $x_0 \in \overline{\mathbb B^{7}_{\delta}}$, $T \in [1 - \delta, 1+\delta]$ and $\alpha \in [-\delta,\delta]$ depending Lipschitz continuously on $\mb v$ with respect to $\mc Y$ as well as a function  $\Psi  \in C^1([0,\infty), \mc H)$, $\Psi = (\psi_1,\psi_2)$,
\[ \Psi(\tau) = \Psi_{a_{\infty}}^* + \tilde \Phi(\tau) \]
solving the initial value problem
\begin{align}
\begin{split}
\partial_{\tau} \Psi(\tau) & =   \mb L \Psi(\tau) + \mb N(\Psi(\tau)), \quad \tau > 0 \\
\Psi(0) & =  \Psi^*_0 + \mb U(\mb v + \alpha \mb h, T, x_0)
\end{split}
\end{align}
and $\|  \tilde \Phi(\tau) \| \lesssim e^{- \omega \tau}$. Setting
\[ u(t,x) := \tfrac{1}{T-t} \psi_1(-\log(T-t) + \log T)(\tfrac{x-x_0}{T-t} ), \]
we obtain a solution to Eq.~\eqref{Eq:InitialValue_NLW_Codim1} in the backward lightcone $\bigcup_{t\in[0,T)}\{t\} \times \overline{\mathbb B_{T-t}^{7}(x_0)}$.
By definition, 
\[ u^*_{T,x_0,a}(t,x) = \tfrac{1}{T-t} \psi^*_{a}(\tfrac{x-x_0}{T-t}). \]
Eq.~\eqref{Eq:Blowup_behavior} implies that $u^*_{T,x_0,a}$ blows up in $\dot H^k(\mathbb B^7_{T-t}(x_0))$ for $k \geq 3$ with a rate given by $(T-t)^{\frac52-k}$, which yields the suitable normalization factor in the respective norms. In particular, we obtain
\begin{align*}
(T-t)^{\frac12} & \| u(t,\cdot) - u^*_{T,x_0, a_{\infty}}(t,\cdot) \|_{\dot H^3(\mathbb B_{T-t}^7(x_0))}   = 
 \| \psi_1(-\log(T-t) + \log T) - \psi^*_{a_{\infty}}\|_{\dot H^3(\mathbb B^7)}  \\
 &  \leq  \|\Psi(-\log(T-t) + \log T) - \Psi^*_{a_{\infty}} \| \lesssim  
 \| \tilde \Phi(-\log(T-t) + \log T) \|  \lesssim (T-t)^{\omega}.
\end{align*}
The other bounds follow analogously.
\end{proof}

\appendix

\section{Relevant results}

\subsection{Poincar\'e's theorem on difference equations}\label{App:Results}
Let
\begin{equation}\label{eq:Poincare}
	x(n+k) + p_1(n)x(n+k-1) + \cdots + p_k(n)x(n)=0
\end{equation}
be a difference equation with variable coefficients $p_k(n)$ such that there are real numbers $p_i$ for which
\begin{equation}\label{eq:limitpi}
	\lim_{n\rightarrow\infty}p_i(n)=p_i, \quad i\in\{1,2,\dots,k\}.
\end{equation}
\noindent Consequently, we define the \emph{characteristic equation} associated with~\eqref{eq:Poincare}
\begin{equation}\label{eq:char}
	t^k + p_1 t^{k-1}+ \cdots +p_{k-1}t+p_k =0.
\end{equation}
\begin{theorem}[Poincar\'e, 1885]
	Suppose \eqref{eq:limitpi} holds and the roots $t_1, t_2,\cdots,t_k$ of the characteristic equation~\eqref{eq:char} have distinct moduli. If $x(n)$ is a solution of~\eqref{eq:Poincare} then either $x(n)=0$ for all large $n$ or 
	\begin{equation}
		\lim_{n\rightarrow\infty}\frac{x(n+1)}{x(n)}=t_i,
	\end{equation}
	for some $i\in\{1,2,\dots,k\}$.
\end{theorem}
For a proof and further details, we direct the reader to a comprehensive treatment of difference equations by Elaydi \cite{Ela05}.

\subsection{Wall's formulation of the Routh-Hurwitz stability criterion}
\label{App:Wall}
The Routh-Hurwitz stability criterion is a famous control theory result which characterizes real-coefficient polynomials whose zero sets are contained in the open left half-plane. This includes calculating all principal minors of the so-called Hurwitz matrix, which consists of the polynomial coefficients arranged in a specific way. For exposition purposes we found it convenient to use the following reformulation by Wall.

\begin{theorem}[Wall, \cite{Wal45}]
	Let $P(z)=z^n+a_1z^{n-1}+\cdots+a_n$ be a polynomial with real coefficients, and let 
	$Q(z)=a_1z^{n-1}+a_3z^{n-3}+\cdots$ be the polynomial that is comprised of the odd-indexed terms of $P(z)$. Then all the zeros of $P(z)$ have negative real parts if and only if
	\[
	\frac{Q(z)}{P(z)}=\cfrac{1}{1+c_1z+\cfrac{1}{c_2z + \cfrac{1}{ c_3z +\lastcfrac{1}{c_nz}}}}.
	\]
	where the coefficients $c_1$, $c_2$, $\dots$, $c_n$ are all positive.
\end{theorem}

\section{Supersymmetric removal}\label{App:SUSY_removal}

\subsection{Motivation from Supersymmetric Quantum Mechanics}\label{Sec:Motivation_SUSY}
In this section we, rather informally, outline a trick which is well-known in quantum mechanics. Let $$H:= -\partial^2_x + V$$ be a Schr\"odinger operator on $L^2(\R)$, for some convenient smooth potential $V$. Furthermore, assume that $\la_0$ is an eigenvalue of $H$ with the corresponding eigenfunction $f_0 \in C^\infty(\R)$. In addition, we assume that $f_0$ has no zeros. Then $V=f_0''/f_0+\la_0$, and the operator $H$ has the following decomposition
\begin{equation}\label{Def:H}
	H = \left(-\partial_x-\frac{f'_0}{f_0}\right)\left(\partial_x-\frac{f'_0}{f_0}\right) + \la_0 =: Q^*Q + \la_0.
\end{equation}
Now, by reversing the order of the operators $Q^*$ and $Q$, we define the so-called supersymmetric partner of $H$ (relative to $\la_0$)
\begin{equation*}\label{Def:H_tilde}
	\tilde{H} := QQ^* + \la_0 = \left(\partial_x-\frac{f'_0}{f_0}\right)\left(-\partial_x-\frac{f'_0}{f_0}\right) + \la_0.
\end{equation*}
What is more, $\tilde{H}=-\partial_x^2+\tilde{V}$ where
$\tilde{V}= V + 2f_0'^2f_0^{-2},$
and $V$ and $\tilde{V}$ are called supersymmetric partner potentials. Furthermore, the following holds $$\sigma_p(\tilde{H})=\sigma_p(H) \setminus \{\la_0\}.$$ Indeed, let $\la \neq \la_0$ be an eigenvalue of $H$, with the corresponding eigenfunction $f$. Then by acting with $Q$ on Eq.~\eqref{Def:H} we get $\tilde{H}(Qf)=\la Qf$. Furthermore $Qf \neq 0$, since otherwise $f=f_0$, which is not possible because $\la \neq \la_0$. Consequently, $\la$ is an eigenvalue of $\tilde{H}$, and therefore $\sigma_p(H) \setminus \{\la_0\} \subseteq  \sigma_p(\tilde{H})$. To show the reversed inclusion we only prove that $\la_0 \notin \sigma_p(\tilde{H})$, as the rest can be done by the same procedure as above but relative to $\tilde{H}$. Assume contrary, that $\tilde{H}g=\la_0 g$ for some $g \in L^2(\R)$. Then $QQ^*g=0$, and since $\text{rg}\, Q^* \perp \text{ker}\, Q$ we have that $Q^*g=0$. But this in turn implies $g=f_0^{-1}$, which is in contradiction with $g \in L^2(\R)$. To sum up, the operator $\tilde{H}$ has the same set of eigenvalues as $H$ apart from $\la_0$.

\subsection{The Supersymmetric Problem}
In this section we implement the reasoning from above for the problems of type \eqref{Eq:ODEeigenv}. The canonical form we use is the following
\begin{align}\label{Eq:ODE_SUSY}
	(1-\rho^2) f''(\rho) +  \left(\frac{2m}{\rho}   - 2(\lambda +1) \rho \right) f'(\rho)
	-  \la (\la + 1)f(\rho) + V(\rho)  f(\rho)  = 0.
\end{align} 
Assume $\la_0$ is an eigenvalue,\footnote{For terminological consistency with the previous section we somewhat abuse the conventional notion of eigenvalue here.} i.e., there is $f_0 \in C^\infty([0,1])$ which solves Eq.~\eqref{Eq:ODE_SUSY} for $\la=\la_0$. Furthermore, we assume that $f_0$ is nonzero in $(0,1)$. The first step is expressing Eq.~\eqref{Eq:ODE_SUSY} in its normal form. To this end, we introduce a transformation
\begin{equation*}
	f(\rho)=\rho^{-m}(1-\rho^2)^{-\frac{\la+1-m}{2}}g(\rho),
\end{equation*}
which then leads to
\begin{equation*}
	-g''(\rho)+ \left( \frac{(m-1)(\rho^2+m)}{\rho^2(1-\rho^2)^2} + \frac{V(\rho)}{1-\rho^2} +\frac{\la(\la-2m)}{(1-\rho^2)^2} \right)g(\rho) = 0.
\end{equation*}
Now, we define
\begin{equation*}
	V_1(\rho):=  \frac{(m-1)(\rho^2+m)}{\rho^2(1-\rho^2)^2} + \frac{V(\rho)}{1-\rho^2} +\frac{\la_0(\la_0-2m)}{(1-\rho^2)^2},
\end{equation*}
according to which we have that
\begin{equation}\label{Eq:Norm_form}
	-g''(\rho)+V_1(\rho)g(\rho)=\frac{(\la-\la_0)(2m-\la-\la_0)}{(1-\rho^2)^2}g(\rho).
\end{equation}
Obviously, for $g_0(\rho):=\rho^m(1-\rho^2)^{\frac{\la_0+1-m}{2}}f_0(\rho)$ we have $-g_0''+V_1g_0=0$ and we therefore perform the factorization
\begin{equation*}
	-\partial_\rho^2+V_1=(-\partial_\rho-h)(\partial_\rho-h)
\end{equation*}
for $h=g_0'/g_0$. Now, by setting $\tilde{g}:=(\partial_\rho - h)g$, from Eq.~\eqref{Eq:Norm_form} we get
\begin{equation*}
	-(\partial_\rho-h(\rho))\Big((1-\rho^2)^2(\partial_\rho+h(\rho))\Big)\tilde{g}(\rho)=(\la-\la_0)(2m-\la-\la_0)\tilde{g}(\rho).
\end{equation*}
Furthermore, by expanding this expression we have
\begin{equation*}
	-(1-\rho^2)^2\tilde{g}''(\rho)+4(1-\rho^2)\rho \tilde{g}'(\rho) + (1-\rho^2) W(\rho) \tilde{g}=(\la-\la_0)(2m-\la-\la_0)\tilde{g}(\rho),
\end{equation*}
for
\begin{equation*}
	W(\rho)=(1-\rho^2)(h(\rho)^2-h'(\rho))+4\rho h(\rho).
\end{equation*}
Now, to obtain the form \eqref{Eq:ODE_SUSY}, we undo the initial change of variables
\begin{equation*}
	\tilde{g}(\rho)=\rho^m(1-\rho^2)^{\frac{\la-1-m}{2}}\tilde{f}(\rho),
\end{equation*}
by which we arrive at the supersymetric problem
\begin{equation}\label{Eq:SUSY_partner}
	(1-\rho^2) \tilde{f}''(\rho) +  \left(\frac{2m}{\rho}   - 2(\lambda +1) \rho \right) \tilde{f}'(\rho)
	- \la (\la + 1)\tilde{f}(\rho)  + \tilde{V}(\rho) \tilde{f}(\rho)  = 0,
\end{equation}
with the supersymmetric potential 
\begin{equation*}
	\tilde{V}(\rho)=W(\rho)+\frac{(2\la_0-1)m-\la_0^2+1}{1-\rho^2}-\frac{m(m-1)}{\rho^2(1-\rho^2)}-2.
\end{equation*}
\subsection{Correspondence of eigenvalues}
The process above defines a map $f_\la \mapsto \tilde{f}_\la$
\begin{equation*}
	\tilde{f}_\la(\rho)=\rho^{-m}(1-\rho^2)^{-\frac{\la-1-m}{2}}	\Big(\partial_\rho - h(\rho)\Big)\rho^{m}(1-\rho^2)^{\frac{\la+1-m}{2}}f_\la(\rho)
\end{equation*}
between solutions to the original problem \eqref{Eq:ODE_SUSY} and its supersymmetric partner \eqref{Eq:SUSY_partner} (this corresponds to the map $f \mapsto Qf$ from Section \ref{Sec:Motivation_SUSY} above). However, there is no a priori guarantee that for a given $f_\la \in C^\infty([0,1])$ with $\la \neq \la_0$ the corresponding $\tilde{f}_\la$ is admissible, i.e., it is non-trivial and belongs to $C^\infty([0,1])$.  
What we know for sure is that $\tilde{f}_\la \in C^\infty(0,1)$, which follows from the assumption that $f_0$ is non-zero on $(0,1)$. However, smoothness at endpoints depends on precise asymptotic behavior of both $f_{\la_0}$ and $f_{\la}$. As a matter of fact, $\tilde{f}_\la$ can have a logarithmic singularity at an endpoint. Therefore, the endpoint behavior of $\tilde{f}_\la$ has to be checked for every particular problem separately.

As a curiosity, we mention that (as we already implied above) the eigenvalue correspondence failure can happen because the function $\tilde{f}_\la$ ends up being identically zero. Namely, this occurs precisely when $f_{\la}(\rho)=C(1-\rho^2)^{\frac{\la_0-\la}{2}}f_{\la_0}(\rho)$, which in fact happens at an instance we come across in this paper, see Remark \ref{Rem:SUSY_fails}.

\section{Explicit expressions}
\subsection{Expressions for Proposition \ref{prop:l >1_l=0}}\label{App:Expr_l>1}

$C_n(\ell,\la)=P_1(n,\ell,\la)/P_2(n,\ell,\la)$ and $\ve_n(\ell,\la)=P_3(n,\ell,\la)/P_2(n,\ell,\la)$ where

\begin{align*}
	P_1(n,\ell,\la)=16(2n+2\ell+7)(n+1)(2n+\la+\ell+7)(2n+\la+\ell-3),
\end{align*}	
	\begin{align*}
	P_2(n,\ell,\la)=[2\la^2+&4(4n+2\ell+9)\la+(2n+2\ell+9)(8n+3\ell)]\cdot\\&\cdot[2\la^2+4(4n+2\ell+5)\la+(2n+2\ell+7)(8n+3\ell-8)].
	\end{align*}
	\begin{align*}
	P_3(n,\ell,\la)=&(-16n^2-28n\ell-96n+10\ell-124)\la^2\\&+(-96n^2\ell-48n\ell^2-160n^2+24n\ell+104\ell^2-640n+20\ell-568)\la\\&-(2n+2\ell+7)(24n^2\ell+10n\ell^2+64n^2-96n\ell-47\ell^2+32n+250\ell-384).
	\end{align*}
Also, $\delta_3(\ell,\la)=R_1(\ell,\la)/R_2(\ell,\la)$ where
\begin{align*}
	R_1(\ell,\la)=&-(23\ell+190)\la^6-6(30\ell^2+387\ell+1094)\la^5\\&+(-447\ell^3-8670\ell^2-46097\ell-59746)\la^4\\&-4(62\ell^4+2935\ell^3+26998\ell^2+68573\ell+8898)\la^3\\&+(483\ell^5-1770\ell^4-97410\ell^3-473652\ell^2-209537\ell+890030)\la^2\\&+2(342\ell^6+3735\ell^5-8906\ell^4-171178\ell^3-181954\ell^2+914227\ell+77910)\la\\&+3(\ell-1) (81 \ell^6+1427 \ell^5+5140 \ell^4-21610 \ell^3-81983 \ell^2+226647 \ell+314586),
\end{align*}
\begin{align*}
	R_2(\ell,\la)=&2\la^8+16(2\ell+11)\la^7+(216\ell^2+2279\ell+5526)\la^6\\
	&+2  (400 \ell^3+6066 \ell^2+27801 \ell+35914) \la^5\\
	&+(1772\ell^4+34319\ell^3+223270\ell^2+532737\ell+294126)\la^4\\
	&+4  (600 \ell^5+13922 \ell^4+114695 \ell^3+382702 \ell^2+370389 \ell-173626) \la^3\\
	&+(1944\ell^6+51981\ell^5+510802\ell^4+2148322\ell^3+2913884\ell^2-2334671\ell-3619670)\la^2\\
	&+2  (432 \ell^7+12978 \ell^6+146745 \ell^5+738530 \ell^4+1304890 \ell^3-1197446 \ell^2-3998275 \ell+420306) \la\\
	&+3  (\ell-1) (3 \ell+16) (13+2 \ell) (9 \ell^5+201 \ell^4+1410 \ell^3+2594 \ell^2-4235 \ell-4971).
\end{align*}

$\delta_5(0,\la)=R_3(\la)/R_4(\la)$ where
\begin{align*}
R_3(\la)=&-191\la^8-18994\la^7-728158\la^6-14060890\la^5-149594764\la^4-900471766\la^3\\&-3005668466\la^2-4932933534\la-2726072037,
\end{align*}
\begin{align*}
R_4(\la)=&\la^{10}+184\la^9+13910\la^8+562738\la^7+13346440\la^6+191728906\la^5+1667459514\la^4\\&+8524836246\la^3+23936737079\la^2+31789410678\la+13392819504.
\end{align*}

\subsection{Expressions for Proposition \ref{prop:l=1}}\label{App:Expr_l=1}

\[
\delta_1(\la)=\frac{2(\la^3+12\la^2+140\la+144)}{\la^4+50\la^3+752\la^2+3280\la+4224},
\]
\[
C_n(\la)=\frac{-4(n+1)(2n+13)[\la^2+2(2n+5)\la+4(n+3)(n+2)]}{[\la^2+4(2n+7)\la+4(2n+11)(n+2)][\la^2+4(2n+5)\la+4(2n+9)(n+1)]},
\]
\[
\ve_n(\la)=\frac{2[\la^3-2(n^2+5n-5)\la^2-8(2n^2-n-12)\la-8(2n+3)(n+2)(n+1)]}{[\la^2+4(2n+7)\la+4(2n+11)(n+2)][\la^2+4(2n+5)\la+4(2n+9)(n+1)]}.
\]

\pagestyle{plain}
\bibliography{references_paper}
\bibliographystyle{plain}

\end{document}